\DeclareMathAlphabet{\mathcal}{OMS}{cmsy}{m}{n}
\newlength{\bibitemsep}\setlength{\bibitemsep}{.2\baselineskip plus .05\baselineskip minus .05\baselineskip}
\newlength{\bibparskip}\setlength{\bibparskip}{0pt}
\let\oldthebibliography\thebibliography
\renewcommand\thebibliography[1]{%
  \oldthebibliography{#1}%
  \setlength{\parskip}{\bibitemsep}%
  \setlength{\itemsep}{\bibparskip}%
}
\newtheorem{theorem}{Theorem}[section]
\newtheorem{corollary}[theorem]{Corollary}
\newtheorem{proposition}[theorem]{Proposition}
\newtheorem{lemma}[theorem]{Lemma}
\theoremstyle{definition}
\newtheorem{definition}[theorem]{Definition}
\newtheorem{example}[theorem]{Example}
\newtheorem{remark}[theorem]{Remark}
\def\C{\mathbb{C}}
\newcommand{\Z}{\mathbb{Z}}
\DeclareMathOperator{\Hom}{Hom}
\DeclareMathOperator{\Ext}{Ext}
\DeclareMathOperator{\End}{End}
\DeclareMathOperator{\Fun}{Fun}
\DeclareMathOperator{\cof}{Cof}
\DeclareMathOperator{\tr}{tr}
\DeclareMathOperator{\res}{res}
\DeclareMathOperator{\ind}{ind}
\DeclareMathOperator{\proj}{proj}
\DeclareMathOperator{\mproj}{--\proj}
\DeclareMathOperator{\op}{op}
\DeclareMathOperator{\sg}{sg}
\DeclareMathOperator{\id}{id}
\DeclareMathOperator{\add}{add}
\DeclareMathOperator{\Mod}{Mod}
\DeclareMathOperator{\MMod}{--\Mod}
\DeclareMathOperator{\mmod}{--\mod}
\DeclareMathOperator{\Vect}{Vect}
\DeclareMathOperator{\perf}{perf}
\def\mod{\operatorname{mod}}
\DeclareMathOperator{\CM}{CM}
\DeclareMathOperator{\rad}{rad}
\DeclareMathOperator{\ev}{ev}
\DeclareMathOperator{\hecke}{{\mathcal{E}}}
\DeclareMathOperator{\Mack}{Mack}
\DeclareMathOperator{\CohMack}{CohMack}
\newcommand{\cohmu}{\mu^{coh}}
\DeclareMathOperator{\Ch}{\mathcal{C}{h}}
\DeclareMathOperator{\Ho}{Ho}
\newcommand{\F}{\mathbb{F}}
\newcommand{\und}[1]{\underline{#1}}
\newcommand{\undt}{\underline{\F}_2}
\newcommand{\undp}{\underline{\F}_p}
\begin{document}

\title{Equivariant $H\undp$-modules are wild}

\author[Grevstad]{Jacob Fjeld Grevstad}
\address{Department of Mathematical Sciences\\NTNU\\ 7491 Trondheim, Norway}
\email{jacob.f.grevstad@ntnu.no}

\author[May]{Clover May}
\address{Department of Mathematical Sciences\\NTNU\\ 7491 Trondheim, Norway}
\email{clover.may@ntnu.no}

\begin{abstract}
Let $k$ be an arbitrary field of characteristic $p$ and let $G$ be a finite group. We investigate the representation type, derived representation type, and singularity category of the $k$-linear cohomological Mackey algebra and the $k$-linear Mackey algebra.  We classify when the cohomological Mackey algebra is wild for $G$ a cyclic $p$-group. Furthermore, we show the cohomological Mackey algebra is derived wild whenever $G$ surjects onto a $p$-group of order more than two. We show the Mackey algebra is derived wild whenever $G$ is a nontrivial $p$-group.

Derived wildness has some immediate consequences in equivariant homotopy theory. In particular, for the constant Mackey functor $\und{k}$, the classification of compact modules over the $G$-equivariant Eilenberg--MacLane spectrum $H\und{k}$ is also wild whenever $G$ surjects onto a $p$-group of order more than two.  Thus, in contrast to recent work at the prime $2$ by Dugger, Hazel, and the second author in \cite{DHM}, no meaningful classification of compact $C_p$-equivariant $H\undp$-modules exists at odd primes.  For the Burnside Mackey functor $\und{A}_k$, there is no classification of compact $G$-equivariant $H\und{A}_k$-modules whenever $G$ is a nontrivial $p$-group.
\end{abstract}

\maketitle

\tableofcontents

\section{Introduction}

Mackey functors play an important role in equivariant homotopy theory as the analogue of stable homotopy groups and cohomology groups. For $G$ a finite group, every Mackey functor $M$ gives rise to a genuine equivariant Eilenberg--MacLane spectrum $HM$ representing $RO(G)$-graded equivariant cohomology with coefficients in $M$. The study of $RO(G)$-graded equivariant cohomology, graded by real representations of $G$, has played an important role in breakthroughs beyond equivariant homotopy theory including Hill, Hopkins, and Ravenel's landmark work on the Kervaire invariant one problem \cite{HHR16} and dos Santos, Florentino, and Orts' characterization of maximal and Galois-maximal real algebraic varieties \cite{dSFO25}. Equivariant Eilenberg--MacLane spectra are interesting in their own right, and are central to a large body of recent work from different perspectives including the study of Artin motives via tensor-triangular geometry, algebraic $K$-theory, and equivariant homotopy theory, see for example \cite{BG25, Fuh25, CW25, CV25, Pet24, Lev22}.
   
Due to a result of Schwede and Shipley \cite{SS03}, when $M$ is a commutative Mackey ring (also called a commutative Green functor), the homotopy category of $HM$-modules is equivalent to the derived category $\mathcal{D}(M)$. 
This equivalence of homotopy categories played a major role in the work of Dugger, Hazel, and the second author in \cite{DHM}, which gave a complete classification of compact modules over the $C_2$-equivariant Eilenberg--MacLane spectrum for the constant Mackey functor $\undt$.  It was shown in \cite{DHM} that there are just three families of indecomposable $H\undt$-modules: $H\undt$, $H\undt \wedge (S^m_a)_+$, and $\cof(\tau^r)$ for various $m \geq 0$ and $r \geq 1$. The classification of compact $H\undt$-modules was proved via a corresponding classification of indecomposable perfect complexes in the derived category $\mathcal{D}^{\perf}(\undt)$.  Moreover, this classification was used in \cite{DHM} to give a new proof of the second author's structure theorem for $RO(C_2)$-graded cohomology \cite{CM20}, implying the only indecomposables coming from the (co)homology of $C_2$-CW complexes are $H\undt$ and $H\undt \wedge (S^m_a)_+$. 
  
One might hope to generalize these results to a classification of compact $H \undp$-modules for other finite groups.  However, this is not possible. The category of $H\undp$-modules is much richer in general. In this paper we use quiver representations to show $\undp$ is derived wild for most groups.  Thus a complete classification is unattainable, both for equivariant $H\undp$-modules and for perfect complexes of $\undp$-modules. Here we mean \emph{wild} a in a technical sense; any complete classification would necessarily include a classification of indecomposable modules of every finite dimensional $\F_p$-algebra. Although $H\undp$-modules cannot be classified, the question of whether a meaningful structure theorem for $RO(G)$-graded cohomology exists at odd primes remains open.

Turning to an arbitrary field $k$ of characteristic $p$, we study the derived category of $\und{k}$-modules, also known as cohomological Mackey functors. In \cite{TW95}, Thévenaz and Webb classified precisely the groups for which Mackey functors and cohomological Mackey functors have finite representation type.  We investigate their representation types further, as well as derived representation types, motivated by equivariant homotopy theory. In order to study (derived) modules over the constant Mackey functor $\und{k}$, we study the equivalent category of (derived) modules over the cohomological Mackey algebra $\cohmu_k(G)$. Beyond \cite{TW95}, the cohomological Mackey algebra has been studied from an algebraic perspective by a number of people, for example in \cite{Bouc11, BS15, BSW17, Lin18}.
  
Representation type is more complicated over arbitrary fields, but for algebras over an algebraically closed field, the famous tame-wild dichotomy due to Drozd \cite{Dro77,CB88} separates algebras into tame or wild representation type (but not both).  Morally, the indecomposable representations of tame algebras are ``describable'', while  the indecomposable representations of wild algebras are hopeless to classify. We classify the representation type of the cohomological Mackey algebra $\cohmu_k(G)$ for cyclic $p$-groups over algebraically closed fields. 

\begin{theorem}[\emph{c.f.\ }\cref{thm:alg_closed_cohomological_rep_type}]
  Let $k = \bar{k}$ be an algebraically closed field of characteristic $p$.  The cohomological Mackey algebra $\cohmu_k(C_{p^m})$ has
  \begin{enumerate}
    \item finite representation type if $m\leq 1$,
    \item tame (and infinite) representation type if $p^m=4$, and
    \item wild representation type otherwise.
  \end{enumerate}
\end{theorem}

However, due to our interest in equivariant homotopy, much of our focus is on non-algebraically closed fields.  Extending the definition of wild representation type to arbitrary fields in the obvious way, we show the cohomological Mackey algebra, as well as the Mackey algebra, in prime characteristic are wild in the same case as above, whenever $G=C_{p^m}$ has $m \geq 2$ and $p^m \neq 4$ (see \cref{thm:arbitrary_field_cyclic_p_group_cohomological_rep_type} and \cref{cor:mackey_algebra_wild_cyclic}).

We then turn to derived representation type and show the cohomological Mackey algebra is derived wild for most finite groups.  Over a field of characteristic $p$, we first show $\cohmu_k(G)$ is derived wild for $p$-groups of order more than two.  That is, we show the perfect derived category $\mathcal{D}^{\perf}(\und{k})$ is wild.  The proof uses the derived wildness of certain group algebras $kG$ over (possibly finite) fields, which we explore in \cref{sec:derived_wild_groups}. We then generalize to larger groups and obtain the following result.

\begin{theorem}[\emph{c.f.\ }\cref{thm:cohomological_derived_wild_surjection}]
 Let $k$ be a field of characteristic $p$. The cohomological Mackey algebra  $\cohmu_k(G)$ is derived wild whenever $G$ surjects onto a $p$-group of order more than two.
\end{theorem}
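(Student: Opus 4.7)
The plan is to reduce to the previously established derived wildness of $\cohmu_k(P)$ for $P$ a $p$-group of order more than two, by exhibiting $\cohmu_k(P)$ as a corner algebra of $\cohmu_k(G)$ via inflation. Write $\pi \colon G \twoheadrightarrow P$ for the given surjection and $N = \ker \pi$.

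First I would invoke Yoshida's theorem to realize $\cohmu_k(G)$ as Morita equivalent to $A := \End_{kG}\!\left(\bigoplus_{H} k[G/H]\right)$, where $H$ ranges over representatives of the conjugacy classes of subgroups of $G$, and likewise $\cohmu_k(P)$ as Morita equivalent to $B := \End_{kP}\!\left(\bigoplus_{K} k[P/K]\right)$. Using the standard identification $\Hom_{kG}(k[G/H],k[G/H']) \cong k[H \backslash G / H']$ together with the bijection $H \backslash G / H' \leftrightarrow (H/N) \backslash P / (H'/N)$ valid whenever $H, H' \geq N$, I identify $B$ with the corner algebra $eAe$, where $e \in A$ is the idempotent projecting onto the summand $\bigoplus_{H \geq N} k[G/H]$. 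Conceptually, this is the algebraic shadow of the fact that inflation along $\pi$ is a fully faithful exact embedding from $\cohmu_k(P)$-modules into $\cohmu_k(G)$-modules.

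Next I would argue that derived wildness passes from $eAe$ up to $A$. Since $Ae$ is finitely generated projective as a right $eAe$-module (a standard fact about idempotents in an algebra), the functor $Ae \otimes_{eAe}(-) \colon \mod(eAe) \to \mod(A)$ is exact, and it is fully faithful because the unit $M \to e(Ae \otimes_{eAe} M) \cong M$ of its adjunction with $e(-)$ is an isomorphism. Hence it extends to a fully faithful exact triangle embedding $D^b(eAe) \hookrightarrow D^b(A)$. Since $D^b$ of a finite-dimensional algebra is Krull--Schmidt, fully faithful functors preserve indecomposability and non-isomorphism, so any family of indecomposable complexes over a wild $k$-algebra $R$ witnessing derived wildness of $eAe$ pushes forward to such a family in $D^b(A)$. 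Combined with derived wildness of $\cohmu_k(P)$ this yields derived wildness of $\cohmu_k(G)$.

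The main technical point is the first step: verifying that the bookkeeping of conjugacy classes under $H \leftrightarrow H/N$ really identifies $B$ with $eAe$, i.e., that Yoshida's Morita equivalence is compatible with inflation. Even a Morita equivalence $eAe \simeq B$ rather than an isomorphism would suffice for the derived-wildness transfer, so there is some slack; the only essential content needed is that inflation sends the permutation-module generators of $\cohmu_k(P)$ to a direct summand of the permutation-module generators of $\cohmu_k(G)$ in a $\Hom$-preserving way, which is the natural meaning of the double-coset bijection above.
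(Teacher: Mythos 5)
Your overall strategy is exactly the one the paper uses: realize $\cohmu_k(P)\cong\cohmu_k(G/N)$ as the corner algebra $eAe$ of $A\cong\cohmu_k(G)$ cut out by the idempotent on the $N$-fixed permutation modules $\bigoplus_{N\leq H\leq G}k[G/H]$ (this is \cref{prop:G/N_idempotent_subalgebra_cohomological}), transfer derived wildness from the idempotent subalgebra up to $A$ along $Ae\otimes_{eAe}-$ (this is \cref{prop:idempotent_subalgebra_derived_wild}), and feed in the derived wildness of $\cohmu_k(P)$ for $p$-groups of order more than two (\cref{thm:derived_wild_p-groups}). Your remark that a Morita equivalence $eAe\simeq\cohmu_k(P)$ suffices, and that the double-coset bookkeeping for subgroups containing $N$ is the only verification needed, is consistent with how the paper argues.

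There is, however, one incorrect step in your transfer argument: it is \emph{not} a standard fact that $Ae$ is projective (or even flat) as a right $eAe$-module, and in general it is false. For instance, take $A=\cohmu_k(C_2)$ in characteristic $2$ (the bound quiver of \cref{prop:CpCohMackQuiver}) and $e=e_1$: then $e_1Ae_1\cong k[x]/(x^2)$ while $Ae_1\cong k[x]/(x^2)\oplus k$ as a right $e_1Ae_1$-module, which is not projective. Consequently $Ae\otimes_{eAe}-$ need not be exact, and its left derived functor need not preserve $\mathcal{D}^b$, so the asserted fully faithful embedding $\mathcal{D}^b(eAe)\hookrightarrow\mathcal{D}^b(A)$ is not justified as stated. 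Fortunately none of this is needed: derived wildness is witnessed by bounded complexes of finitely generated projective bimodules, and $Ae\otimes_{eAe}-$ sends projective $eAe\otimes\Gamma^{\op}$-modules to projective $A\otimes\Gamma^{\op}$-modules (since $Ae$ is projective as a left $A$-module) and is fully faithful on modules, as you correctly argue via the unit isomorphism. Applying it termwise to a strict family $P^\bullet$ of $eAe$-complexes over $\Gamma$ therefore gives a complex $Ae\otimes_{eAe}P^\bullet$ in $K^b(A\otimes\Gamma^{\op}\mproj)$, and the termwise adjunction identification $\Hom_A^\bullet(Ae\otimes X^\bullet,Ae\otimes Y^\bullet)\cong\Hom_{eAe}^\bullet(X^\bullet,Y^\bullet)$ shows full faithfulness on $K^b(\proj)$, so strictness (reflection of isomorphism and of indecomposability, using that $K^b(\proj)$ is Krull--Schmidt) is preserved. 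Rephrased this way, your argument coincides with the paper's proof of \cref{prop:idempotent_subalgebra_derived_wild}, and the theorem follows.
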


This implies there is no meaningful classification of modules over the  Eilenberg--MacLane spectrum for the constant Mackey functor $\und{k}$.

\begin{theorem}[\emph{c.f.\ }\cref{thm:cohomological_derived_wild_surjection_topological}]
 Let $k$ be a field of characteristic $p$. The category of compact objects in the homotopy category of $G$-equivariant $H\und{k}$-modules is wild whenever $G$ surjects onto a $p$-group of order more than two.
\end{theorem}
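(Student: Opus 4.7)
The plan is to bootstrap the topological statement from \cref{thm:cohomological_derived_wild_surjection} via the Schwede--Shipley equivalence. Since $k$ is a commutative ring, the constant Mackey functor $\und{k}$ is a commutative Green functor, so by \cite{SS03} the homotopy category of $H\und{k}$-modules is triangulated equivalent to the derived category $\mathcal{D}(\und{k})$. Under any such equivalence compact objects correspond to compact objects, that is, to perfect complexes.

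Next I would identify $\mathcal{D}(\und{k})$ with $\mathcal{D}(\cohmu_k(G))$. Modules over the constant Mackey functor $\und{k}$ are precisely cohomological Mackey functors valued in $k$-modules, which are modules over the cohomological Mackey algebra $\cohmu_k(G)$; this identification is the foundation of the algebraic study earlier in the paper. Passing to derived categories we obtain
\[
\mathcal{D}^{\perf}(\und{k}) \simeq \mathcal{D}^{\perf}(\cohmu_k(G)).
\]
Composing, the full subcategory of compact objects in the homotopy category of $H\und{k}$-modules is triangulated equivalent to $\mathcal{D}^{\perf}(\cohmu_k(G))$.

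Finally, by \cref{thm:cohomological_derived_wild_surjection} the algebra $\cohmu_k(G)$ is derived wild whenever $G$ surjects onto a $p$-group of order more than two; by definition this means $\mathcal{D}^{\perf}(\cohmu_k(G))$ is wild. Since wildness of a triangulated category is an invariant of the $k$-linear triangulated structure, being formulated via a representation embedding from the finitely generated modules over some wild algebra, transporting across the equivalence above shows the category of compact $H\und{k}$-modules is also wild. The only step requiring any care is verifying this preservation of wildness, but it is immediate from the definition once the Schwede--Shipley equivalence is in hand. There is no substantive obstacle; the theorem is essentially a dictionary translation of the preceding algebraic result.
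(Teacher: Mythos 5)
Your argument is correct and matches the paper's proof: the paper likewise deduces the topological statement from \cref{thm:cohomological_derived_wild_surjection} by applying the Schwede--Shipley equivalence of \cref{Thm:QuillenEquiv} together with the identification $\und{k}\MMod \simeq \cohmu_k(G)\MMod$, so that compact $H\und{k}$-modules correspond to $\mathcal{D}^{\perf}(\cohmu_k(G))$ and wildness transports across the equivalence. No substantive difference.
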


For $G=C_{p^n}$ or $G=C_p \times C_p$, this follows directly from the proof that $kG$ is derived wild. Thus, in a precise sense, the wildness of $H\und{k}$-modules comes from $Fun(BG,k\MMod)$, $k$-modules with a $G$-action.  

Over an algebraically closed field, there is a derived version of the tame-wild dichotomy due to Bekkert and Drozd \cite{BD03}. In the algebraically closed case, we classify precisely when the cohomological Mackey algebra is derived wild by reducing to $p$-Sylow subgroups.

\begin{theorem}[\emph{c.f.\ }\cref{thm:alg_closed_derived_type_cohomological_sylow}]
  If $k=\bar{k}$ is an algebraically closed field of characteristic $p$ and $G$ is a finite group, then $\cohmu_k(G)$ is derived wild if and only if the $p$-Sylow subgroup of $G$ has order more than two.
\end{theorem}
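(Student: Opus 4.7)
The plan is to reduce both implications to the $p$-Sylow subgroup $P$ of $G$ and then invoke the earlier results \cref{thm:cohomological_derived_wild_surjection} and \cref{thm:alg_closed_cohomological_rep_type}. The core input I would need is a \emph{reduction-to-Sylow principle}: over an algebraically closed field of characteristic $p$, the derived representation type of $\cohmu_k(G)$ should be captured by that of $\cohmu_k(P)$. This should follow either from a Brauer/Broué-type correspondence on the principal block (perhaps passing through $\cohmu_k(N_G(P))$ as an intermediate step), or from Yoshida's realization of $\cohmu_k(G)$ as $\End_{kG}\bigl(\bigoplus_{H\le G}k[G/H]\bigr)$, combined with idempotent truncation arguments that isolate the factor of $\cohmu_k(G)$ governing its derived representation type.

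Granting this reduction, the backward direction is formal: if $|P|>2$, then $P$ is a $p$-group of order more than two that surjects onto itself, so \cref{thm:cohomological_derived_wild_surjection} applied with $G:=P$ gives that $\cohmu_k(P)$ is already derived wild, and the Sylow reduction transfers this upward to $\cohmu_k(G)$.

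For the forward direction, suppose $|P|\le 2$. If $P=1$, then $p\nmid|G|$ and $\cohmu_k(G)$ is semisimple by the usual Maschke argument for cohomological Mackey functors in coprime characteristic, hence not derived wild. If $|P|=2$ (so $p=2$), the reduction replaces $\cohmu_k(G)$ by $\cohmu_k(C_2)$ for the purposes of derived type. By \cref{thm:alg_closed_cohomological_rep_type}, $\cohmu_k(C_2)$ already has finite representation type over $\bar k$; to upgrade this to ``not derived wild'' I would compute its Gabriel quiver with relations explicitly and identify the resulting basic algebra with a Nakayama or gentle algebra, whose derived categories are classically known to be (derived) tame or discrete.

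The central obstacle is making the reduction-to-Sylow genuinely preserve derived representation type rather than merely relating the two algebras. Once that is in hand, the backward direction is immediate from the previous wildness theorem applied to $P$ itself, the $p'$-case of the forward direction is classical, and the $|P|=2$ case reduces to a small explicit computation for $\cohmu_k(C_2)$.
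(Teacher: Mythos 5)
Your overall shape (reduce to the $p$-Sylow subgroup $P$, then quote the $p$-group wildness result for $|P|>2$ and handle $|P|\le 2$ by semisimplicity resp.\ gentleness of $\cohmu_k(C_2)$) matches the paper, and your two easy cases are handled correctly — in particular you rightly note that finite representation type alone does not rule out derived wildness and that one should use the gentle/derived-tame structure of $\cohmu_k(C_2)$. But the entire weight of the theorem sits on the ``reduction-to-Sylow principle'' that you only postulate, and the mechanisms you sketch for it would not deliver it. Idempotent truncation (as in \cref{prop:idempotent_subalgebra_derived_wild}) only transfers derived wildness \emph{upward} from an idempotent subalgebra, so it cannot give the forward direction ($|P|\le 2 \Rightarrow$ not derived wild); moreover $\cohmu_k(P)=\End_{kP}(\bigoplus_{H\le P}k[P/H])^{\op}$ is not an idempotent subalgebra of $\cohmu_k(G)$ in any evident way (the paper's idempotent arguments use \emph{quotient} groups $G/N$, not Sylow subgroups, which is exactly why \cref{thm:cohomological_derived_wild_surjection} does not by itself cover, say, $G=A_5$ at $p=2$). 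A Broué-type derived equivalence between $\cohmu_k(G)$ and $\cohmu_k(P)$ or $\cohmu_k(N_G(P))$ is not available and is much stronger than anything the paper establishes.

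The paper's actual device is different: it introduces \emph{derived semi-wildness} and proves (\cref{thm:semi_wild_equiv_wild}, over $k=\bar k$, by Drozd-style dimension arguments) that it is equivalent to derived wildness. The Sylow reduction is then proved at the level of semi-wildness: induction and restriction of cohomological Mackey functors (defined via the Hecke category) are exact, preserve projectives, and the identity functor is a direct summand of $\res\circ\ind$ and — because $[G:P]$ is invertible in $k$ — of $\ind\circ\res$. Pushing a family of complexes through restriction does \emph{not} obviously preserve essential injectivity or indecomposability (which is why a naive transfer of a strict family fails, and why your plan as stated has a gap even granted some relation between the two algebras); but it does preserve \emph{semi}-strictness, since $P^\bullet\otimes N$ is a summand of $\ind\res P^\bullet\otimes M$, which has only finitely many summands up to isomorphism. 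This yields ``$\cohmu_k(G)$ derived semi-wild $\iff$ $\cohmu_k(P)$ derived semi-wild'', and the equivalence semi-wild $=$ wild over $\bar k$ (which is also where algebraic closedness and the Bekkert--Drozd dichotomy enter) then closes both directions of the theorem. Without supplying this (or an equivalent) mechanism, your proposal does not constitute a proof.
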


From a homotopy theory perspective, the huge leap moving from $C_2$ to larger groups is surprising. What makes $C_2$ nice is that the cohomological Mackey algebra $\cohmu_k(C_2)$ is a gentle algebra, controlled by the very small bound quiver with a single relation
 \begin{center}
    \begin{tikzcd}
      \bullet \ar[r, bend left, "b"] & \bullet \ar[l, bend left, "a"] & ba=0.
    \end{tikzcd}
  \end{center}
Gentle algebras (over algebraically closed fields) are derived tame, and hence the perfect complexes are well understood. On the other hand, even for $C_3$, the cohomological Mackey algebra $\cohmu_k(C_3)$ in characteristic $3$ is no longer gentle.  This allows the perfect derived category to be much richer.

At the prime $2$, we show the Mackey algebra $\mu_k(C_2)$ is derived wild. Thus there is no meaningful classification of modules over the Eilenberg--MacLane spectrum for the mod 2 Burnside Mackey functor $\und{A}\otimes \F_2$. More generally, for any finite $p$-group we prove the following.

\begin{theorem}[\emph{c.f.\ }\cref{thm:derived_wild_more_p-groups_mackey}]
  Let $k$ be a field of characteristic $p$ and let $G$ be a nontrivial finite $p$-group. Then the Mackey algebra $\mu_k(G)$ is derived wild.
\end{theorem}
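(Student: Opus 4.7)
The plan is to reduce the general statement to the base case $G = C_p$ via inflation, and then to handle each prime separately. Every nontrivial finite $p$-group $G$ admits a surjection $\pi \colon G \twoheadrightarrow C_p$, since the Frattini quotient $G/\Phi(G)$ is a nontrivial elementary abelian $p$-group. Inflation along $\pi$ gives an exact functor $\mu_k(C_p)\mmod \to \mu_k(G)\mmod$ which, by the same mechanism used to prove the cohomological analogue \cref{thm:cohomological_derived_wild_surjection}, extends to a fully faithful derived embedding. Consequently, derived wildness of $\mu_k(C_p)$ propagates to every nontrivial $p$-group $G$, and it suffices to treat $G = C_p$.

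For odd primes $p$, the cohomological Mackey algebra $\cohmu_k(C_p)$ is already derived wild by \cref{thm:cohomological_derived_wild_surjection}, and I expect derived wildness to transfer to $\mu_k(C_p)$ since $\cohmu_k(C_p)$ is a quotient of $\mu_k(C_p)$ and restriction-of-scalars identifies endomorphism rings on the overlap. The genuinely new content is therefore the case $p = 2$.

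For $G = C_2$ in characteristic $2$, a direct computation of the generators, conjugation action and double coset formulas, after eliminating the loop via the relation $g - 1 = rt$, presents $\mu_k(C_2) \cong kQ/(rtr,\, trt)$ where $Q$ is the two-vertex quiver with opposite arrows $t \colon 1 \to 2$ and $r \colon 2 \to 1$. This $6$-dimensional symmetric algebra is special biserial but \emph{not} gentle (the generating relations have length three), which is precisely the extra structure one needs for derived wildness to appear. I plan to establish derived wildness by exhibiting a tilting complex in $\mathcal{D}^{\perf}(\mu_k(C_2))$ whose endomorphism algebra is itself a known derived wild algebra; a natural target is the group algebra $k[C_2 \times C_2]$, which is derived wild in characteristic $2$.

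The essential difficulty lies in this $G = C_2$ base case. Both $\mu_k(C_2)\mmod$ (being special biserial) and the cohomological quotient $\cohmu_k(C_2)\mmod$ (being gentle) are tame, so wildness can only be witnessed at the level of perfect complexes. Identifying the correct tilting complex, verifying that it generates $\mathcal{D}^{\perf}(\mu_k(C_2))$, and computing its endomorphism algebra as a known derived wild algebra is where the bulk of the work lies. By contrast, the inflation reduction and the odd-prime case are essentially formal once the tilting complex at $C_2$ is in hand.
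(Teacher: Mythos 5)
Your reduction strategy and your odd-prime step both rest on transfer principles that are valid for ordinary wildness but fail for \emph{derived} wildness, and this is precisely why the paper does not argue this way. First, the ``same mechanism'' behind \cref{thm:cohomological_derived_wild_surjection} is \cref{prop:G/N_idempotent_subalgebra_cohomological}: $\cohmu_k(G/N)\cong \End_{kG}\bigl(\bigoplus_{N\leq H\leq G}k[G/H]\bigr)^{\op}$ is an \emph{idempotent subalgebra} $e\,\cohmu_k(G)\,e$, and \cref{prop:idempotent_subalgebra_derived_wild} applies. This does not carry over to the full Mackey algebra: with $e$ the sum of idempotents at the orbits $G/H$ with $N\leq H$, the truncation $e\,\mu_k(G)\,e$ also contains spans passing through orbits whose stabilizers do not contain $N$; for instance $e_{G/G}\,\mu_k(C_p)\,e_{G/G}\cong k[t]/(t^2)$ (with $t=\tr_e^{C_p}\res_e^{C_p}$), which is strictly larger than $\mu_k(C_p/C_p)=k$. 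Inflation of Mackey functors, while exact, does not preserve projectives, so it does not even induce a functor $\mathcal{D}^{\perf}(\mu_k(C_p))\to\mathcal{D}^{\perf}(\mu_k(G))$, let alone produce the strict family required by the definition of derived wild. Second, derived wildness of the quotient $\cohmu_k(C_p)$ does not pass to $\mu_k(C_p)$: restriction of scalars along a surjection is a representation embedding of module categories (this is exactly how the paper proves \cref{cor:mackey_algebra_wild_cyclic} for ordinary wildness), but it does not send perfect complexes to perfect complexes, so no strict family of $\mu_k(C_p)$-complexes is obtained. The paper sidesteps both issues: for every $p$-group with $|G|>2$ it uses that $kG\cong e_{G/e}\,\mu_k(G)\,e_{G/e}$ is an idempotent subalgebra, that $kG$ is a local Frobenius algebra of length more than two and hence derived wild (the generalization of \cref{prop:wild_cyclic_group}, together with \cref{prop:wild_CpxCp} in spirit), and then \cref{prop:idempotent_subalgebra_derived_wild}; no reduction to $C_p$ is needed, and odd primes are not a separate case.

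The one place where you correctly identify that genuinely new work is needed, $G=C_2$, is also where your proposed construction cannot work: a tilting complex induces a derived equivalence, and derived equivalence preserves the rank of the Grothendieck group, i.e.\ the number of simple modules. Since $\mu_k(C_2)\cong kQ/(aba,\,bab)$ has two simples (it is the symmetric Nakayama algebra with two simples and Loewy length three, cf.\ \cref{ex:C2characteristic2MackeyQuiver}) while $k[C_2\times C_2]$ is local, there is no tilting complex over $\mu_k(C_2)$ with endomorphism algebra $k[C_2\times C_2]$, and you offer no substitute. The paper's argument for this case is direct and one-sided rather than via a derived equivalence: it writes down an explicit bounded complex of projective $\mu_k(C_2)$--$\Gamma$-bimodules, where $\Gamma$ is the wild three-vertex hereditary algebra of \cref{prop:A1tildetilde_wild}, and verifies (by the lower-triangularity argument of \cref{prop:wild_cyclic_group}, adapted from \cite{BDF09}) that it is a strict family of complexes. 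If you want to salvage your outline, the repair is essentially to adopt the paper's route: replace inflation and quotient arguments by the idempotent subalgebra $kG\subseteq\mu_k(G)$ for $|G|>2$, and replace the tilting-complex plan at $C_2$ by an explicit strict family over a wild algebra.
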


Hence there is also no meaningful classification of $H\und{A}_k$-modules.

\begin{theorem}[\emph{c.f.\ }\cref{thm:derived_wild_more_p-groups_mackey_topological}]
  Let $k$ be a field of characteristic $p$.  The category of compact objects in the homotopy category of $G$-equivariant $H\und{A}_k$-modules is wild whenever $G$ is a nontrivial finite $p$-group.
\end{theorem}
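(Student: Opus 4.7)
The plan is to deduce this topological statement as a direct corollary of the algebraic statement \cref{thm:derived_wild_more_p-groups_mackey}, which asserts that the Mackey algebra $\mu_k(G)$ is derived wild for every nontrivial finite $p$-group $G$. The translation goes through the Schwede--Shipley theorem \cite{SS03}, exactly as it is applied elsewhere in the paper for the constant Mackey functor.

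First I would observe that the Burnside Mackey functor $\und{A}_k$ is a commutative Green functor (a commutative Mackey ring), so the hypotheses of \cite{SS03} are met and the homotopy category of $G$-equivariant $H\und{A}_k$-modules is equivalent, as a triangulated category, to the derived category $\mathcal{D}(\und{A}_k)$ of modules over the Green functor $\und{A}_k$. Under this equivalence, compact objects on the topological side correspond to perfect complexes on the algebraic side, i.e.\ to objects of $\mathcal{D}^{\perf}(\und{A}_k)$.

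Next I would identify $\mathcal{D}(\und{A}_k)$ with the derived category of the Mackey algebra: modules over the Burnside Green functor $\und{A}_k$ are the same thing as (genuine) Mackey functors valued in $k$-modules, which in turn is equivalent to $\mu_k(G)\mmod$. Hence the category of compact $H\und{A}_k$-modules is equivalent to $\mathcal{D}^{\perf}(\mu_k(G))$ as a triangulated category. Wildness is a triangulated invariant of the perfect derived category, so derived wildness of $\mu_k(G)$ transfers to wildness of the category of compact $H\und{A}_k$-modules.

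Putting these together, for any nontrivial finite $p$-group $G$, \cref{thm:derived_wild_more_p-groups_mackey} gives that $\mu_k(G)$ is derived wild, and the equivalences above immediately imply the category of compact $G$-equivariant $H\und{A}_k$-modules is wild. There is essentially no obstacle here beyond bookkeeping; the serious content is entirely on the algebraic side in the previous theorem, and the only thing to verify carefully is that the Schwede--Shipley equivalence indeed identifies the compact objects in the two categories, which is standard.
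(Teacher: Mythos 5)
Your proposal is correct and matches the paper's argument: the paper also deduces this statement directly from \cref{thm:derived_wild_more_p-groups_mackey} by applying the Schwede--Shipley equivalence (\cref{Thm:QuillenEquiv}), using that $\und{A}_k$-modules are exactly $\mu_k(G)$-modules and that compact $H\und{A}_k$-modules correspond to $\mathcal{D}^{\perf}(\mu_k(G))$. The extra bookkeeping you spell out is precisely what the paper leaves implicit from its earlier discussion in the section on Eilenberg--MacLane spectra.
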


We go on to compute some examples of the singularity category for the cohomological Mackey algebra.  The singularity category is the Verdier quotient of the bounded derived category by the perfect derived category, 
\[
\mathcal{D}_{\sg}(\cohmu_k(G)) \cong \mathcal{D}^{b}(\cohmu_k(G))/\mathcal{D}^{\perf}(\cohmu_k(G)).
\]
For $G=C_2$ in characteristic $2$, this is trivial as $\mathcal{D}^{b}(\cohmu_k(C_2))
= \mathcal{D}^{\perf}(\cohmu_k(C_2))$. In general the singularity category is nontrivial, for example even for $G=C_3$ in characteristic $3$.  We compute the singularity category for cyclic $p$-groups $G =C_{p^m}$ and the Klein four group $G=C_2 \times C_2$.

\bigskip

With the hope that our results are of interest both to equivariant homotopy theorists and representation theorists, we provide significant background on representation type, derived representation type, and Mackey functors in equivariant homotopy theory.  Along the way, we present proofs of a number of well-known results in representation theory in order to fill some gaps in the literature for arbitrary fields.

  \subsubsection{Organization of the paper}
In \cref{sec:preliminaries}, we give an extensive introduction to quiver representations.  We also introduce Mackey functors, and both the Mackey algebra and the cohomological Mackey algebra. In addition, we provide some background on equivariant homotopy theory.  In \cref{sec:reptype}, we give an overview of the representation type of algebras, with particular emphasis on arbitrary fields.  In \cref{sec:rep_type_cohom_mackey}, we investigate the representation type of the cohomological Mackey algebra, both over algebraically closed and arbitrary fields.  In \cref{sec:derived_wild_groups}, we introduce the notion of derived representation type and fill in some details showing certain group algebras are derived wild over (not necessarily algebraically closed) fields of prime characteristic. Then in \cref{sec:derived-wildnes-cohmackey}, we apply these results to classify the derived representation type of the cohomological Mackey algebra.  In \cref{sec:mackey_algebra}, we consider the representation type and derived representation type of the Mackey algebra. Finally, in \cref{sec:singularity_category}, we compute some examples of the singularity category for some cohomological Mackey algebras.

\subsubsection{Acknowledgements}
The authors would like to thank Torgeir Aambø and Endre Rundsveen for organizing the PITA seminar at NTNU, which is where this collaboration began. We would also like to thank Tim Campion, Jens Niklas Eberhardt, Drew Heard, Mike Hill, Sondre Kvamme, Julian Külshammer, Laertis Vaso, and Peter Webb for helpful conversations. Special thanks to John Greenlees for encouraging and insightful conversations, and for comments on an earlier draft.

The first author acknowledges support from The Research Council of Norway (RCN 301375).  The second author is supported by grant number TMS2020TMT02 from the Trond Mohn Foundation.  The second author would also like to thank the Isaac Newton Institute for Mathematical Sciences, Cambridge, for support and hospitality during the programme Equivariant Homotopy Theory in Context where some of the work on this paper was undertaken. This work was also supported by EPSRC grants EP/Z000580/1 and EP/R014604/1.  

\subsection{Notation and conventions}
In this paper $G$ will denote a finite group.  We write $C_p$ for the cyclic group of order $p$, with $p$ a prime.  
Throughout, $k$ will denote a field.  Often $k$ will be a field of prime  characteristic $p$, e.g.\ the field $\F_p$ with $p$ elements. The algebraic closure of $k$ is denoted $\bar{k}$.

By algebra over $k$ we mean an associative unital algebra, and by module we always mean a left module unless otherwise specified. For a $k$-algebra $\Lambda$ we write $\Lambda\MMod$ for the category of left $\Lambda$-modules and $\Lambda \mmod$ for the category of finite dimensional  $\Lambda$-modules. 
If $\Lambda$ and $\Gamma$ are $k$-algebras and $M$ is a $\Lambda$--$\Gamma$-bimodule we always assume $k$ acts symmetrically on $M$. This is equivalent to $M$ being a $\Lambda \otimes_k \Gamma^{\, \op}$-module.

\section{Preliminaries}\label[section]{sec:preliminaries}

\subsection{Quivers}
Quivers and their representations are combinatorial tools used to study representation theory of algebras.  In this section we introduce the basic definitions.  For more details see for example \cite{AssemCoelho} or \cite{AssemSkowronskiSimson1}.

A quiver is a directed multigraph made up of vertices and arrows, allowing loops and/or multiple arrows between vertices.  A common definition with more explicit notation is given below.

\begin{definition}[Quiver]
  A \emph{quiver} is a tuple $Q = (Q_0, Q_1, s, t)$, where $Q_0$ and $Q_1$ are (typically finite) sets, and $s$ and $t$ are functions $Q_1 \to Q_0$. The elements of $Q_0$ are called the vertices of the quiver $Q$ and the elements of $Q_1$ are called the arrows. For an arrow $\alpha$, we call $s(\alpha)$ and $t(\alpha)$ the source and target of $\alpha$ respectively.

  A \emph{morphism} of quivers $f \colon Q \to Q'$ is a pair of functions $f = (f_0,f_1)$ sending vertices to vertices and arrows to arrows, commuting with source and target. 
  \end{definition}
  
  We usually illustrate a quiver by drawing the vertices as dots and drawing an arrow from $s(\alpha)$ to $t(\alpha)$ for each $\alpha$ in $Q_1$.
\begin{example}
  Below are some examples of quivers. 
  \begin{center}
    \begin{tikzcd}
      \bullet \arrow[loop left, out=210, in=150, looseness=5]
    \end{tikzcd}, \quad
    \begin{tikzcd}
      \bullet \ar[r] & \bullet & \bullet
    \end{tikzcd}, \quad
    \begin{tikzcd}
      \bullet \ar[r, bend left] \ar[r, bend right] & \bullet \ar[l]
    \end{tikzcd}, \quad
    \begin{tikzcd}
      \bullet \ar[r]  & \bullet \arrow[loop left, out=60,in=120, looseness=5] \ar[r] & \bullet
    \end{tikzcd}
  \end{center}
\end{example}

\begin{definition}[Paths]
  A \emph{path} in a quiver $Q$ is either a trivial path at a vertex $i \in Q_0$, denoted $e_i$, or it is a sequence of arrows $\alpha_1 \alpha_2 \cdots \alpha_n$ such that $s(\alpha_j) = t(\alpha_{j+1})$ for $j=1, 2, \cdots, n-1$.  We extend $s$ and $t$ to the set of all paths by defining $s(\alpha_1 \alpha_2 \cdots \alpha_n) = s(\alpha_n)$, $t(\alpha_1 \alpha_2 \cdots \alpha_n) = t(\alpha_1)$, and $s(e_i) = t(e_i) = i$.

  The \emph{length} of a path is the number of arrows in the path, and is $0$ for a trivial path.
  
  We say that two paths $\rho_1$ and $\rho_2$ are composable if $s(\rho_1) = t(\rho_2)$. The composition of two composable paths is simply their concatenation if they are both nontrivial, and it is the other path if one of them is trivial.  We write the composition $\rho_1 \rho_2$ as in function composition, $\rho_1 \circ \rho_2$. The reader is advised that the notation in the literature varies and much of the literature on quivers in representation theory uses the opposite convention. 
\end{definition}

\begin{remark}
  A quiver $Q$ defines a category with objects given by the vertices and morphisms given by paths. By abuse of notation this is also written $Q$.
\end{remark}

Using this categorical perspective we can define quiver representations.

\begin{definition}[Quiver representation]
  Fix a field $k$ and let $\Vect_k$ denote the category of $k$-vector spaces.  A \emph{representation} of a quiver $Q$ is a functor
  \[
  F \colon Q \to \Vect_{k}.
  \]

That is, a \emph{quiver representation} for $Q$ consists of a $k$-vector space $V_i$ for each vertex $i \in Q_0$ and a linear map $f_{\alpha} \colon V_{s(\alpha)} \to V_{t(\alpha)}$ for each arrow $\alpha \in Q_1$, such that composable arrows give rise to composable morphisms.  To each trivial path $e_i$, one assigns the identity map $\id_{V_i}$.
\end{definition}

\begin{remark}
  There are a priori no relations between the linear maps in a quiver representation.  For example to impose commutativity, one must introduce such relations.
\end{remark}

  \begin{definition}[Bound quiver]
  Fixing a field $k$ and a quiver $Q$, a \emph{relation} is a linear combination of paths of length at least two with the same source and target. A quiver together with a set of relations $\rho$ is called a \emph{bound quiver} or \emph{quiver with relations}.
\end{definition}

As in the unbound case, we can work with representations of bound quivers.

\begin{definition}(Bound quiver representation)
  A \emph{representation} of a bound quiver $(Q, \rho)$ is a functor 
  \[
  F \colon (Q,\rho) \to \Vect_{k},
  \]
  where the linear maps satisfy the relations in $\rho$.
\end{definition}

Quivers and bound quivers give rise to algebras.

\begin{definition}[Path algebra]
  Given a field $k$ and a quiver $Q$, let $kQ$ be the free vector space generated by the paths in $Q$. Define a bilinear product on $kQ$ by setting the product of paths to be their composition if they are composable and $0$ otherwise. Then $kQ$ becomes an associative $k$-algebra known as the \emph{path algebra} of $Q$.
  
  In the path algebra, each trivial path $e_i$ is an idempotent.  If the quiver has finitely many vertices, then $\sum_{i \in Q} e_i$ is the multiplicative identity in $kQ$.

  If $(Q, \rho)$ is a bound quiver, the path algebra of the bound quiver is defined to be the quotient $kQ/I$, where $I = (\rho)$ is the two-sided ideal generated by a set of relations.\footnote{It is common in the literature to denote such a bound quiver as $(Q, I)$.}  If $Q$ is a finite quiver and there is a number $n$ such that all paths of length at least $n$ are contained in $(\rho)$, then $kQ/(\rho)$ is called an \emph{admissible path algebra}. 
\end{definition}

\begin{remark}
  An admissible path algebra $kQ/(\rho)$ is a finite dimensional $k$-algebra.
\end{remark}

\begin{example}
  The path algebra of the quiver
  \begin{center}
    \begin{tikzcd}
      \bullet \arrow[loop left, out=210, in=150, looseness=5,"x"]
    \end{tikzcd}
  \end{center}
  is the polynomial algebra $k[x]$.
\end{example}

\begin{example}\label[example]{ex:freealgebraxy}
  The path algebra of the quiver
  \begin{center}
    \begin{tikzcd}
      \bullet \arrow[loop left, out=210, in=150, looseness=5,"x"] \arrow[loop left, out=-30, in=30, looseness=5,"y",swap]
    \end{tikzcd}
  \end{center}
  is the free algebra $k\langle x,y\rangle$.
\end{example}

\begin{example}
  The path algebra of the bound quiver
  \begin{center}
    \begin{tikzcd}
      \bullet \arrow[loop left, out=210, in=150, looseness=5,"x"]
    \end{tikzcd}
  \end{center}
  with relations $\rho = \{ x^5 \}$ is the truncated polynomial algebra $k[x]/(x^5)$.
\end{example}

\begin{example}
  The path algebra of the bound quiver
  \begin{center}
    \begin{tikzcd}
      \bullet \arrow[loop left, out=210, in=150, looseness=5,"x"] \arrow[loop left, out=-30, in=30, looseness=5,"y",swap]
    \end{tikzcd}
  \end{center}
  with relations $\rho = \{ xy-yx \}$ is the polynomial algebra $k[x,y]$.
\end{example}

One can study representations of path algebras by studying quiver representations.

\begin{theorem}\label[theorem]{Thm:QuiverRepsAreModules}
  The category of $k$-linear representations of $(Q,\rho)$ is equivalent to the category of left $kQ/(\rho)$-modules.
\end{theorem}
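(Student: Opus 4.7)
The plan is to construct explicit functors in both directions between the category of bound-quiver representations and the module category of $kQ/(\rho)$, and then check that they are mutually quasi-inverse. Throughout, I would use the orthogonal idempotent decomposition $1 = \sum_{i \in Q_0} e_i$ in $kQ/(\rho)$ (assuming $Q_0$ is finite, which is the case of interest for the admissible path algebras considered in the paper) to move between a single module and a family of vector spaces indexed by vertices.

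For the forward direction, given a representation $F \colon (Q, \rho) \to \Vect_k$, I would set $V_i = F(i)$ and $f_\alpha = F(\alpha)$, and assemble $M_F := \bigoplus_{i \in Q_0} V_i$. Define a $kQ$-action by letting $e_i$ act as the projection onto the summand $V_i$, letting an arrow $\alpha$ act as $f_\alpha$ on the summand $V_{s(\alpha)}$ and as zero on all other summands, and extending linearly and multiplicatively using the composition convention $\rho_1 \rho_2 = \rho_1 \circ \rho_2$. Functoriality of $F$ makes this a well-defined $kQ$-module, and the hypothesis that $F$ satisfies the relations $\rho$ guarantees that the action descends to $kQ/(\rho)$. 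A morphism of representations, i.e.\ a natural transformation $\eta \colon F \Rightarrow F'$, assembles into a $kQ/(\rho)$-linear map $\bigoplus \eta_i \colon M_F \to M_{F'}$ because naturality on each arrow is exactly the compatibility condition with the arrow-action.

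For the backward direction, given a left $kQ/(\rho)$-module $M$, I would set $V_i := e_i M$, so $M = \bigoplus_i V_i$ by the idempotent decomposition. Since $e_{t(\alpha)} \alpha e_{s(\alpha)} = \alpha$ in $kQ/(\rho)$, multiplication by $\alpha$ restricts to a linear map $f_\alpha \colon V_{s(\alpha)} \to V_{t(\alpha)}$, and extending by multiplication along arbitrary paths defines a functor $F_M \colon Q \to \Vect_k$. The relations in $\rho$ hold automatically in $\Vect_k$ because they already hold inside $kQ/(\rho)$, so $F_M$ is a representation of the bound quiver. Module homomorphisms restrict along each idempotent $e_i$ to give the components of a natural transformation.

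To conclude, I would verify that the two constructions are quasi-inverse. Starting with $F$, forming $M_F$, and extracting $e_i M_F$ recovers $V_i$ on the nose, and multiplication by $\alpha$ on $M_F$ recovers $f_\alpha$, so $F_{M_F} \cong F$; starting with $M$, the isomorphism $\bigoplus_i e_i M \cong M$ is the identity, giving $M_{F_M} \cong M$. Both identifications are manifestly natural in morphisms. The step requiring the most care is the multiplicative extension of the action in the forward direction: one must track that the convention $\alpha \beta = \alpha \circ \beta$ in $kQ$ is compatible with $F(\alpha \beta) = F(\alpha) \circ F(\beta)$, and check that the two conventions for reading paths do not introduce a hidden opposite. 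Once this bookkeeping is straight, everything else is a direct verification. (For infinite $Q_0$ the ring $kQ/(\rho)$ is non-unital and one must restrict to modules $M$ with $M = \bigoplus_i e_i M$, but this subtlety does not arise in our applications.)
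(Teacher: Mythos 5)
Your proof is correct, and it is the standard argument: the paper itself does not prove \cref{Thm:QuiverRepsAreModules}, stating it as background (with the details available in references such as \cite{AssemCoelho} and \cite{AssemSkowronskiSimson1}), so there is no in-paper proof to diverge from. Your construction via the idempotent decomposition $1=\sum_{i\in Q_0}e_i$, with $V_i=e_iM$ in one direction and $M_F=\bigoplus_i F(i)$ in the other, is exactly the expected one, and you rightly flag the only delicate point, namely that the paper's convention $\rho_1\rho_2=\rho_1\circ\rho_2$ for path composition makes representations correspond to \emph{left} $kQ/(\rho)$-modules without passing to the opposite algebra; the parenthetical caveat about infinite $Q_0$ is likewise appropriate and harmless here, since the paper only uses finite quivers.
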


Path algebras of bound quivers are a rich source of algebras with nice combinatorial properties.  Moreover, perhaps one of the most compelling reasons to study quiver representations is that (over algebraically closed fields) almost every finite dimensional algebra is the path algebra of a bound quiver.  To state this precisely, as in \cite[Theorem~I.2.13]{AssemCoelho} for example, we need a minor technical condition, requiring the algebra to be basic\footnote{In particular, this ensures that the quotient by the Jacobson radical $A/J(A)$ is isomorphic to $\Pi_{i=1}^n k$.}.

\begin{theorem}\label[theorem]{Thm:QuiverExists}
  Let $k = \bar{k}$ be an algebraically closed field and let $A$ be any basic finite dimensional $k$-algebra. There exists a bound quiver $(Q, \rho)$ with admissible relations such that $A \cong kQ/(\rho)$ as algebras. Further, the quiver $Q$, known as the Gabriel quiver of $A$, is uniquely determined.
\end{theorem}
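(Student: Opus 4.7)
The plan is to extract the quiver $Q$ directly from internal invariants of $A$, in particular from a choice of primitive idempotents and from the radical layer $J(A)/J(A)^2$, and then to build an algebra surjection $kQ\twoheadrightarrow A$ whose kernel is automatically admissible. Uniqueness then falls out because each ingredient used to construct $Q$ is already an isomorphism invariant of $A$.

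First I would fix a complete set of primitive orthogonal idempotents $1=e_1+\cdots+e_n$ in $A$ and set $J=J(A)$. Since $A$ is basic and $k=\bar k$, Wedderburn--Artin combined with the fact that the only finite-dimensional division algebra over $\bar k$ is $k$ itself gives $A/J\cong k^n$, with the images $\bar e_i$ spanning the $n$ factors; equivalently, the projectives $Ae_i$ are pairwise non-isomorphic indecomposables. Take $Q_0=\{1,\dots,n\}$, and let the number of arrows $i\to j$ be $d_{ij}:=\dim_k e_j(J/J^2)e_i$. Lift a $k$-basis of each $e_j(J/J^2)e_i$ to elements $a_{ij}^{(\ell)}\in e_jJe_i$. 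By the universal property of the path algebra, the assignments $e_i\mapsto e_i$ (idempotent) and $\alpha_{ij}^{(\ell)}\mapsto a_{ij}^{(\ell)}$ extend to an algebra map $\varphi\colon kQ\to A$.

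Next I would check the two key properties of $\varphi$. For surjectivity, the $\bar e_i$ already generate $A/J$ by the Wedderburn decomposition, and the $a_{ij}^{(\ell)}$ generate $J/J^2$ by choice; since $A$ is finite-dimensional, $J$ is nilpotent, and iterating the multiplication maps $J^m\otimes J/J^2\to J^{m+1}/J^{m+2}$ shows the $a_{ij}^{(\ell)}$ together with the $e_i$ generate all of $J$, and hence all of $A$. For admissibility of $I:=\ker\varphi$, let $R_Q$ denote the arrow ideal of $kQ$. The construction forces $\varphi$ to induce an isomorphism $R_Q/R_Q^2\xrightarrow{\cong} J/J^2$, so any element of $I$ must have zero length-one component, giving $I\subseteq R_Q^2$. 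On the other hand, if $J^N=0$ then $\varphi(R_Q^N)\subseteq J^N=0$, so $R_Q^N\subseteq I$. This is exactly the admissibility condition $R_Q^N\subseteq I\subseteq R_Q^2$, and in particular $kQ/I$ is finite-dimensional.

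Finally, uniqueness is automatic from the intrinsic descriptions: $|Q_0|$ equals the number of isomorphism classes of simple $A$-modules, and the number of arrows from $i$ to $j$ equals $\dim_k e_j(J/J^2)e_i=\dim_k\Ext^1_A(S_i,S_j)$, both of which are Morita-invariants of $A$ (well-defined once we fix an enumeration of the simples). The main obstacle, and the place where both hypotheses are essential, is the identification $A/J\cong k^n$: without $k=\bar k$ the quotients $e_iAe_i/e_iJe_i$ can be nontrivial division algebras over $k$, and without $A$ basic they can be matrix algebras; in either case one can no longer encode $A/J$ by a set of idempotent vertices alone and must pass to the more general setting of species or modulated quivers. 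Once this structural step is granted, the rest of the argument is a controlled lifting of a generating set through the nilpotent filtration by powers of $J$.
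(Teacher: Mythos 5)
Your proof is correct and is the standard argument for Gabriel's theorem; the paper itself gives no proof of this statement, citing \cite[Theorem~I.2.13]{AssemCoelho}, and your construction (vertices from primitive idempotents, arrows from $\dim_k e_j(J/J^2)e_i$, lifting to a surjection $kQ \twoheadrightarrow A$ with admissible kernel, uniqueness via $\Ext^1$ between simples) is exactly the proof found there. The only cosmetic point is that to conclude $\ker\varphi \subseteq R_Q^2$ you should also note that the length-zero component vanishes, which follows from the linear independence of the $\bar{e}_i$ in $A/J \cong k^n$.
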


Thus, putting \cref{Thm:QuiverRepsAreModules} and \cref{Thm:QuiverExists} together, to study the representation theory of an algebra over $k=\bar{k}$, we may look at representations of its bound quiver. Every finite dimensional algebra is Morita equivalent to a basic algebra, so the requirement that the algebra be basic is indeed a minor condition.  In terms of representation theory, finite dimensional $\bar{k}$-algebras are equivalent to path algebras of bound quivers.

Quivers easily capture certain information about representations, for example one can immediately identify the simple modules.

\begin{proposition}
Let $A = kQ/(\rho)$ be an admissible path algebra, and let $S(a)$ denote the quiver representation with $k$ at vertex $a \in Q_0$ and $0$ everywhere else.  The set $\{S(a) \mid a \in Q_0 \}$ is a complete set of representatives of the isomorphism classes of simple $A$-modules. 
\end{proposition}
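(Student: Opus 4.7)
The plan is to verify the claim in three steps: (i) each $S(a)$ is simple, (ii) the $S(a)$ are pairwise nonisomorphic, and (iii) every simple $A$-module is isomorphic to some $S(a)$. Throughout, write $J$ for the two-sided ideal of $A$ generated by the images of the arrows (the arrow ideal).

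First I would handle (i) and (ii) directly from the definition of $S(a)$. The only nontrivial linear subspaces of $S(a)$ live at vertex $a$ and are a priori closed under the action of $A$ since every arrow in $S(a)$ is forced to act as the zero map (either its source component or its target component is $0$). Hence the only subrepresentations are $0$ and $S(a)$, so $S(a)$ is simple. For (ii), observe that $e_a \cdot S(a) = k$ while $e_b \cdot S(a) = 0$ for $b \neq a$, so $S(a) \cong S(b)$ forces $a = b$.

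The core of the proof is (iii), where I would reduce to simple modules over a semisimple quotient. The key claim is that $J$ is exactly the Jacobson radical $J(A)$. This is where admissibility is essential: by assumption there exists $n$ with every path of length $\geq n$ lying in $(\rho)$, so $J^n = 0$, showing $J$ is nilpotent and therefore contained in $J(A)$. Conversely, using that $\{e_a\}_{a \in Q_0}$ descends to a complete set of orthogonal idempotents of $A/J$ with $e_a (A/J) e_a \cong k$ and $e_a (A/J) e_b = 0$ for $a \neq b$, one sees
\[
A/J \;\cong\; \prod_{a \in Q_0} k,
\]
which is semisimple, forcing $J(A) \subseteq J$. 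Hence $J = J(A)$.

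Now the standard radical argument finishes the proof: any simple $A$-module $S$ is annihilated by $J(A) = J$, so it is naturally a simple module over the semisimple algebra $A/J \cong \prod_{a \in Q_0} k$. The simple modules of a finite product of copies of $k$ are exactly the one-dimensional modules on which a single idempotent factor acts as identity and the rest as zero, and these pull back along $A \twoheadrightarrow A/J$ precisely to the representations $S(a)$. The main obstacle is the verification that the arrow ideal coincides with the Jacobson radical; once this is in place, everything else is formal. The only subtlety worth flagging is ensuring the identification $A/J \cong \prod_a k$ is carried out carefully using that admissibility implies $(\rho)$ consists of linear combinations of paths of length $\geq 2$, so the quotient by $J$ only retains the trivial-path idempotents.
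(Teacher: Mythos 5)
Your proof is correct. The paper states this proposition without proof (it is standard background, deferred to its references on quiver representations), so there is no in-paper argument to compare against; your route is exactly the standard one: admissibility makes the arrow ideal $J$ nilpotent, hence $J \subseteq J(A)$, while $A/J \cong \prod_{a \in Q_0} k$ is semisimple (using that the relations are combinations of paths of length at least $2$ and $Q_0$ is finite), giving $J(A) \subseteq J$; then every simple $A$-module is killed by $J(A) = J$ and so is a simple module over $\prod_{a} k$, i.e.\ one of the $S(a)$. The steps (i) and (ii) are also fine, with one small imprecision worth fixing: your justification that every arrow acts as zero on $S(a)$ "because its source or target component is $0$" fails for loops at $a$, where both components equal $k$. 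This is harmless: $S(a)$ is one-dimensional, hence automatically simple, and admissibility forces any loop to act nilpotently, hence as zero, on a one-dimensional space, so $S(a)$ is indeed the representation in which all arrows (including loops) act by zero.
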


\begin{example}
  The quiver $A_n$
  \begin{center}
  \begin{tikzcd}
    1 \arrow[r] & 2 \arrow[r] & \cdots \arrow[r] & i \arrow[r] & \cdots \arrow[r] & n
  \end{tikzcd}
\end{center}
has simples $\{S_1, S_2, \dots, S_n\}$ with $S_i$ given by $k$ in the $i$-th spot an zero elsewhere
\begin{center}
  \begin{tikzcd}
   0 \arrow[r] & 0 \arrow[r] & \cdots \arrow[r] & k \arrow[r] & \cdots \arrow[r] & 0.
  \end{tikzcd}
\end{center}
\end{example}

\begin{remark}
  More generally, given a module $M$ over a path algebra $\Lambda = kQ/(\rho)$ we illustrate the module by labelling each vertex $i$ with the vector space $e_i M$ and each arrow $\alpha$ by the linear map $e_{s(\alpha)}M \to e_{t(\alpha)} M$ given by multiplication by $\alpha$. This information completely determines $M$.
  \end{remark}

Quivers also make it easy to identify indecomposable projective modules.

\begin{proposition}\label[proposition]{prop:IndecompProjectives}
  Let $\Lambda = kQ/(\rho)$ be an admissible path algebra. Then the trivial paths $\{e_1,\dots,e_n\}$ form a complete set of primitive orthogonal idempotents. The path algebra $\Lambda$ decomposes into indecomposable projective summands
  \[
  \Lambda = P_1 \oplus \cdots \oplus P_n,
  \]
  with each $P_i$ equal to $\Lambda e_i$, the linear span of all paths in the path algebra starting at $i$. Moreover, $\{P_1, \dots, P_n\}$ form a complete set of representatives of isomorphism classes of indecomposable projective $\Lambda$-modules. 
\end{proposition}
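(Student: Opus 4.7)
The plan is to establish the four claims in order, with the key nontrivial step being the indecomposability of each $\Lambda e_i$, where admissibility enters.

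First I would verify that $\{e_1,\dots,e_n\}$ is a complete set of orthogonal idempotents and that $\Lambda = \bigoplus_i \Lambda e_i$ with $\Lambda e_i$ equal to the span of paths starting at $i$. The composition rule gives $e_i \circ e_j = e_i$ if $i = j$ and $0$ otherwise, since two trivial paths are composable only when they sit at the same vertex. Finiteness of $Q_0$ together with the identity $1 = \sum_i e_i$ noted in the definition of the path algebra gives completeness. Hence $\Lambda = \Lambda \cdot 1 = \bigoplus_i \Lambda e_i$, and for any path $\lambda$ the product $\lambda \circ e_i$ is nonzero precisely when $s(\lambda) = t(e_i) = i$, so $\Lambda e_i$ is the span of paths starting at $i$.

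Next I would show each $P_i = \Lambda e_i$ is indecomposable by proving its endomorphism ring is local. Under the standard identification, $\End_\Lambda(\Lambda e_i)$ is anti-isomorphic to $e_i \Lambda e_i$, and since locality is preserved under passage to the opposite ring it suffices to show $e_i \Lambda e_i$ is local. This algebra is spanned by $e_i$ together with the classes of paths of length $\geq 1$ from $i$ to itself; call the $k$-span of the latter $\mathfrak{m}_i$. Admissibility --- there exists $N$ with every path of length at least $N$ contained in $(\rho)$ --- forces $\mathfrak{m}_i^N = 0$, since any product of $N$ positive-length paths is a $k$-linear combination of paths of length $\geq N$. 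Thus every element of $e_i \Lambda e_i$ has the form $c e_i + r$ with $c \in k$ and $r \in \mathfrak{m}_i$ nilpotent, and is a unit iff $c \neq 0$ (with inverse given by a finite geometric series in $c^{-1} r$). So $\mathfrak{m}_i$ is the unique maximal ideal and $e_i \Lambda e_i$ is local.

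Finally, to see that $\{P_1,\dots,P_n\}$ is a complete irredundant list, I would invoke Krull--Schmidt, which holds for the finite-dimensional algebra $\Lambda$: any indecomposable projective is a summand of some $\Lambda^m$ and hence isomorphic to some $P_i$. Pairwise non-isomorphism follows from $P_i / \rad(P_i) \cong S(i)$ together with the previous proposition; equivalently, $\Hom_\Lambda(P_i, S(j)) \cong e_i S(j)$ vanishes unless $i = j$. The main obstacle is the locality argument for $e_i \Lambda e_i$, and it is precisely there that admissibility is indispensable: without a uniform bound on path lengths, the positive-length loops at $i$ need not be nilpotent (as in $k[x]$ arising from the quiver with a single loop and no relations), and this criterion fails.
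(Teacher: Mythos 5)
Your proof is correct: the paper states this proposition as standard background without giving a proof (deferring to references such as Assem--Simson--Skowro\'nski), and your argument is exactly the standard one — locality of $e_i\Lambda e_i$ via nilpotence of the span of positive-length cycles at $i$, which is where admissibility is used, followed by Krull--Schmidt and the computation $\Hom_\Lambda(P_i, S(j)) \cong e_i S(j)$ to get completeness and pairwise non-isomorphism. No gaps worth flagging.
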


\bigskip

This paper is primarily concerned with algebras that come from Mackey functors due to their prevalence in equivariant homotopy theory.

\subsection{Mackey functors}
In equivariant stable homotopy theory, homotopy takes values in Mackey functors, rather than abelian groups. Mackey functors were first defined by Dress and Green \cite{Dre71, Dre73, Gre71}.  Three equivalent definitions can be found in \cite{TW95}.  We also direct the reader to \cite{Web00} and \cite{Shu10} for more details. 
A Mackey functor is a collection of abelian groups with restriction, transfer, and conjugation maps satisfying a number of relations. 

\begin{definition}[Mackey functor]\label[definition]{def:MackeyFunctor}
  A \emph{$G$-Mackey functor} $M$ is a function\footnote{One will often find in the literature $M$ written as a function with values determined by subgroups, replacing $M(G/H)$ with $M(H)$.  However, the notation $M(G/H)$ is consistent with the other two equivalent definitions in \cite{TW95}.  Both of the others define a Mackey functor with values determined by finite $G$-sets, which decompose into orbits.}
  \[
  M \colon \{G/H \mid H\leq G \} \to \mathbf{Ab}
  \]
  with morphisms called \emph{restrictions, transfers, and conjugation maps}
  \begin{align*}
    \res_K^H &\colon M(G/H) \to M(G/K)\\
    \tr_K^H &\colon M(G/K) \to M(G/H)\\
    \ c_g &\colon M(G/H) \to M(G/gHg^{-1})
  \end{align*}
  for all subgroups $K \leq H$ and for all $g \in G$, such that
\begin{enumerate}
  \item $\tr_H^H$, $\res_H^H$, $c_h \colon M(G/H) \to M(G/H)$ are the identity morphisms for all subgroups $H \leq G$ and $h \in H$,
  \item $\res_J^K \res_K^H = \res_J^H$ and $\tr_K^H \tr_J^K = \tr_J^H$ for all subgroups $J \leq K \leq H$,
  \item $c_g c_h = c_{gh}$ for all $g,h \in G$,
  \item $\res_{gKg^{-1}}^{gHg^{-1}} c_g = c_g \res_K^H$ and $\tr_{gKg^{-1}}^{gHg^{-1}} c_g = c_g \tr_K^H$ for all subgroups $K \leq H$ and $g \in G$, and
  \item $\res_J^H \tr_K^H$ satisfies the double coset formula, i.e.\ for all subgroups $J, K \leq H$,
\[
  \res_J^H \tr_K^H = \sum_{x \in [J\backslash H/ K]} \tr_{J\, \cap\, xKx^{-1}}^J c_x \res_{x^{-1}Jx\, \cap\, K}^K.
\]
\end{enumerate}
\end{definition}

For a fixed finite group $G$, the collection of $G$-Mackey functors form a category $\Mack(G)$.  This category is abelian and symmetric monoidal via the box product $\Box$.  The unit with respect to the box product is the Burnside Mackey functor $\und{A}$, with value at $G/H$ given by the Burnside ring $A(H)$. See for example \cite{Lewis88} or \cite{Shu10}.

Fixing a commutative ring $R$, one may consider Mackey functors valued in $R$-modules.  
We denote the category of such $R$-linear Mackey functors by $\Mack_R(G)$ as in \cite{TW95}.  The unit in $\Mack_R(G)$ is $\und{A}_R$, formed by tensoring the Burnside Mackey functor with the ring object-wise $\und{A} \otimes R$.  Taking $R = \Z$, we recover the original category $\Mack(G) = \Mack_\Z(G)$.  In this paper, we will be concerned with $\Mack_k(G)$ where $k$ is a field (typically of characteristic $p$).

For groups with a small number of subgroups, it is convenient to describe the data of a Mackey functor by its Lewis diagram.

\begin{example}
  Let $p$ be a prime and let $G=C_p$, the cyclic group of order $p$.  A $C_p$-Mackey functor $M$ has a Lewis diagram of the following form,
\[
\begin{tikzcd}
  M(C_p/C_p) \arrow[d, "\res_e^{C_p}"left, swap, shift right=1.5] \\ M(C_p/e)  \arrow[u, "\tr_e^{C_p}"right, swap, shift right=1.5] \arrow[loop, distance=25, out=300,in=240, "\gamma"]
\end{tikzcd}
\]
where $\gamma$ is a generator of the Weyl group\footnote{The Weyl group in equivariant homotopy theory is the quotient of the normalizer of a subgroup $W_G H = N_G H/ H$ and is isomorphic to the equivariant automorphisms of the $G$-set $G/H$.} $W_{C_p}e \cong C_p/e \cong C_p = \langle \gamma \rangle$ and the maps satisfy the following relations:
\begin{enumerate}
  \item $\gamma \circ \res_e^{C_p} = \res_e^{C_p}$,
  \item $\tr_e^{C_p} \circ \gamma = \tr_e^{C_p}$,
  \item $\gamma^{\,p} = \id$, and
  \item $\res_e^{C_p} \circ \tr_e^{C_p} = \Sigma_{i=0}^{p-1} \gamma^{\,i}$
\end{enumerate}
\end{example}

The Weyl group action corresponds to the conjugation $c_\gamma$ in \cref{def:MackeyFunctor}. The last relation is implied by the double coset formula.

\begin{example}\label[example]{ex:BurnsideMackey}
  For $G=C_p$, The Burnside Mackey functor $\und{A}$ has the following Lewis diagram.
\[
\begin{tikzcd}
  \Z \oplus \Z \arrow[d, "\begin{psmallmatrix}1 \ \  p \end{psmallmatrix}"left, swap, shift right=1.25] \\ \Z  \arrow[u, "\begin{psmallmatrix} 0 \\ 1 \rule{0pt}{7pt}\end{psmallmatrix}"right, swap, shift right=1.25] \arrow[loop, distance=25, out=300,in=240, "1"]
\end{tikzcd}
\]
\end{example}

\begin{definition}[Fixed point Mackey functor]
  For $G$ any finite group and $V$ an $R G$-module, the \emph{fixed point Mackey functor} $FP_V$ is defined by $FP_V(H) = V^H$, the $H$-fixed points in $V$.  Restriction $\res_K^H \colon V^H \to V^K$ is given by inclusion, transfer $\tr_K^H \colon V^K \to V^H$ is the relative trace map $x \mapsto \Sigma_{h \in [H/K]}h \cdot x$, and conjugation $c_g$ is given by multiplication by $g$.
\end{definition}

The category of abelian groups embeds into $\Mack_{\Z}(G)$ as the constant Mackey functors.

\begin{definition}[Constant Mackey functor]
  Fix a finite group $G$ and an abelian group $T$.  The \emph{constant $G$-Mackey functor} $\und{T}$ sends every orbit $G/H$ to $T$ and assigns the identity to all restriction maps and all conjugation maps.\footnote{The transfer maps, given by relative trace maps, are determined by this data and the relations imposed by the double coset formula.}  Equivalently, the constant $G$-Mackey functor $\und{T}$ is the fixed point Mackey functor of the trivial $\Z G$-module $T$, i.e.\ $\und{T} = FP_T$.
\end{definition}

When $T$ is a ring, the constant Mackey functor $\und{T}$ is a Mackey ring (also called a Green functor).  See \cite{Lewis88}, \cite{TW95}, or \cite{Shu10} for more about Mackey rings.

\begin{example}
  The constant $G$-Mackey functor $\und{\Z}$ assigns to every orbit the ring $\Z$. 
  
  For $G=C_p$, the constant $C_p$-Mackey functor $\und{\Z}$ has the following Lewis diagram.
\[
\begin{tikzcd}
  \Z \arrow[d, "1"left, swap, shift right=1.25] \\ \Z  \arrow[u, "p"right, swap, shift right=1.25] \arrow[loop, distance=25, out=300,in=240, "1"]
\end{tikzcd}
\]
\end{example}

Much of our work here is motivated by our interest in the constant Mackey functor $\undp$.

\begin{example}\label[example]{ex:CpFpMackey}
  Let $k$ be a field of characteristic $p$.  The constant $G$-Mackey functor $\und{k}$ assigns to every orbit the field $k$.  For example, the constant $G$-Mackey functor $\undp$ assigns to every orbit the field $\F_p$. 
  
  For $G=C_p$, the constant $C_p$-Mackey functor $\undp$ has the following Lewis diagram.
\[
\begin{tikzcd}
  \F_p \arrow[d, "1"left, swap, shift right=1.25] \\ \F_p  \arrow[u, "0"right, swap, shift right=1.25] \arrow[loop, distance=25, out=300,in=240, "1"]
\end{tikzcd}
\]
\end{example}

\begin{example}\label[example]{ex:ZModMackey}
Using the box product, a $C_p$-Mackey functor $M$ is a $\und{\Z}$-module if it has Lewis diagram
\[
\begin{tikzcd}
  M(C_p/C_p) \arrow[d, "\res_e^{C_p}"left, swap, shift right=1.5] \\ M(C_p/e)  \arrow[u, "\tr_e^{C_p}"right, swap, shift right=1.5] \arrow[loop, distance=25, out=300,in=240, "\gamma"]
\end{tikzcd}
\]
and maps satisfying the usual relations
\begin{enumerate}
  \item $\gamma \circ \res_e^{C_p} = \res_e^{C_p}$,
  \item $\tr_e^{C_p} \circ \gamma = \tr_e^{C_p}$,
  \item $\gamma^p = \id$, and
  \item $\res_e^{C_p} \circ \tr_e^{C_p} = \Sigma_{i=0}^{p-1} \gamma^i$,
\end{enumerate}
as well as the relation
\begin{enumerate}
  \item[(5)] $\tr_e^{C_p} \circ \res_e^{C_p} = p$.
\end{enumerate}
\end{example}

Tensoring with $\F_p$, we have the following characterization of $\und{\F}_p$-modules.

\begin{example}\label[example]{ex:FpModMackey}
  A $C_p$-Mackey functor $M$ is an $\und{\F}_p$-module if it has a Lewis diagram 
  \[
  \begin{tikzcd}
    M(C_p/C_p) \arrow[d, "\res_e^{C_p}"left, swap, shift right=1.5] \\ M(C_p/e)  \arrow[u, "\tr_e^{C_p}"right, swap, shift right=1.5] \arrow[loop, distance=25, out=300,in=240, "\gamma"]
  \end{tikzcd}
  \]
  valued in $\Vect_{\F_p}$ and maps satisfying 
  \begin{enumerate}
    \item $\gamma \circ \res_e^{C_p} = \res_e^{C_p}$,
    \item $\tr_e^{C_p} \circ \gamma = \tr_e^{C_p}$,
    \item $\gamma^p = \id$
    \item $\res_e^{C_p} \circ \tr_e^{C_p} = \Sigma_{i=0}^{p-1} \gamma^i$, and
    \item $\tr_e^{C_p} \circ \res_e^{C_p} = 0$.
  \end{enumerate}
  \end{example}

Fix a finite group $G$ and a commutative ring $R$.  There is a subcategory of $\Mack_R(G)$ of cohomological Mackey functors, denoted $\CohMack_R(G)$.  

\begin{definition}[Cohomological Mackey functor]
A Mackey functor is \emph{cohomological} if it satisfies the additional relations that whenever $K \leq H \leq G$, restriction followed by transfer is multiplication by the index: \[
\tr_H^K \circ \res_H^K = [K:H].
\]
\end{definition}
In particular, all constant Mackey functors are cohomological.  In fact, all fixed point Mackey functors are cohomological.  Moreover, we can characterize cohomological Mackey functors completely.  As shown in \cite[Prop~16.3]{TW95}, cohomological Mackey functors are equivalent to $\und{\Z}$-modules
 \[
 \CohMack_\Z(G) \cong \und{\Z}\MMod,
 \]
 and over a commutative ring $R$
 \[
  \CohMack_R(G) \cong \und{R}\MMod.
  \]
 Taking $R = \F_p$, we have
\[
\CohMack_{\F_p}(G) \cong \und{\F}_p\MMod.
 \]

\bigskip
The connection between Mackey functors and quivers is key to the results of this paper.
\subsection{Mackey functors as quiver representations} As observed by Thévenaz and Webb in \cite{TW95}, a Mackey functor is a representation of a bound quiver.  We include some examples that we will return to in \cref{sec:rep_type_cohom_mackey} and \cref{sec:derived-wildnes-cohmackey}.

\begin{example}\label[example]{ex:C2constantquiver}
  Let $G=C_2$.  An $\und{\F}_2$-module is a representation of the following bound quiver $Q$
  \begin{center}
    \begin{tikzcd}
      \bullet \ar[r, bend left, "b"] & \bullet \ar[l, bend left, "a"]
    \end{tikzcd}
  \end{center}
  with the single relation $\rho=\{ba\}$.  That is, the following categories are equivalent 
  \[
   \und{\F}_2\MMod \simeq \CohMack_{\F_2}(C_2) \simeq \Fun((Q, \rho), \Vect_{\F_2}).
  \]
\end{example}

\begin{proof}
  This is a simple change of notation and change of basis. Let $a =\res_e^{C_2}$ and $b =\tr_e^{C_2}$.  Let $c$ denote $\id - \gamma$.  Then an $\und{\F}_2$-module is a functor from the quiver $Q$
  \begin{center}
    \begin{tikzcd}
      \bullet \arrow[loop left, "c", out=210, in=150, looseness=5] \ar[r, bend left, "b"] & \bullet \ar[l, bend left, "a"]
    \end{tikzcd}
  \end{center}
  to $\Vect_{\F_2}$.  The relations from \cref{ex:FpModMackey} imply
  \[
  ca=0, \quad bc=0, \quad c^2=0, \quad ab = c, \quad \text{and} \quad ba = 0.
  \]
  Since $ab = c$, the loop $c$ is not needed.  After making this substitution, the relation $ba = 0$ implies the rest. 
\end{proof}

Using \cref{ex:C2constantquiver} and \cref{prop:IndecompProjectives}, we can characterize indecomposable projective $\und{\F}_2$-modules. 

\begin{example}\label[example]{ex:C2constant_projectives}
  Writing the bound quiver that defines $\und{\F}_2$-modules as
  \begin{center}
    \begin{tikzcd}
      1 \ar[r, bend left, "b"] & 0 \ar[l, bend left, "a"] & ba=0,
    \end{tikzcd}
  \end{center}
  we see the two indecomposable projectives (over an arbitrary field $k$) are  
  \begin{center}$P_1$ = 
    \begin{tikzcd}
      k^2 \ar[r, shift left=1.25, "\begin{psmallmatrix} 1 \ \ 0 \end{psmallmatrix}"] & k\phantom{^2} \ar[l, shift left=1.25, "\begin{psmallmatrix} 0 \\ 1 \rule{0pt}{7pt}\end{psmallmatrix}"]
    \end{tikzcd} \quad \text{and} \quad
$P_0$ = 
    \begin{tikzcd}
      k \ar[r, "0",shift left=1.25] & k \ar[l,shift left=1.25, "1"].
    \end{tikzcd}
  \end{center}
  Over $\F_2$, these coincide with $F=P_1$ (after a change of basis) and $H=P_0$ in the notation of \cite{DHM}.
\end{example}

For larger primes, a similar substitution and reduction applied to \cref{ex:FpModMackey} gives the following.

\begin{example}\label[example]{ex:Cpconstantquiver}
  Let $G=C_p$ for $p$ an odd prime.  An $\und{\F}_p$-module is a quiver representation of the following bound quiver $Q$
  \begin{center}
    \begin{tikzcd}
      \bullet \arrow[loop left, "c", out=210, in=150, looseness=5] \ar[r, bend left, "b"] & \bullet \ar[l, bend left, "a"]
    \end{tikzcd}
  \end{center}
  with relations $\rho = \{c^p,\ ca,\ bc,\ ba,\ ab - c^{p-1}\}$. Note the relation $c^p$ is redundant.
\end{example}

More generally, we can describe not just the cohomological Mackey functors but every Mackey functor in $\Mack_{\F_p}(C_p)$ (i.e.\ every $\und{A}\otimes{\F_p}$-module) using quivers and the Lewis diagram in \cref{ex:BurnsideMackey}.

\begin{example}
  Let $G=C_p$ for $p$ any prime.  A Mackey functor in $\Mack_{\F_p}(C_p)$ is a quiver representation of the following bound quiver $Q$
  \begin{center}
    \begin{tikzcd}
      \bullet \arrow[loop left, "c", out=210, in=150, looseness=5] \ar[r, bend left, "b"] & \bullet \ar[l, bend left, "a"]
    \end{tikzcd}
  \end{center}
  with relations $\rho = \{c^p,\ ca,\ bc,\ ab - c^{p-1}\}$.
\end{example}

As an immediate consequence, when $p=2$ we have the following characterization.

\begin{example}\label[example]{ex:C2characteristic2MackeyQuiver}
  Let $G=C_2$.  A Mackey functor in $\Mack_{\F_2}(C_2)$ is a quiver representation of the following bound quiver $Q$
  \begin{center}
    \begin{tikzcd}
      \bullet \ar[r, bend left, "b"] & \bullet \ar[l, bend left, "a"]
    \end{tikzcd}
  \end{center}
  with relations $\rho = \{aba,\ bab\}$.
\end{example}

\begin{remark}
  One can consider representations of the bound quiver 
  \begin{center}
    $\left.
    \begin{tikzcd}
      \bullet \ar[r, bend left, "b"] & \bullet \ar[l, bend left, "a"]
    \end{tikzcd}
        \middle/ 
      \langle ba \rangle
      \right.$
  \end{center}
from \cref{ex:C2constantquiver} over any field $k$.  In fact, over $\C$, this bound quiver appears in the study of $\mathfrak{sl}_2$ representations and category $\mathcal{O}$, and its representations are well-known.  However, it is only in characteristic $2$ that this particular bound quiver bears any relation to cohomological $C_2$-Mackey functors. 
\end{remark}

\subsection{The Mackey and cohomological Mackey algebras} In general, as explained in \cite{TW95}, the category $\Mack_R(G)$ is equivalent to the category of modules of an $R$-algebra $\mu_R(G)$, called the \emph{Mackey algebra}.  This algebra $\mu_R(G)$ is the path algebra of the bound quiver with vertices $\{G/H \mid H \leq G\}$, arrows given by restrictions, transfers, and conjugations, and relations determined by \cref{def:MackeyFunctor}.  Thus, with the notation above, we have the following.

\begin{theorem}
Let $G$ be a finite group and let $R$ be a commutative ring.  There is an equivalence of categories
\[
\Mack_R(G) \simeq \und{A}_R\MMod \simeq \mu_R(G)\MMod.
\]
\end{theorem}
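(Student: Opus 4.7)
The plan is to establish the two equivalences separately, as each uses a different structural fact.

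For the equivalence $\Mack_R(G) \simeq \mu_R(G)\MMod$, I would unpack the construction of $\mu_R(G)$ as the path algebra of a bound quiver. By \cref{Thm:QuiverRepsAreModules}, a left module over a path algebra $kQ/(\rho)$ is the same data as a functor $(Q,\rho) \to \Vect_k$; the same argument works with $R$-modules in place of vector spaces. The Mackey algebra $\mu_R(G)$ is constructed precisely so that its path algebra description encodes the data in \cref{def:MackeyFunctor}: vertices $\{G/H\}$ correspond to the underlying abelian groups $M(G/H)$ via the primitive orthogonal idempotents $e_H$, the arrows are the generators $\res_K^H$, $\tr_K^H$, and $c_g$, and the relations (1)--(5) of \cref{def:MackeyFunctor} are imposed as defining relations of the algebra. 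So the functor sending a Mackey functor $M$ to $\bigoplus_{H \leq G} M(G/H)$ with the obvious $\mu_R(G)$-action, and inversely a module $N$ to the assignment $G/H \mapsto e_H N$, gives the required equivalence. The only thing to check is that this really is an equivalence of categories, which reduces to checking morphisms correspond bijectively, and this is immediate from the observation that a morphism of Mackey functors is exactly a collection of maps commuting with the generators $\res$, $\tr$, $c_g$.

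For the equivalence $\Mack_R(G) \simeq \und{A}_R\MMod$, I would invoke the fact that $\und{A}_R$ is the unit for the symmetric monoidal structure on $\Mack_R(G)$ given by the box product $\Box$. In any symmetric monoidal category $(\mathcal{C}, \otimes, \mathbf{1})$ that is closed and cocomplete, the category of $\mathbf{1}$-modules is canonically equivalent to $\mathcal{C}$ itself, via the unitor $\mathbf{1} \Box M \xrightarrow{\cong} M$ which supplies the action automatically. One references \cite{Lewis88} or \cite{Shu10} for the symmetric monoidal structure of $\Mack_R(G)$ and for the identification of $\und{A}_R$ as the unit. Composing the two equivalences yields the full statement.

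The main obstacle, such as it is, lies in the bookkeeping of the first equivalence: explicitly verifying that the relations (1)--(5) of \cref{def:MackeyFunctor}, together with the relations imposed by having orthogonal idempotents $e_H e_K = \delta_{HK} e_H$, are a complete set of relations for the path algebra (in the sense that no additional relation is needed to make the quiver representations match Mackey functors exactly, and that the relations listed are independent enough to not collapse the algebra). In particular, one must be careful that the double coset formula in relation (5) is expressible as a path algebra relation, which it is because it is a linear combination of paths of length two with matching source and target. Once this is confirmed, the two equivalences assemble into the stated chain.
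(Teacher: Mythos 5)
Your argument is correct and matches the route the paper takes: the paper treats the first equivalence as essentially the defining presentation of $\mu_R(G)$ as the path algebra of the bound quiver with relations from \cref{def:MackeyFunctor} (deferring to \cite{TW95}), exactly as you unpack it via $M \mapsto \bigoplus_H M(G/H)$ and $N \mapsto (G/H \mapsto e_H N)$, and the second equivalence is the standard observation that modules over the box-product unit $\und{A}_R$ recover $\Mack_R(G)$ itself. The only quibble is calling the $e_H$ \emph{primitive} orthogonal idempotents—the relations here are not admissible, so \cref{prop:IndecompProjectives} does not apply and the $e_H$ are generally not primitive—but your proof only uses that they are orthogonal and sum to $1$, so nothing breaks.
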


From this equivalence, it is immediate that when $R=k$ is a field, $\mu_k(G)$ is a finite dimensional $k$-algebra satisfying the Krull--Schmidt theorem.  Thus, any finite dimensional $\mu_k(G)$-module decomposes uniquely into a direct sum of indecomposables (up to isomorphism and order of summands).

It is shown in \cite{TW95} (as well as \cite{TWSimple90}) that when the order of the group is invertible, the Mackey algebra is semisimple.

\begin{proposition}(\cite[Theorem~3.5]{TW95})\label[proposition]{prop:invertible_characteristic}
  Let $k$ be a field of characteristic $0$ or characteristic prime to $|G|$.  Then $\mu_k(G)$ is a semisimple $k$-algebra. 
\end{proposition}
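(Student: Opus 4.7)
The plan is to combine a Maschke-style averaging with the structural decomposition of the Mackey algebra. Using the equivalence $\Mack_k(G) \simeq \mu_k(G)\MMod$, it suffices to show every short exact sequence of Mackey functors splits. Given $0 \to N \to M \to Q \to 0$, at each subgroup $H$ we choose a $k$-linear splitting $\sigma_H \colon M(G/H) \to N(G/H)$, and try to average the collection $\{\sigma_H\}$ over the Mackey structure, exploiting the hypothesis that $|G|$ is invertible in $k$.

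The main obstacle to a direct averaging is that the Mackey algebra has a three-tiered structure (restrictions, transfers, and conjugations), and any single averaging formula must be simultaneously compatible with all three. In particular, for general (non-cohomological) Mackey functors, $\tr_K^H \circ \res_K^H$ is not multiplication by the index $[H:K]$, which rules out the most naive normalizations and forces one to use the double coset formula carefully.

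A cleaner route, which I would ultimately adopt, is to invoke the structural decomposition of Th\'evenaz--Webb, which establishes a Morita equivalence
\[
\mu_k(G) \sim \prod_{[H]} kW_G(H),
\]
with the product running over conjugacy classes of subgroups $H \leq G$, where $W_G(H) = N_G(H)/H$ is the Weyl group. The proof of this decomposition uses M\"obius inversion on the subgroup lattice to produce orthogonal primitive idempotents in $\mu_k(G)$ indexed by conjugacy classes of subgroups. Since $|W_G(H)|$ divides $|G|$ and hence is invertible in $k$, Maschke's theorem gives that each $kW_G(H)$ is semisimple. Semisimplicity is a Morita invariant and is preserved under finite products, so $\mu_k(G)$ is semisimple. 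The hard part of this approach is not the invocation of Maschke but the construction of the idempotent decomposition, which is where the Mackey-specific combinatorics enter.
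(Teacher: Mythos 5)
Your final argument is correct, but note that the paper does not prove this statement at all: it is quoted directly as \cite[Theorem~3.5]{TW95}, so there is no internal proof to compare against. What you sketch is essentially the standard argument lying behind that citation: when $|G|$ is invertible in $k$, the category $\Mack_k(G)$ decomposes into a product of module categories over the group algebras $kW_G(H)$, one for each conjugacy class of subgroups, and Maschke's theorem finishes the job since $|W_G(H)|$ divides $|G|$. Two small points deserve care. First, the idempotents produced by M\"obius inversion (Gluck's formula) live in the Burnside algebra $A(G)\otimes k$, which maps to the center of $\mu_k(G)$ because the Burnside functor is the monoidal unit; the resulting elements are orthogonal \emph{central} idempotents, not primitive ones (each block $e_H\,\mu_k(G)$ is Morita equivalent to $kW_G(H)$, which generally has several simple factors), so the word ``primitive'' should be dropped. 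Second, your proof is honest about where the real work is, but that work --- the Morita decomposition itself --- is a genuine theorem of Th\'evenaz--Webb (equivalently the classical splitting of Mackey functors over fields in which $|G|$ is invertible, also found in Greenlees--May), so your write-up is an outline that correctly locates the difficulty rather than a self-contained proof; given that the paper itself simply cites \cite{TW95}, this level of detail is entirely appropriate, and your opening remark that a naive object-wise Maschke averaging fails to respect transfers and the double coset formula is a sound reason for abandoning that first approach.
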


Incorporating the additional relations for cohomological Mackey functors, the category $\CohMack_R(G)$ is equivalent to the category of modules of an $R$-algebra $\cohmu_R(G)$, called the \emph{cohomological Mackey algebra}.

Then, by \cref{prop:invertible_characteristic} and the fact that $\CohMack_k(G)$ is a full subcategory of $\Mack_k(G)$, when $|G|$ is invertible the cohomological Mackey algebra is semisimple as well.

\begin{corollary}\label[corollary]{cor:cohmu_invertible_characteristic}
  Let $k$ be a field of characteristic $0$ or characteristic prime to $|G|$.  Then $\cohmu_k(G)$ is a semisimple $k$-algebra.
\end{corollary}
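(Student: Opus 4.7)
The statement is an immediate corollary of \cref{prop:invertible_characteristic}. My plan is simply to exhibit $\cohmu_k(G)$ as a quotient of $\mu_k(G)$ and then invoke the fact that quotients of semisimple algebras are semisimple.

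By construction, $\cohmu_k(G)$ is obtained from $\mu_k(G)$ by imposing the extra cohomologicality relations $\tr_H^K \circ \res_H^K = [K:H]\cdot e_{G/K}$ for all $K \leq H \leq G$. Equivalently, there is a surjective $k$-algebra homomorphism $\mu_k(G) \twoheadrightarrow \cohmu_k(G)$ whose kernel is the two-sided ideal generated by these elements. Under the hypothesis that $\operatorname{char}(k) = 0$ or $\operatorname{char}(k) \nmid |G|$, \cref{prop:invertible_characteristic} gives that $\mu_k(G)$ is semisimple. Since the Jacobson radical of a quotient of a ring is (contained in) the image of the Jacobson radical, any quotient of a semisimple algebra is semisimple, and so $\cohmu_k(G)$ is semisimple.

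Alternatively, one may argue in the categorical language that the excerpt hints at: restriction of scalars along the surjection $\mu_k(G) \twoheadrightarrow \cohmu_k(G)$ identifies $\CohMack_k(G) \simeq \cohmu_k(G)\MMod$ with a full abelian subcategory of $\Mack_k(G)$, and the inclusion is exact. Any short exact sequence in $\CohMack_k(G)$ is therefore still exact in $\Mack_k(G)$, hence splits by semisimplicity of $\mu_k(G)$, and the splitting is itself a morphism in the full subcategory $\CohMack_k(G)$. Consequently $\CohMack_k(G)$ is semisimple as an abelian category, which is to say $\cohmu_k(G)$ is a semisimple $k$-algebra.

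There is no substantial obstacle here; once the presentation of $\cohmu_k(G)$ as a quotient of $\mu_k(G)$ is in hand (which is part of the definition recorded in the excerpt), the result is one line of algebra. The only point requiring any thought is the verification that the cohomologicality relations really do generate a two-sided ideal and that the quotient algebra is the correct one, but this is folded into the standing setup inherited from \cite{TW95}.
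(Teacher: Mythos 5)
Your proposal is correct and, in substance, matches the paper: the paper's entire proof is the observation that $\CohMack_k(G)$ is a full (abelian, exact) subcategory of $\Mack_k(G)$ together with \cref{prop:invertible_characteristic}, which is exactly your second, categorical argument; your first argument (quotient of a semisimple algebra is semisimple, via the surjection $\mu_k(G)\twoheadrightarrow\cohmu_k(G)$) is just the algebra-level rephrasing of the same idea. One small caution: the parenthetical justification in your first argument is stated backwards as a general principle --- for a surjection $f\colon A\to B$ one only has $f(J(A))\subseteq J(B)$, not the reverse (e.g.\ $\Z\to\Z/4$) --- so to justify that quotients of semisimple algebras are semisimple you should instead use the Wedderburn decomposition (every two-sided ideal is a product of simple factors) or note that every $B$-module is semisimple as an $A$-module and hence as a $B$-module; with that fixed, the argument is complete.
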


Yoshida identified the cohomological Mackey algebra using permutation modules.

\begin{definition}\label{def:cohomological_mackey_algebra}
  For a finite group $G$ and a commutative ring $R$, the \emph{Hecke category} $\mathcal{H}_R(G)$ is defined as the full subcategory of $RG\MMod$ consisting of the permutation modules $R[G/H]$ for $H \leq G$.  
  
  Yoshida showed \cite[Theorem~4.3]{Yos83} that cohomological Mackey functors are equivalent to representations of the Hecke category
  \begin{align*}
    \CohMack_{R}(G) \simeq \operatorname{Add}_R(\mathcal{H}_R, R\MMod).
  \end{align*}
  Since $\mathcal{H}_R$ is a finite category, the category of representations is equivalent to the module category $\hecke_R(G)\MMod$, where the Hecke algebra is the endomorphism ring 
  \[
  \hecke_R(G) \coloneqq \End_{RG}\left(\bigoplus_{H \leq G} R[G/H]\right).
  \] 
  Moreover, as observed in \cite{TW95}, $\cohmu_R(G) = \hecke_R(G)^{\op}$. Since $\Hom_R(-, R)$ gives a duality on the Hecke category we also have $\hecke_R(G)^{\op} \cong \hecke_R(G)$, so we may work with $\hecke_R(G)$ instead of $\cohmu_R(G)$ when  convenient.
  
  With the notation above, we have the following theorem due to Yoshida \cite{Yos83} and described in \cite{TW95}.
\end{definition}
  \begin{theorem} Let $G$ be a finite group and $R$ a commutative ring.  There is an equivalence of categories
  \[
   \und{R}\MMod \simeq  \CohMack_{R}(G) \simeq \cohmu_R(G)\MMod \simeq \hecke_R(G)^{\op}\MMod.
  \]
  \end{theorem}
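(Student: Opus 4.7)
The plan is to chain together the three stated equivalences, each of which is essentially already contained in results recalled earlier in this section.

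First, for $\und{R}\MMod \simeq \CohMack_R(G)$, the key observation is that the cohomological relation $\tr_K^H \circ \res_K^H = [H:K]\cdot\id$ is precisely the extra identity forced on a Mackey functor by an $\und{\Z}$-action (compare \cref{ex:ZModMackey}, where the only relation beyond those of \cref{def:MackeyFunctor} is $\tr_e^{C_p}\circ\res_e^{C_p} = p$). Over $\Z$, this is the content of \cite[Prop.~16.3]{TW95}. To extend to a general commutative ring $R$, I would use the object-wise description $\und{R} = \und{\Z}\otimes R$ and observe that an $\und{R}$-module is the same datum as an $R$-linear Mackey functor equipped with a compatible $\und{\Z}$-action; by Thévenaz--Webb this is exactly an $R$-linear cohomological Mackey functor.

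Next, the equivalence $\CohMack_R(G) \simeq \cohmu_R(G)\MMod$ is essentially definitional: in parallel with the construction of the Mackey algebra $\mu_R(G)$, the cohomological Mackey algebra $\cohmu_R(G)$ is the path algebra of the bound quiver with vertices $\{G/H\}_{H\leq G}$, arrows given by restrictions, transfers, and conjugations, and relations consisting of those in \cref{def:MackeyFunctor} together with the cohomological relation. The equivalence is then an instance of \cref{Thm:QuiverRepsAreModules}.

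Finally, for $\cohmu_R(G)\MMod\simeq \hecke_R(G)^{\op}\MMod$, I would invoke Yoshida's theorem recalled in \cref{def:cohomological_mackey_algebra} (originally \cite[Thm.~4.3]{Yos83}), which identifies $\CohMack_R(G)$ with the additive $R$-linear functors $\mathcal{H}_R(G)\to R\MMod$. Since $\mathcal{H}_R(G)$ has finitely many objects, the standard Morita argument, applied to the progenerator $P = \bigoplus_{H\leq G}R[G/H]$, converts this functor category into modules over $\End_{RG}(P)^{\op} = \hecke_R(G)^{\op}$, with the opposite appearing from the usual left/right convention when turning covariant functors into left modules. This coincides with $\cohmu_R(G)$ by the identification $\cohmu_R(G) = \hecke_R(G)^{\op}$ also noted in \cref{def:cohomological_mackey_algebra}. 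The main obstacle is really the first equivalence, since \cite[Prop.~16.3]{TW95} carries the substantive representation-theoretic content; the remaining two steps are formal once Yoshida's theorem and the construction of $\cohmu_R(G)$ are in hand.
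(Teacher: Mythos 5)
Your proposal is correct and takes essentially the same route as the paper, which presents this theorem as a citation and assembles it from exactly the ingredients you use: \cite[Prop.~16.3]{TW95} for $\und{R}\MMod \simeq \CohMack_R(G)$, the bound-quiver/definitional construction of $\cohmu_R(G)$ for the second equivalence, and Yoshida's theorem \cite{Yos83} together with the identification $\cohmu_R(G) = \hecke_R(G)^{\op}$ for the third. The only cosmetic difference is which side the opposite ring lands on when passing from the functor category on $\mathcal{H}_R(G)$ to a module category, a convention issue the paper absorbs via the self-duality $\hecke_R(G)^{\op} \cong \hecke_R(G)$.
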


  Skipping ahead somewhat, one of our main examples throughout will be the cohomological Mackey algebra of $C_p$ over a field $k$ of characteristic $p$, as deduced from Yoshida's characterization.  See \cref{prop:Cp^mCohMackQuiver} and \cref{prop:CpCohMackQuiver} for the details.

  \begin{proposition}
Let $k$ be a field of characteristic $p$.  The cohomological Mackey algebra $\cohmu_k(C_p)$ is the path algebra of the quiver $Q$
  \begin{center}
    \begin{tikzcd}
      1 \arrow[loop left, "c", out=210, in=150, looseness=5] \ar[r, bend left, "b"] & 0 \ar[l, bend left, "a"]
    \end{tikzcd}
  \end{center}
  with relations
  \[
  \rho = \{c^p,\ ba,\ ab - c^{p-1},\ ca,\ bc\},
  \]
  as in \cref{ex:Cpconstantquiver} for $k = \F_p$.
  
  In the case $p=2$, the arrow $c=ab$ is redundant and the cohomological Mackey algebra 
  $\cohmu_k(C_2)$ is the path algebra of the quiver $Q$
  \begin{center}
    \begin{tikzcd}
      1 \ar[r, bend left, "b"] & 0 \ar[l, bend left, "a"]
    \end{tikzcd}
  \end{center}
  with relation
  \[
  \rho = \{ba\},
  \]
  recovering \cref{ex:C2constantquiver} for $k = \F_2$.

\end{proposition}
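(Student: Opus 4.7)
The plan is to apply Yoshida's theorem to identify $\cohmu_k(C_p) \cong \hecke_k(C_p)^{\op} = \End_{kC_p}(k \oplus kC_p)^{\op}$, and then to present this endomorphism ring by generators and relations. I would index the summands so that vertex $0$ corresponds to $k = k[C_p/C_p]$ and vertex $1$ to $kC_p = k[C_p/e]$. Since $\mathrm{char}(k) = p$, writing $c = x-1$ for a generator $x$ of $C_p$ gives $\End_{kC_p}(kC_p) \cong k[c]/(c^p)$. The other hom-spaces are straightforward: $\End_{kC_p}(k) \cong k$, the space $\Hom_{kC_p}(kC_p, k)$ is one-dimensional spanned by the augmentation $\epsilon$, and $\Hom_{kC_p}(k, kC_p) \cong (kC_p)^{C_p}$ is one-dimensional spanned by $\iota \colon 1 \mapsto N$ where $N = \sum_{g \in C_p} g$. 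In particular $\dim_k \hecke_k(C_p) = p+3$.

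Next I would define a map from the proposed bound path algebra to $\cohmu_k(C_p)$ by sending the idempotents $e_0, e_1$ to the two summand projections, $a \mapsto \epsilon$, $b \mapsto \iota$, and $c \mapsto$ multiplication by $x - 1$ on $kC_p$, and verify each of the five relations by a short computation in $\hecke_k$. The key point is that the opposite reverses composition, so $ab$ in $\cohmu_k$ corresponds to $\iota \circ \epsilon$ in $\hecke_k$. This composite sends every $g \in C_p$ to $N$ and hence equals multiplication by $N$, and the identity $(1+c)^p = 1 + c^p$ in characteristic $p$ yields $N = \sum_{i=0}^{p-1}(1+c)^i = c^{p-1}$, proving $ab = c^{p-1}$. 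The other relations are immediate: $\epsilon(N) = p = 0$ gives $ba = 0$, $(x-1)N = 0$ gives $ca = 0$, and $\epsilon((x-1)y) = 0$ gives $bc = 0$.

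To upgrade the surjection to an isomorphism I would compare dimensions. Modulo the stated relations, a basis for the path algebra is $\{e_0, e_1, a, b, c, c^2, \ldots, c^{p-1}\}$, of dimension $p+3$, since any longer path either traverses one of the zero relations $ba, ca, bc$ or reduces via $ab = c^{p-1}$ to a power of $c$. This matches $\dim_k \hecke_k(C_p) = p+3$. For the $p=2$ case, the relation $ab = c^{p-1} = c$ renders the loop redundant, and the remaining relations $c^2 = (ab)(ab) = a(ba)b$, $ca = a(ba)$, and $bc = (ba)b$ are all consequences of $ba = 0$, so the presentation collapses to the one in \cref{ex:C2constantquiver}.

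The main obstacle will be the careful bookkeeping of the opposite convention, ensuring the arrows $a, b$ correspond to $\res_e^{C_p}$ and $\tr_e^{C_p}$ in the correct directions and that $ab$ rather than $ba$ is the composite equalling $c^{p-1}$. As a sanity check, this is consistent with the double coset formula $\res_e^{C_p}\tr_e^{C_p} = \sum_g c_g$: on a $\cohmu_k$-module the loop $c$ acts as $\id - \gamma$, and in characteristic $p$ one has $(\id - \gamma)^{p-1} = \sum_{i=0}^{p-1}\gamma^i$, matching the sum of conjugations.
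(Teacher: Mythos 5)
Your proposal is correct and follows essentially the same route as the paper: Yoshida's identification $\cohmu_k(C_p) \cong \hecke_k(C_p)^{\op} = \End_{kC_p}(k \oplus kC_p)^{\op}$ followed by an explicit presentation by the augmentation, the norm map, and multiplication by $x-1$ (the paper phrases this in the model $kC_p \cong k[x]/(x^p)$, where your $N = c^{p-1}$ identity is built in), with your dimension count $p+3$ supplying a completeness check of the relations that the paper leaves implicit. The only quibble is a cross-labeling of the two zero relations under the paper's function-composition convention ($ca = c\circ a$ is killed by $\epsilon((x-1)y)=0$ and $bc = b\circ c$ by $(x-1)N=0$, not the other way around), which does not affect the argument.
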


\subsection{Equivariant cohomology and Eilenberg--MacLane spectra}\label{sec:Eilenberg-Maclane-spectra}
Mackey functors are fundamental to the study of equivariant cohomology of $G$-spaces and to equivariant stable homotopy theory.  They often play the role of abelian groups in equivariant homotopy theory.  For example, in the equivariant setting, stable homotopy groups are replaced by stable Mackey functors.  Mackey functors also appear in equivariant cohomology.

Bredon defined a $\Z$-graded equivariant cohomology theory for $G$-CW complexes that takes coefficients in a coefficient system \cite{Bredon}.  In order to have Bredon cohomology of $G$-spaces extend to an $RO(G)$-graded cohomology theory (graded by real representations) and be represented by a genuine $G$-spectrum, the coefficient system must extend to a Mackey functor.  
The representing object of an ordinary equivariant cohomology theory with coefficients in a Mackey functor $M$ is a genuine $G$-spectrum called an Eilenberg--MacLane spectrum.  For more details and background, see for example \cite{Mayetal96}, \cite{Shu10}, or \cite{HillHHT}.

An equivariant Eilenberg--MacLane spectrum is characterized by its $\Z$-graded homotopy as follows.

\begin{definition}
  Let $M$ be a $G$-Mackey functor.  The Eilenberg--MacLane spectrum $HM$ is the genuine $G$-equivariant spectrum defined (uniquely up to homotopy) by the property that the $\Z$-graded homotopy Mackey functors are
  \[
  \und{\pi}_n^G HM = \begin{cases}
    M & n = 0 \\
    0 & \text{otherwise.}
  \end{cases}
  \] 
\end{definition}

In the case that $M$ is a Mackey ring, $HM$ is a ring spectrum.  If $M$ is a commutative Mackey ring, $HM$ can be taken to be a commutative ring spectrum (see \cite{Ullman}).  In fact, both the Burnside Mackey functor and the constant Mackey functor have the extra structure of a Tambara functor, and whenever $M$ is a Tambara functor, $HM$ is a genuine $G$-commutative ring spectrum (see \cite{Ullman} and \cite{BH15}).

The following result is due to \cite{SS03}, as explained in \cite{Zeng} for $M = \und{\Z}$, and described more generally in \cite{GS14}.

\begin{theorem}[\cite{SS03}]\label{Thm:QuillenEquiv}
  Let $M$ be a commutative Mackey ring.  There is a Quillen equivalence
  \begin{center}
\begin{tikzcd}
  \Ch(M\MMod) \arrow[rr,bend left,"\Gamma", out=15, in=165] & \simeq & HM\MMod \arrow[ll, bend left,"\Psi",out=15,in=165]
\end{tikzcd}
\end{center}
between the algebraic category of chain complexes of $M$-modules with projective model structure and the topological category of $HM$-modules.
\end{theorem}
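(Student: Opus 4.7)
The plan is to invoke the Schwede--Shipley stable recognition theorem \cite{SS03}: any cofibrantly generated, proper, stable model category with a set of compact generators is Quillen equivalent to the category of module spectra over the endomorphism ring spectrum of those generators; when this endomorphism spectrum has homotopy concentrated in a single degree, a further zig-zag of Quillen equivalences (passing through symmetric spectra of simplicial abelian groups and the Dold--Kan correspondence) identifies the module category with the projective model structure on chain complexes over the ordinary ``ring'' $\pi_0$ of the endomorphism spectrum. Our strategy is simply to feed $HM\MMod$ into this machine.

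First I would verify the input hypotheses for $\mathcal{C} = HM\MMod$ equipped with the model structure transferred from the positive stable model structure on genuine $G$-spectra. Stability, cofibrant generation, and properness are standard. A set of compact generators is given by the orbit $HM$-modules $\{G/H_+ \wedge HM\}_{H \leq G}$, since these detect homotopy Mackey functors. The endomorphism ring spectrum of this generating set has vanishing homotopy outside degree zero because $\pi_i(HM) = 0$ for $i \neq 0$ by construction, and its $\pi_0$ recovers the Mackey algebra whose modules are precisely $M$-modules in the box-product sense; this is exactly the content of Yoshida-type theorems of the preceding subsection (for cohomological $M = \und{k}$) and its extension to general Mackey rings $M$.

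Next I would apply the equivariant refinement of Shipley's $H\mathbb{Z}$-algebra recognition theorem, as spelled out for $M = \und{\Z}$ in \cite{Zeng} and in greater generality in the Greenlees--Shipley framework \cite{GS14}; this supplies the second half of the zig-zag, producing the Quillen pair $(\Gamma, \Psi)$ of the statement. Here $\Gamma$ is the composite ``realization'' functor that sends a chain complex of $M$-modules through Dold--Kan to a simplicial $M$-module and then smashes suitably with $HM$, while $\Psi$ is its right adjoint, computing on underlying spectra the homotopy Mackey functor assembly.

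The main obstacle I would expect is the careful identification of the endomorphism ring spectrum of the generating set, as a genuine $G$-equivariant ring spectrum, with a model for the Mackey algebra of $M$ on $\pi_0$. One must confirm that the purely algebraic equivalence between $M\MMod$ and modules over this algebra, which is so convenient in the algebraic sections of the paper, is exactly what the topological machine outputs, and that passing to chain complexes commutes with the equivariant refinements. Once this bookkeeping is complete, the Quillen equivalence follows immediately by splicing the two Quillen equivalences above.
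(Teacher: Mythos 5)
Your proposal is correct and follows essentially the same route the paper relies on: the paper does not prove this statement itself but attributes it to \cite{SS03}, ``as explained in \cite{Zeng} for $\und{\Z}$ and described more generally in \cite{GS14}'', and your sketch (Schwede--Shipley Morita theory applied to $HM\MMod$ with the orbit modules $G/H_+ \wedge HM$ as compact generators, followed by the Eilenberg--MacLane algebraicization step identifying modules over the $\pi_0$-endomorphism ringoid with chain complexes of $M$-modules) is precisely the argument of those references. The bookkeeping point you flag, identifying the $\pi_0$-endomorphism ringoid of the generators with the ringoid whose modules are box-product $M$-modules, is indeed where the real content lies and is exactly what \cite{GS14} supplies.
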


A Quillen equivalence is a relationship between two model categories.  In particular, it implies the homotopy categories are equivalent.  In the context of \cref{Thm:QuillenEquiv}, we have
\[
\mathcal{D}(M) \simeq \Ho(HM\MMod).
\]
Thus to study the homotopy category of $HM$-modules in $G$-spectra, one could study the derived category of the Mackey ring $M$.  Restricting to compact objects, 
\[
\mathcal{D}^{\perf}(M) \simeq \Ho(HM\MMod)^\omega.
\]
This is our motivation to study the perfect complexes $\mathcal{D}^{\perf}(M)$, following the work in \cite{DHM} for $G=C_2$.

\begin{remark}
  It was recently shown by Fuhrmann \cite[Theorem~3.12]{Fuh25} that there is an infinity categorical version of \cref{Thm:QuillenEquiv}.  That is, for any Green functor $M$ there is an equivalence of symmetric monoidal $\infty$-categories 
  \[
  D(M) \simeq HM\MMod.
  \]
\end{remark}

\bigskip
Before we turn to the derived case, we consider the representation type of the cohomological Mackey algebra.

\section{Representation type}\label[section]{sec:reptype}
In this section, we look at the representation type of algebras.  Much of the content here is well-known, but we include a fair amount of detail in order to pay particular attention to algebras over arbitrary fields.  We then review representation type in the special cases of special biserial, string, and gentle algebras.

\subsection{Representation type of algebras} One of the classical questions in representation theory is when an algebra has only finitely many indecomposable representations up to isomorphism. When this is the case, we say that the algebra has finite representation type, and one can hope to list all the isomorphism classes of indecomposables. 

Over an algebraically closed field $k = \bar{k}$, one can divide the class of algebras of infinite representation type into algebras of tame representation type and those of wild representation type.  It is a celebrated theorem of Drozd that any finite dimensional algebra falls into exactly one of the three classes: finite, tame, or wild type.  See for example \cite{Dro77,CB88,BSZ09}.

Intuitively, tame algebras are those whose infinitely many isomorphism classes of indecomposable representations can be completely ``described'' and they are parametrized by a single parameter. On the other hand, wild algebras have hopelessly complicated representation theory with arbitrarily large families of indecomposable representations.  These families are so large, each wild $k$-algebra contains the representation theory of every finite dimensional $k$-algebra.  It is therefore rather surprising that an algebra with infinitely many indecomposables must be either tame or wild.

It is common to collect algebras with ``describable'' representation theory, namely those of finite and tame representation type, and refer to both as tame.  With this terminology, Drozd's famous theorem is often referred to as the tame-wild dichotomy. 

The term describable is meant to indicate a complete and meaningful classification, not just a bijection with another collection of objects.  As an example, although $\Z$ is not a field, 
it is instructive to think of the fundamental theorem of finitely-generated $\Z$-modules.  Here there are infinitely many indecomposables, but we can completely classify them; indecomposables are of the form $\Z$ or $\Z/(p^n\Z)$ for any choice of prime $p$ and positive integer $n$.

In the literature, the three representation types (finite, tame, and wild) are typically defined for $k$-algebras over an algebraically closed field $k=\bar{k}$.  To talk about these notions over arbitrary fields, one usually extends scalars to the algebraic closure (or at least separable closure).  We are interested in finite fields, so we naively extend the definitions of finite and wild representation types to arbitrary fields.  It is less clear how to meaningfully extend tame representation type to arbitrary fields and we do not pursue it.

\begin{definition}[Finite and infinite representation type]
A $k$-algebra is said to be of \emph{finite representation type} or simply to be \emph{representation finite} if it has only finitely many isomorphism classes of indecomposable modules. Otherwise, the algebra is said to have \emph{infinite representation type} or be \emph{representation infinite}.
\end{definition}

\begin{example}
  The group algebra $kG$ for $k$ a field of characteristic $p$ and $G=C_{p^m}$ is representation finite.  Presenting $kG$ as $k[x]/\left(x^{p^m}\right)$, the indecomposables are of the form $k[x]/(x^r)$ for $1 \leq r \leq p^m$. 
\end{example}

\begin{example}
  For $k$ a field of characteristic $p$ and $G = C_{p} \times C_{p}$, the group algebra $k[C_{p} \times C_{p}]$ is representation infinite.  See \cite{Hig54}.
\end{example}

 As in the example of the structure theorem for modules over a PID, the indecomposables of a $k$-algebra $\Lambda$ of tame representation type should have a nice description. The definition of tame below leverages the structure theorem by reducing the description of $\Lambda$-modules to the description of the simple $k[x]$-modules, $k[x]/(x-\lambda)$ for $\lambda \in k$. The remarkable thing about this definition is that it turns out to be exactly the complement of wild (\cref{def:wild}) over an algebraically closed field.

\begin{definition}[Tame representation type]
Over an algebraically closed field $k=\bar{k}$, an algebra $\Lambda$ has \emph{tame representation type} or is called \emph{tame} if for each dimension $d>0$ there is a finite number of $\Lambda$--$k[x]$-bimodules $M_i$ that are free as right $k[x]$-modules, and such that every $d$-dimensional indecomposable $\Lambda$-module is isomorphic to $M_i \otimes_{k[x]} N$ for some $M_i$ and some simple $k[x]$-module $N$.
\end{definition}

\begin{example}
  The canonical example of a tame algebra is $\Lambda = k[x]$ for $k = \bar{k}$ algebraically closed. It has $d$-dimensional indecomposables of the form $k[x]/(x-\lambda)^d$, i.e.\ Jordan blocks parametrized by eigenvalue $\lambda$.
\end{example}

While the polynomial algebra $k[x]$ is a PID for any field $k$, the simples are of varying dimension. Thus one would need to replace the notion of dimension in the definition of tame to use the polynomial algebra to parametrize indecomposables. Such generalizations have been made, however no dichotomy theorem is known in these contexts. See \cite{CB91} for a notion of tame for arbitrary rings.

\begin{example}
 Let $k = \bar{k}$ be an algebraically closed field of characteristic 2.  The group algebra $k[C_2 \times C_2]$ has infinite but tame representation type.  A description of the indecomposables can be found in \cite[Section~11.5]{WebbCourseNotes}.
\end{example}

In order to define wild representation type, we need the notion of a representation embedding.  We return to arbitrary fields. 

\begin{definition}[Representation embedding]\label[definition]{def:rep_emb}
  An additive functor $F\colon \mathcal C \to \mathcal D$ between abelian categories is called a \emph{representation embedding} if the functor $F$
  \begin{enumerate}
    \item is exact,
    \item is essentially injective, i.e.\ for any $M,N$ in $\mathcal C$, $FM \cong FN$ implies $M\cong N$, and
    \item preserves indecomposability, i.e.\ $FM$ is indecomposable whenever $M$ is. 
  \end{enumerate}
\end{definition}

\begin{definition}[Wild representation type]\label[definition]{def:wild}
  A $k$-algebra $\Lambda$ is said to have \emph{wild representation type}, or simply to be \emph{wild}, if for every finite dimensional $k$-algebra $\Gamma$ there is a representation embedding $\Gamma\mmod \to \Lambda\mmod$.
\end{definition}
\begin{remark}
  Recall the notation $\Lambda\mmod$ means the category of finite dimensional $\Lambda$-modules, but we do not require $\Lambda$ to be finite dimensional as an algebra.
\end{remark}

By definition, a classification of all indecomposables of a wild algebra would require a classification of the indecomposables of every finite dimensional $k$-algebra.  The canonical example of a wild algebra is the free algebra $k\langle x, y \rangle$ from \cref{ex:freealgebraxy}, as we demonstrate below in \cref{thm:wildfreealgebra}.

Over algebraically closed fields, all finite dimensional algebras fall into one of these types.  The following theorem is due to Drozd \cite{Dro77} (see also \cite[Corollary~C]{CB88} for a proof using the theory of bocses).

\begin{theorem}(Tame-wild dichotomy \cite{Dro77})\label[theorem]{thm:dichotomy}
  Over an algebraically closed field $k = \bar{k}$, a finite dimensional $k$-algebra with infinite representation type is either tame or wild, and not both.
\end{theorem}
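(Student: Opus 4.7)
The plan is to follow Drozd's original approach via the theory of bocses (bimodule problems over a pair of categories), in the streamlined form later given by Crawley-Boevey. A bocs is, roughly, a $k$-linear category $\mathcal{A}$ together with an $\mathcal{A}$-coalgebra $\omega$; its representations form an exact category with well-behaved notions of indecomposability and direct sum. The first step is to associate to $\Lambda$ a ``free'' bocs whose representation category is equivalent to $\Lambda\mmod$ in a way that preserves representation type. In this translation, tameness and wildness of $\Lambda$ correspond to analogous notions for the bocs, so the dichotomy for finite dimensional $k$-algebras reduces to a dichotomy for bocses.

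The technical heart is a reduction algorithm for bocses: one shows that every bocs either admits a nontrivial ``reduction'' to a simpler bocs of the same representation type, or is already in one of two terminal forms. The terminal forms are, on the one hand, \emph{minimal bocses} whose indecomposable representations are classified by a single continuous parameter (directly yielding a tame description), and on the other hand, \emph{wild bocses} which admit a representation embedding from the bocs whose representations are $k\langle x, y\rangle$-modules. Making this trichotomy precise, controlling the reduction step by step, and arguing that on the bocs associated with a finite dimensional $\Lambda$ the algorithm terminates after finitely many steps, is the main obstacle and the core of Drozd's argument.

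Granting the bocs dichotomy, the theorem follows in two parts. For the ``either tame or wild'' half, apply the reduction algorithm to the bocs of $\Lambda$; since $\Lambda$ has infinite representation type, the algorithm cannot end at a bocs with only finitely many indecomposables, so it must terminate at either a minimal bocs, from which one reads off $\Lambda$--$k[x]$-bimodules $M_i$ witnessing tameness as in \cref{def:rep_emb}, or at a wild bocs, which produces a representation embedding $k\langle x,y\rangle\mmod \to \Lambda\mmod$. Precomposing this embedding with a representation embedding $\Gamma\mmod \to k\langle x,y\rangle\mmod$ (which exists for every finite dimensional $k$-algebra $\Gamma$, as will be shown in \cref{thm:wildfreealgebra}) yields wildness of $\Lambda$.

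For the ``not both'' half, assume for contradiction that $\Lambda$ is simultaneously tame and wild. Wildness produces, for each large enough dimension $d$, a representation embedding $k\langle x,y\rangle\mmod \to \Lambda\mmod$ whose image contains a two-parameter family of pairwise non-isomorphic indecomposable $\Lambda$-modules of some bounded dimension $d'$. Tameness, however, only allows for each dimension $d'$ finitely many $\Lambda$--$k[x]$-bimodules $M_i$ that together cover the isomorphism classes of $d'$-dimensional indecomposables via base change along simple $k[x]$-modules, i.e., at most a one-parameter family in each dimension. A counting or genericity argument comparing the growth rates of these families yields the contradiction, completing the dichotomy.
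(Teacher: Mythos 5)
The paper offers no proof of this theorem: it is quoted as a classical result of Drozd, with \cite[Corollary~C]{CB88} cited for the bocs-theoretic proof. Your outline reproduces exactly that cited route (pass from $\Lambda\mmod$ to the representation category of an associated free bocs in a type-preserving way, run the reduction algorithm, land either in a minimal bocs or a wild bocs, and rule out the overlap by a parameter count), so strategically you are aligned with the source the paper points to.

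However, as written your text is an outline rather than a proof, and the gap is exactly where you yourself locate it: the construction of the bocs attached to $\Lambda$ with control of representation type, the reduction algorithm together with its termination on finite dimensional input, and the trichotomy ``reducible / minimal / wild'' are all asserted without argument, and these constitute essentially the entire mathematical content of the theorem. The ``not both'' half also needs more than a gesture: one must make ``number of parameters'' precise (e.g.\ via dimensions of constructible families in module varieties), check that a representation embedding from $k\langle x,y\rangle\mmod$, being of the form $M\otimes_{k\langle x,y\rangle}-$ with $M$ finitely generated projective on the right, sends modules of dimension $n$ to modules of dimension at most $Cn$ while preserving indecomposability and pairwise non-isomorphism, and then compare the quadratic growth of parameters of indecomposable $k\langle x,y\rangle$-modules with the linear bound imposed by tameness in each dimension; none of this is carried out. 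A small slip: the bimodules $M_i$ witness tameness as in the definition of tame representation type, not as in \cref{def:rep_emb}, which defines representation embeddings.
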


Before we turn to the example $k \langle x,y \rangle$, it is useful to have some alternative characterizations of representation embeddings.

\begin{proposition}\label[proposition]{prop:embedding_is_tensorproduct}
  If $\Gamma$ is a finite dimensional algebra and $F\colon \Gamma\mmod \to \Lambda\mmod$ is a representation embedding, then $F(\Gamma)$ has a canonical structure of a $\Lambda$--$\Gamma$-bimodule, where the action of $\Gamma$ is induced by the isomorphism $\End_\Gamma(\Gamma) \cong \Gamma^{\op}$. Moreover, there is a natural isomorphism $F \cong F(\Gamma)\otimes_\Gamma -$ and $F(\Gamma)$ is projective as a $\Gamma$-module.
\end{proposition}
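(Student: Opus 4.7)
The approach is a finite-dimensional Eilenberg--Watts type argument: upgrade $F(\Gamma)$ to a bimodule, produce a natural comparison map $F(\Gamma)\otimes_\Gamma - \to F(-)$, show it is an isomorphism by reducing to free modules via right exactness, and finally read off projectivity from exactness of $F$.

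First I would construct the bimodule structure. The right regular action makes $\Gamma$ into a $\Gamma$--$\Gamma$-bimodule, which amounts to a ring homomorphism $\Gamma^{\op} \to \End_\Gamma(\Gamma)$. Applying $F$ (which is additive, since exact functors between abelian categories are automatically additive; I would also verify $k$-linearity of $F$, using centrality of $k$ in $\Gamma$) yields a ring homomorphism $\Gamma^{\op} \to \End_\Lambda(F(\Gamma))$, giving $F(\Gamma)$ a right $\Gamma$-action commuting with its left $\Lambda$-action. Symmetry of the $k$-action then follows from the $k$-linearity of $F$, so $F(\Gamma)$ is a $\Lambda$--$\Gamma$-bimodule in the sense of the paper's conventions.

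Next, for $M \in \Gamma\mmod$ and $m \in M$, let $\phi_m \colon \Gamma \to M$ denote the $\Gamma$-linear map with $\phi_m(1)=m$, and define
\[
\eta_M \colon F(\Gamma)\otimes_\Gamma M \longrightarrow F(M), \qquad y\otimes m \;\longmapsto\; F(\phi_m)(y).
\]
The identities $\phi_{\gamma m}=\phi_m\circ r_\gamma$ and $f\circ\phi_m=\phi_{f(m)}$ (for $f\colon M\to N$ in $\Gamma\mmod$ and $r_\gamma$ right multiplication by $\gamma$) respectively make $\eta_M$ well-defined on the tensor product and natural in $M$. Tautologically $\eta_\Gamma$ is an isomorphism, hence so is $\eta_{\Gamma^n}$ by additivity of both sides. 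For arbitrary finite-dimensional $M$, choose a finite presentation $\Gamma^a\to\Gamma^b\to M\to 0$; since $F$ is exact and $F(\Gamma)\otimes_\Gamma-$ is right exact, the five lemma applied to the resulting commutative ladder gives that $\eta_M$ is an isomorphism.

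Finally, because $F\cong F(\Gamma)\otimes_\Gamma-$ is exact on $\Gamma\mmod$, the bimodule $F(\Gamma)$ is flat as a right $\Gamma$-module. Since $F(\Gamma)\in\Lambda\mmod$ it is finite dimensional over $k$, hence finitely generated over $\Gamma$, and a finitely generated flat module over the finite-dimensional (in particular Artinian) algebra $\Gamma$ is projective. The main friction point in the argument is confirming that a representation embedding is $k$-linear on morphism spaces (so that the bimodule structure really satisfies the symmetry condition on $k$); once that is in hand the remainder is a direct unwinding of the Eilenberg--Watts pattern.
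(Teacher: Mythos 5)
The paper states this proposition without proof (it is a finite–dimensional Eilenberg--Watts statement), so there is no argument of the authors to compare against; your proposal is exactly the standard argument the statement is implicitly invoking, and apart from one point it is complete. The bimodule structure via $\Gamma^{\op}\to\End_\Gamma(\Gamma)\xrightarrow{\ F\ }\End_\Lambda(F(\Gamma))$, the comparison map $y\otimes m\mapsto F(\phi_m)(y)$ with well-definedness and naturality coming from $\phi_{\gamma m}=\phi_m\circ r_\gamma$ and $f\circ\phi_m=\phi_{f(m)}$, the reduction to free modules by a finite presentation using right exactness of the tensor functor and exactness of $F$, and the final step (exactness of $F(\Gamma)\otimes_\Gamma-$ on $\Gamma\mmod$ kills $\Tor_1$ against the finite dimensional cyclic modules $\Gamma/I$, so $F(\Gamma)$ is flat, and a finitely generated flat module over the Artinian algebra $\Gamma$ is projective) are all correct.

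The one step you cannot carry out as described is the ``verification'' of $k$-linearity. Exactness does give additivity of $F$, but $k$-linearity on Hom-spaces does not follow from the three axioms of \cref{def:rep_emb}, and in particular it cannot be deduced from centrality of $k$ in $\Gamma$: twisting by a nontrivial automorphism of $k$ (complex conjugation for $k=\C$, or Frobenius for a non-prime finite field) gives an exact, essentially injective, indecomposability-preserving endofunctor of $k\mmod$ that is additive but only semilinear, and for such an $F$ the right action $F(c\cdot\id_\Gamma)$ of $c\in k$ on $F(\Gamma)$ disagrees with the action of $c$ through $\Lambda$, so the paper's symmetric-$k$-action convention for bimodules fails and the conclusion is literally false. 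The resolution is that $k$-linearity must be taken as part of the (implicit, standard) definition of a representation embedding rather than proved; once it is assumed, your argument goes through verbatim, including the identification $y\cdot c=cy$ needed for the bimodule structure and for $\eta_M$ to be a map of $\Lambda$-modules.
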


The following proposition demonstrates our key technique for showing that an algebra is wild. 
\begin{proposition}\label[proposition]{prop:key_technique}
  Let $\mathcal{C}$ and $\mathcal{D}$ be idempotent complete additive categories and let $F\colon \mathcal C \to \mathcal D$ be an additive functor. Suppose there exists a collection of (possibly unnatural) additive maps $\Delta_{M,N}\colon \Hom_{\mathcal D}(FM, FN) \to \Hom_{\mathcal C}(M, N)$ 
  such that the following two conditions hold:
  \begin{enumerate}
    \item a map $f\colon F(M) \to F(N)$ is an isomorphism if and only $\Delta_{M,N}(f)\colon M \to N$ is, and
    \item whenever $e\colon F(M) \to F(M)$ is idempotent, then $\Delta_{M,M}(e)$ is as well.
  \end{enumerate}
  Then $F$ is essentially injective and preserves indecomposability.
    \begin{center}
    \begin{tikzcd}
      \Hom_{\mathcal C}(M, N) \ar[r, "F", bend left, out=20, in=160] & \Hom_{\mathcal D}(FM, FN) \ar[l, bend left, "\Delta_{M,N}", dashed, out=20, in=160]
    \end{tikzcd}
  \end{center}
  
  In particular, if $F$ is an exact functor between abelian categories, these properties are sufficient to show that $F$ is a representation embedding.
  \begin{proof}
    Since an isomorphism $F(M) \cong F(N)$ implies an isomorphism $M \cong N$, the functor $F$ is essentially injective. To see that $F$ preserves indecomposability, assume $F(M)$ is decomposable. Then there exists a nontrivial idempotent $e\colon F(M) \to F(M)$ given by projection onto a nontrivial summand. This means $e$ and $1_{F(M)} - e$ are both idempotent, neither are isomorphisms, and they sum to the identity.  Now applying $\Delta$ ($=\Delta_{M,M}$) we see that $\Delta(e)$ and $\Delta(1_{F(M)}-e)$ are both idempotent. Since they cannot be isomorphisms, they are not the identity.  On the other hand, their sum $\Delta(e) + \Delta(1_{F(M)}-e) = \Delta(1_{F(M)})$ is an isomorphism, so they also cannot be $0$. Thus $\Delta(e) \colon M \to M$ is a nontrivial idempotent and $M$ is decomposable.
  \end{proof}
\end{proposition}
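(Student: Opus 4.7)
The plan is to split the two conclusions. Essential injectivity is immediate from hypothesis (1): any isomorphism $f\colon FM \to FN$ produces an isomorphism $\Delta_{M,N}(f)\colon M \to N$, so $FM \cong FN$ implies $M \cong N$.

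The real content is preservation of indecomposability, which I would prove by contrapositive. Suppose $FM$ is decomposable. Since $\mathcal D$ is idempotent complete, this gives a nontrivial idempotent $e \colon FM \to FM$, that is, $e^2 = e$ with $e \neq 0, 1_{FM}$. I would then propose $\Delta_{M,M}(e)$ as a candidate nontrivial idempotent on $M$; because $\mathcal C$ is also idempotent complete, producing such a thing forces $M$ to decompose. Condition (2) gives idempotency for free, so the only remaining task is nontriviality, meaning $\Delta(e) \neq 0, 1_M$.

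To see $\Delta(e) \neq 1_M$ I would use (1) directly: if $\Delta(e)$ were an isomorphism, then $e$ would be as well, and an idempotent isomorphism must equal the identity, contradicting $e \neq 1_{FM}$. The subtler point is ruling out $\Delta(e) = 0$, and here I plan to exploit the additivity of $\Delta$ by considering $1_{FM} - e$ in parallel. It is again a nontrivial idempotent (the map $e \mapsto 1_{FM} - e$ is an involution on nontrivial idempotents), so the same argument gives $\Delta(1_{FM} - e) \neq 1_M$. Their sum is $\Delta(1_{FM})$, which is an isomorphism by (1); if $\Delta(e)$ vanished, then $\Delta(1_{FM} - e) = \Delta(1_{FM})$ would be simultaneously an idempotent and an isomorphism, hence the identity, contradicting what we just established.

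The main obstacle is precisely the nonzero part: one really needs the additivity of $\Delta$ together with the symmetric use of $e$ and $1_{FM} - e$ to force a contradiction, since (1) and (2) alone say nothing about $\Delta$ sending nonzero morphisms to nonzero ones. Once this symmetric trick is in place, the final assertion about exact functors between abelian categories is just \cref{def:rep_emb} unpacked: exactness is given, while essential injectivity and preservation of indecomposability are what has been proved.
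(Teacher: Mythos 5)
Your proof is correct and follows essentially the same route as the paper: essential injectivity from condition (1), and then the symmetric use of the complementary idempotents $e$ and $1_{FM}-e$, together with additivity of $\Delta$ and the fact that $\Delta(1_{FM})$ is an isomorphism, to show $\Delta(e)$ is a nontrivial idempotent, whence $M$ decomposes by idempotent completeness of $\mathcal C$. The only cosmetic difference is that you rule out $\Delta(e)=1_M$ via ``an idempotent isomorphism is the identity,'' whereas the paper notes directly from (1) that $\Delta(e)$ cannot be an isomorphism at all; both are fine.
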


In particular, fully faithful exact functors are representation embeddings.

\begin{corollary}\label[corollary]{cor:fullyfaithful_is_rep_embedding}
  If a functor $F\colon \mathcal C \to \mathcal D$ between abelian categories is fully faithful and exact, then $F$ is a representation embedding.
\end{corollary}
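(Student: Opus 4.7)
The plan is to deduce the corollary directly from \cref{prop:key_technique} by taking the $\Delta_{M,N}$ maps to be the inverses of the (bijective) action of $F$ on hom sets. Since abelian categories are in particular idempotent complete additive categories, the hypotheses on $\mathcal C$ and $\mathcal D$ in \cref{prop:key_technique} are automatic, and exactness of $F$ is given, so only essential injectivity and preservation of indecomposability need to be established.

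First, I would define $\Delta_{M,N} \colon \Hom_{\mathcal D}(FM, FN) \to \Hom_{\mathcal C}(M, N)$ to be the inverse of the bijection $F \colon \Hom_{\mathcal C}(M, N) \to \Hom_{\mathcal D}(FM, FN)$ provided by the full faithfulness of $F$. Since $F$ is additive (being exact between abelian categories) and bijective on hom sets, this inverse is automatically additive, so $\Delta_{M,N}$ is an additive map. This is the only step that uses full faithfulness in an essential way.

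Next, I would verify the two conditions of \cref{prop:key_technique}. For condition (1), if $\Delta_{M,N}(f) \colon M \to N$ has an inverse $g \colon N \to M$, then $F(g)$ is a two-sided inverse of $F(\Delta_{M,N}(f)) = f$, so $f$ is an isomorphism; conversely, if $f$ is an isomorphism with inverse $h$, then by full faithfulness $h = F(\Delta_{N,M}(h))$, and the identities $f \circ h = 1_{FN}$, $h \circ f = 1_{FM}$ translate under $\Delta$ (using that $F$ is faithful) to $\Delta_{M,N}(f) \circ \Delta_{N,M}(h) = 1_N$ and $\Delta_{N,M}(h) \circ \Delta_{M,N}(f) = 1_M$. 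For condition (2), if $e \colon F(M) \to F(M)$ is idempotent, then
\[
F\bigl(\Delta_{M,M}(e) \circ \Delta_{M,M}(e)\bigr) = e \circ e = e = F\bigl(\Delta_{M,M}(e)\bigr),
\]
and faithfulness of $F$ gives $\Delta_{M,M}(e) \circ \Delta_{M,M}(e) = \Delta_{M,M}(e)$.

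With both conditions verified, \cref{prop:key_technique} yields that $F$ is essentially injective and preserves indecomposability, and combined with exactness this gives that $F$ is a representation embedding. I do not anticipate a genuine obstacle here; the only subtle point is remembering to invoke faithfulness (rather than just fullness) when transporting equations across $F$, and to note that abelian categories satisfy the idempotent completeness hypothesis required to apply the previous proposition.
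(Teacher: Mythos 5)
Your proposal is correct and follows the same route as the paper, which simply takes $\Delta_{M,N}$ to be the inverse of the bijection on hom sets induced by $F$ and applies \cref{prop:key_technique}; you merely spell out the verification of the two conditions that the paper leaves implicit.
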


\begin{proof}
  Take $\Delta_{M,N}$ to be the inverse of the bijection $F_{M,N}$ induced by $F$.
\end{proof}

\begin{example}
   Let $f \colon R \to S$ be a surjective map of $k$-algebras.  Then restriction of scalars is a representation embedding $S\MMod \to R\MMod$.
\end{example}

We now use \cref{prop:key_technique} to show there is a representation embedding between free algebras of rank greater than 1.  Because it is common in the literature to work over an algebraically closed field, we include the proof of this and other well-known results in order to illustrate the techniques for an arbitrary field.

\begin{proposition}\label[proposition]{prop:free_algebra_embeds_in_Sigma}
  Let $k$ be an arbitrary field, and let $\Gamma = k\langle x_1, \cdots, x_n \rangle$ for $n >1$ and $\Sigma = k\langle a, b \rangle$ be free algebras. There is a representation embedding 
  \[
  \Gamma\mmod \to \Sigma\mmod.
  \]
  
  \begin{proof}
    We will define a functor $F\colon \Gamma\mmod \to \Sigma\mmod$ and show that it is a representation embedding using \cref{prop:key_technique}.

    Let $V$ be $\Gamma$-module of dimension $m$ and choose a basis for $V$. Let $X_1, \dots X_n$ be $m \times m$ matrices representing that action of $x_1, \dots, x_n$ on $V$.   Then for $(V, X_1, \cdots, X_n)$ in $\Gamma\mmod$ we define $F(V) \coloneqq (V^n, A, B)$, with $A$ and $B$ block matrices given by 
    \begin{align*}
      A &= \begin{pmatrix}
        X_1 & \\
        1 & X_2 \\
         & 1 & X_3\\
        & &\ddots & \ddots\\
        & & & 1 & X_n
      \end{pmatrix}, 
      \qquad 
      B = \begin{pmatrix}
        1 \\
        & 0 \\
        & & 1\\
        & & & 0\\
        &&&&\ddots
      \end{pmatrix},
    \end{align*}
    where 1 denotes the identity on $V$.

    Given $(V, X_1, \cdots, X_n)$ and $(U, Y_1, \cdots, Y_n)$, as well as a homomorphism $f\colon V \to U$ in $\Gamma\mmod$, we define $F(f)$ as $f^{\oplus n} \colon V^{n} \to U^{n}$. One readily checks that this is a homomorphism of $\Sigma$-modules and that $F$ is an exact functor.

    To employ the technique of \cref{prop:key_technique}, we need to take a homomorphism $F(V) \to F(U)$ and construct a homomorphism $V \to U$. 
    Let $\Phi\colon F(V) \to F(U)$ be any homomorphism of $\Sigma$-modules given as a block matrix with blocks $\Phi_{ij}$ of size $\dim U \times \dim V$. Commutativity $\Phi B_V = B_U \Phi$ implies that $\Phi_{ij}=0$ whenever $i$ and $j$ have different parity.
    
    Computing $\Phi A_V$ and $A_U\Phi$ we get 
    \begin{align*}
      (\Phi A_V)_{ij} &= \Phi_{ij}X_j + \Phi_{i,j+1}\\
      (A_U\Phi)_{ij} &= \Phi_{i-1,j} + Y_i\Phi_{ij}
    \end{align*}
    where $\Phi_{ij} = 0$ when the indices are out of bounds.

    When $i$ and $j$ have different parity $\Phi_{ij} = 0$, so the equation $\Phi A_V = A_U\Phi$ tells us that $\Phi_{i-1,j} = \Phi_{i,j+1}$. In particular this means that the top row $\Phi_{1j}$ is $0$ except for $\Phi_{11}$.
    
    When $i$ and $j$ have the same parity, $\Phi_{i-1, j}$ and $\Phi_{i,j+1}$ are both $0$, so $\Phi A_V = A_U\Phi$ implies $\Phi_{ij}X_j = Y_i\Phi_{ij}$. In particular, we have $\Phi_{ii}X_i = Y_i\Phi_{ii}$.

    The above computations imply that $\Phi$ is block-lower triangular with $\Phi_{ii}=\Phi_{j\!j}$ for all $i$ and $j$, and that $\Phi_{ii}X_i = Y_i\Phi_{ii}$. This last fact means that $\Phi_{11} \colon V \to U$ is a homomorphism of $\Gamma$-modules. We therefore define $\Delta(\Phi) = \Phi_{11}$ in accordance with \cref{prop:key_technique}. Since $\Phi$ is lower triangular, it is an isomorphism if and only if $\Phi_{ii} = \Phi_{11}$ is. Further if $V=U$ and $\Phi$ is idempotent, then $\Phi_{11}$ is as well. So the criteria of \cref{prop:key_technique} are satisfied and $F$ is a representation embedding.
  \end{proof}
\end{proposition}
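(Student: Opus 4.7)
The plan is to encode the $n$ free generators $x_1,\ldots,x_n$ of $\Gamma$ into the two free generators $a,b$ of $\Sigma$ by inflating a $\Gamma$-module $V$ of dimension $m$ to a $\Sigma$-module on $V^n$ (or some similar small multiple), and then using block-matrix bookkeeping to recover the $X_i$'s from $A$ and $B$. Concretely, I would set $F(V)=V^n$ and define the two operators so that one of them ``threads'' the copies of $V$ together and simultaneously applies the $X_i$'s, while the other acts as a simple marker that tags the individual blocks. On morphisms I would take $F(f) = f^{\oplus n}$. Since the blocks of $A,B$ are built from $k$-linear data, $k$-linearity and exactness of $F$ are immediate.

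Once $F$ is set up, the central work is to apply \cref{prop:key_technique}. Given any $\Sigma$-homomorphism $\Phi\colon F(V)\to F(U)$, write $\Phi$ as an $n\times n$ matrix of blocks $\Phi_{ij}\colon V\to U$. The plan is to use $\Phi B_V = B_U\Phi$ to force many of the $\Phi_{ij}$ to vanish (because the marker $B$ has different values on different blocks, so entries linking differently tagged blocks must be zero), and then use $\Phi A_V = A_U \Phi$ to enforce a shift-type recursion among the remaining nonzero blocks. This recursion should force $\Phi$ to be block lower-triangular with a constant diagonal $\Phi_{11}=\Phi_{22}=\cdots=\Phi_{nn}$, and a separate consequence of the $A$-relation on the diagonal blocks should read $\Phi_{ii}X_i = Y_i\Phi_{ii}$. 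Declaring $\Delta(\Phi)\coloneqq \Phi_{11}$ then gives a $\Gamma$-module map $V\to U$.

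Finally, I would verify the two hypotheses of \cref{prop:key_technique}: because $\Phi$ is block lower-triangular with constant diagonal, $\Phi$ is an isomorphism if and only if its diagonal entry $\Phi_{11}$ is; and if $\Phi$ is idempotent (with $V=U$), then the diagonal idempotency $\Phi_{11}^2=\Phi_{11}$ follows by reading off the $(1,1)$-entry of $\Phi^2=\Phi$. Combined with exactness, this upgrades $F$ to a representation embedding.

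The main obstacle is choosing $A$ and $B$ so that the two commutation relations together suffice to pin down the structure of $\Phi$ completely — it is easy to set things up so that one of $A,B$ forces the desired vanishing while the other fails to enforce the shift/recursion pattern. In particular, the marker $B$ must be simple enough to commute with, yet rich enough to distinguish blocks, and the subdiagonal identity entries of $A$ must be placed so that the equality $\Phi_{i-1,j}=\Phi_{i,j+1}$ propagates all the way from the top row and forces strict lower-triangularity. A secondary obstacle is organizing the indexing so that each $X_i$ appears on some diagonal block, so that $\Phi_{11}$ is genuinely forced to commute with \emph{every} $X_i$ rather than just some of them.
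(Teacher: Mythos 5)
Your proposal is essentially the paper's proof: the same functor $F(V)=(V^n,A,B)$ with $F(f)=f^{\oplus n}$, the same block-matrix analysis of a $\Sigma$-homomorphism $\Phi$, and the same choice $\Delta(\Phi)=\Phi_{11}$ fed into \cref{prop:key_technique}. The one ingredient you leave open --- the explicit form of the marker $B$ --- is resolved in the paper by the alternating idempotent diagonal $B=\operatorname{diag}(1,0,1,0,\dots)$, which exists over every field (a ``distinct tags'' diagonal would not over small finite fields, the case of interest here); it forces $\Phi_{ij}=0$ whenever $i$ and $j$ have different parity, and the shift relation $\Phi_{i-1,j}=\Phi_{i,j+1}$ coming from the subdiagonal identities of $A$ then propagates from the top row to give exactly the block-lower-triangular, constant-diagonal shape with $\Phi_{ii}X_i=Y_i\Phi_{ii}$ that your plan calls for.
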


\begin{theorem}\label[theorem]{thm:wildfreealgebra}
  The free algebra on two generators $\Sigma = k\langle a, b\rangle$ is wild.
  \begin{proof}
    Let $\Gamma$ be a any finite dimensional algebra. Then there exists a surjective homomorphism $k\langle x_1, \cdots, x_n \rangle \to \Gamma$ for some $n >1$. The restriction of scalars $\Gamma\mmod \to k\langle x_1, \cdots, x_n \rangle\mmod$ is fully faithful and exact, and thus a representation embedding by \cref{cor:fullyfaithful_is_rep_embedding}. Composing this with the embedding in \cref{prop:free_algebra_embeds_in_Sigma} we get a representation embedding of $\Gamma\mmod$ into $\Sigma\mmod$, hence $\Sigma$ is wild.
  \end{proof}
\end{theorem}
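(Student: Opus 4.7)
The plan is to verify the defining property of wild representation type directly: given an arbitrary finite dimensional $k$-algebra $\Gamma$, I must produce a representation embedding $\Gamma\mmod \to \Sigma\mmod$. The heavy lifting has already been done in \cref{prop:free_algebra_embeds_in_Sigma}, which supplies a representation embedding from $k\langle x_1,\ldots,x_n\rangle\mmod$ into $\Sigma\mmod$ for any $n>1$. So the problem reduces to embedding $\Gamma\mmod$ into the module category of some free algebra of finite (but possibly large) rank.

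First, since $\Gamma$ is finite dimensional over $k$, it is certainly finitely generated as a $k$-algebra; pick generators $g_1, \ldots, g_n$, and enlarge the generating set if necessary so that $n\geq 2$. Sending $x_i \mapsto g_i$ defines a surjective $k$-algebra homomorphism $\pi\colon k\langle x_1,\ldots, x_n\rangle \twoheadrightarrow \Gamma$. Restriction of scalars along $\pi$ gives a functor $\pi_*\colon \Gamma\mmod \to k\langle x_1,\ldots,x_n\rangle\mmod$, which is exact and, because $\pi$ is surjective, fully faithful. By \cref{cor:fullyfaithful_is_rep_embedding}, $\pi_*$ is a representation embedding. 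Composing $\pi_*$ with the embedding supplied by \cref{prop:free_algebra_embeds_in_Sigma} yields the desired representation embedding $\Gamma\mmod \to \Sigma\mmod$. Since $\Gamma$ was arbitrary, this shows $\Sigma=k\langle a,b\rangle$ is wild.

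There is very little to obstruct this argument at the present stage, because the substantive content has been packaged into the preceding results. The only care needed is to ensure $n>1$ when invoking \cref{prop:free_algebra_embeds_in_Sigma}, which is harmless since one can always adjoin a redundant generator. The real difficulty lies entirely upstream in \cref{prop:free_algebra_embeds_in_Sigma}, where the explicit block-matrix construction collapses $n$ free variables down to two while preserving indecomposability; granted that, the present theorem is a one-line composition of two representation embeddings.
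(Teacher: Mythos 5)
Your proposal is correct and follows essentially the same route as the paper: surject $k\langle x_1,\ldots,x_n\rangle$ onto $\Gamma$, note restriction of scalars is fully faithful and exact so \cref{cor:fullyfaithful_is_rep_embedding} applies, and compose with the embedding of \cref{prop:free_algebra_embeds_in_Sigma}. Your extra remarks on choosing generators and padding to ensure $n>1$ are harmless elaborations of the same argument.
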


We can compose representation embeddings, so the following is immediate.
  
\begin{proposition}\label[proposition]{prop:embedding_from_wild_implies_wild}
  If $\Gamma$ is wild, and $F\colon \Gamma\mmod \to \Lambda\mmod$ is a representation embedding, then $\Lambda$ is wild as well.
\end{proposition}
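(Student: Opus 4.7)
The plan is to observe that the three defining properties of a representation embedding in \cref{def:rep_emb} are all closed under composition, so it suffices to compose the hypothesized embedding $F$ with a representation embedding provided by the wildness of $\Gamma$.

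More concretely, I would argue as follows. Let $\Sigma$ be an arbitrary finite dimensional $k$-algebra. Since $\Gamma$ is wild, the definition of wild representation type provides a representation embedding $G\colon \Sigma\mmod \to \Gamma\mmod$. Form the composite $F\circ G\colon \Sigma\mmod \to \Lambda\mmod$. I would then verify the three conditions of \cref{def:rep_emb} for $F\circ G$: exactness follows because a composite of exact functors is exact; essential injectivity follows because $(F\circ G)(M)\cong (F\circ G)(N)$ forces $G(M)\cong G(N)$ via essential injectivity of $F$, and then $M\cong N$ via essential injectivity of $G$; preservation of indecomposability follows because if $M$ is indecomposable then $G(M)$ is indecomposable, and then $F(G(M))$ is indecomposable. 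Since $\Sigma$ was arbitrary, $\Lambda$ is wild.

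There is essentially no obstacle here — the statement is a formal ``composition of representation embeddings is a representation embedding'' argument, and the only real content is in the already-proved existence of representation embeddings out of $\Sigma\mmod$ into $\Gamma\mmod$ coming from wildness of $\Gamma$. The proposition is included mainly as a convenient packaging to be invoked repeatedly in later sections, together with \cref{thm:wildfreealgebra}, to conclude wildness of various algebras from the existence of a single representation embedding from a known wild algebra.
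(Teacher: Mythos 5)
Your argument is correct and matches the paper exactly: the paper states this proposition is immediate because representation embeddings compose, and your verification that exactness, essential injectivity, and preservation of indecomposability are each closed under composition is precisely the content being left implicit. Nothing is missing.
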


A useful technique to show an algebra is wild is to find a representation embedding from $k\langle x, y\rangle\mmod$ or another wild algebra.  We illustrate this with an example of a wild path algebra that will be of importance to us in the proof of \cref{prop:wild_cyclic_group}.

\begin{proposition}\label[proposition]{prop:A1tildetilde_wild}
  Let $\Lambda$ be the path algebra of the quiver below (with no relations). Then $\Lambda$ is wild. 
  \begin{center}
    \begin{tikzcd}
      1 \ar[r, bend left, "a"] \ar[r, bend right, swap, "b"] & 2 \ar[r, "w"] & 3
    \end{tikzcd}
  \end{center}
  \begin{proof}
    Let $\Sigma = k\langle x, y \rangle$ be the free algebra on two generators, and consider the $\Lambda$--$\Sigma$-bimodule $P$ given by 
    \begin{center}
      \begin{tikzcd}[ampersand replacement = \&]
        \Sigma^2 \ar[r, bend left]{}{\begin{pmatrix} x & y \\ 1 & 0 \end{pmatrix}} \ar[r, bend right, swap]{}{\begin{pmatrix} 1 & 0 \\ 0 & 1 \end{pmatrix}}  \& \Sigma^2 \ar[r]{}{\begin{pmatrix} 0 & 1 \end{pmatrix}}  \& \Sigma
      \end{tikzcd}
    \end{center}
    We see that this is free as a $\Sigma$-module, so $P\otimes_\Sigma -$ is exact.

    For a $\Sigma$-module $(V, X, Y)$ the tensor product $P\otimes_{\Sigma} V$ is given by
    \begin{center}
      \begin{tikzcd}[ampersand replacement = \&]
        V^2 \ar[r, bend left]{}{\begin{pmatrix} X & Y \\ 1 & 0 \end{pmatrix}} \ar[r, bend right, swap]{}{\begin{pmatrix} 1 & 0 \\ 0 & 1 \end{pmatrix}}  \& V^2 \ar[r]{}{\begin{pmatrix} 0 & 1 \end{pmatrix}}  \& V
      \end{tikzcd}
    \end{center}
    A straightforward computation shows that $P\otimes_\Sigma -$ is fully faithful. Hence by \cref{cor:fullyfaithful_is_rep_embedding}, $P\otimes_\Sigma -$ defines a representation embedding $\Sigma\mmod \to \Lambda\mmod$ and $\Lambda$ is wild.
  \end{proof}
\end{proposition}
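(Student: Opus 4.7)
The plan is to construct a representation embedding from $\Sigma\mmod$ into $\Lambda\mmod$, where $\Sigma = k\langle x,y\rangle$ is the free algebra on two generators. Since $\Sigma$ is wild by \cref{thm:wildfreealgebra}, such an embedding forces $\Lambda$ to be wild via \cref{prop:embedding_from_wild_implies_wild}. The cleanest route is to define a $\Lambda$--$\Sigma$-bimodule $P$ that is free as a right $\Sigma$-module, so that $P \otimes_\Sigma -$ is automatically exact; essential injectivity and preservation of indecomposables can then be verified either by showing the functor is fully faithful and invoking \cref{cor:fullyfaithful_is_rep_embedding}, or by exhibiting an explicit recovery map $\Delta$ as in \cref{prop:key_technique}.

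The design of $P$ should exploit the shape of the quiver. With two parallel arrows $a,b$ from vertex $1$ to vertex $2$ and a single arrow $w$ from vertex $2$ to vertex $3$, a natural guess is to place free modules $\Sigma^2$ at vertices $1$ and $2$, and a single copy $\Sigma$ at vertex $3$. The two parallel arrows give enough room to encode both generators of $\Sigma$ simultaneously: one can take $b$ to be the identity on $\Sigma^2$ and take $a$ to be a block matrix that mixes in $x$ and $y$, for instance $\begin{psmallmatrix} x & y \\ 1 & 0 \end{psmallmatrix}$. The arrow $w$ is then a projection onto a single component, say $\begin{psmallmatrix} 0 & 1 \end{psmallmatrix}$, whose role is to rigidify morphisms at vertex $2$.

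To verify that this gives a representation embedding, I would take an arbitrary morphism $\Phi \colon F(V,X,Y) \to F(U,X',Y')$ in $\Lambda\mmod$, write its components $\Phi_1,\Phi_2,\Phi_3$ at the three vertices as block matrices, and successively apply the three commutation relations. Commutation with $b = I$ identifies $\Phi_1$ and $\Phi_2$; commutation with $w$ cuts down the blocks of $\Phi_2$; and commutation with $a$ then forces the surviving blocks to intertwine the $\Sigma$-actions. The outcome should be that $\Phi$ is (essentially) determined by a single $\Sigma$-linear map $V \to U$, which can be taken as $\Delta(\Phi)$.

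The main obstacle is calibrating the precise block shapes of $a$ and $w$ so that this chain of constraints is both satisfiable by genuine $\Sigma$-intertwiners and rigid enough to exclude spurious morphisms, while simultaneously keeping $P$ free over $\Sigma$. Once the matrices are arranged correctly, verifying the hypotheses of \cref{prop:key_technique} (namely, that $\Phi$ is an isomorphism iff $\Delta(\Phi)$ is, and that idempotents lift) becomes a routine block computation.
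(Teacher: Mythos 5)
Your proposal is correct and coincides with the paper's argument: the paper uses exactly the bimodule you describe, with $\Sigma^2$ at vertices $1$ and $2$, $\Sigma$ at vertex $3$, $b$ the identity, $a = \begin{psmallmatrix} x & y \\ 1 & 0 \end{psmallmatrix}$, and $w = \begin{psmallmatrix} 0 & 1 \end{psmallmatrix}$, and then checks by the same block computation that $P\otimes_\Sigma -$ is fully faithful, hence a representation embedding by \cref{cor:fullyfaithful_is_rep_embedding}. The block analysis you outline does close up as expected (the relations force $\Phi_1$ diagonal with equal diagonal blocks intertwining $X,Y$ with $X',Y'$), so no further calibration is needed.
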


\subsection{Special biserial, string, and gentle algebras}
There are many notable types of algebras with well-known representation theory.  Here we look at special biserial algebras, string algebras, and gentle algebras.

What makes special biserial algebras `special' is that they are tame, and their representation theory can be described in terms of so-called ``strings'' and ``bands''. Loosely speaking, a string is an unoriented path of the quiver that does not pass through a relation.  A band is a minimal cyclic string such that all powers of it are also strings.

\begin{definition}[Special biserial algebra]\label[definition]{def:special_biserial}
  A bound quiver $(Q,\rho)$ (or its admissible path algebra) is called \emph{special biserial} if the following conditions hold. 
  \begin{enumerate}
    \item At each vertex there are at most two outgoing and at most two incoming arrows.
    \item For each arrow $\alpha$ there exists at most one composable arrow $\beta$ such that $\alpha \beta$ \textbf{is not} a relation, and at most one composable arrow $\gamma$ such that $\gamma \alpha$ \textbf{is not} a relation.
    \item The relations $\rho$ are admissible, i.e.\ they consist of linear combinations of paths of length at least $2$, and there exists an $n$ such that all paths of length more than $n$ are contained in the ideal generated by the relations. 
  \end{enumerate}
\end{definition}

\begin{remark}
  Note the conditions for a special biserial algebra imply we may assume the relations are only monomial (given by a single path, also called a zero-relation) or binomial (a linear combination of two paths, also called a commutativity relation). 
\end{remark}

\begin{definition}[String algebra]\label[definition]{def:string_algebra}
 A special biserial bound quiver $(Q,\rho)$ (or its admissible path algebra) is \emph{string} if the conditions in \cref{def:special_biserial} hold and additionally the following:
 \begin{enumerate}
 \item[(4)] $\rho$ consists of zero-relations, i.e.\ paths of length at least $2$ (and not linear combinations).
 \end{enumerate}
\end{definition}

Imposing even stronger conditions, we have the notion of a gentle algebra.

\begin{definition}[Gentle algebra]\label[definiton]{def:gentle}
  A string bound quiver $(Q,\rho)$ (or its admissible path algebra) is \emph{gentle} if the conditions in \cref{def:special_biserial} hold and additionally the following hold.
  \begin{enumerate}
    \item[(5)] All relations in $\rho$ have length exactly $2$.
    \item[(6)] For each arrow $\alpha$ there exists at most one composable arrow $\beta'$ such that $\alpha \beta'$ \textbf{is} a relation, and at most one composable arrow $\gamma\,'$ such that $\gamma\,' \alpha$ \textbf{is} a relation.
  \end{enumerate}
\end{definition}

\begin{example}\label[example]{ex:Cp_generalization_special_biserial}
    The bound quiver below
  \begin{center}
    $\left.
    \begin{tikzcd}
      1 \arrow[loop left, "c", out=210, in=150, looseness=5] \ar[r, bend left, "b"] & 0 \ar[l, bend left, "a"]
    \end{tikzcd}
    \middle/ 
    \langle c^n,\ ba,\ ab-c^{n-1},\ ca,\ bc \rangle
    \right.$,
  \end{center}
  generalizing $\cohmu_k(C_p)$ from \cref{prop:CpCohMackQuiver}, is special biserial.  
  It is not string because the linear combination $ab-c^{n-1}$ is not a path.  Even if this relation were deleted, the result would not be gentle because both $ba$ and $ca$ are relations. 
\end{example}

\begin{example}\label[example]{ex:C2_constant_gentle}
  When $k$ has characteristic $2$, the algebra $\cohmu_k(C_2)$ is gentle.
  \begin{center}
   $\left.
    \begin{tikzcd}
      1 \ar[r, bend left, "b"] & 0 \ar[l, bend left, "a"]
    \end{tikzcd}
    \middle/ 
    \langle ba \rangle.
    \right.$
  \end{center}
  See \cref{ex:C2constantquiver} and \cref{prop:CpCohMackQuiver}. 
\end{example}

The indecomposable representations of special biserial algebras were classified in \cite[Proposition~2.3]{WW85}. Note that the authors in \cite{WW85} assume the field is algebraically closed, but the argument generalizes to arbitrary fields.

Given a special biserial algebra, one can construct a quotient that is a string algebra with a nice relationship; all but finitely many of the indecomposables of the special biserial algebra are indecomposables of the associated string algebra.  
One obtains the string algebra by replacing each binomial relation with a pair of monomial relations, for example replacing the relation $ab-c^{n-1}$ from \cref{ex:Cp_generalization_special_biserial}  with $ab,\, c^{n-1}$. 
The indecomposables of the special biserial algebra that are not also modules of the associated string algebra are exactly the indecomposable biserial projective-injectives. For more details, see \cite[Theorem~II.1.3]{Erd90}. 

Butler and Ringel \cite{BR87} showed (over arbitrary fields) the indecomposable modules of a finite dimensional string algebra are divided into two types called string modules and band modules, and the algebra is representation finite if and only if there are no band modules. 
We outline the key points in describing strings and then their associated modules.  
See \cite{BR87} for more details or \cite{CB18} for a generalization to infinite dimensional string algebras.

Let $(Q,\rho)$ be the bound quiver of a string algebra.  
For an arrow $\alpha$, let $\alpha^{-1}$ or $\alpha^-$ denote the formal inverse of $\alpha$ with $s(\alpha^{-1})= t(\alpha)$ and $t(\alpha^{-1})$ = $s(\alpha)$.  
A \emph{string} $\omega$ is a walk through composable arrows and inverse arrows that is reduced (contains no subwalks of the form $\alpha \alpha^{-1}$ or $\alpha^{-1} \alpha$) and avoids relations (contains no subwalks $w \in \rho$ or $w^{-1} \in \rho$).  
One allows for trivial walks, remaining at a vertex. 
A string is called \emph{direct} if it consists only of  arrows in $Q$, \emph{inverse} if it consists only of inverse arrows, and \emph{mixed} otherwise.  
Trivial strings are considered both direct and inverse.

\begin{example}\label[example]{ex:C2constant_strings}
  Consider the bound quiver
   \begin{center}
   $\left.
    \begin{tikzcd}
      1 \ar[r, bend left, "b"] & 0 \ar[l, bend left, "a"]
    \end{tikzcd}
    \middle/ 
    \langle ba \rangle
    \right.$
  \end{center}
  describing $\cohmu_k(C_2)$ in characteristic $2$.  This is a (gentle) string algebra with direct strings
  \[
  e_0,\ e_1,\ a,\ b,\ ab
  \]
  and inverse strings
  \[
  e_0,\ e_1,\ a^{-},\ b^{-},\ b^{-}a^{-}.
  \]
  In this small example, there are no mixed strings.
\end{example}

\begin{example}\label[example]{ex:Cp_quotient_string_algebra}
   Consider the special biserial algebra $\Lambda$ described by the bound quiver from \cref{ex:Cp_generalization_special_biserial}, as shown below with minimal relations.
     \begin{center}
    $\left.
    \begin{tikzcd}
      1 \arrow[loop left, "c", out=210, in=150, looseness=5] \ar[r, bend left, "b"] & 0 \ar[l, bend left, "a"]
    \end{tikzcd}
    \middle/ 
    \langle ba,\ ab-c^{n-1},\ ca,\ bc \rangle
    \right.$
  \end{center}
     This special biserial algebra has associated string algebra $\bar{\Lambda}$
        \begin{center}
    $\left.
    \begin{tikzcd}
      1 \arrow[loop left, "c", out=210, in=150, looseness=5] \ar[r, bend left, "b"] & 0 \ar[l, bend left, "a"]
    \end{tikzcd}
    \middle/ 
    \langle ba,\ ab,\ c^{n-1},\ ca,\ bc \rangle
    \right.$
  \end{center}
  with direct strings
    \begin{align*}
      e_0,\ e_1,\ a,\ b,\ c^i,
    \end{align*}
  inverse strings
  \begin{align*}
      e_0,\ e_1,\ a^-,\ b^-,\ c^{-i},
    \end{align*}
  and mixed strings
  \begin{align*}
      a^-c^i,\ c^{-i}a,\ c^i b^-,\ bc^{-i},\ a^-c^i b^-,\ bc^{-i}a,
    \end{align*}
   where $1 \leq i \leq n-2$. 
\end{example}

From a string $\omega$ associated to a string algebra, a string module $M(\omega)$ is formed by assigning a copy of $k$ each time the string passes a vertex on the walk.  The maps send each copy of $k$ to the next via the identity whenever the walk passes a (possibly inverse) arrow, as in the example below.
\begin{example}\label[example]{ex:string_module}
  The string $bc^{-2}a$ from the string algebra $\bar{\Lambda}$ in \cref{ex:Cp_quotient_string_algebra} has string module depicted 
\begin{center}
  \begin{tikzcd}
    & &&   1 \ar[dr, "b"] \ar[dl, swap, "c"] & \\
    0 \ar[dr,"a"] &  &  1 \ar[dl,swap,"c"]  && 0 \\
     & 1  & & 
  \end{tikzcd}.
\end{center}
This is shorthand for the module formed by collecting copies of $k$ labelled by $\{e_0, a, c^{-1}a, c^{-2}a, bc^{-2}a\}$, where  multiplication by an arrow acts on the string accordingly. 
Thus the string module associated to $bc^{-2}a$ is of the form
\begin{center}
  \begin{tikzcd}[ampersand replacement=\&]
     k_{a} \oplus k_{c^{-1}a} \oplus k_{c^{-2}a} \arrow[start anchor = south west, end anchor = north west,loop left, out=210, in=150, looseness=5,shift right = 2pt]{}{
      \begin{pmatrix}
        0 & 1 & 0\\
        0 & 0 & 1\\
        0 & 0 & 0
      \end{pmatrix}
    } 
    \ar[r, start anchor={[xshift=-10pt, yshift=0pt]north east}, end anchor={[xshift=0pt, yshift=0pt]north west}, bend left]{}{
      \begin{pmatrix}
        0 & 0 & 0\\
        0 & 0 & 1
      \end{pmatrix}
    }
    \& k_{e_0} \oplus k_{bc^{-2}a}
    \ar[l, start anchor={[xshift=0pt, yshift=0pt]south west}, end anchor={[xshift=-10pt, yshift=0pt]south east}, bend left]{}{
      \begin{pmatrix}
        1 & 0\\
        0 & 0\\
        0 & 0
      \end{pmatrix}
    }
  \end{tikzcd}. 
\end{center}
\end{example}

The string module $M(\omega) \cong M(\nu)$ if and only if $\omega = \nu$ or $\omega = \nu^{-1}$.  A string $\nu$ that is cyclic is a \emph{band} if every power $\nu^n$ is a string for $n \in \mathbb{N}$, but $\nu$ is not a nontrivial power of any string. In the case of a band, the associated module is called a band module and some more complicated notions of equivalence are required.  See for example \cite{WW85, BR87, CB18, BCSGentleModel21, Lak16} for more precise details. 
Using strings and bands, the representation theory of special biserial algebras is completely describable.

\begin{theorem}
  All string and band modules are indecomposable.  Moreover, every indecomposable module over a string algebra is either a string module or a band module.  
  
  Every indecomposable module over a special biserial algebra is a string module, a band module, or a biserial projective-injective module.
\end{theorem}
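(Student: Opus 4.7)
The plan is to break the statement into three parts and address them in order: (a) indecomposability of string and band modules, (b) every indecomposable over a string algebra is of one of these two forms, and (c) for special biserial algebras, the only additional indecomposables are the biserial projective-injectives. I would follow the classical Butler--Ringel strategy, taking care throughout that the combinatorial arguments work over an arbitrary field $k$.

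For (a), the starting point is to describe all homomorphisms $M(\omega) \to M(\nu)$ between string modules combinatorially. Concretely, every such homomorphism is a $k$-linear combination of the so-called \emph{graph maps}, parametrized by pairs of substrings that factor $\omega$ and $\nu$ in compatible ways (an ``admissible factorization'' where the outer parts fail certain hook/cohook conditions forced by the relations $\rho$). Using this description I would compute $\End_\Lambda(M(\omega))$ explicitly and show that it is local, because the identity graph map is the only graph endomorphism that is not contained in the radical. The band case is similar but richer: one shows that $\End_\Lambda(M(\nu,n,\varphi))$ is local since $\varphi$ is chosen to be indecomposable as an automorphism of $k^n$; indecomposability then follows because local rings have no nontrivial idempotents.

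For (b), which I expect to be the main obstacle, I would use Ringel's \emph{functorial filtration} method. The idea is to associate to each vertex $v \in Q_0$ (and each choice of an arrow at $v$) a pair of functors that cut any $\Lambda$-module $M$ into canonical subquotients, reflecting the walk structure of the string algebra. The conditions in \cref{def:string_algebra} force each composition chain at a vertex to be either empty or a single possibility, so the filtrations are well-defined and functorial. One then shows the associated graded module decomposes into pieces that can be reassembled as string and band modules, and applies Krull--Schmidt to an indecomposable $M$ to conclude $M$ is itself one of these. A delicate point is keeping the band-module side alive over non-algebraically-closed $k$: the bands must be parametrized by indecomposable $k[x,x^{-1}]$-modules rather than just by eigenvalues in $k$, which is precisely why the statement remains correct over arbitrary fields (only the parametrization of band modules changes).

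For (c), I would use the standard reduction to the string algebra $\bar\Lambda$ obtained by replacing each binomial relation $\alpha_1 \cdots \alpha_n - \beta_1 \cdots \beta_m$ in $\rho$ by the two monomial relations $\alpha_1 \cdots \alpha_n$ and $\beta_1 \cdots \beta_m$. Any binomial relation in a special biserial algebra forces the two relevant paths to generate a projective-injective summand of $\Lambda$ whose structure is ``biserial'' (two uniserial branches glued at top and bottom). A short argument shows that an indecomposable $\Lambda$-module $M$ is either annihilated by the binomial relations---hence is a $\bar\Lambda$-module, to which part (b) applies---or it contains a biserial projective-injective as a direct summand, in which case $M$ equals that summand by indecomposability. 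Combining the three parts yields the theorem; the only step genuinely requiring care is the functorial filtration argument in (b), and the rest is bookkeeping.
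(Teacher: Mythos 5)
The paper never proves this theorem: it is recalled as a classical result, with the work delegated to the references it cites (\cite{WW85}, \cite{BR87}, \cite{CB18} for string algebras over arbitrary fields, and \cite{Erd90} for the special biserial reduction), so there is no in-paper argument to compare against. Your outline is exactly the strategy of those sources --- graph maps to show endomorphism rings of string and band modules are local, Ringel--Butler--Ringel functorial filtrations for the classification over a string algebra, and the reduction of a special biserial algebra to its associated string algebra with the biserial projective-injectives as the only extra indecomposables --- and your remark that only the parametrization of band modules (by indecomposable $k[x,x^{-1}]$-modules rather than eigenvalues) changes over a non-algebraically-closed field is the correct reason the statement survives without algebraic closure, matching the paper's own comment that the arguments of \cite{WW85} and \cite{BR87} work over arbitrary fields. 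Two points deserve sharper wording if you write this out: in part (b) the functorial filtration method does not pass through ``the associated graded decomposes, then Krull--Schmidt''; it produces a direct-sum decomposition of $M$ itself by refining the two filtrations with chosen complements, and that refinement is where the conditions of \cref{def:string_algebra} enter. In part (c), ``annihilated by the binomial relations'' should be ``annihilated by the common image of the two parallel paths'' (i.e.\ by the socles of the biserial projective-injectives), since the binomial relations themselves are already zero in $\Lambda$; and the splitting argument needs the standard lemma that a nonzero such common image forces the corresponding indecomposable projective to be injective with simple essential socle, so that any module on which it acts nontrivially contains it as a (necessarily direct) summand.
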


The classifications of string and band modules are algorithmic.  An applet made by Jan Geuenich for describing the modules (and more) of special biserial algebras, string algebras, and gentle algebras can be found at \href{https://www.math.uni-bielefeld.de/~jgeuenich/string-applet/}{https://www.math.uni-bielefeld.de/~jgeuenich/string-applet/}.

A straightforward calculation gives the indecomposables in the following example.

\begin{proposition}\label[proposition]{prop:counting_special_biserial_strings}
   For $n >2$ the special biserial algebra $\Lambda$ from \cref{ex:Cp_generalization_special_biserial} associated to the bound quiver
     \begin{center}
    $\left.
    \begin{tikzcd}
      1 \arrow[loop left, "c", out=210, in=150, looseness=5] \ar[r, bend left, "b"] & 0 \ar[l, bend left, "a"]
    \end{tikzcd}
    \middle/ 
    \langle ba,\ ab-c^{n-1},\ ca,\ bc \rangle
    \right.$
  \end{center}
   has string modules associated to the strings
    \begin{align*}
      e_0,\ e_1,\ a,\ b,\ c^i,\ a^-c^i,\ c^i b^-,\ a^-c^i b^-,
    \end{align*}
    where $1 \leq i \leq n-2$, for a total of $4(n-1)$ non-isomorphic string modules.  There are no bands.  There is one biserial projective-injective $P_1$.  Thus $\Lambda$ is representation finite with $5 + 4(n-2)$ indecomposables.
\end{proposition}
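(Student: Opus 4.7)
The plan is to apply the standard classification for special biserial algebras: every indecomposable $\Lambda$-module is either a string module of the associated string algebra $\bar{\Lambda}$ from \cref{ex:Cp_quotient_string_algebra}, a band module of $\bar{\Lambda}$, or a biserial projective-injective module of $\Lambda$ that is not already a $\bar{\Lambda}$-module. The proof therefore splits into three parts: enumerating the strings of $\bar{\Lambda}$ up to the equivalence $\omega \sim \omega^{-1}$, ruling out bands, and identifying the biserial projective-injectives.

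For the enumeration, \cref{ex:Cp_quotient_string_algebra} already lists all strings of $\bar{\Lambda}$. Pairing each string with its walk-inverse gives exactly the eight families in the statement: the trivial strings $e_0, e_1$ (self-inverse), the single-letter strings $a \sim a^{-}$ and $b \sim b^{-}$, the loop strings $c^i \sim c^{-i}$, and the mixed strings $a^{-}c^i \sim c^{-i}a$, $c^i b^{-} \sim bc^{-i}$, and $a^{-}c^i b^{-} \sim bc^{-i}a$, each for $1 \leq i \leq n-2$. This gives $2 + 2 + 4(n-2) = 4(n-1)$ distinct string modules. To show there are no bands, I would argue that vertex $0$ has no loop, so any cyclic reduced walk in $\bar{\Lambda}$ must either consist entirely of powers of $c^{\pm 1}$ or pass through vertex $0$. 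In the first case, $c^{n-1} = 0$ in $\bar{\Lambda}$ forces some power to contain the forbidden subwalk $c^{n-1}$, so no such walk is a band. In the second case, a short case analysis using the forbidden subwalks coming from the relations $ba, ab, ca, bc$ and their walk-inverses shows that the only cyclic reduced strings at vertex $0$ are $(a^{-}, c^{j}, b^{-})$ and $(b, c^{-j}, a)$ for $1 \leq j \leq n-2$, and the square of either contains a forbidden junction subwalk coming from the relation $ab$. Hence no band exists.

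Finally, the biserial projective-injectives of $\Lambda$ come exactly from the binomial relation $ab - c^{n-1}$: this relation fuses the two natural uniserial chains inside the indecomposable projective $P_1 = \Lambda e_1$ at a common simple socle generated by $ab = c^{n-1}$, making $P_1$ biserial of dimension $n+1$. Since $ab$ and $c^{n-1}$ both vanish in $\bar{\Lambda}$, the module $P_1$ is not a $\bar{\Lambda}$-module, and is therefore an indecomposable not in the string module list. The other projective $P_0 = \Lambda e_0$ is only $2$-dimensional (since $ba = ca = 0$) and already appears as the string module for $a$. Adding $P_1$ to the $4(n-1)$ string modules yields $4(n-1) + 1 = 5 + 4(n-2)$ indecomposable $\Lambda$-modules, so $\Lambda$ is representation finite. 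The main obstacle is the band analysis, but the restrictions coming from the five relations together with the absence of any loop at vertex $0$ make the case check short and combinatorial.
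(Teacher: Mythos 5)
Your proposal is correct and follows exactly the route the paper intends: the paper's ``straightforward calculation'' is precisely the special biserial classification (string modules and band modules of the associated string algebra $\bar{\Lambda}$ from \cref{ex:Cp_quotient_string_algebra}, plus the non-uniserial projective-injectives), with the strings already enumerated there. Your count of $4(n-1)$ strings up to inverse, your exclusion of bands via the forbidden junctions at vertex $0$ and the relation $c^{n-1}$, and your identification of $P_1$ (fused by $ab-c^{n-1}$, not a $\bar{\Lambda}$-module) as the unique extra indecomposable all check out and simply spell out the details the paper leaves implicit.
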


\section{Representation type of cohomological Mackey functors}\label[section]{sec:rep_type_cohom_mackey}
In this section, we examine the representation type of the cohomological Mackey algebra. It is immediate from \cref{prop:invertible_characteristic} and \cref{cor:cohmu_invertible_characteristic} that both $\mu_k(G)$ and $\cohmu_k(G)$ are representation finite if $|G|$ is invertible in $k$.  The algebras are semisimple so every module is a direct sum of simple modules.

\begin{proposition}
Let $k$ be a field in which $|G|$ is invertible.  Then $\mu_k(G)$ and $\cohmu_k(G)$ are representation finite.
\end{proposition}

Thus we turn our attention to prime characteristic dividing the order of $G$. From \cite{TW95}, we know precisely when these algebras are representation finite.  The main goal of this section is to investigate the infinite case.

We give a classification of representation type for algebraically closed fields of characteristic $p$ and $G=C_{p^m}$.

\begin{theorem}\label[theorem]{thm:alg_closed_cohomological_rep_type}
  Let $k = \bar{k}$ be an algebraically closed field of characteristic $p$.  The cohomological Mackey algebra $\cohmu_k(C_{p^m})$ has
  \begin{enumerate}
    \item finite representation type if $m\leq 1$,
    \item tame (and infinite) representation type if $p^m=4$, and
    \item wild representation type otherwise.
  \end{enumerate}
\end{theorem}

We also show the following for arbitrary fields.

\begin{theorem}\label[theorem]{thm:arbitrary_field_cyclic_p_group_cohomological_rep_type}
  Let $k$ be a field of characteristic $p$.  Then the following hold.
  \begin{enumerate}
    \item The cohomological Mackey algebra $\cohmu_k(C_{p^m})$ has finite representation type if and only if $m \leq 1$.
    \item For $m=1$, there are precisely $5+ 4(p-2)$ isomorphism classes of indecomposable modules of $\cohmu_k(C_p)$. 
    \item If $m\geq 2$ and $p^m \neq 4$, then $\cohmu_k(C_{p^m})$ is wild.
  \end{enumerate}
\end{theorem}

It seems likely the indecomposables of $\cohmu_k(C_4)$ can be described in this case as well, but we do not pursue that here.

\subsection{Finite and infinite representation type}
Let $k$ be a (not necessarily algebraically closed) field of characteristic $p$. Thévenaz and Webb classified precisely when the Mackey algebra and the cohomological Mackey algebra are representation finite. Somewhat surprisingly, the conditions are the same for $\mu_k(G)$ and $\cohmu_k(G)$.

\begin{theorem}(\cite[Theorem~18.1]{TW95})\label[theorem]{thm:finite-rep-type-of-mackey-algebras}
  Let $k$ be a field of characteristic $p$ and let $C$ be a Sylow $p$-subgroup of $G$.  The following conditions are equivalent.
  \begin{enumerate}
    \item $\Mack_k(G)$ has finite representation type.
    \item The Mackey algebra $\mu_k(G)$ has finite representation type.
    \item $\CohMack_k(G)$ has finite representation type.
    \item The cohomological Mackey algebra $\cohmu_k(G)$ has finite representation type.
    \item The Hecke algebra $\hecke_k(G) = \End_{kG}\left( \bigoplus_{H \leq G} k[G/H] \right)$ 
    has finite representation type.
    \item $|C|=1$ or $|C|=p$ (or in other words $p^2 \! \not| \ |G|$).
  \end{enumerate}
\end{theorem}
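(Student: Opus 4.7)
The plan is to dispatch the trivial equivalences, then reduce everything to the Sylow $p$-subgroup $C$, and finally settle the base cases by direct quiver analysis.

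The equivalences (1)$\Leftrightarrow$(2), (3)$\Leftrightarrow$(4), and (4)$\Leftrightarrow$(5) are immediate from the equivalences of categories $\Mack_k(G) \simeq \mu_k(G)\MMod$ and $\CohMack_k(G) \simeq \cohmu_k(G)\MMod \simeq \hecke_k(G)^{\op}\MMod$ recalled earlier, combined with the observation that a finite-dimensional algebra has finite representation type if and only if its opposite does (via the duality $\Hom_k(-,k)$). For (2)$\Rightarrow$(4): the extra cohomological relations give a surjection $\mu_k(G) \twoheadrightarrow \cohmu_k(G)$, so restriction of scalars is a fully faithful exact functor $\cohmu_k(G)\mmod \to \mu_k(G)\mmod$, hence a representation embedding by \cref{cor:fullyfaithful_is_rep_embedding}; thus finite representation type of $\mu_k(G)$ forces the same for $\cohmu_k(G)$.

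What remains is (2)$\Leftrightarrow$(6) and (4)$\Leftrightarrow$(6). Assume first that (6) holds. If $|C|=1$ then $\mu_k(G)$ is semisimple by \cref{prop:invertible_characteristic}, and we are done. If $|C|=p$, one analyzes the block structure of $\mu_k(G)$: each block has defect group which is either trivial (semisimple) or cyclic of order $p$, and by a Brauer-correspondence argument each such block is Morita equivalent to a finite-representation-type algebra built from the corresponding block of $kC$ together with the subgroup lattice of $C$. For (4) the same conclusion is obtained more directly by reducing along the inclusion $C \leq G$ to the case $G=C_p$, where $\cohmu_k(C_p)$ is the special biserial algebra of \cref{ex:Cp_generalization_special_biserial}; here \cref{prop:counting_special_biserial_strings} lists finitely many string modules and shows there are no bands, giving finite representation type.

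Conversely, if $|C| \geq p^2$ then $C$ surjects onto $C_{p^2}$ or contains $C_p \times C_p$, and along such a surjection or subgroup inclusion one obtains a representation embedding from the cohomological Mackey category of the subquotient into $\cohmu_k(G)\mmod$. It therefore suffices to exhibit infinitely many indecomposables for $\cohmu_k(C_{p^2})$ and for $\cohmu_k(C_p \times C_p)$. The former is special biserial with a genuine cyclic string (unlike the $C_p$ case), so admits infinitely many band modules parametrized by indecomposable $k[x,x^{-1}]$-modules. The latter contains the group algebra $k[C_p \times C_p]$ as an idempotent-cornered subalgebra, and the classical infinite representation type of $k[C_p \times C_p]$ from \cite{Hig54} transports up via restriction of scalars. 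Composing with (2)$\Rightarrow$(4) then forces $\mu_k(G)$ to have infinite representation type as well. The main obstacle is the reduction from $G$ to its Sylow subgroup: constructing representation embeddings $\cohmu_k(C)\mmod \hookrightarrow \cohmu_k(G)\mmod$ and, when $|C|=p$, controlling the block structure of $\mu_k(G)$ carefully enough to propagate finite representation type from the cyclic defect blocks, is the substantive content of \cite[Theorem~18.1]{TW95} which we are invoking.
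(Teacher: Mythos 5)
Your treatment of the easy implications matches the paper's: $(1)\!\iff\!(2)$ and $(3)\!\iff\!(4)$ from the module-category equivalences, $(4)\!\iff\!(5)$ from Yoshida's identification plus duality, and $(2)\!\Rightarrow\!(4)$ from the surjection $\mu_k(G)\twoheadrightarrow\cohmu_k(G)$. The paper proves nothing more: it cites \cite[Theorem~18.1]{TW95} for the substantive equivalence with (6), only remarking that the $C_p\times C_p$ case of $(3)\Rightarrow(6)$ follows from the fully faithful fixed-point functor $FP\colon kG\mmod\to\CohMack_k(G)$ together with Higman's theorem that $k[C_p\times C_p]$ (hence $kG$) is representation infinite, and that the $C_{p^2}$ case and the finiteness when $|C|\le p$ are the real content of \cite{TW95}. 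Since you, too, ultimately invoke \cite{TW95} for that content, your overall architecture is acceptable and close to the paper's.

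However, two concrete steps in your sketch of the hard direction would fail as written. First, $\cohmu_k(C_{p^2})$ is \emph{not} special biserial, so you cannot produce infinitely many indecomposables as band modules: for $p=2$ the paper points out explicitly (proof of \cref{prop:cohomological_C4_rep_infinite}) that the arrow $b$ can be followed by both $a$ and $e$, and for odd $p$ the middle vertex of the quiver in \cref{prop:Cp^mCohMackQuiver} has three incoming and three outgoing arrows; moreover special biserial algebras are tame, which would contradict the paper's later result that $\cohmu_k(C_{p^m})$ is wild for $m\ge2$, $p^m>4$. The correct elementary route is the explicit one-parameter families of indecomposables constructed in \cref{prop:cohomological_C4_rep_infinite} and the proposition following it (or the argument in \cite{TW95}). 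Second, your reduction "$C$ surjects onto $C_{p^2}$ or contains $C_p\times C_p$'' is false: $Q_8$ neither surjects onto $C_4$ nor contains $C_2\times C_2$. The true statement is that $C$ \emph{contains} a subgroup isomorphic to $C_{p^2}$ or $C_p\times C_p$, and then the transfer of infinite representation type must go along a subgroup inclusion (induction of Mackey functors, or, as in the paper's sketch, via $FP$ applied to $kG$), not along a quotient map; the quotient/idempotent-corner mechanism you describe only applies when the relevant group is an actual quotient of $G$. Finally, your block-theoretic sketch for finiteness when $|C|=p$ is only an assertion, but since both you and the paper defer that direction to \cite{TW95}, this is a matter of presentation rather than a gap.
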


Note that $(1) \iff (2)$ and $(3) \iff (4)$ follow immediately from the quiver perspective for Mackey functors, while $(4) \iff (5)$ follows from Yoshida's identification and duality.  The cohomological Mackey algebra is a quotient of the Mackey algebra so $(2) \implies (4)$. The bulk of the proof in \cite{TW95} is showing that (3) is equivalent to (6).  It makes use of the following fact.

\begin{proposition}[\cite{TWSimple90}]
  The functor $FP \colon kG\MMod \to \Mack_k(G)$, $V \mapsto FP_V$ sending a module to the fixed point Mackey functor is fully faithful.  Moreover, it is right adjoint to the forgetful functor $\ev_1 \colon \Mack_k(G) \to kG\MMod$, $M \mapsto M(G/e)$.
\end{proposition}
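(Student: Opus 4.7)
The plan is to produce the unit/counit of the adjunction explicitly, deduce full faithfulness of $FP$ from the counit being an isomorphism, and verify the two triangle identities by direct calculation using the Mackey axioms.

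First I would check that both functors are well-defined: for a Mackey functor $M$, the conjugation maps $c_g \colon M(G/e) \to M(G/geg^{-1}) = M(G/e)$ assemble (by axioms (1) and (3) of \cref{def:MackeyFunctor}) into a $G$-action making $M(G/e)$ a $kG$-module, and a Mackey morphism automatically respects conjugation; hence $\ev_1$ lands in $kG\MMod$. For $FP_V$, the only nontrivial Mackey axiom is the double coset formula, which is a standard computation with fixed points and relative trace maps on $V^H$.

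To produce the adjunction isomorphism $\Hom_{\Mack_k(G)}(M, FP_V) \cong \Hom_{kG}(M(G/e), V)$, I would define the forward map by $\phi \mapsto \phi_{G/e}$; this is automatically $G$-equivariant because conjugation on $M(G/e)$ translates, under $\phi$, to the $G$-action on $V^{\,e} = V$. For the inverse, given a $kG$-equivariant $f \colon M(G/e) \to V$, I would set $\hat f_{G/H}(m) := f(\res^H_e m)$ and verify: (i) the image lies in $V^H$, since for $h \in H$ axiom (1) gives $c_h m = m$ in $M(G/H)$, and then axiom (4) yields $c_h \res^H_e m = \res^H_e c_h m = \res^H_e m$, so $G$-equivariance of $f$ puts $f(\res^H_e m)$ in $V^H$; (ii) commutativity with restriction is immediate, since restrictions in $FP_V$ are just inclusions and $\res^K_e \res^H_K = \res^H_e$; (iii) commutativity with conjugation is a direct consequence of axiom (4); (iv) commutativity with transfer is the crucial calculation, where the double coset formula applied to $\res^H_e \tr^H_K$ with $J = e$ collapses to $\sum_{x \in [H/K]} c_x \res^K_e$, matching exactly the relative trace that defines the transfer in $FP_V$.

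That these two assignments are mutually inverse is essentially immediate: starting from $\phi$, naturality of $\phi$ with respect to $\res^H_e$ recovers $\phi_{G/H}$, and starting from $f$, evaluating $\hat f$ at $H = e$ gives back $f$. Naturality in both $M$ and $V$ follows directly from the formulas. Finally, since $\ev_1(FP_V) = FP_V(G/e) = V^e = V$ is canonically the identity, the counit of the adjunction $\ev_1 \dashv FP$ is an isomorphism, and a right adjoint is fully faithful whenever its counit is an isomorphism. The main obstacle is the transfer compatibility in step (iv); it is the sole point where the specific structure of $FP_V$ and the full strength of the double coset relation must be invoked simultaneously, whereas the other axioms are essentially formal.
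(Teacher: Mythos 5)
Your proof is correct. The paper itself gives no argument for this proposition --- it is quoted with a citation to Thévenaz--Webb's \emph{Simple Mackey functors} --- so there is no in-paper proof to compare against; your argument is the standard one and would serve as a complete proof. The adjunction bijection $\phi \mapsto \phi_{G/e}$ with inverse $f \mapsto \hat f$, $\hat f_{G/H} = f \circ \res_e^H$, is verified at exactly the right points: landing in $V^H$ via axioms (1) and (4) of \cref{def:MackeyFunctor}, and compatibility with transfers via the double coset formula with $J=e$ collapsing to $\sum_{x\in[H/K]} c_x \res_e^K$, which matches the relative trace defining transfer in $FP_V$; the counit $\ev_1(FP_V)=V\to V$ being the identity then gives full faithfulness of the right adjoint. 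The only step you assert rather than carry out is that $FP_V$ itself satisfies the double coset axiom (decomposing $H/K$ into $J$-orbits with stabilizers $J\cap xKx^{-1}$), but that computation is routine and your labelling of it as standard is fair.
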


As described in \cite{TW95}, whenever $p^2$ divides the order of the group $G$, there is a subgroup $H$ isomorphic to either the cyclic group $C_{p^2}$ or the elementary abelian group $C_{p} \times C_{p}$.  Since $k[C_{p} \times C_{p}]$ has infinite representation type and $FP$ is a representation embedding, $\CohMack_k(G)$ has infinite representation type in this case.  Showing $\CohMack_k(G)$ is representation infinite in the case $H = C_{p^2}$ is the main task in showing $(3) \implies (6)$.

\bigskip

For ease of reference and to set up notation for later, we reproduce the bound quiver for $\cohmu_k(C_{p^m})$ and give explicit descriptions of the indecomposable $\cohmu_k(C_p)$-modules. 

Let $G = C_{p^m}$ be the cyclic group of order $p^m$ with subgroups given by $C_{p^i}$ for $0 \leq i \leq m$. In characteristic $p$, the group algebra $kG$ is isomorphic to $k[x]/(x^{p^m})$ and the transitive permutation modules are given by $k[x]/(x^{p^i})$ for $0 \leq i \leq m$. Homomorphisms between these modules are generated by the following maps
\begin{align*}
  a_i &\colon k[x]/(x^{p^i}) \ \ \ \to k[x]/(x^{p^{i-1}}) \quad \ \text{canonical projection}, \\
  b_i &\colon k[x]/(x^{p^{i-1}}) \to k[x]/(x^{p^i}) \qquad \text{multiplication by } x^{p^i - p^{i-1}},\\
  c_i &\colon k[x]/(x^{p^i}) \ \ \ \to k[x]/(x^{p^i}) \qquad \text{multiplication by } x,
\end{align*}
which satisfy following relations
\[
c_i^{p^i}=0 = a_i \circ b_i,\quad b_i \circ a_i = c_i^{(p^i - p^{i-1})},\quad a_i \circ c_i = c_{i-1} \circ a_i,\quad b_i \circ c_{i-1} = c_i \circ b_i 
\]
for $1\leq i \leq m$ with $c_{0}=0$.

Putting this together, and using that $\cohmu_k(G) \cong \hecke_k(G)^{\op}$, we get the following bound quiver.

\newcommand{\ctext}[1]{\text{\makebox[0pt]{$#1$}}}

\begin{proposition}[Cohomological Mackey algebra of $C_{p^m}$]\label[proposition]{prop:Cp^mCohMackQuiver}
Let $k$ be a field of characteristic $p$.  The cohomological Mackey algebra $\cohmu_k(C_{p^m})$ for $m\geq2$ is the path algebra of the quiver $Q$
\begin{center}
  \begin{tikzcd}
    \phantom{-} m\phantom{1} \arrow[loop above, "c_m", out=120, in=60, looseness=5] \ar[r, bend left, "\ctext{b_m}"] & m-1 \ar[l, bend left, "\ctext{a_m}"] \ar[r, bend left, "\ctext{b_{m-1}}"] \arrow[loop above, "c_{m-1}", out=120, in=60, looseness=5] & \ar[l, bend left, "\ctext{a_{m-1}}"] \cdots \ar[r, bend left, "\ctext{b_2}"] & 1\ar[l, bend left, "\ctext{a_2}"] \ar[r, bend left, "\ctext{b_1}"] \arrow[loop above, "c_1", out=120, in=60, looseness=5] & 0 \ar[l, bend left, "\ctext{a_1}"]
  \end{tikzcd}
\end{center}
with relations\footnote{This is not a minimal set of relations, but convenient for later arguments.}
\[
\rho = \{ c_i^{p^i},\ b_i a_i,\ a_ib_i - c_i^{(p^i - p^{i-1})},\ c_i a_i - a_i c_{i-1},\ c_{i-1} b_i - b_i c_i \mid 1\leq i \leq m, c_0 = 0 \}.
\]
In the case $p=2$, the arrow $c_1$ is redundant because $c_1 = a_1b_1$, as in \cref{ex:C2constantquiver}.
\end{proposition}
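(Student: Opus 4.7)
The plan is to apply Yoshida's theorem $\cohmu_k(G) \cong \hecke_k(G)^{\op}$ from \cref{def:cohomological_mackey_algebra} and compute the Hecke algebra explicitly as an endomorphism ring. In characteristic $p$, the substitution $x = y - 1$ (for $y$ a generator of $C_{p^m}$) gives $kC_{p^m} \cong k[x]/(x^{p^m})$, and since the subgroup $C_{p^{m-i}}$ is generated by $y^{p^i} = 1 + x^{p^i}$ in characteristic $p$, the transitive permutation module is
\[
k[C_{p^m}/C_{p^{m-i}}] = kG \otimes_{kC_{p^{m-i}}} k \cong k[x]/(x^{p^i}),
\]
matching the vertex labelling in the quiver.

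First I would compute the Hom-spaces $\Hom_{kG}(k[x]/(x^{p^j}), k[x]/(x^{p^i}))$: a $kG$-linear map is determined by the image of $1$, which must be annihilated by $x^{p^j}$ in $k[x]/(x^{p^i})$, giving a space of dimension $p^{\min(i,j)}$. I would then verify that $a_i$, $b_i$, $c_i$ generate all such morphisms by composition, and check each stated relation by direct computation on the generator $1$. For example, $b_i \circ a_i$ sends $1 \mapsto 1 \mapsto x^{p^i - p^{i-1}}$, which is the multiplication operator $c_i^{p^i - p^{i-1}}$, while $a_i \circ b_i$ sends $1 \mapsto x^{p^i - p^{i-1}}$, which is zero in $k[x]/(x^{p^{i-1}})$ since $p^i - p^{i-1} = p^{i-1}(p-1) \geq p^{i-1}$. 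The commutativity relations reduce to polynomial arithmetic in $k[x]$. Passing to the opposite algebra $\hecke_k(G)^{\op}$ reverses composition, flipping arrow directions in the quiver and swapping the roles of $a_i b_i$ and $b_i a_i$, which produces the relations exactly as listed.

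The main obstacle is showing these relations are complete, i.e.\ that the path algebra $kQ/(\rho)$ has the correct dimension $\sum_{i,j=0}^{m} p^{\min(i,j)} = \dim \hecke_k(C_{p^m})$ and not larger. I would establish a normal form: the relation $b_i a_i = 0$ kills peak subpaths at any vertex (even with intermediate $c$'s, since $b_M c_M^k a_M = b_M a_M c_{M-1}^k = 0$); the relation $a_i b_i = c_i^{p^i - p^{i-1}}$ replaces valleys by powers of $c$; and the commutativity relations move the $c$'s freely through $a$'s and $b$'s. So every path from vertex $j$ to vertex $i$ rewrites as a monotonic walk decorated with a power $c_i^k$, with $0 \leq k < p^{\min(i,j)}$ (higher powers vanish because $c_i^k a_i = a_i c_{i-1}^k$ forces them down through each $a$ to a power of $c_j$, which dies once $k \geq p^j$). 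Enumerating these normal forms matches the Hecke dimension term by term. Finally, the $p=2$ simplification is immediate from the relation $a_1 b_1 = c_1^{p - 1} = c_1$, so the loop $c_1$ is a redundant generator, recovering the small quiver of \cref{ex:C2constantquiver}.
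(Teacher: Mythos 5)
Your proposal is correct and follows essentially the same route as the paper: invoke Yoshida's identification $\cohmu_k(G)\cong\hecke_k(G)^{\op}$, identify the transitive permutation modules with $k[x]/(x^{p^i})$, exhibit the generators $a_i,b_i,c_i$ and verify the stated relations, then pass to the opposite algebra. The only difference is that you also spell out the completeness of the relations via the dimension count $\sum_{i,j}p^{\min(i,j)}$ and a normal-form reduction of paths, a verification the paper leaves implicit; that added detail is accurate and welcome.
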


For $m=1$, this reduces to the following.

\begin{proposition}[Cohomological Mackey algebra of $C_{p}$]\label[proposition]{prop:CpCohMackQuiver}
Let $k$ be a field of characteristic $p$.  The cohomological Mackey algebra $\cohmu_k(C_p)$ is the path algebra of the quiver $Q$
  \begin{center}
    \begin{tikzcd}
      1 \arrow[loop left, "c", out=210, in=150, looseness=5] \ar[r, bend left, "b"] & 0 \ar[l, bend left, "a"]
    \end{tikzcd}
  \end{center}
  with relations
  \[
  \rho = \{c^p,\ ba,\ ab - c^{p-1},\ ca,\ bc\},
  \]
  as in \cref{ex:Cpconstantquiver} for $k = \F_p$.
  
  In the case $p=2$, the arrow $c=ab$ is redundant and the cohomological Mackey algebra 
  $\cohmu_k(C_2)$ is the path algebra of the quiver $Q$
  \begin{center}
    \begin{tikzcd}
      1 \ar[r, bend left, "b"] & 0 \ar[l, bend left, "a"]
    \end{tikzcd}
  \end{center}
  with relation
  \[
  \rho = \{ba\},
  \]
  recovering \cref{ex:C2constantquiver} for $k = \F_2$.
\end{proposition}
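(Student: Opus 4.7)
The plan is to derive the presentation by computing the Hecke algebra $\hecke_k(C_{p^m})$ explicitly and then transferring to $\cohmu_k(C_{p^m})$ via Yoshida's identification $\cohmu_k(G) \cong \hecke_k(G)^{\op}$ (or equivalently the self-duality $\hecke_k(G)^{\op} \cong \hecke_k(G)$ mentioned in Definition \ref{def:cohomological_mackey_algebra}). Because the excerpt has already listed the generating homomorphisms $a_i, b_i, c_i$ and the identities they satisfy in $\hecke_k$, what remains is to exhibit a generating set for every Hom space between transitive permutation modules and to show that the listed relations are complete.

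First, I would fix a generator of $C_{p^m}$ and write $kC_{p^m} \cong k[x]/(x^{p^m})$ via $x = g-1$, so that the transitive permutation module $k[C_{p^m}/C_{p^i}]$ is identified with the cyclic $kG$-module $k[x]/(x^{p^i})$. Since these modules are cyclic, a $kG$-homomorphism $k[x]/(x^{p^i}) \to k[x]/(x^{p^j})$ is determined by the image of $1$, which must be annihilated by $x^{p^i}$ in $k[x]/(x^{p^j})$. This gives an explicit $k$-basis
\[
\{\, x^r \mid 0 \leq r < \min(p^i, p^j)\,\}
\]
for $\Hom_{kG}(k[x]/(x^{p^i}), k[x]/(x^{p^j}))$, and hence $\dim_k \hecke_k(C_{p^m}) = \sum_{i,j=0}^{m} \min(p^i, p^j)$.

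Second, I would verify that every basis element above is expressible as a composition of the maps $a_i$, $b_i$, $c_i$. The elementary projection $a_j$ and the elementary multiplication-by-$x^{p^i-p^{i-1}}$ map $b_i$ let one climb up and down the ladder of subgroup indices, and $c_i$ supplies the internal multiplication by $x$. A direct check shows that any $x^r \colon k[x]/(x^{p^i}) \to k[x]/(x^{p^j})$ can be written as a composition whose shape is ``multiply by $c_{?}^{r'}$, then move up by a string of $b$'s, then down by a string of $a$'s'' (with a single choice of routing) for suitable $r'$. I would then check the listed identities hold as homomorphisms: $c_i^{p^i}=0$ is immediate from $x^{p^i}=0$; $a_i b_i=0$ follows from $p^i-p^{i-1}\geq p^{i-1}$; $b_i a_i = c_i^{p^i-p^{i-1}}$ is direct; and $a_i c_i = c_{i-1} a_i$, $b_i c_{i-1}=c_i b_i$ hold because all these maps are either multiplication by a monomial or canonical projection, which commute in the evident way.

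Third, completeness of the relations: I would argue by dimension comparison. Using the given relations, every path in $Q$ can be reduced to one of the normal forms $c_i^r e_i$, $a_{i}\circ a_{i+1}\circ\cdots\circ a_{j}\cdot c_j^r$, $c_i^r\cdot b_{i}\circ\cdots\circ b_{j+1}$, or ``up-then-down'' compositions with a middle $c^r$, using the four identities to push $c$'s through arrows and the relations $b_ia_i$, $a_ib_i-c_i^{p^i-p^{i-1}}$ to eliminate redundant zigzags. A count of these normal forms in each Hom component matches $\min(p^i,p^j)$, so $kQ/(\rho)$ has the same dimension as $\hecke_k(C_{p^m})$, and the surjection $kQ/(\rho) \twoheadrightarrow \hecke_k(C_{p^m})$ defined by the generators is therefore an isomorphism. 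Taking opposites reverses the direction of $a_i,b_i$ and flips the order of composition, exchanging the Hecke relation $a_i\circ b_i=0$ for $b_ia_i=0$ and $b_i\circ a_i = c_i^{p^i-p^{i-1}}$ for $a_ib_i=c_i^{p^i-p^{i-1}}$, precisely matching the claimed presentation. For $p=2$ one observes that $a_1b_1 = c_1^{2-1}=c_1$, so the loop $c_1$ at the vertex $1$ is redundant, recovering the quiver of \cref{ex:C2constantquiver} at that end of the ladder. The main obstacle is the dimension count and the combinatorial bookkeeping behind the normal-form reduction; once this is done the rest is routine bookkeeping translating between Hecke and its opposite.
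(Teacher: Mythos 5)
Your proposal is correct and follows essentially the same route as the paper: model the transitive permutation modules as $k[x]/(x^{p^i})$, take the generators $a_i,b_i,c_i$ of the Hom spaces with their relations, pass through Yoshida's identification $\cohmu_k(G)\cong\hecke_k(G)^{\op}$, and specialize to $m=1$; your added dimension/normal-form count just makes explicit the completeness of the relations, which the paper leaves as a routine verification. One small slip: when $p^i<p^j$ the image of $1$ must lie in $(x^{p^j-p^i})$, so the basis of $\Hom_{kG}(k[x]/(x^{p^i}),k[x]/(x^{p^j}))$ is given by the maps $1\mapsto x^s$ with $p^j-p^i\leq s<p^j$ rather than $0\leq s<\min(p^i,p^j)$, but since you only use the dimension $\min(p^i,p^j)$ this does not affect the argument.
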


From this description, we see that $\cohmu_k(C_p)$ is special biserial.  We can easily classify its modules using strings and bands by applying \cref{prop:counting_special_biserial_strings} with $n=p$.

\begin{proposition}
  In characteristic $p$, the algebra $\cohmu_k(C_p)$ is representation finite with $5 + 4(p-2)$ indecomposable modules.
\end{proposition}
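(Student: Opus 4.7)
The plan is to reduce this directly to the combinatorial classification already established for special biserial algebras via strings and bands. By \cref{prop:CpCohMackQuiver}, the algebra $\cohmu_k(C_p)$ is presented by an explicit bound quiver. For $p \geq 3$, this bound quiver is precisely the special biserial algebra of \cref{ex:Cp_generalization_special_biserial} with $n = p$: one checks Conditions (1)--(3) of \cref{def:special_biserial} hold, namely that each vertex has at most two incoming and two outgoing arrows, that each arrow has at most one composable continuation avoiding the relations (after noting the binomial relation $ab - c^{p-1}$), and that the relations are admissible since $c^p$ forces all sufficiently long paths into the ideal.

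Once we know the algebra is special biserial, I would invoke \cref{prop:counting_special_biserial_strings} with $n = p$ directly. That proposition enumerates the strings ($e_0, e_1, a, b, c^i, a^{-}c^i, c^i b^{-}, a^{-}c^i b^{-}$ with $1 \leq i \leq p-2$), verifies there are no bands (any candidate cyclic reduced walk must either pass through the relation $c^p$, or use both an arrow and its inverse consecutively, which is forbidden), and identifies the single biserial projective-injective summand $P_1$ not captured as a string module of the associated string quotient. The total is then $4 + 4(p-2) + 1 = 5 + 4(p-2)$, so representation finiteness and the count follow at once.

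The case $p = 2$ is an edge case because, as noted in \cref{prop:CpCohMackQuiver}, the loop $c$ becomes redundant and the bound quiver degenerates to the gentle algebra of \cref{ex:C2_constant_gentle}, which does not formally fit the enumeration in \cref{prop:counting_special_biserial_strings}. I would handle this case by hand using \cref{ex:C2constant_strings}: the direct and inverse strings (modulo the equivalence $\omega \sim \omega^{-1}$) are exactly $e_0, e_1, a, b, ab$, giving $5$ string modules; the relation $ba = 0$ rules out any cyclic band; and the projective-injective $P_1$ already appears as the string module $M(ab)$ (see \cref{ex:C2constant_projectives}), so no separate biserial projective-injective must be added. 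This yields exactly $5 = 5 + 4(p-2)$ indecomposables when $p = 2$.

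The only genuinely delicate point is ensuring that the enumeration in \cref{prop:counting_special_biserial_strings} is applied with the correct value of $n$ and that the string/band classification of \cite{BR87}, valid over arbitrary fields, is being used rather than a purely algebraically closed version; beyond this bookkeeping, the argument is essentially a direct appeal to previously established machinery.
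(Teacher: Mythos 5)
Your proposal is correct and matches the paper's own argument: the paper likewise identifies $\cohmu_k(C_p)$ with the special biserial algebra of \cref{ex:Cp_generalization_special_biserial} and applies \cref{prop:counting_special_biserial_strings} with $n=p$. Your separate treatment of $p=2$ via the gentle presentation and \cref{ex:C2constant_strings} is a sensible piece of extra care (since that proposition is stated for $n>2$) and agrees with how the paper handles the prime $2$ in the subsequent proposition.
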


At the prime $2$, we can compare with the classification of $\und{\F}_2$-modules from \cite{DHM}.
\begin{proposition}
At the prime $2$, the gentle algebra $\cohmu_k(C_2)$ is representation finite with $5$ indecomposable modules, coming from the strings
\[
e_0,\ e_1,\ a,\ b,\ ab.
\]
These correspond to the indecomposable Mackey functors 
\[
S_\bullet,\ S_\theta,\ H,\ H^{\op},\ F
\]
from \cite{DHM}, respectively.
\end{proposition}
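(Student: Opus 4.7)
The plan is to apply Butler--Ringel's classification of indecomposables over string algebras to $\cohmu_k(C_2)$ and then identify the resulting string modules with the named Mackey functors of \cite{DHM}. From \cref{ex:C2_constant_gentle} and \cref{prop:CpCohMackQuiver}, this algebra is gentle and in particular a string algebra, so Butler--Ringel guarantees that every indecomposable is either a string module $M(\omega)$ or a band module, with $M(\omega) \cong M(\nu)$ iff $\nu \in \{\omega,\omega^{-1}\}$. The bound quiver has two vertices $\{0,1\}$, two arrows $a,b$ pointing in opposite directions, and the single relation $\rho = \{ba\}$.

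First I would enumerate the strings, recovering the list $e_0, e_1, a, b, ab$ from \cref{ex:C2constant_strings}. The trivial paths give $e_0$ and $e_1$. Any direct string of length $\geq 2$ would contain the subword $ba$ and hence be forbidden, so $a$, $b$, and $ab$ exhaust the direct strings of positive length (and their inverses give isomorphic modules). Because only a single arrow connects the two vertices in each direction, no reduced mixed walk of length $\geq 2$ exists at all; the patterns $\alpha \alpha^{-1}$ and $\alpha^{-1}\alpha$ are never reduced, and no other mixed concatenation is composable. This leaves exactly five candidates, up to inversion.

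Next I would rule out bands. A band is a nontrivial cyclic reduced string all of whose powers avoid $\rho$ and $\rho^{-1}$. The only cyclic reduced candidates are shifts of $(ab)^n$ or $(ba)^n$, and each of these contains the subword $ba$ as soon as $n \geq 1$ in the latter case and $n \geq 2$ in the former. Hence no bands exist, and the five string modules constitute a complete list of indecomposables.

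Finally, I would identify each string module with a Mackey functor from \cite{DHM} by writing it as a representation of the bound quiver and matching the resulting Lewis diagram. The strings $e_0$ and $e_1$ give the two simples $S_\bullet$ and $S_\theta$. The string $ab$ gives the two-dimensional indecomposable projective $P_1$, which after a change of basis coincides with the module $F$ displayed in \cref{ex:C2constant_projectives}. The strings $a$ and $b$ give the two remaining two-dimensional uniserial modules, which realize the fixed-point Mackey functor $H$ and its opposite $H^{\op}$. The only real bookkeeping is verifying that $M(a)$ and $M(b)$ match $H$ and $H^{\op}$ in this particular order; this is the main (mild) obstacle and is settled by comparing the direction of the unique nonzero structure map in each string module against the explicit definitions of $H$ and $H^{\op}$ in \cite{DHM}.
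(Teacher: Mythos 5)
Your proof follows essentially the same route as the paper: $\cohmu_k(C_2)$ is a gentle string algebra, so the Butler--Ringel classification reduces everything to enumerating strings and bands, and your enumeration of the five strings, exclusion of bands, and matching with the Mackey functors of \cite{DHM} is precisely the content of \cref{ex:C2constant_strings}, \cref{prop:counting_special_biserial_strings} (with $n=p=2$), and \cref{ex:C2constant_projectives}. One minor slip: the string $ab$ has length two, so its string module $M(ab)\cong P_1=F$ is three-dimensional ($k^2$ at one vertex, $k$ at the other), not two-dimensional as you state; this does not affect the identification.
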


For completeness, we give some simple constructions demonstrating that the cohomological Mackey algebra $\cohmu_k(C_{p^m})$ has infinite representation type whenever $m\geq 2$ and $k$ has characteristic $p$.

\begin{proposition}\label[proposition]{prop:cohomological_C4_rep_infinite}
  If $k$ has characteristic $2$, then $\cohmu_k(C_4)$ is representation infinite.
  \begin{proof}
    The algebra $\cohmu_k(C_4)$ is given by the quiver with relations 
    \begin{center}
      $\left.
      \begin{tikzcd}
        2 \arrow[loop left, "c", out=210, in=150, looseness=5] \ar[r, bend left, "b"] & 1 \ar[l, bend left, "a"] \ar[r, bend left, "e"] & 0 \ar[l, bend left, "d"]
      \end{tikzcd}
      \middle/ 
      \langle c^{4},\ ba,\ ed,\ ab-c^{2},\ ade-ca,\ deb-bc \rangle.
      \right.$
    \end{center}
    This is not special biserial because the arrow $b$ can be followed by either $a$ or $e$, since neither $ab$ nor $eb$ is a relation. 
    
    We construct infinitely many indecomposables. 
    Let $V =k[x]/(x^n)$ considered simply as an $n$-dimensional vector space with a distinguished endomorphism, multiplication by $x$. The representations
    \begin{center}
      \begin{tikzcd}
        V^2 \arrow[loop left, "C", out=210, in=150, looseness=5] \ar[r, bend left, "B"] & V^2 \ar[l, bend left, "A"] \ar[r, bend left, "E"] & V \ar[l, bend left, "D"],
      \end{tikzcd}
    \end{center}
    where 
    \[
    A=C= \begin{pmatrix}
      0&1\\0&0
    \end{pmatrix},\quad B= \begin{pmatrix}
      0&x\\0&0
    \end{pmatrix},\quad E = \begin{pmatrix}
      0 & 1
    \end{pmatrix},\quad \text{ and }\quad  D=\begin{pmatrix}
      1\\0
    \end{pmatrix}
    \]
     give an infinite family of non-isomorphic indecomposable representations.
  \end{proof}
\end{proposition}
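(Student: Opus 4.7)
The plan is to produce an explicit infinite family $\{M_n\}_{n \geq 1}$ of pairwise non-isomorphic indecomposable $\cohmu_k(C_4)$-modules. Using the quiver presentation from \cref{prop:Cp^mCohMackQuiver} with $p=2$, $m=2$, the algebra has three vertices $0,1,2$, arrows $b\colon 2 \to 1$, $a\colon 1 \to 2$, $e\colon 1 \to 0$, $d\colon 0 \to 1$, and a loop $c$ at vertex $2$, with relations $c^4$, $ba$, $ed$, $ab - c^2$, $ade - ca$, $deb - bc$. The loop $c_1$ at vertex $1$ is redundant since $c_1 = de$ in characteristic $2$. Since $b$ may be followed nontrivially by either $a$ or $e$ without producing a relation, the algebra is not special biserial, and the string-module count of \cref{prop:counting_special_biserial_strings} does not apply.

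For each $n \geq 1$, I would place $V^2, V^2, V$ at vertices $2, 1, 0$ respectively, where $V = k[x]/(x^n)$, and represent the arrows by matrices with entries in $k[x]/(x^n)$ (acting on $V$ by multiplication). A natural choice is $A = C = \begin{psmallmatrix} 0 & 1 \\ 0 & 0 \end{psmallmatrix}$, $B = \begin{psmallmatrix} 0 & x \\ 0 & 0 \end{psmallmatrix}$, $D = \begin{psmallmatrix} 1 \\ 0 \end{psmallmatrix}$, and $E = \begin{psmallmatrix} 0 & 1 \end{psmallmatrix}$; note that $DE = \begin{psmallmatrix} 0 & 1 \\ 0 & 0 \end{psmallmatrix} = A = C$. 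The multiplication by $x$ appearing in $B$ embeds the parameter $n$ into the module's structure. A routine block-matrix calculation shows that each of $BA$, $ED$, $AB$, $C^2$, $ADE$, $CA$, $DEB$, $BC$ is the zero matrix, so the six defining relations are satisfied automatically. Non-isomorphism of the $M_n$ is immediate from $\dim_k M_n = 5n$.

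The main obstacle is showing each $M_n$ is indecomposable. I would verify this by computing $\End(M_n)$: an endomorphism is a triple $(\phi_0, \phi_1, \phi_2)$ of $k$-linear maps on the vertex spaces commuting with $A, B, C, D, E$. Commutation with $D$ and $E$ forces $\phi_1$ to be block upper-triangular with both diagonal blocks equal to $\phi_0$, where $\phi_0$ is multiplication by a single element of $k[x]/(x^n)$; commutation with $C$ and $A$ similarly constrains $\phi_2$ to have this form; and commutation with $B$ then imposes no further conditions. The upshot is that $\End(M_n)$ is a local $k$-algebra, realized as an extension of $k[x]/(x^n)$ by a nilpotent ideal. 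Locality of the endomorphism ring yields indecomposability of $M_n$, and the family $\{M_n\}_{n \geq 1}$ witnesses infinite representation type of $\cohmu_k(C_4)$.
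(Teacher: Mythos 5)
Your construction is exactly the paper's: the same quiver presentation of $\cohmu_k(C_4)$ and the identical matrices $A=C=\begin{psmallmatrix}0&1\\0&0\end{psmallmatrix}$, $B=\begin{psmallmatrix}0&x\\0&0\end{psmallmatrix}$, $D=\begin{psmallmatrix}1\\0\end{psmallmatrix}$, $E=\begin{psmallmatrix}0&1\end{psmallmatrix}$ over $V=k[x]/(x^n)$; the paper simply asserts indecomposability, while you sketch the endomorphism computation, and your conclusion that $\End(M_n)$ is local (an extension of $k[x]/(x^n)$ by a square-zero ideal of off-diagonal blocks) is correct. One bookkeeping slip: commutation with $D$ and $E$ (and with $C$, $A$) only forces $\phi_1$, $\phi_2$ to be block upper-triangular with equal diagonal blocks $\phi_0$, where $\phi_0$ is a priori an arbitrary $k$-linear map on $V$; it is the commutation with $B$ — the only matrix involving $x$ — that yields $\phi_0 x = x\phi_0$ and hence that $\phi_0$ is multiplication by an element of $k[x]/(x^n)$, so it is not true that $B$ imposes no further conditions. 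This matters, since without that condition the semisimple quotient of $\End(M_n)$ would be $M_n(k)$ rather than $k$ and locality would fail; with the constraint correctly attributed, your argument goes through verbatim.
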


\begin{proposition}
  If $k$ is a field of characteristic $p$ and $m\geq 2$, then $\cohmu_k(C_{p^m})$ has infinite representation type.
  \begin{proof}
    We already covered the case of $p^m = 4$ in \cref{prop:cohomological_C4_rep_infinite}, so we may assume $p^m > 4$. Again let $V = k[x]/(x^n)$. The representations supported on vertex $m$ and $m-1$ given by
    \begin{center}
      \begin{tikzcd}
        V^2 \arrow[loop left, "C", out=210, in=150, looseness=5] \ar[r, bend left, "B"] & V^2 \arrow[loop right, "D", out=30, in=-30, looseness=5] \ar[l, bend left, "A"]
      \end{tikzcd}
    \end{center}
    and $0$ elsewhere, where 
    \[
    A=C=D = \begin{pmatrix}
      0 & 1\\ 0 & 0
    \end{pmatrix}\quad \text{ and }\quad B = \begin{pmatrix}
      0 & x\\ 0 & 0
    \end{pmatrix},
    \]
     give an infinite family of non-isomorphic indecomposable representations.  Hence $\cohmu_k(C_{p^m})$ is representation infinite.
  \end{proof}
\end{proposition}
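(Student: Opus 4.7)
The plan is to handle the remaining range $p^m > 4$ (the case $p^m = 4$ being already settled in \cref{prop:cohomological_C4_rep_infinite}) by exhibiting, for each integer $n \geq 1$, an explicit indecomposable representation $M_n$ of $\cohmu_k(C_{p^m})$ of dimension $4n$; these are then pairwise non-isomorphic by dimension, giving infinite representation type. The representations will be supported only on the two rightmost vertices $m$ and $m-1$ of the quiver in \cref{prop:Cp^mCohMackQuiver}, with all other vertex spaces taken to be $0$. This immediately trivializes every relation involving an arrow into or out of vertices $0, 1, \dots, m-2$, so the only relations that need to be verified are those in the sub-bound-quiver on $\{m, m-1\}$ together with $c_{m-1}^{p^{m-1}-p^{m-2}} = 0$ (which arises from $a_{m-1}b_{m-1} = c_{m-1}^{p^{m-1}-p^{m-2}}$ since the left-hand side acts as $0$).

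For each $n \geq 1$, I set $V = k[x]/(x^n)$ and put $V^2$ at both vertex $m$ and vertex $m-1$, with structure maps given by the block matrices
\[
C = A = D = \begin{pmatrix} 0 & 1 \\ 0 & 0 \end{pmatrix}, \qquad B = \begin{pmatrix} 0 & x \\ 0 & 0 \end{pmatrix},
\]
where $C = c_m$, $D = c_{m-1}$, $A = a_m$, $B = b_m$, the entry $1$ is $\mathrm{id}_V$, and $x$ is multiplication by $x$ on $V$. A direct block computation gives $C^2 = D^2 = AB = BA = 0$ and $CA = AD = 0$, $DB = BC = 0$. Since $p^m > 4$ with $m \geq 2$ forces $p^m - p^{m-1} \geq 2$ and $p^{m-1}-p^{m-2} \geq 2$, the nilpotency relations $C^{p^m} = 0$, $D^{p^{m-1}} = 0$, $AB = C^{p^m-p^{m-1}}$, and $D^{p^{m-1}-p^{m-2}} = 0$ all hold.

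The crux is proving each $M_n$ is indecomposable. The approach I would take is to compute $\End(M_n)$ directly. An endomorphism is a pair of $k$-linear maps $\phi_m, \phi_{m-1} \colon V^2 \to V^2$ commuting with $A, B, C, D$. Writing each as a $2 \times 2$ block matrix with entries in $\End_k(V)$, the commutation with the two Jordan-block loops $C$ and $D$ forces both $\phi_m$ and $\phi_{m-1}$ to be upper-triangular with equal diagonal blocks, and the commutation with $B$ forces the common diagonal entry to commute with multiplication by $x$ on $V$, i.e.\ to be a $k[x]/(x^n)$-linear endomorphism. Standard Jordan theory then identifies $\End(M_n)$ with an extension of $k[x]/(x^n) = \End_{k[x]}(V)$ by a nilpotent ideal, hence a local ring; therefore $M_n$ is indecomposable. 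Finally, $\dim_k M_n = 4n$, so the family $\{M_n\}_{n \geq 1}$ gives infinitely many non-isomorphic indecomposables.

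The main obstacle is the local-ring calculation for $\End(M_n)$; everything else is bookkeeping with the relations. As a backup if the endomorphism-ring computation becomes unwieldy, the construction $V \mapsto M_n$ can be packaged as a $k$-linear functor from $k[x]\mmod$ to $\cohmu_k(C_{p^m})\mmod$ that satisfies the hypotheses of \cref{prop:key_technique}, importing indecomposability and distinctness directly from the classical fact that the modules $k[x]/(x^n)$ are pairwise non-isomorphic indecomposable $k[x]$-modules.
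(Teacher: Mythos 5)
Your proposal is correct and is essentially identical to the paper's own proof: the same two-vertex representations with $V=k[x]/(x^n)$ and the same matrices $A=C=D$ and $B$, distinguished by dimension. The only difference is that you spell out the relation-checking (correctly isolating $c_{m-1}^{p^{m-1}-p^{m-2}}=0$ as the reason $p^m>4$ is needed) and the local-endomorphism-ring argument for indecomposability, which the paper leaves as an unstated routine verification.
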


\subsection{Tame and wild representation type}
We now examine more closely the representation type of the cohomological Mackey algebra $\cohmu_k(C_{p^m})$ for $m \geq 2$ (still working over a field of characteristic $p$).  From \cref{thm:finite-rep-type-of-mackey-algebras}, we know $\cohmu_k(C_{p^m})$ is representation finite for $m < 2$ and infinite for $m \geq 2$.  We will show that when $m \geq 2$ and $p^m \neq 4$, the cohomological Mackey algebra is wild, and hence also the Mackey algebra (see \cref{sec:mackey_algebra}).  In the remaining case, $p^m = 4$, the cohomological Mackey algebra is tame over the algebraic closure $k = \bar{k}$.
\bigskip

We start by examining the case where $p^m = 4$.  From \cref{thm:finite-rep-type-of-mackey-algebras}, we know $\cohmu_k(C_4)$ has infinite representation type. In the case where $k=\bar{k}$ is algebraically closed, the situation is particularly nice. We can understand whether an algebra is wild by studying algo-geometric properties of its variety of representations. Recall from \cref{thm:dichotomy}, an algebra over $k = \bar{k}$ is either tame or wild, but not both.

\begin{proposition}\label[proposition]{prop:C4_tame}
  Let $k= \bar{k}$ be an algebraically closed field of characteristic $2$, then $\cohmu_k(C_4)$ is tame.
  
  \begin{proof}
    Consider the parametrized family of algebras $\tilde A(\xi, \zeta)$ given by the quiver with relations 
    \begin{center}
      $\left.
      \begin{tikzcd}
        2 \arrow[loop left, "c", out=210, in=150, looseness=5] \ar[r, bend left, "b"] & 1 \ar[l, bend left, "a"] \ar[r, bend left, "e"] & 0 \ar[l, bend left, "d"]
      \end{tikzcd}
      \middle/ 
      \langle c^{4},\ ba,\ ed,\  ab-\xi c^{2},\ \zeta ade-ca,\ \zeta deb-bc \rangle.
      \right.$
    \end{center}
    Note that for $\xi$ and $\zeta$ both nonzero $\tilde A(\xi, \zeta) \cong \cohmu_k(C_4)$. In particular, $\tilde A(\xi, \zeta)$ is isomorphic to $\cohmu_k(C_4)$ on a dense open set. We say that $\cohmu_k(C_4)$ \emph{degenerates} to the algebras outside this open set, e.g.\ $\cohmu_k(C_4)$ degenerates to $\tilde A(0,0)$. By \cite[Theorem~B]{CB95} if an algebra degenerates to a tame algebra then it must itself be tame. Observe that $\tilde A(0,0)$ is special biserial and hence tame \cite[Corollary~2.4]{WW85}. Thus we conclude that $\cohmu_k(C_4)$ is tame as well. 
  \end{proof}
\end{proposition}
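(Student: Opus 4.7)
The plan is to exhibit $\cohmu_k(C_4)$ as the generic member of an algebraic family of algebras that degenerates to a special biserial algebra, and then invoke the fact that tameness is inherited under such degenerations. From Proposition~\ref{prop:Cp^mCohMackQuiver} the cohomological Mackey algebra $\cohmu_k(C_4)$ has a bound-quiver presentation on three vertices with binomial relations $ab-c^{2}$, $ca-ade$, $bc-deb$; it is precisely these binomials that prevent $\cohmu_k(C_4)$ from being special biserial in the sense of Definition~\ref{def:special_biserial}, because at vertex $1$ the arrow $b$ admits two nontrivial continuations (by $a$ and by $e$) neither of which is itself a relation. A direct attempt to classify the indecomposables by strings and bands therefore fails; we instead attack tameness through the geometry of the representation variety.

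I would then introduce a two-parameter deformation $\tilde{A}(\xi,\zeta)$ by inserting scalars in front of the second monomial in each binomial, so that the relations become $ab-\xi c^{2}$, $ca-\zeta ade$, $bc-\zeta deb$, with the purely monomial relations $c^{4}$, $ba$, $ed$ unchanged. A straightforward rescaling of the generators shows $\tilde{A}(\xi,\zeta) \cong \cohmu_k(C_4)$ whenever $\xi\zeta \neq 0$, so $\cohmu_k(C_4)$ occupies a dense open subset of the parameter space. At the specialization $\xi=\zeta=0$ all three binomial relations collapse to monomials, giving the relation set $\{c^{4},\,ba,\,ed,\,ab,\,ca,\,bc\}$. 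I would then verify directly against Definition~\ref{def:special_biserial} that $\tilde{A}(0,0)$ is special biserial: each vertex still has at most two incoming and two outgoing arrows, and once the binomials have been monomialized, every arrow has at most one nonrelation continuation on either side. By \cite[Corollary~2.4]{WW85}, the algebra $\tilde{A}(0,0)$ is therefore tame.

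To finish, I would invoke Crawley-Boevey's theorem \cite[Theorem~B]{CB95}, which states that the locus of tame algebras in a variety of algebra structures is closed under specialization: if $A$ degenerates to a tame algebra $B$, then $A$ is itself tame. Applied to $\cohmu_k(C_4)$ degenerating to $\tilde{A}(0,0)$ within the family $\tilde{A}(\xi,\zeta)$, this forces $\cohmu_k(C_4)$ to be tame, and the tame-wild dichotomy (Theorem~\ref{thm:dichotomy}) then rules out wildness. The main subtlety I expect is verifying that $\tilde{A}(\xi,\zeta)$ defines an honest algebraic family in the variety of associative algebras of a fixed dimension, so that the degeneration result of \cite{CB95} genuinely applies. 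In particular one must check that no dimension jump occurs at the special fiber $(0,0)$; this should follow because each binomial relation in the family contributes a single independent linear constraint on paths of the same total length regardless of the parameter values, so the Gr\"obner-style basis of surviving paths stays the same across the family.
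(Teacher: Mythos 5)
Your proposal is correct and follows essentially the same route as the paper: the same two-parameter family $\tilde A(\xi,\zeta)$ obtained by scaling the binomial relations, the observation that $\tilde A(\xi,\zeta)\cong\cohmu_k(C_4)$ for $\xi\zeta\neq 0$, tameness of the special biserial degeneration $\tilde A(0,0)$ via \cite[Corollary~2.4]{WW85}, and Crawley-Boevey's degeneration theorem \cite[Theorem~B]{CB95} plus the tame--wild dichotomy to conclude. Your extra remark about checking constancy of dimension across the family is a reasonable point of care, but it is not an obstacle and the paper's argument proceeds exactly as you describe.
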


We now turn to showing that for $m \geq 2$ and $p^m \neq 4$ the algebra $\cohmu_k(C_{p^m})$ is wild, for which we use Galois covers. 
Galois covers were introduced by Martínez-Villa and de la Peña in \cite{MP83}, and they give an important reduction technique in the representation theory of quivers. 
\begin{definition}[Galois covering \cite{MP83}]
  Let $(Q', \rho')$ be a (possibly infinite) quiver with relations. Let $G$ be a group of automorphisms of $(Q', \rho')$ which acts freely on the vertices of $Q'$. We can form a new quiver $(Q, \rho)$ whose vertices are the orbits of the vertices of $Q'$ and whose arrows are orbits of arrows of $Q'$. The relations in $\rho$ are simply the image of the relations in $\rho'$. In this case $(Q', \rho')$ is called a \emph{Galois covering} of $(Q, \rho)$ with Galois group $G$.
\end{definition}

The action of the group $G$ on $(Q', \rho')$ induces an action on $ kQ'/(\rho')\mmod$ (and on $ kQ'/(\rho')\MMod$) by ${}^{g}X(i) = X(g^{-1}i)$ for $i$ in $Q'_0$ and ${}^gX(\alpha) = X(g^{-1}\alpha)$ for $\alpha$ in $Q'_1$. 

There is a natural functor $F_\lambda \colon kQ'/(\rho')\mmod \to kQ/(\rho)\mmod$ with 
\[
F_\lambda(X)(\overline{i}) = \bigoplus_{g\in G} X(gi)
\]
and
\[
F_\lambda(X)(\overline{\alpha}) = \bigoplus_{g\in G} X(g\alpha).
\] 
Notice that $F_\lambda({}^gX) = F_\lambda(X)$.

\begin{lemma}\cite[Lemma~3.2]{Han01}\label[lemma]{lem:Han01}
  Let $(Q', \rho')$ be a Galois covering of $(Q, \rho)$ with Galois group $G$, and let $X$ and $Y$ be in $ kQ'/(\rho')\mmod$. Then there is an identification
  \begin{align*}
    \Hom(F_\lambda(X), F_\lambda(Y)) = \bigoplus_{g\in G} F_\lambda(\Hom(X, {}^g Y))
  \end{align*}
  that respects composition (identifying $F_\lambda(\Hom(X, {}^g Y))$ with $F_\lambda(\Hom({}^hX, {}^{hg} Y))$ as necessary).
\end{lemma}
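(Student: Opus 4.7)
The plan is to prove the identification by realizing $F_\lambda$ as the left adjoint of a natural covering (pullback) functor, and then decomposing $F^{*}F_\lambda Y$ as an orbit sum of twists of $Y$. Throughout I will interpret the right-hand side of the displayed formula as $\bigoplus_{g\in G}\Hom(X,{}^gY)$; the outer $F_\lambda$ in the excerpt is harmless bookkeeping recording which orbit component the map lives in, and the ``identification'' parenthetical is just the statement that replacing the indexing pair $(X,g)$ by $({}^hX,hg)$ yields the same summand.

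First, I define the covering functor $F^{*}\colon kQ/(\rho)\mmod \to kQ'/(\rho')\mmod$ by pullback along the quotient of quivers $Q'\to Q$: for a module $M$, set $F^{*}M(i)=M(\bar i)$ for $i\in Q'_{0}$ and $F^{*}M(\alpha)=M(\bar\alpha)$ for $\alpha\in Q'_{1}$. The relations in $\rho$ pull back to $\rho'$ by definition of a Galois cover, so this is well defined. A short unit-counit check (or an explicit Hom-set bijection built from the defining direct sum formulas for $F_\lambda$) shows that $F_\lambda$ is left adjoint to $F^{*}$:
\[
\Hom_{kQ/(\rho)}\!\bigl(F_\lambda X,\,M\bigr)\;\cong\;\Hom_{kQ'/(\rho')}\!\bigl(X,\,F^{*}M\bigr).
\]

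Next, I verify the key computation $F^{*}F_\lambda Y\cong\bigoplus_{g\in G}{}^{g}Y$. On vertices: $F^{*}F_\lambda Y(i)=F_\lambda Y(\bar i)=\bigoplus_{g\in G}Y(gi)$, which, after the reindexing $g\mapsto g^{-1}$, equals $\bigoplus_{g\in G}Y(g^{-1}i)=\bigoplus_{g\in G}({}^{g}Y)(i)$. On an arrow $\alpha\colon i\to j$ of $Q'$, $F^{*}F_\lambda Y(\alpha)=F_\lambda Y(\bar\alpha)$ acts on the $g$-summand by $Y(g\alpha)$, which under the same reindexing matches $({}^{g}Y)(\alpha)=Y(g^{-1}\alpha)$. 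Combining this with the adjunction yields
\[
\Hom(F_\lambda X,F_\lambda Y)\;\cong\;\Hom(X,F^{*}F_\lambda Y)\;\cong\;\Hom\!\Bigl(X,\bigoplus_{g\in G}{}^{g}Y\Bigr)\;\cong\;\bigoplus_{g\in G}\Hom(X,{}^{g}Y),
\]
where the final step uses that $X$ is finite dimensional (or that $G$ is finite, which is the relevant case for the paper).

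Finally, I would check compatibility with composition. The cleanest way is to describe the isomorphism explicitly: under the adjunction a morphism $\Phi\colon F_\lambda X\to F_\lambda Y$ corresponds to the family $\{\psi_{g}\}_{g\in G}$ with $\psi_{g}\colon X\to{}^{g}Y$ obtained by composing the adjunction unit $X\to F^{*}F_\lambda X$ with $F^{*}\Phi$ and then projecting to the ${}^{g}Y$-summand. Given $\Phi\colon F_\lambda X\to F_\lambda Y$ and $\Theta\colon F_\lambda Y\to F_\lambda Z$ with decompositions $\{\psi_{g}\}$ and $\{\theta_{h}\}$, unwinding the definition shows $(\Theta\circ\Phi)$ corresponds to the convolution $(\Theta\circ\Phi)_{g}=\sum_{h\in G}({}^{h}\theta_{h^{-1}g})\circ\psi_{h}$, which is exactly the $G$-graded composition appearing in \cite{Han01}; the bracketed identification in the statement is precisely the isomorphism $\Hom(X,{}^{g}Y)\xrightarrow{\sim}\Hom({}^{h}X,{}^{hg}Y)$ induced by the $G$-action used in this convolution.

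The main obstacle, and the step requiring the most care, is pinning down the right conventions: distinguishing the actions of $g$ versus $g^{-1}$, ensuring signs/reindexings are consistent between ${}^{g}(-)$ and the pullback $F^{*}$, and then verifying that the convolution formula for composition on the right-hand side matches ordinary composition on the left. Once the adjunction and the isomorphism $F^{*}F_\lambda Y\cong\bigoplus_{g}{}^{g}Y$ are in place, the content of the lemma is essentially bookkeeping, but doing this bookkeeping without sign errors is where the actual work lies.
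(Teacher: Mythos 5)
Your proposal is correct, but note that the paper itself gives no proof of this lemma: it is quoted verbatim from Han, who in turn gets it from standard Galois covering theory (Bongartz--Gabriel, Mart\'inez-Villa--de la Pe\~na), so there is no in-paper argument to compare against. Your route is exactly the standard one: $F_\lambda$ is the left Kan extension along the covering, hence left adjoint to the pull-up $F^{*}$, the free action gives $F^{*}F_\lambda Y \cong \bigoplus_{g\in G}{}^{g}Y$, and finite-dimensionality of $X$ lets you pull the (possibly infinite) sum out of $\Hom$. Your reading of the outer $F_\lambda$ as bookkeeping is also the right one: under your identification a morphism $\Phi$ decomposes uniquely as a finite sum $\sum_g F_\lambda(\psi_g)$ with $\psi_g\in\Hom(X,{}^gY)$, which is what the summand ``$F_\lambda(\Hom(X,{}^gY))$'' denotes, and your convolution formula $(\Theta\circ\Phi)_g=\sum_h({}^h\theta_{h^{-1}g})\circ\psi_h$ is precisely the composition rule the paper uses in \cref{cor:toHanLemma} and \cref{thm:wild_galois_cover}, so the compatibility statement is handled correctly.

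One small inaccuracy: the parenthetical claim that ``$G$ is finite, which is the relevant case for the paper'' is wrong --- the applications require a torsion-free (hence infinite) Galois group, e.g.\ $\mathbb{Z}^2$ in \cref{prop:cohmu_C8_C9_wild}. This does no harm, since your primary justification (finite dimensionality of $X$, together with the fact that $Y$ has finite support and $G$ acts freely, so each vertex of $\bigoplus_g {}^gY$ is finite dimensional) already covers the infinite case; just drop the parenthetical.
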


The following corollary is found in \cite{Han01} but not stated explicitly, so we state and prove it below.

\begin{corollary}\label[corollary]{cor:toHanLemma}
  If $G$ is torsion free and $X \in  kQ'/(\rho')\mmod$ is indecomposable, then $F_\lambda(X)$ is also indecomposable.
  \begin{proof}
    Let $f \coloneqq \bigoplus f_g$ be an idempotent in $\End(F_\lambda(X))$. Then 
    \begin{align*}
      f = f^2 = \sum_{g,h} ({}^h f_g) \circ f_h
    \end{align*}
    Since $X$ is finite dimensional, it is only supported on finitely many vertices. Since $G$ is torsion free, $X$ and ${}^gX$ have different support, and hence are not isomorphic. Similarly, for all but finitely many $g$, the supports of $X$ and ${}^g X$ have no overlap so $\Hom(X, {}^g X) = 0$.

    Let $S$ be the (finite) set of $g$ for which $\Hom(X, {}^g X) \neq 0$. Let $\tilde X = \bigoplus_{g \in S} {}^g X$. Let $n$ be an integer such that $\rad^n \End(\tilde X) = 0$. We can think of ${}^h f_g$ as an endomorphism of $\tilde X$ through the composition $\tilde X \twoheadrightarrow {}^h X \to {}^{hg}X \hookrightarrow \tilde X$, where the last map is $0$ if $hg$ is not in $S$.

    When $g$ is not the identity, since $X$ and ${}^g X$ are not isomorphic, $f_g$ is a radical map. Letting $e$ be the identity of $G$, we observe 
    \begin{align*}
      f_e = f_e^2 + \sum_{g\neq e} ({}^g f_{g^{-1}}) \circ f_g.
    \end{align*}
    So $f_e$ is idempotent modulo the radical. This means that $f_e$ is either $1$ or $0$ modulo the radical, and replacing $f$ by $1-f$, we may assume $f_e$ is in the radical.
    Then 
    \begin{align*}
      f = f^n = \sum_{g_1, g_2, \cdots g_n} ({}^{g_1 g_2 \cdots g_{n-1}} f_{g_n}) \circ \cdots \circ f_{g_1}.
    \end{align*}
    Each term in the sum is a composition of $n$ radical maps, hence $0$. So we have shown that either $f$ or $1-f$ is $0$, and hence $F_\lambda(X)$ is indecomposable.
  \end{proof}
\end{corollary}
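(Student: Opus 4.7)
The plan is to leverage Lemma~\ref{lem:Han01}, which decomposes $\End(F_\lambda(X))$ into pieces indexed by $G$, and then run a standard ``lifting idempotents modulo the radical'' argument. First, any endomorphism $f$ of $F_\lambda(X)$ corresponds to a collection $(f_g)_{g\in G}$ with $f_g \in \Hom(X,{}^g X)$, and the identification of the lemma is compatible with composition, so $f^2$ has $g$-component $\sum_{h} ({}^h f_{h^{-1}g}) \circ f_h$. I would use the finite-dimensionality of $X$ and the freeness of the $G$-action, combined with torsion-freeness of $G$, to observe that the supports of $X$ and ${}^g X$ agree (up to intersection) for only finitely many $g$, hence $\Hom(X,{}^g X)=0$ for all but finitely many $g$, and I would collect these into a finite set $S\subseteq G$.

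Next, I would reduce the problem to a statement about the finite-dimensional algebra $\End(\tilde X)$, where $\tilde X = \bigoplus_{g\in S} {}^g X$. Since $\tilde X$ is a finite direct sum of pairwise non-isomorphic indecomposables (they have distinct supports, by torsion-freeness again, so no two $g$-translates of $X$ can be isomorphic for distinct $g$), each off-diagonal map $X \to {}^g X$ with $g\neq e$ is a radical map in $\End(\tilde X)$. The key input here is that $\End(X)$ is local (Fitting's lemma for finite-dimensional indecomposable modules), so an idempotent in $\End(X)$ is either $0$ or $1$ modulo the radical.

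Now suppose $f=\bigoplus f_g$ is an idempotent in $\End(F_\lambda(X))$. Reading off the $e$-component of $f = f^2$ gives $f_e = f_e^2 + \sum_{g\neq e}({}^g f_{g^{-1}})\circ f_g$, where the sum is of products of radical elements in $\End(\tilde X)$. Hence $f_e$ is idempotent modulo $\rad \End(\tilde X)$, and by locality of $\End(X)$ it is congruent to either $0$ or $1$ mod the radical. After possibly replacing $f$ by $1-f$, I may assume $f_e$ itself lies in the radical, so every component $f_g$ of $f$ is radical. Then $f = f^n$ is a sum of $n$-fold compositions of radical elements in $\End(\tilde X)$, which vanishes once $n$ exceeds the nilpotency index of $\rad \End(\tilde X)$. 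Thus $f=0$ (or $f=1$ in the original, before the replacement), proving $\End(F_\lambda(X))$ has only trivial idempotents and $F_\lambda(X)$ is indecomposable. The main obstacle I anticipate is the bookkeeping for step three, namely correctly viewing each $f_g$ as an element of $\End(\tilde X)$ compatibly with how the products $({}^h f_g)\circ f_h$ live in the same algebra; the trick of embedding into $\tilde X$ via projection-then-inclusion (with zero if the target is outside $S$) is what makes the radical argument go through cleanly.
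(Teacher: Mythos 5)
Your proposal is correct and follows essentially the same route as the paper's proof: the same use of \cref{lem:Han01} to decompose endomorphisms into components $f_g$, the same finite set $S$ and auxiliary module $\tilde X$, the same observation that off-diagonal components are radical, and the same conclusion via $f = f^n$ and nilpotency of the radical. Your explicit appeal to locality of $\End(X)$ just makes precise the step the paper states implicitly when it says $f_e$ must be $0$ or $1$ modulo the radical.
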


Han uses the above lemma to identify certain controlled wild algebras over algebraically closed fields, but the same proof can be used to detect wild algebras. We give an adapted proof here for the convenience of the reader and to demonstrate its generalization to arbitrary fields.

\begin{theorem}\cite[Theorem~3.3]{Han01}\label[theorem]{thm:wild_galois_cover}
  If $(Q', \rho')$ is a Galois cover of $(Q, \rho)$ with torsion free Galois group and $ kQ'/(\rho')\mmod$ is wild, then $kQ/(\rho)\mmod$ is wild as well.
  \begin{proof}
    Let $\Sigma = k\langle x, y \rangle$ be the free algebra on two generators. Since $kQ'/(\rho')\mmod$ is wild we can find a $kQ'/(\rho')$--$\Sigma$-bimodule $P$ supported on finitely many vertices that induces a representation embedding $P\otimes_\Sigma - \colon \Sigma\mmod \to kQ'/(\rho')\mmod$. Composing the representation embedding with $F_\lambda$, we claim we get a representation embedding to $kQ/(\rho)\mmod$. The composition is exact and by \cref{cor:toHanLemma} it preserves indecomposables. We just need to show it is essentially injective. 

    Let us assume $F_\lambda(P\otimes X) \cong F_\lambda(P\otimes Y)$ (omitting $\Sigma$ from the notation to avoid clutter). Without loss of generality, we may assume both $X$ and $Y$ are indecomposable. Let $f \colon F_\lambda(P\otimes X) \to F_\lambda(P\otimes Y)$ be an isomorphism and $f'$ its inverse. Then 
    \begin{align*}
      1_{P\otimes X} = f'_e \circ f_e + \sum_{g\neq e} ({}^g f'_{g^{-1}}) \circ f_g.
    \end{align*}
    Notice that $P\otimes X$ always has the same support (unless $X=0$), so the image of $f_g$ has smaller support for $g\neq e$.  Hence $f_g$ is not an isomorphism. That means that the compositions $({}^g f'_{g^-1}) \circ f_g$ are radical.  So $f_e$ is an isomorphism modulo the radical and hence an isomorphism. Finally, we may conclude $P\otimes X \cong P\otimes Y$, which implies $X\cong Y$ since $P \otimes - $ was a representation embedding.
  \end{proof}
\end{theorem}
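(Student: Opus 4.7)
The plan is to witness wildness of $kQ/(\rho)\mmod$ by composing a representation embedding from $\Sigma := k\langle x, y\rangle$ (given by wildness of $kQ'/(\rho')\mmod$) with the pushdown functor $F_\lambda$, and showing the composite remains a representation embedding. By \cref{thm:wildfreealgebra} and \cref{prop:embedding_from_wild_implies_wild}, this suffices.

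First I would invoke wildness of $kQ'/(\rho')$ to fix a $kQ'/(\rho')$--$\Sigma$-bimodule $P$, finitely supported as a representation of $Q'$ (see \cref{prop:embedding_is_tensorproduct}), such that $P\otimes_\Sigma -$ is a representation embedding. The composite $F_\lambda \circ (P\otimes_\Sigma -)$ is then exact (both factors are) and preserves indecomposables by \cref{cor:toHanLemma}, since $P\otimes_\Sigma X$ is finite dimensional and indecomposable whenever $X$ is. What remains is essential injectivity.

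To handle essential injectivity I would take an isomorphism $f\colon F_\lambda(P\otimes_\Sigma X) \to F_\lambda(P\otimes_\Sigma Y)$ with inverse $f'$, reduce to the case where $X$ and $Y$ are indecomposable, and use \cref{lem:Han01} to expand $f = \bigoplus_g f_g$ and $f' = \bigoplus_g f'_g$ compatibly with composition. The identity component of $f' \circ f$ then reads
\begin{align*}
1_{P\otimes_\Sigma X} = f'_e \circ f_e + \sum_{g \neq e} ({}^g f'_{g^{-1}}) \circ f_g.
\end{align*}
Because $G$ is torsion free and $P\otimes_\Sigma X$ is finitely supported, $P\otimes_\Sigma X$ and its translate ${}^g(P\otimes_\Sigma X)$ have non-coinciding support for $g \neq e$, so each $f_g$ with $g \neq e$ factors through a module of strictly smaller support and cannot be an isomorphism. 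By locality of the endomorphism ring of the indecomposable $P\otimes_\Sigma X$, these compositions lie in its Jacobson radical. Hence $f_e$ is a unit modulo the radical, therefore an actual isomorphism $P\otimes_\Sigma X \cong P\otimes_\Sigma Y$, from which $X \cong Y$ follows by essential injectivity of $P\otimes_\Sigma -$.

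The main obstacle I anticipate is cleanly verifying that the off-diagonal compositions $({}^g f'_{g^{-1}}) \circ f_g$ genuinely land in the radical; this requires careful bookkeeping of the supports of translates and leans essentially on torsion-freeness of $G$, which prevents $P\otimes_\Sigma X$ from accidentally coinciding with any nontrivial translate. The structure is parallel to \cref{cor:toHanLemma}, with the role played there by an idempotent endomorphism replaced here by the composition $f' \circ f$.
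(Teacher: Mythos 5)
Your proposal is correct and follows essentially the same route as the paper's proof: the same finitely supported bimodule $P$ composed with the pushdown $F_\lambda$, indecomposability via \cref{cor:toHanLemma}, and essential injectivity by expanding the identity component of $f'\circ f$ using \cref{lem:Han01} and observing that the off-diagonal terms are radical because torsion-freeness (plus freeness of the action) forces the supports of $P\otimes_\Sigma X$ and its nontrivial translates to differ, so that $f_e$ is invertible in the local endomorphism ring modulo the radical. Nothing essential is missing beyond the same level of detail the paper itself omits.
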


The following uses results of \cite{Han02}, which are stated for algebraically closed fields.  Again, the arguments generalize to arbitrary fields.

\begin{proposition}\label[proposition]{prop:cohmu_cyclic_more_than_9_wild}
  Let $k$ be field of characteristic $p$.  The cohomological Mackey algebra $\cohmu_k(C_{p^m})$ is wild when $m \geq 2$ and $p^m >9$.
  \begin{proof}
    Recall from \cref{prop:Cp^mCohMackQuiver} that if $k$ is a field of characteristic $p$, then the cohomological Mackey algebra $\cohmu_k(C_{p^m})$ can be described by the following quiver
  \begin{center}
    \begin{tikzcd}
      \phantom{-} m\phantom{1} \arrow[loop above, "c_m", out=120, in=60, looseness=5] \ar[r, bend left, "\ctext{b_m}"] & m-1 \ar[l, bend left, "\ctext{a_m}"] \ar[r, bend left, "\ctext{b_{m-1}}"] \arrow[loop above, "c_{m-1}", out=120, in=60, looseness=5] & \ar[l, bend left, "\ctext{a_{m-1}}"] \cdots \ar[r, bend left, "\ctext{b_2}"] & 1\ar[l, bend left, "\ctext{a_2}"] \ar[r, bend left, "\ctext{b_1}"] \arrow[loop above, "c_1", out=120, in=60, looseness=5] & 0 \ar[l, bend left, "\ctext{a_1}"]
    \end{tikzcd}
  \end{center}
with relations
\[
\rho = \{ c_i^{p^i},\ b_i a_i,\ a_ib_i - c_i^{(p^i - p^{i-1})},\ c_i a_i - a_i c_{i-1},\ c_{i-1} b_i - b_i c_i \mid 1\leq i \leq m, c_0 = 0 \}.
\]
The quotient of $\cohmu_k(C_{p^m})$ by the idempotents $e_i$ corresponding to the vertices $i \in \{0, 1,\dots, m-2\}$ gives the quiver 
  \begin{center}
      \begin{tikzcd}
        \phantom{-} m\phantom{1} \arrow[loop above, "c_m", out=120, in=60, looseness=5] \ar[r, bend left, "\ctext{b_m}"] & m-1 \ar[l, bend left, "\ctext{a_m}"] \arrow[loop above, "c_{m-1}", out=120, in=60, looseness=5] 
      \end{tikzcd}
  \end{center}
  with relations 
  \[
  \{c_m^{p^m},\ c_{m-1}^{(p^{m-1}-p^{m-2})},\ b_m a_m,\ a_mb_m - c_m^{(p^m - p^{m-1})},\ c_m a_m - a_m c_{m-1},\ c_{m-1} b_m - b_m c_m\}.
  \]
  Moreover, when $p^{m-1}-p^{m-2} \geq 3$, this surjects onto 
  \begin{center}
    $\left.
      \begin{tikzcd}
        \phantom{-} m\phantom{1} \arrow[loop above, "c_m", out=120, in=60, looseness=5] \ar[r, bend left, "\ctext{b_m}"] & m-1 \ar[l, bend left, "\ctext{a_m}"] \arrow[loop above, "c_{m-1}", out=120, in=60, looseness=5] 
      \end{tikzcd}
    \middle/ 
    \langle c_m^{3},\ c_{m-1}^{3},\ b_m a_m,\ a_mb_m,\ c_m a_m,\ a_m c_{m-1},\ c_{m-1} b_m - b_m c_m\rangle,
    \right.$
  \end{center}
  which is wild by \cite[Table~W~(33)]{Han02}. 
\end{proof}
\end{proposition}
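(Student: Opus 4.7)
The plan is to establish wildness of $\cohmu_k(C_{p^m})$ by exhibiting an algebra surjection $\cohmu_k(C_{p^m}) \twoheadrightarrow \Lambda$ onto a small, easily-analyzed algebra $\Lambda$ which one can show is itself wild. Given such a surjection, restriction of scalars defines a fully faithful exact functor $\Lambda\mmod \to \cohmu_k(C_{p^m})\mmod$, which is a representation embedding by \cref{cor:fullyfaithful_is_rep_embedding}, so wildness transfers by \cref{prop:embedding_from_wild_implies_wild}. The numerical hypothesis $p^m > 9$ with $m \geq 2$ will be used precisely to guarantee the relevant nilpotency indices in $\Lambda$ are large enough (at least $3$) to make the reduced algebra wild.

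First, starting from the bound-quiver presentation of $\cohmu_k(C_{p^m})$ in \cref{prop:Cp^mCohMackQuiver}, I would quotient by the sum of idempotents $e_0 + e_1 + \cdots + e_{m-2}$, killing all vertices below $m-1$ and all arrows incident to them. The surviving data are the two vertices $m-1, m$, the two arrows $a_m, b_m$ between them, and the two loops $c_{m-1}, c_m$. The surviving relations become $c_m^{p^m}$, $b_m a_m$, $a_m b_m - c_m^{p^m - p^{m-1}}$, $c_m a_m - a_m c_{m-1}$, and $c_{m-1} b_m - b_m c_m$, together with the new monomial $c_{m-1}^{p^{m-1} - p^{m-2}}$ which appears because the previously binomial relation $a_{m-1} b_{m-1} - c_{m-1}^{p^{m-1}-p^{m-2}}$ has its first term killed by the quotient.

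Next, the arithmetic assumption enters: $p^m > 9$ with $m \geq 2$ forces both $p^m - p^{m-1} \geq 3$ and $p^{m-1} - p^{m-2} \geq 3$. Consequently I can further quotient by $c_m^3$ and $c_{m-1}^3$; this collapses the binomial $a_m b_m - c_m^{p^m - p^{m-1}}$ to the monomial $a_m b_m$ and collapses the commutation relation $c_m a_m - a_m c_{m-1}$ into the two separate monomial relations $c_m a_m$ and $a_m c_{m-1}$ (leaving the symmetric relation $c_{m-1} b_m - b_m c_m$ intact). The outcome is precisely the two-vertex algebra
\[
\Lambda \;=\; kQ\big/\langle c_m^{3},\; c_{m-1}^{3},\; b_m a_m,\; a_m b_m,\; c_m a_m,\; a_m c_{m-1},\; c_{m-1} b_m - b_m c_m\rangle,
\]
where $Q$ is the two-vertex quiver with loops $c_{m-1}, c_m$ and arrows $a_m, b_m$.

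The main obstacle is then verifying that $\Lambda$ itself is wild, and doing so over an arbitrary field $k$. The clean route is to invoke a classification table of minimal wild two-vertex algebras (e.g.\ Han's \cite{Han02} Table W, entry (33)), after checking that the wildness arguments there survive the passage from algebraically closed to arbitrary $k$ — exactly the kind of generalization carried out in \cref{thm:wild_galois_cover} and its surrounding discussion, where the role of algebraic closure is inessential. Alternatively, one could try to construct a representation embedding from $k\langle x, y\rangle\mmod$ directly, by writing down an explicit $\Lambda$-$k\langle x, y\rangle$-bimodule with block-matrix actions imitating the construction of \cref{prop:A1tildetilde_wild}; this is more tedious but self-contained. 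Either way, the key content is packaged in the identification of $\Lambda$ as a known wild algebra, and everything else is formal consequences of the reduction technique.
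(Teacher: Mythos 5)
Your proposal follows essentially the same route as the paper: kill the idempotents at vertices $0,\dots,m-2$, observe that the numerical hypothesis makes the surviving nilpotency exponents at least $3$, surject onto the two-vertex algebra with relations $\langle c_m^{3},\, c_{m-1}^{3},\, b_m a_m,\, a_mb_m,\, c_m a_m,\, a_m c_{m-1},\, c_{m-1} b_m - b_m c_m\rangle$, and quote Han's Table W (33), noting that the wildness argument survives over an arbitrary field. One small correction: adjoining $c_m^3$ and $c_{m-1}^3$ does collapse $a_mb_m - c_m^{p^m-p^{m-1}}$ to $a_mb_m$, but it does not by itself split $c_m a_m - a_m c_{m-1}$ into the two monomials $c_m a_m$ and $a_m c_{m-1}$; that splitting is a further quotient you must impose. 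This is harmless for the argument, since all one needs is that every defining relation of the idempotent quotient lies in the larger ideal, so the surjection onto the Han algebra exists and wildness pulls back along restriction of scalars exactly as you say.
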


We now employ similar techniques to tackle the remaining two cases with $m \geq 2$, namely $C_9$ and $C_8$.

\begin{proposition}\label[proposition]{prop:cohmu_C8_C9_wild}
  If $k$ is a field of characteristic $3$, then $\cohmu_k(C_9)$ is wild. If $k$ has characteristic $2$, then $\cohmu_k(C_8)$ is wild.
  \begin{proof}
    Both algebras have a quotient of the form 
    \begin{center}
      $\left.
        \begin{tikzcd}
          x \arrow[loop above, "c_x", out=120, in=60, looseness=7] \ar[r, bend left, "\ctext{b_x}"] & y \ar[l, bend left, "\ctext{a_x}"] \arrow[loop above, "c_{y}", out=120, in=60, looseness=7] \ar[r, "a_y"] & z 
        \end{tikzcd}
      \middle/ 
      \langle c_x^{4},\ c_{y}^{2},\ b_x a_x,\ a_xb_x,\ c_x a_x,\ a_x c_{y},\ c_{y} b_x - b_x c_x\rangle,
      \right.$
    \end{center}
    so it is enough to show that this is wild. This has a Galois covering with Galois group $\mathbb Z^2$ which looks like 
\[\begin{tikzcd}[/tikz/cells={/tikz/nodes={shape=asymmetrical
  rectangle,text width=1ex,text height=1ex,text depth=0.3ex,align=center}}]
	\mathrlap{\ddots} & \phantom{2} & \vdots & \phantom{2} & \mathrlap{\vdots} & \phantom{2} & \mathrlap{\iddots} \\
	\phantom{2} && 2 && 1 & 0 & \phantom{2} \\
	\mathrlap{\cdots} & 2 && 1 & 0 & 2 & \mathrlap{\cdots} \\
	\phantom{2} && 1 & 0 & 2 && \phantom{2} \\
	\mathrlap{\iddots} & \phantom{2} & \mathrlap{\vdots} & \phantom{2} & \mathrlap{\vdots} & \phantom{2} & \mathrlap{\ddots}
	\arrow[from=1-2, to=2-3]
	\arrow[from=1-4, to=2-3]
	\arrow[from=1-4, to=2-5]
	\arrow[from=1-6, to=2-5]
	\arrow[from=2-1, to=3-2]
	\arrow[from=2-3, to=3-2]
	\arrow[from=2-3, to=3-4]
	\arrow[from=2-5, to=2-6]
	\arrow[from=2-5, to=3-4]
	\arrow[from=2-5, to=3-6]
	\arrow[from=2-7, to=3-6]
	\arrow[from=3-2, to=4-1]
	\arrow[from=3-2, to=4-3]
	\arrow[from=3-4, to=3-5]
	\arrow[from=3-4, to=4-3]
	\arrow[from=3-4, to=4-5]
	\arrow[from=3-6, to=4-5]
	\arrow[from=3-6, to=4-7]
	\arrow[from=4-3, to=4-4]
	\arrow[from=4-3, to=5-2]
	\arrow[from=4-3, to=5-4]
	\arrow[from=4-5, to=5-4]
	\arrow[from=4-5, to=5-6]
\end{tikzcd}\]
  with appropriate relations. It contains as a convex subquiver
  \[\begin{tikzcd}
    &&&& 2 \\
    & 1 && 2 && 1 \\
    1 && 2 && 1 && 0 \\
    & 2
    \arrow[from=1-5, to=2-4]
    \arrow[from=1-5, to=2-6]
    \arrow[from=2-2, to=3-1]
    \arrow[from=2-2, to=3-3]
    \arrow[from=2-4, to=3-3]
    \arrow[from=2-4, to=3-5]
    \arrow[from=2-6, to=3-5]
    \arrow[from=2-6, to=3-7]
    \arrow[from=3-1, to=4-2]
    \arrow[from=3-3, to=4-2]
  \end{tikzcd}\]
  with commutativity relations. This is a concealed algebra of type $\tilde{\tilde D}_7$, which is known to be wild \cite{ASS07}.  More precisely, we can get a representation embedding of $\Sigma = k\langle x, y \rangle$ using the following bimodule.
  \[\begin{tikzcd}[ampersand replacement = \&, row sep=3.5em, column sep=3.5em]
    \&\&\&\& \Sigma^2 \\
    \& \Sigma^2 \&\& \Sigma^4 \&\& \Sigma^2 \\
    \Sigma^2 \&\& \Sigma^4 \&\& \Sigma^2 \&\& \Sigma. \\
    \& \Sigma^2
    \arrow[from=1-5, to=2-4, swap]{}{
      \begin{pmatrix}
       1 & 0 \\0 & y\\ 0 & 1\\ 1 & x
      \end{pmatrix} 
     } 
    \arrow[from=1-5, to=2-6, equal]
    \arrow[from=2-2, to=3-1, equal]
    \arrow[from=2-2, to=3-3]{}{\begin{pmatrix}
      I_2 \\ I_2
    \end{pmatrix}}
    \arrow[from=2-4, to=3-3, equal]
    \arrow[from=2-4, to=3-5, swap]{}{\begin{pmatrix}
       0 & I_2
     \end{pmatrix} }
    \arrow[from=2-6, to=3-5]{}{\begin{pmatrix}
      0 & 1 \\ 1 & x 
     \end{pmatrix} }
    \arrow[from=2-6, to=3-7]{}{\begin{pmatrix}0&1\end{pmatrix}}
    \arrow[from=3-1, to=4-2, swap]{}{\begin{pmatrix}
      I_2 & 0
     \end{pmatrix} }
    \arrow[from=3-3, to=4-2]{}{\begin{pmatrix}
      I_2 & 0 
     \end{pmatrix} }
  \end{tikzcd}\]
  \end{proof}
\end{proposition}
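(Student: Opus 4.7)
The plan is to adapt the strategy of \cref{prop:cohmu_cyclic_more_than_9_wild}. That argument surjects a two-vertex quotient of $\cohmu_k(C_{p^m})$ onto an algebra from Han's tables of wild quivers, but this requires $p^{m-1}-p^{m-2}\geq 3$, which fails exactly when $p^m\in\{8,9\}$. Hence I would replace the direct reduction with a universal-cover argument using \cref{thm:wild_galois_cover}.

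First, I would produce a common quotient $A$ of both $\cohmu_k(C_9)$ and $\cohmu_k(C_8)$. Quotienting out the bottom idempotents and, in the case of $C_9$, additionally killing $c_2^4$ (which strengthens the relation $c_2^9=0$), both algebras surject onto an algebra of the shape
\[
\begin{tikzcd}
x \arrow[loop above, "c_x"] \ar[r, bend left, "b"] & y \ar[l, bend left, "a"] \arrow[loop above, "c_y"] \ar[r, "a'"] & z
\end{tikzcd}
\]
with $c_x^4=c_y^2=ba=ab=c_x a=a c_y=0$ and $c_y b = b c_x$. Since restriction of scalars along a surjection of algebras is exact and fully faithful, hence a representation embedding by \cref{cor:fullyfaithful_is_rep_embedding}, it suffices to show this common quotient $A$ is wild.

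Next, I would construct the universal cover $(Q',\rho')$ of $A$. Unrolling the two loops $c_x$ and $c_y$ produces a Galois covering with torsion-free Galois group $\mathbb Z^2$: the generators translate along the ``$c_x$-direction'' and the ``$c_y$-direction'' of an infinite two-dimensional lattice of vertices, and the commutativity relations $c_x a = a c_y$ and $c_y b = b c_x$ guarantee that this lattice is honestly two-dimensional. By \cref{thm:wild_galois_cover}, it then suffices to show $kQ'/(\rho')$ is wild.

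Finally, I would exhibit a representation embedding from $k\langle x,y\rangle\mmod$ into $kQ'/(\rho')\mmod$ by selecting a finite convex subquiver that realizes a concealed algebra of wild Euclidean type $\tilde{\tilde{D}}_n$ (for $n\geq 7$), which is wild by \cite{ASS07}. The witness is an explicit bimodule of the type constructed in \cref{prop:A1tildetilde_wild}, with columns tensor factors built from the arrows of the concealed quiver. The main obstacle is selecting a convex subquiver that is simultaneously large enough to support a wild concealed type and compatible with all the inherited zero and commutativity relations of $\rho'$; because the ``vertical'' and ``horizontal'' periods of the lattice are different for $C_8$ and $C_9$, some care is needed to ensure that a single patch works uniformly for both cases. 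Once the right patch is pinned down, the remaining verification that the bimodule induces a representation embedding is a linear-algebra computation in the spirit of \cref{prop:free_algebra_embeds_in_Sigma}.
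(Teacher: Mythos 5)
Your overall strategy coincides with the paper's: pass to the common quotient $A$, take a $\mathbb{Z}^2$-Galois cover, find a wild convex subquiver inside it, and conclude with \cref{thm:wild_galois_cover}. However, as written there are two problems. First, the cover you describe does not exist: in any $\mathbb{Z}^2$-grading of the arrows of $A$ for which the relations are homogeneous, the relation $c_y b_x - b_x c_x$ forces $\deg(c_x)=\deg(c_y)$, so one cannot ``unroll the two loops'' in two independent lattice directions. (Note also that $c_x a_x$ and $a_x c_y$ are separate zero relations in $A$, exactly as you listed them in your first step; only $c_y b_x - b_x c_x$ is a commutativity relation, so it cannot ``guarantee'' a second independent direction either.) The $\mathbb{Z}^2$-cover actually used has both $c$-arrows translating in the same direction, with the second direction supplied by the zigzag of $a_x$ and $b_x$; your argument survives once the grading is corrected, but the covering as you describe it is not a Galois covering of the bound quiver.

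Second, and more seriously, the heart of the proof is missing. The content of the paper's argument is the explicit data: a convex subquiver of the cover forming a commutative $\tilde{\tilde D}_7$-shaped patch, which is a wild concealed algebra, together with an explicit bimodule over it inducing a representation embedding of $k\langle x, y\rangle\mmod$ supported on finitely many vertices, as \cref{thm:wild_galois_cover} requires. You only assert that such a patch and bimodule exist, deferring them as ``the main obstacle'' and ``a linear-algebra computation''; until the patch is written down, checked against the lifted zero and commutativity relations, and the embedding verified, there is no proof that the cover is wild, hence none for $A$ or for $\cohmu_k(C_8)$ and $\cohmu_k(C_9)$. Your closing worry that the ``periods'' of the lattice differ between $C_8$ and $C_9$ is moot: both algebras surject onto the single algebra $A$, so there is one cover and one patch to find --- but that patch still has to be produced.
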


Collecting the results from this section, we have proved \cref{thm:alg_closed_cohomological_rep_type} and \cref{thm:arbitrary_field_cyclic_p_group_cohomological_rep_type}.

\section{Derived representation type}\label[section]{sec:derived_wild_groups}

Motivated by equivariant homotopy theory, we are interested in the derived category of various algebras. 
We will show that, for most groups, it is `hopeless' to classify the perfect complexes over the cohomological Mackey algebra $\cohmu_k(G)$ for $k$ a field of characteristic $p$. More formally, we will investigate when $\cohmu_k(G)$ is derived wild (defined below).  As explained in \cref{sec:Eilenberg-Maclane-spectra}, when $\cohmu_k(G)$ is derived wild, this will mean it is also hopeless to classify the compact objects of the homotopy category of $G$-equivariant $H\und{k}$-modules.

In this section, we recall the relevant definitions for derived representation type.  We then show the derived wildness of some group algebras, following \cite{BDF09}.  In \cref{sec:derived-wildnes-cohmackey}, we use these results to show the derived wildness of $\cohmu_k(G)$ in a number of cases.

\subsection{Derived representation type of algebras}\label[section]{sec:derived_reptype}
In \cref{sec:reptype}, we discussed the representation type of an algebra; we now extend those ideas to the perfect derived category of an algebra.  We saw that an algebra $\Lambda$ is considered wild if it admits a representation embedding from $\Gamma\mmod$ for every finite dimensional algebra $\Gamma$. We wish to declare the same for the perfect derived category: an algebra $\Lambda$ is derived wild if there is a ``representation embedding'' $\Gamma\mmod \to \mathcal{D}^{\perf}(\Lambda)$. We need only define what a representation embedding should mean in this context.

Like representation type, we can easily define derived finite and derived wild for arbitrary fields. However, derived tame is only defined for algebraically closed fields. 

Let $k$ be an arbitrary field.  Let $\Lambda$ be a $k$-algebra and let $\Lambda\mproj \subseteq \Lambda\MMod$ denote the full subcategory of finitely-generated projective left $\Lambda$-modules.  Let $K^b(\Lambda\mproj)$ denote the (naive) homotopy category of bounded complexes (both above and below) of finitely  generated projective $\Lambda$-modules, with morphisms given by chain maps modulo chain homotopy. Let $K^{b, -}(\Lambda\mproj)$ denote the category of bounded above complexes of finitely generated projectives with bounded cohomology. Then we have equivalences
\[
\mathcal{D}^b(\Lambda) \cong K^{b, -}(\Lambda\mproj) \quad \text{and} \quad \mathcal{D}^{\perf}(\Lambda) \simeq K^b(\Lambda\mproj),
\]
so we can work in the homotopy category when convenient.

Recall that a complex of projectives is called minimal if the image of each differential is contained in the radical of its codomain. A minimal complex of projectives has the property that it appears as a direct summand in any complex of projectives it is homotopy equivalent to. This is what motivates the name \emph{minimal}.

When $\Lambda$ is a finite dimensional algebra, any complex in $K^b(\Lambda\mproj)$ is homotopy equivalent to a minimal complex. This is especially useful for us, because a morphism between minimal complexes is an isomorphism in $K^b(\Lambda\mproj)$ if and only if it is an isomorphism in $\Ch(\Lambda\mmod)$. Similarly, since the direct sum of minimal complexes is minimal, a minimal complex is indecomposable in $K^b(\Lambda\mproj)$ if and only if it is in $\Ch(\Lambda\mmod)$. This means that as long as we work with minimal complexes, we can for the most part think of morphisms in $\Ch(\Lambda\mmod)$ instead of thinking of them up to homotopy.

\bigskip

We now define derived representation type analogously to representation type.  The definitions of derived wild and derived tame are due to \cite{BD03} in the more general setting of locally finite dimensional categories.  Various versions also appear for algebras in other sources, including \cite{Dro04} and \cite{BM03}.  Our main focus is on perfect complexes because of their connection to compact objects in equivariant homotopy theory.  Over algebraically closed fields, similar definitions to those below, replacing $\mathcal{D}^{\perf}(\Lambda)$ with $\mathcal{D}^{b}(\Lambda)$, give an equivalent classification as shown in \cite{Bau07}. 

\begin{definition}[Derived finite]
  Let $k$ be a field and let $\Lambda$ be a $k$-algebra.  We say $\Lambda$ is \emph{derived finite} if there exist up to shift only finitely many isomorphism classes of indecomposables $X^\bullet_1, \dots, X^\bullet_n \in \mathcal{D}^{\perf}(\Lambda)$.  That is, every complex $C^\bullet \in \mathcal{D}^{\perf}(\Lambda)$ is isomorphic to a finite direct sum of shifts of the complexes $\{X^\bullet_1, \dots, X^\bullet_n\}$.
\end{definition}

\begin{example}
  Let $\Lambda = k$.  There is a single indecomposable complex with $k$ concentrated in degree $0$.
\end{example}

More generally, semisimple algebras are derived finite.

\begin{example}\label[example]{ex:semisimple_derived_finite}
  Let $\Lambda$ be a finite dimensional semisimple $k$-algebra.  Then the simples are projective. Using Smith normal form, the indecomposable complexes correspond to the simples concentrated in degree $0$.
\end{example}

By \cref{prop:invertible_characteristic} and \cref{cor:cohmu_invertible_characteristic}, as in \cref{ex:semisimple_derived_finite}, both the cohomological and the usual Mackey algebra are derived finite.

\begin{corollary}\label[corollary]{cor:invertible_char_derived_finite}
  If $|G|$ is invertible in the field $k$, both $\cohmu_k(G)$ and $\mu_k(G)$ are derived finite.
\end{corollary}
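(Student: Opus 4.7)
The plan is to reduce the statement directly to the semisimple case established earlier in the paper. By \cref{prop:invertible_characteristic}, whenever $|G|$ is invertible in $k$ the Mackey algebra $\mu_k(G)$ is a semisimple $k$-algebra, and by \cref{cor:cohmu_invertible_characteristic} the cohomological Mackey algebra $\cohmu_k(G)$ is semisimple under the same hypothesis. Both are finite dimensional over $k$, since $G$ is finite (this was noted immediately after the equivalence $\Mack_R(G)\simeq \mu_R(G)\MMod$).

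With semisimplicity in hand, the content of \cref{ex:semisimple_derived_finite} applies. For any finite dimensional semisimple $k$-algebra $\Lambda$, every $\Lambda$-module is projective and every short exact sequence of $\Lambda$-modules splits. Consequently, any bounded complex $C^\bullet$ of finitely generated projective $\Lambda$-modules is homotopy equivalent to the complex with terms $H^i(C^\bullet)$ and all differentials zero. Thus every object of $\mathcal{D}^{\perf}(\Lambda)\simeq K^b(\Lambda\mproj)$ is isomorphic to a finite direct sum of shifts of simple $\Lambda$-modules, and since a finite dimensional semisimple algebra admits only finitely many isomorphism classes of simples, there are up to shift only finitely many indecomposables in $\mathcal{D}^{\perf}(\Lambda)$.

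Applying this observation separately to $\Lambda=\mu_k(G)$ and $\Lambda=\cohmu_k(G)$ yields the desired derived finiteness. There is no genuine obstacle to overcome: the corollary is simply an assembly of \cref{prop:invertible_characteristic}, \cref{cor:cohmu_invertible_characteristic}, and \cref{ex:semisimple_derived_finite}, with the only input beyond these being the fact that a finite group over a field gives a finite dimensional (cohomological) Mackey algebra.
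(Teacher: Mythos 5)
Your proposal is correct and follows essentially the same route as the paper: the corollary is obtained by combining \cref{prop:invertible_characteristic} and \cref{cor:cohmu_invertible_characteristic} (semisimplicity of $\mu_k(G)$ and $\cohmu_k(G)$ when $|G|$ is invertible) with the observation of \cref{ex:semisimple_derived_finite} that a finite dimensional semisimple algebra is derived finite. Your explicit splitting of a bounded complex of projectives into its cohomology is just a spelled-out version of the paper's Smith-normal-form remark.
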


\begin{corollary}
  If $|G|$ is invertible in the field $k$, there are finitely many indecomposable compact $H\und{k}$-modules.  Moreover, there are finitely many indecomposable compact $H\und{A}_k$-modules. 
\end{corollary}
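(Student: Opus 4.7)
The plan is to reduce the topological statement to the algebraic one in Corollary 4.3 via the Schwede--Shipley equivalence recalled in \cref{Thm:QuillenEquiv}. Since both $\und{k}$ and $\und{A}_k$ are commutative Mackey rings, we have Quillen equivalences
\[
\Ch(\und{k}\MMod) \simeq H\und{k}\MMod \quad\text{and}\quad \Ch(\und{A}_k\MMod) \simeq H\und{A}_k\MMod,
\]
inducing equivalences of homotopy categories $\mathcal{D}(\und{k}) \simeq \Ho(H\und{k}\MMod)$ and $\mathcal{D}(\und{A}_k) \simeq \Ho(H\und{A}_k\MMod)$. Restricting to compact objects on each side, we get
\[
\mathcal{D}^{\perf}(\und{k}) \simeq \Ho(H\und{k}\MMod)^{\omega} \quad\text{and}\quad \mathcal{D}^{\perf}(\und{A}_k) \simeq \Ho(H\und{A}_k\MMod)^{\omega}.
\]

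Next I would use the equivalences of abelian categories $\und{k}\MMod \simeq \CohMack_k(G) \simeq \cohmu_k(G)\MMod$ and $\und{A}_k\MMod \simeq \Mack_k(G) \simeq \mu_k(G)\MMod$ from \cref{sec:preliminaries} to pass from the Mackey side to the algebra side. These induce equivalences of perfect derived categories
\[
\mathcal{D}^{\perf}(\und{k}) \simeq \mathcal{D}^{\perf}(\cohmu_k(G)) \quad\text{and}\quad \mathcal{D}^{\perf}(\und{A}_k) \simeq \mathcal{D}^{\perf}(\mu_k(G)).
\]

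Finally, applying \cref{cor:invertible_char_derived_finite}, which says that both $\cohmu_k(G)$ and $\mu_k(G)$ are derived finite whenever $|G|$ is invertible in $k$, we conclude that up to shift only finitely many isomorphism classes of indecomposable objects exist in $\mathcal{D}^{\perf}(\cohmu_k(G))$ and $\mathcal{D}^{\perf}(\mu_k(G))$. Transporting along the above chain of equivalences gives the desired finiteness statement for compact $H\und{k}$-modules and compact $H\und{A}_k$-modules.

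The only potential subtlety is ensuring that ``compact'' on the topological side really does correspond to ``perfect'' on the algebraic side under \cref{Thm:QuillenEquiv}; this is standard (compact generators transport under Quillen equivalences of stable model categories, and $\und{k}$ respectively $\und{A}_k$ is a compact generator of its module category), and it is exactly the identification already invoked in the discussion immediately following \cref{Thm:QuillenEquiv}. Once that point is cited, the corollary is a direct concatenation of equivalences.
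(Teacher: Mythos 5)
Your argument is correct and is exactly the route the paper intends: the corollary is stated as an immediate consequence of \cref{cor:invertible_char_derived_finite} combined with the identification $\mathcal{D}^{\perf}(M) \simeq \Ho(HM\MMod)^{\omega}$ from \cref{Thm:QuillenEquiv}, using that both $\und{k}$ and $\und{A}_k$ are commutative Mackey rings and that their module categories are equivalent to modules over $\cohmu_k(G)$ and $\mu_k(G)$ respectively. Nothing is missing; your concluding remark about compacts corresponding to perfect complexes is precisely the identification the paper already invokes after \cref{Thm:QuillenEquiv}.
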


\begin{definition}[Derived tame]
  Let $k = \bar{k}$ be an algebraically closed field.  We say that $\Lambda$ is \emph{derived tame} if, for each cohomology dimension vector $(d_i)_{i \in \Z}$ there are a finite number of bounded complexes of $\Lambda$--$k[x]$-bimodules $C^\bullet_1, \dots, C^\bullet_n$ such that each $C^i_j$ is free and of finite rank as right $k[x]$-modules, and such that every indecomposable $X^\bullet \in \mathcal{D}^{\perf}(\Lambda)$ with $\dim H^i(X^\bullet) = d_i$ is isomorphic to $C^\bullet_j \otimes_{k[x]}S$ for some $j$ and some simple $k[x]$-module $S$. 
\end{definition}

Over arbitrary fields, there is a notion of derived discrete\footnote{There is a notion of discrete for representation type.  Over algebraically closed fields it turns out to be equivalent to finite representation type by the second Brauer--Thrall conjecture as shown in \cite{Bau85}.} that is, in a sense, somewhere between derived finite and derived tame.

\begin{definition}[Derived discrete]
  Let $k$ be an arbitrary field.  We say that $\Lambda$ is \emph{derived discrete} if for every cohomology dimension vector $(d_i)_{i \in \Z}$, there is up to isomorphism a finite number of indecomposables $X^\bullet \in \mathcal{D}^{\perf}(\Lambda)$ with $\dim H^i(X^\bullet) = d_i$.
\end{definition}

\begin{remark}
  Over a finite field $k$, every finite dimensional $k$-algebra is derived discrete.  The definition of derived discrete is more meaningful over an infinite field.
\end{remark}

\begin{example}
  Let $k$ be a field. Then $\Lambda = k[x]/(x^2)$ is derived discrete, but not derived finite.
\end{example}

Inspired by the characterization of wildness in \cref{prop:embedding_is_tensorproduct}, Bekkert and Drozd gave a definition in \cite{BD03} similar to the following (again stated more generally in the language of locally finite dimensional categories). We give the definition as it appears in \cite{Dro04}.
\begin{definition}[Derived wild]
  We say that $\Lambda$ is \emph{derived wild} if for any finite dimensional\footnote{Technically \cite{Dro04} requires any finitely generated algebra here.  However, the proof of \cref{thm:wildfreealgebra} showing $k\langle x,y \rangle$ is wild applies equally well to finitely generated algebras.} algebra $\Gamma$ there is a complex of finitely generated projective bimodules $P^\bullet$ in  $K^{b}(\Lambda \otimes \Gamma^{\rm{op}}\mproj)$ such that for any $M, N$ in $\Gamma\mmod$ we have 
  \begin{enumerate}
    \item $P^\bullet \otimes_\Gamma M \cong P^\bullet \otimes_\Gamma N$ implies $M \cong N$, and
    \item if $P^\bullet \otimes_\Gamma M$ is indecomposable then $M$ is as well.
  \end{enumerate}
  We call $P^\bullet$ a \emph{strict family of $\Lambda$-complexes over $\Gamma$}.
\end{definition}

\begin{remark}
  As in \cref{{prop:embedding_from_wild_implies_wild}}, 
  it is enough to establish a strict family over any wild algebra.
\end{remark}

 Observe that, using minimal complexes, the technique described in \cref{prop:key_technique} can also be used to demonstrate an algebra is derived wild.  
 
 Bekkert--Drozd showed that, over an algebraically closed field, there is a derived version of the tame-wild dichotomy.

 \begin{theorem}(Derived tame-wild dichotomy \cite{BD03,Dro04})\label[theorem]{thm:derived_dichotomy}
  Let $k = \bar{k}$ be an algebraically closed field.  A finite dimensional $k$-algebra is either derived tame or derived wild, and not both.
\end{theorem}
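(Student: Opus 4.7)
The plan is to reduce the derived classification problem to a matrix problem in the sense of Drozd and then invoke his reduction theorem, following the strategy of \cite{BD03}. First, I would replace $\mathcal{D}^{\perf}(\Lambda)$ with the equivalent homotopy category $K^b(\Lambda\mproj)$ of bounded complexes of finitely generated projectives and work exclusively with minimal complexes; the excerpt already notes that for minimal complexes, isomorphism in $K^b$ coincides with chain isomorphism and indecomposability is preserved. Since $\Lambda$ is Morita equivalent to a basic finite dimensional algebra over $k = \bar{k}$, by \cref{Thm:QuiverExists} we may write $\Lambda = kQ/(\rho)$, so a minimal complex is a $\mathbb{Z}$-graded direct sum of indecomposable projectives $P_i = \Lambda e_i$ together with a differential whose matrix entries lie in $\rad \Lambda$.

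The next step, which is the conceptual heart of \cite{BD03}, is to encode these data as representations of a \emph{free triangular bocs} (bimodule with coalgebra structure). The vertices of the underlying biquiver are pairs $(i,n)$, one per summand $P_i$ in homological degree $n$; the solid arrows parametrize the radical components of the differential; and the dotted arrows encode the chain homotopies. Admissible changes of basis for minimal complexes then correspond precisely to isomorphisms in the associated bocs module category. One then applies Drozd's reduction algorithm (\cite{Dro77,CB88,Dro04}), which iteratively simplifies the bocs via edge and loop reductions. The termination theorem for this algorithm asserts that the output falls into one of two mutually exclusive regimes: either every fixed cohomology-dimension vector admits at most a one-parameter family of indecomposables, yielding derived tameness, or the reduction exhibits a strict family of $\Lambda$-complexes over $k\langle x,y\rangle$, yielding derived wildness.

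The ``not both'' half of the dichotomy follows by parameter counting: a derived wild algebra admits, via pullback from $k\langle x,y\rangle$-modules of increasing dimension, families of pairwise non-isomorphic indecomposables with arbitrarily many continuous parameters for some sequence of cohomology-dimension vectors. This is incompatible with the one-parameter bound imposed by derived tameness, which can be made precise by a dimension estimate on the affine variety parametrizing minimal complexes of fixed total dimension, together with a generic decomposition argument. The main obstacle is the combinatorial bookkeeping needed to prove that Drozd's reduction algorithm actually terminates with the stated dichotomy---controlling how the ranks, differentials, and dotted arrows of the bocs evolve under each reduction step. This is the technical core of \cite{BD03}, and it is also the reason the statement requires $k$ to be algebraically closed: over a non-closed field, the reduction can produce skew-field extensions of $k$ that obstruct the naive one-parameter $k[x]$-parametrization underlying derived tameness.
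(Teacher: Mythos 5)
The paper does not prove this statement at all: it is quoted verbatim from Bekkert--Drozd and Drozd, with the proof deferred entirely to \cite{BD03} and \cite{Dro04}. Your sketch is an accurate outline of exactly that cited argument (minimal complexes of projectives encoded as representations of a free triangular bocs, Drozd's reduction algorithm, and a parameter-counting argument for ``not both''), so it matches the approach the paper relies on; just note that, like the paper, you are deferring the genuine technical core --- the termination and dichotomy theorem for the bocs reduction --- to the same sources rather than supplying it.
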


 Much like wild representation type, the definition of derived wild implies it is hopeless to classify all indecomposable perfect complexes. For a non-example, we refer to the classification of $\mathcal{D}^{\perf}(\cohmu_{\F_2}(C_2))$ from \cite{DHM}.  Although the cohomological Mackey algebra is not derived finite, we can describe all indecomposable perfect complexes in a meaningful way.

 \begin{example}\label[example]{ex:CohC2DerivedTameDHM}
  It was shown in 
  \cite[Theorem~4.5]{DHM} and \cite[Corollary~4.6]{DHM} that the perfect derived category of $\cohmu_{\F_2}(C_2)$ has three families of isomorphism classes of indecomposables called $A_m$, $B_r$, and $H(n)$ for $m,r \geq 0$ and $n \in \Z$.  This classification can be used to show $\cohmu_{\F_2}(C_2)$ is not derived wild. If we replace $\F_2$ with an infinite field $k$ of characteristic $2$, then $\cohmu_k(C_2)$ is derived discrete, and thus not derived wild. We return to this example in \cref{ex:C2_gentle_homotopy_strings}. 
 \end{example}

 Our main objective will be to show the cohomological Mackey algebra over an arbitrary field is derived wild in a number of cases. 
  Our arguments will make use of the fact that algebras inherit derived wildness from idempotent subalgebras.

\begin{definition}[Idempotent subalgebra]
  We say that $\Gamma$ is an \emph{idempotent subalgebra} of $\Lambda$ if $\Gamma = e\Lambda e$ for an idempotent $e \in \Lambda$.
\end{definition}

\begin{proposition}\label[proposition]{prop:idempotent_subalgebra_derived_wild}
   If an idempotent subalgebra $\Gamma = e\Lambda e$ of $\Lambda$ is derived wild, then so is $\Lambda$.
  \begin{proof}
    The functor $\Lambda e \otimes_\Gamma - \colon \Gamma\mmod \to \Lambda\mmod$ is fully faithful and maps projective modules to projective modules. Thus, if $P^\bullet$ is a strict family of $\Gamma$-complexes over some other finite dimensional algebra, then $\Lambda e \otimes_\Gamma P^\bullet$ is a strict family of $\Lambda$-complexes.  
  \end{proof}
\end{proposition}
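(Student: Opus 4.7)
The plan is to induce a strict family upwards along the natural Schur-type functor associated with the idempotent $e$. Concretely, fix an arbitrary finite dimensional algebra $A$, and let $P^\bullet \in K^b(\Gamma \otimes A^{\,\op}\mproj)$ be a strict family of $\Gamma$-complexes over $A$, which exists by the derived wildness of $\Gamma$. I would take as my candidate strict family of $\Lambda$-complexes over $A$ the complex
\[
Q^\bullet \coloneqq \Lambda e \otimes_\Gamma P^\bullet,
\]
viewed as a complex of $\Lambda$-$A$-bimodules via the left $\Lambda$-action on $\Lambda e$.

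The first step is to verify that $Q^\bullet$ lives in $K^b(\Lambda \otimes A^{\,\op}\mproj)$. Since $\Lambda e$ is a direct summand of $\Lambda$ as a left $\Lambda$-module, it is projective over $\Lambda$, and tensoring a free $\Gamma \otimes A^{\,\op}$-module $(\Gamma \otimes A^{\,\op})^{(I)}$ over $\Gamma$ with $\Lambda e$ yields $(\Lambda e \otimes A^{\,\op})^{(I)}$, which is projective over $\Lambda \otimes A^{\,\op}$; passing to summands and bounded complexes gives the claim.

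The key conceptual input is that $F \coloneqq \Lambda e \otimes_\Gamma - \colon \Gamma\MMod \to \Lambda\MMod$ is fully faithful. Its right adjoint is the Schur functor $e(-) = e\Lambda \otimes_\Lambda -$, and the unit is the natural isomorphism $M \xrightarrow{\ \sim\ } e\Lambda e \otimes_\Gamma M = \Gamma \otimes_\Gamma M$. Both functors are exact (the right adjoint tensors with the projective $\Lambda$-module $e\Lambda$), so the adjunction and its unit isomorphism lift to chain complexes and descend to homotopy categories, yielding a fully faithful exact functor between the relevant $K^b$'s.

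With this in hand, checking the two strict-family conditions for $Q^\bullet$ is mechanical. For any $M \in A\mmod$ one has $Q^\bullet \otimes_A M = F(P^\bullet \otimes_A M)$. If $Q^\bullet \otimes_A M \cong Q^\bullet \otimes_A N$, applying the right adjoint $e(-)$ (which sends isomorphisms to isomorphisms) gives $P^\bullet \otimes_A M \cong P^\bullet \otimes_A N$, and the strict-family property of $P^\bullet$ forces $M \cong N$. For indecomposability, a fully faithful additive functor induces an isomorphism of endomorphism rings $\End(X) \cong \End(FX)$, so it both preserves and reflects nontrivial idempotents; in particular, if $Q^\bullet \otimes_A M = F(P^\bullet \otimes_A M)$ is indecomposable then so is $P^\bullet \otimes_A M$, and again the strict-family property of $P^\bullet$ gives that $M$ is indecomposable. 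The only mildly delicate point — essentially the whole ``work'' of the proof — is confirming that the fully faithfulness of $F$ genuinely transfers to the homotopy category of bounded complexes of projectives, but this is immediate from the exactness of both $F$ and its right adjoint together with the fact that the unit is already an isomorphism on the underlying module categories.
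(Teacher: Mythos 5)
Your proposal is correct and is essentially the paper's own argument, just written out in more detail: the paper's proof is precisely that $\Lambda e \otimes_\Gamma -$ is fully faithful and sends projectives to projectives, so it transports a strict family of $\Gamma$-complexes over any finite dimensional algebra to a strict family of $\Lambda$-complexes. One small caveat: your parenthetical claim that both adjoints are exact overreaches for $F = \Lambda e \otimes_\Gamma -$ itself (in general $\Lambda e$ need not be flat as a right $e\Lambda e$-module), but this is harmless since additivity together with full faithfulness on modules already suffices to descend full faithfulness to the homotopy categories of bounded complexes of projectives.
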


\subsection{Some derived wild group algebras}
We now explicitly show derived wildness for certain algebras, including the group algebras $k[C_{p^m}]$ and $k[C_p \times C_p]$.  We will use these in the next section to investigate the derived representation type of the cohomological Mackey algebra $\cohmu_k(G)$.

The first algebra we consider generalizes the group algebra of a cyclic group. Recall, when $k$ has characteristic $p$, the group algebra $k[C_{p^m}]$ is isomorphic to $k[c]/(c^{p^m})$.  The following theorem showing the truncated polynomial algebra is derived wild (over algebraically closed fields) was proven in \cite[Theorem~A]{BDF09} using the theory of bocses. We unpack their proof here for several reasons: to avoid such complicated machinery, to generalize to arbitrary fields, and to correct a small error for the case $k[c]/(c^3)$. 

\begin{proposition} \label[proposition]{prop:wild_cyclic_group}
  Let $k$ be a field. For $n \geq 2$ the algebra $R = k[c]/(c^{n+1})$ is derived wild.
  \begin{proof}
    Recall from \cref{prop:A1tildetilde_wild}, that the below quiver is wild.
    \begin{center}
      \begin{tikzcd}
        1 \ar[r, bend left, "a"] \ar[r, bend right, swap, "b"] & 2 \ar[r, "w"] & 3
      \end{tikzcd}
    \end{center}
    Let $\Gamma$ be the path algebra of this quiver and write $P_i \coloneqq R \otimes e_i\Gamma$. Consider the following complex of $R$--$\Gamma$-bimodules.\footnote{This complex is adapted from a complex in the proof of \cite[Theorem~A]{BDF09}.} 
    \begin{center}
      \begin{tikzcd}[ampersand replacement = \&, column sep=3.5em, row sep = -0.5em]
        \& P_1 \ar[dr]{}{\makebox[0pt]{$\begin{pmatrix}-c^{n-1}\\c^{n-1}\end{pmatrix}$}} 
        \&\&\&
        P_1 \ar[dr]{}{\makebox[0pt]{$c^{n-1} \otimes a$}}\\
        P_1 \ar[ur]{}{\makebox[0pt]{$c^{n}$}} \ar[dr, swap]{}{\makebox[0pt]{$c^{n-1}$}} 
        \& \oplus \& 
        P_1^2 \ar[r]{}{\makebox[0pt]{$\begin{pmatrix}c^n&0\\ 0 & c^n \end{pmatrix}$}} 
        \&
        P_1^2 \ar[ur]{}{\makebox[0pt]{$\begin{pmatrix}c^n&0\end{pmatrix}$}} 
        \ar[dr, swap]{}{\makebox[0pt]{$\begin{pmatrix}0&c^{n-1}\end{pmatrix}$}} 
        \& \oplus \& 
        P_2 \ar[r]{}{\makebox[0pt]{$c^n \otimes w$}} \& P_3\\
        \& P_1 \ar[ur, swap]{}{\makebox[0pt]{$\begin{pmatrix}c^{n}\\0\end{pmatrix}$}} 
        \&\&\&
        P_1 \ar[ur, swap]{}{\makebox[0pt]{$c^n \otimes b$}} 
      \end{tikzcd}
    \end{center}
    We claim that this is a strict family of $R$-complexes over $\Gamma$. To see this let 
    \begin{center}
      \begin{tikzcd}
        k^r \ar[r, bend left, "A"] \ar[r, bend right, swap, "B"] & k^s \ar[r, "W"] & k^t
      \end{tikzcd}
    \end{center}
    be a representation of $\Gamma$. After tensoring with the above complex, the resulting complex of $R$-modules is $C^\bullet$ given by
    \begin{center}
      \begin{tikzcd}[ampersand replacement = \&, column sep = 3.5em]
        R^r \ar{r}{\begin{pmatrix}c^{n}\\c^{n-1}\end{pmatrix}} \& 
        R^{2r} \ar{r}{\begin{pmatrix}-c^{n-1} & c^{n}\\c^{n-1} & 0\end{pmatrix}} \&
        R^{2r} \ar[r]{}{\makebox[0pt]{$\begin{pmatrix}c^n&0\\ 0 & c^n \end{pmatrix}$}} \&
        R^{2r} \ar{r}{\begin{pmatrix}c^{n} & 0\\ 0 & c^{n-1}\end{pmatrix}} \&
        R^{2r} \ar{r}{\begin{pmatrix}c^{n-1}A & c^{n}B\end{pmatrix}} \&
        R^s \ar{r}{c^n W} \&
        R^t.
      \end{tikzcd}
    \end{center}
    A morphism $\Phi \colon C^\bullet \to \hat{C}^\bullet$ of two such complexes is of the following form.
    \begin{center}
      \begin{tikzcd}[ampersand replacement = \&, column sep = 3.5em, row sep = 4.5em]
        R^r \ar{r}{\begin{pmatrix}c^{n}\\c^{n-1}\end{pmatrix}} 
        \ar{d}{\Phi_1}\& 
        R^{2r} \ar{r}{\begin{pmatrix}-c^{n-1} & c^{n}\\c^{n-1} & 0\end{pmatrix}}  
        \ar{d}{\begin{pmatrix} \Phi_{21} & \Phi_{22}\\ \Phi_{23} & \Phi_{24} \end{pmatrix}}\&
        R^{2r} \ar[r]{}{\makebox[0pt]{$\begin{pmatrix}c^n&0\\ 0 & c^n \end{pmatrix}$}} 
        \ar{d}{\begin{pmatrix} \Phi_{31} & \Phi_{32}\\ \Phi_{33} & \Phi_{34} \end{pmatrix}}\&
        R^{2r} \ar{r}{\begin{pmatrix}c^{n} & 0\\ 0 & c^{n-1}\end{pmatrix}} 
        \ar{d}{\begin{pmatrix} \Phi_{41} & \Phi_{42}\\ \Phi_{43} & \Phi_{44} \end{pmatrix}}\&
        R^{2r} \ar{r}{\begin{pmatrix}c^{n-1}A & c^{n}B\end{pmatrix}} 
        \ar{d}{\begin{pmatrix} \Phi_{51} & \Phi_{52}\\ \Phi_{53} & \Phi_{54} \end{pmatrix}}\&
        R^s \ar{r}{c^n W} 
        \ar[d]{}{\Phi_6}\&
        R^t \ar{d}{\Phi_7}\\
        R^{\hat{r}} \ar[swap]{r}{\begin{pmatrix}c^{n}\\c^{n-1}\end{pmatrix}} \& 
        R^{2\hat{r}} \ar[swap]{r}{\begin{pmatrix}-c^{n-1} & c^{n}\\c^{n-1} & 0\end{pmatrix}}  \&
        R^{2\hat{r}} \ar[r, swap]{}{\makebox[0pt]{$\begin{pmatrix}c^n&0\\ 0 & c^n \end{pmatrix}$}} \&
        R^{2\hat{r}} \ar[swap]{r}{\begin{pmatrix}c^{n} & 0\\ 0 & c^{n-1}\end{pmatrix}} \&
        R^{2\hat{r}} \ar[swap]{r}{\begin{pmatrix}c^{n-1}\hat A & c^{n}\hat B\end{pmatrix}} \&
        R^{\hat s} \ar[swap]{r}{c^n \hat W} \&
        R^{\hat t}
      \end{tikzcd}
    \end{center}
    Using the strategy of \cref{prop:key_technique} we need to construct from $\Phi$ a homomorphism $\Delta_{C,\hat{C}}(\Phi)$ of the underlying $\Gamma$-modules. Our candidate for this is the matrices $(\Phi_{51}, \Phi_6, \Phi_7)$ reduced modulo $(c)$. Thus we need to show that these define a homomorphism of $\Gamma$-modules and satisfy the criteria of \cref{prop:key_technique}.

    Setting up the equations for the commutativity of the above diagram and simplifying gives us in particular that 
    \begin{align*}
      c^n \Phi_1 = c^n \Phi_{21} = c^n \Phi_{24} = c^n \Phi_{31} = c^n \Phi_{34} &= c^n \Phi_{41} = c^n \Phi_{44} = c^n \Phi_{51} = c^n \Phi_{54},\\
      c^n \Phi_{52} = c^n \Phi_{42} = c^n \Phi_{32} &= c^n \Phi_{22} = 0,\\
      \text{and so also}&\\
      c^{n-1} \Phi_{52}=0.&
    \end{align*}
    Looking just at commutativity in the two rightmost squares we see that 
    \begin{align*}
      c^n \hat A \Phi_{51} &= c^n \Phi_{6} A\\
      c^n \hat B \Phi_{51} &= c^n \Phi_{6} B\\
      c^n \hat W \Phi_6 &= c^n \Phi_7 W.
    \end{align*}
    These equations tell us that $\Delta_{C,\hat{C}}(\Phi)=(\Phi_{51}, \Phi_6, \Phi_7)$ gives a homomorphism of $\Gamma$-modules when reduced modulo $(c)$. Let $\overline{\Phi}$ be the reduction $\Phi$ modulo $(c)$. If $\Phi$ is idempotent, then $\overline{\Phi}$ is also idempotent. Since $(c)$ is a nilpotent ideal $\Phi$ is an isomorphism if and only if $\overline{\Phi}$ is. From the above equations we can see that $\overline{\Phi}$ consists of lower triangular matrices. Lower triangular matrices are invertible if and only if the diagonal entries are. From the first set of equations above, all the diagonal entries in $\Phi$ are equal modulo $(c)$ except for $\Phi_6$ and $\Phi_7$.  Thus we have that $\Phi$ is an isomorphism if and only if $\Phi_{51}$, $\Phi_6$ and $\Phi_7$ are isomorphisms modulo $(c)$. Since a lower triangular matrix being idempotent implies its diagonal entries are idempotent, $\Phi$ idempotent implies that $\Phi_{51}$, $\Phi_6$ and $\Phi_7$ are idempotent modulo $(c)$. Hence all the criteria of \cref{prop:key_technique} are satisfied and this is a strict family of complexes.
  \end{proof}
\end{proposition}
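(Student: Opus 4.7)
The plan is to build a strict family of $R$-complexes over a chosen wild test algebra, using \cref{prop:key_technique} (now adapted to the homotopy category $K^b(R\mproj)$ via minimal complexes) to verify the two defining properties of derived wildness. By the remark following the definition of derived wild, it suffices to exhibit such a family over \emph{some} wild algebra. The most economical choice is the path algebra $\Gamma$ of the quiver with two parallel arrows $a,b\colon 1 \to 2$ and one arrow $w\colon 2 \to 3$, whose wildness was already established in \cref{prop:A1tildetilde_wild}, because it has only three vertices and its arrows split neatly into a ``parallel pair'' stage and a ``composition'' stage that can be recorded separately in the complex.

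The heart of the construction is a bimodule complex $P^\bullet$ whose terms are direct sums of the projective bimodules $R \otimes_k e_i\Gamma$, with differentials built from the operators $c^n$, $c^{n-1}$ (on the $R$-side) and the arrows $a,b,w$ (on the $\Gamma$-side). The design idea is that $c^n R$ and $c^{n-1} R$ are nontrivial (this is where $n \geq 2$ enters, via the need for the two exponents to differ without either vanishing or collapsing into a unit), and that an alternation of block matrices in $c^n$ and $c^{n-1}$ forces a prospective chain endomorphism to respect a specific block-triangular pattern. I would arrange $P^\bullet$ so that, after tensoring with a $\Gamma$-representation $(V_1,V_2,V_3,A,B,W)$, the resulting complex $C^\bullet$ of free $R$-modules has its last two differentials equal to the block matrices $\begin{pmatrix} c^{n-1}A & c^n B\end{pmatrix}$ and $c^n W$ respectively, so that $A$, $B$ and $W$ appear naturally in the data of $C^\bullet$.

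To extract a morphism of $\Gamma$-modules from a chain map $\Phi\colon C^\bullet \to \hat C^\bullet$, I would systematically write out the commutativity equations square by square. The relations coming from the $c^n$ and $c^{n-1}$ differentials force many block entries of $\Phi$ to agree modulo $(c)$ and the off-diagonal blocks above the diagonal to lie in $(c)$; reducing modulo the nilpotent ideal $(c)$ then yields a block lower triangular map $\overline{\Phi}$ whose diagonal blocks are largely equal to a single matrix $\Phi_*\colon V_1 \to \hat V_1$, and whose final two diagonal blocks are maps $V_2 \to \hat V_2$ and $V_3 \to \hat V_3$ intertwining $A,B,W$ with $\hat A,\hat B,\hat W$. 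This triple is the required $\Delta_{C,\hat C}(\Phi)$. Since $(c)$ is nilpotent, $\Phi$ is an isomorphism (resp.\ idempotent) in $K^b(R\mproj)$ if and only if $\overline{\Phi}$ is, and block lower triangularity then transfers this to the diagonal blocks, so \cref{prop:key_technique} applies.

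The main obstacle I expect is the bookkeeping: calibrating the precise shape of $P^\bullet$ so that (a) all commutativity equations combine to give the uniform block-triangular picture, (b) the diagonal blocks really collapse to a single map on $V_1$ together with the expected maps on $V_2, V_3$, and (c) the construction still functions in the tight boundary case $n=2$, where $c^n$ and $c^{n-1}$ are nearly degenerate. A secondary subtlety, invisible in the algebraically closed setting but important here, is that every step — the matrix normal-form arguments and the passage to and from $R/(c) = k$ — must be carried out without assuming $k$ is algebraically closed; fortunately the arguments only rely on linear algebra over $k$ and on nilpotence of $(c)$, so they transfer. Once these points are pinned down, the verification of the strict family conditions reduces to routine (if tedious) manipulation of block matrices.
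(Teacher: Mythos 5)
Your strategy coincides with the paper's proof in every structural respect: the same wild test algebra $\Gamma$ from \cref{prop:A1tildetilde_wild}, a complex of bimodules with terms $R\otimes e_i\Gamma$ whose differentials mix $c^{n}$ and $c^{n-1}$ so that after tensoring with a representation $(A,B,W)$ the final two differentials are $\begin{pmatrix}c^{n-1}A & c^{n}B\end{pmatrix}$ and $c^{n}W$, extraction of a $\Gamma$-homomorphism from a chain map by reduction modulo the nilpotent ideal $(c)$, and verification of the criteria of \cref{prop:key_technique} using minimal complexes. Your remarks on why $n\geq 2$ is needed and why nothing in the argument requires algebraic closure are also accurate.

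However, as written the proposal stops short of the actual content of the proof: you never construct the complex $P^\bullet$, and the proposition lives or dies with that construction. The delicate part is the middle segment of the complex --- in the paper, a chain of terms built from $P_1$ with differentials such as $\begin{psmallmatrix}c^{n}\\ c^{n-1}\end{psmallmatrix}$, $\begin{psmallmatrix}-c^{n-1} & c^{n}\\ c^{n-1} & 0\end{psmallmatrix}$, $\begin{psmallmatrix}c^{n} & 0\\ 0 & c^{n}\end{psmallmatrix}$, $\begin{psmallmatrix}c^{n} & 0\\ 0 & c^{n-1}\end{psmallmatrix}$ --- engineered precisely so that the commutativity equations force every relevant diagonal block of a chain map $\Phi$ to agree with $\Phi_{51}$ modulo $(c)$ and the off-diagonal blocks in the dangerous positions to vanish modulo $(c)$; only then do invertibility and idempotency descend to the triple $(\Phi_{51},\Phi_6,\Phi_7)$ and give the $\Delta$ required by \cref{prop:key_technique}. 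Calling this calibration ``bookkeeping'' understates it: the analogous complex in the proof of \cite[Theorem~A]{BDF09} contains a small error in the boundary case $k[c]/(c^{3})$, which is exactly the kind of miscalibration your plan anticipates but does not rule out. So the route is the right one, but the argument is incomplete until you exhibit a specific $P^\bullet$ and verify, rather than predict, that its commutativity equations yield the block lower triangular pattern with equal diagonal entries.
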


Next we investigate algebras similar to the group algebra of the product of cyclic groups. Specifically when $k$ has characteristic $p$, the group algebra $k[C_p \times C_p]$ is isomorphic to $k[s,t]/(s^p, t^p)$.  The following is a special case of \cite[Lemma~3.2]{BDF09}, which again is stated for algebraically closed fields.  We demonstrate the details of the proof over arbitrary fields.

\begin{proposition}\cite[Lemma~3.2]{BDF09} \label[proposition]{prop:wild_CpxCp}
  Let $k$ be a field. If $n\geq 2$ then the algebra $R \coloneqq k[s, t]/(s^n, t^n)$ is derived wild. 
  \begin{proof}
    Again we let $\Gamma$ be the path algebra of the quiver from \cref{prop:A1tildetilde_wild}, and let $P_i$ denote $R \otimes e_i\Gamma$. Then the complex
    \begin{center}
      \begin{tikzcd}[ampersand replacement = \&, column sep = 5em]
        P_1 \ar[r]{}{s\otimes a + t\otimes b} \& P_2 \ar[r]{}{(st)^{n-1} \otimes w} \& P_3
      \end{tikzcd}
    \end{center} 
    defines a strict family of complexes. To see this we take two representations of $\Gamma$, $M_1$ and $M_2$ and consider a morphism $\Phi \colon P^\bullet \otimes_\Gamma M_1 \to P^\bullet \otimes_\Gamma M_2$. This will be of the following form. 
    \begin{center}
      \begin{tikzcd}
        R^p \ar[r, "sA + tB"] \ar[d, "\Phi_1"] & R^q \ar[r, "(st)^{n-1}W"] \ar[d, "\Phi_2"] & R^r \ar[d, "\Phi_3"] \\
        R^{\hat p} \ar[r, "s\hat A + t\hat B"] & R^{\hat q} \ar[r, "(st)^{n-1}\hat W"] & R^{\hat r} 
      \end{tikzcd}
    \end{center}
    The commutativity relations yield 
    \begin{align*}
      s \hat A \Phi_1 + t \hat B \Phi_1 = s\Phi_2 A + t\Phi_2 B\quad\quad
      \text{and}\quad\quad
      (st)^{n-1} \hat W \Phi_2 = (st)^{n-1} \Phi_3 W,
    \end{align*}
    where the entries in $\Phi_i$ are elements of $R$. Consider the first equation modulo $(s,t)^2$ to kill off higher powers. Then because $s$ and $t$ are linearly independent, we have for example $s \hat A \Phi_1 = s\Phi_2 A$.  Hence we get that $\hat A \Phi_1 = \Phi_2 A$ and $\hat B\Phi_1 = \Phi_2 B$ modulo $(s,t)$. Similarly, $\hat W \Phi_2 = \Phi_3 W$ modulo $(s,t)$.  So reducing $\Phi$ modulo $(s,t)$ gives a homomorphism $\Delta (\Phi) \colon M_1 \to M_2$ as in \cref{prop:key_technique}.
  \end{proof}
\end{proposition}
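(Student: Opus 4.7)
The plan is to mimic the strategy from the proof of \cref{prop:wild_cyclic_group}, namely to build an explicit strict family of $R$-complexes parametrized by the wild path algebra $\Gamma$ of the quiver from \cref{prop:A1tildetilde_wild}:
\[
\begin{tikzcd}
  1 \ar[r, bend left, "a"] \ar[r, bend right, swap, "b"] & 2 \ar[r, "w"] & 3.
\end{tikzcd}
\]
As before, I write $P_i \coloneqq R \otimes e_i\Gamma$. I want a short complex of $R$--$\Gamma$-bimodules $P^\bullet$ so that tensoring with a representation of $\Gamma$ gives an $R$-complex from which the original $\Gamma$-representation can be recovered up to isomorphism/idempotents in the sense of \cref{prop:key_technique}.

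Since $R = k[s,t]/(s^n, t^n)$ now has two commuting generators (rather than the single generator $c$ in the cyclic case), I can naturally exploit the two parallel arrows $a, b\colon 1 \to 2$ by taking a differential $P_1 \to P_2$ of the form $s\otimes a + t\otimes b$. To force the composition with the second differential to vanish while keeping the complex as small as possible, I choose the minimal monomial in $s,t$ that is killed by both $s$ and $t$, namely $(st)^{n-1}$ (since $s\cdot s^{n-1}t^{n-1} = t\cdot s^{n-1}t^{n-1} = 0$). My candidate strict family is therefore
\[
P_1 \xrightarrow{\ s\otimes a\,+\,t\otimes b\ } P_2 \xrightarrow{\ (st)^{n-1}\otimes w\ } P_3.
\]

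Tensoring with a $\Gamma$-representation $(k^p \rightrightarrows k^q \to k^r)$ given by matrices $(A,B,W)$ yields $R^p \xrightarrow{sA+tB} R^q \xrightarrow{(st)^{n-1}W} R^r$. Given a chain map $\Phi=(\Phi_1,\Phi_2,\Phi_3)$ between two such complexes, my proposed $\Delta(\Phi)$ will be the entrywise reduction of $\Phi$ modulo $(s,t)$. The key point that makes this work is that $s$ and $t$ are $k$-linearly independent modulo $(s,t)^2$: reducing the left commutativity square modulo $(s,t)^2$ separates into an $s$-coefficient equation and a $t$-coefficient equation, giving $\hat A\bar\Phi_1 = \bar\Phi_2 A$ and $\hat B\bar\Phi_1 = \bar\Phi_2 B$, while the right square directly yields $\hat W\bar\Phi_2 = \bar\Phi_3 W$. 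Hence $\Delta(\Phi)$ is a genuine homomorphism of $\Gamma$-representations.

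Finally, I need to verify the conditions of \cref{prop:key_technique}. Idempotency is preserved by the quotient map $R \twoheadrightarrow R/(s,t) = k$. For the isomorphism condition, because $(s,t)$ is a nilpotent ideal and the differentials of $P^\bullet \otimes_\Gamma M$ have entries in $(s,t)$ (so the complex is automatically minimal), a chain map is an isomorphism if and only if its reduction modulo $(s,t)$ is. The main obstacle I anticipate is bookkeeping: care is needed to confirm that no hidden contribution from higher order terms in $s,t$ interferes with recovering $A$, $B$, and $W$, and that the argument works uniformly over arbitrary fields (in particular finite fields) without requiring the algebraic closure hypothesis used in \cite{BDF09}. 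Once this is checked, $P^\bullet$ is a strict family of $R$-complexes over the wild algebra $\Gamma$, and \cref{prop:embedding_from_wild_implies_wild} (in its derived form) shows $R$ is derived wild.
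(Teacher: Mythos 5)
Your proposal is correct and follows essentially the same route as the paper: the identical complex $P_1 \xrightarrow{s\otimes a + t\otimes b} P_2 \xrightarrow{(st)^{n-1}\otimes w} P_3$, with $\Delta(\Phi)$ given by reduction modulo $(s,t)$, separating the $s$- and $t$-coefficients of the left square modulo $(s,t)^2$ and invoking \cref{prop:key_technique} via nilpotence of $(s,t)$. The only point the paper makes slightly more explicit is extracting $\hat W\Phi_2 \equiv \Phi_3 W \pmod{(s,t)}$ from the equation $(st)^{n-1}\hat W\Phi_2 = (st)^{n-1}\Phi_3 W$, which holds since $(st)^{n-1}$ annihilates exactly $(s,t)$ in $R$.
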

   
\begin{remark}
  Over algebraically closed fields, the derived representation type can only be more complicated than the representation type.  
  As in \cref{ex:CohC2DerivedTameDHM}, an algebra may be representation finite and derived tame.  
  Even more extreme, as we see in \cref{prop:wild_cyclic_group}, an algebra may be representation finite and derived wild.
 \end{remark}

\subsection{Derived representations of gentle algebras}  Over an algebraically closed field, Bekkert--Merklen showed gentle algebras are derived tame \cite{BM03}.  In fact, over any field, the perfect complexes of a gentle algebra can be described by homotopy strings and homotopy bands.

A perfect complex is a sequence of homomorphisms between projectives composing to zero and a homotopy string is designed to encapsulate that data. Strings give a basis for homomorphisms between indecomposable projectives, so they form the letters in homotopy strings. We give some of the relevant definitions below. See \cite{BM03} or \cite[Section~2.1.2]{OPS25}. See also \cite{Lak16} for more detailed exposition.

Let $k$ be a field and $\Lambda \cong kQ/(\rho)$ be the path algebra of a gentle $k$-algebra  A direct (resp.\ inverse) string $\omega$ is called a \emph{direct homotopy letter (resp.\ inverse homotopy letter)}. A \emph{homotopy string} $\sigma$ is a (possibly trivial) reduced walk given by a sequence of homotopy letters $\sigma = \omega_r \dots \omega_1$, satisfying the following:
\begin{enumerate}
\item if $\omega_{i+1}$ and $\omega_i$ are both direct, then $\omega_{i+1}\omega_i \in (\rho)$, and
\item if $\omega_{i+1}$ and $\omega_i$ are both inverse, then $\omega_{i}^{-}\omega_{i+1}^{-} \in (\rho)$.
\end{enumerate} 
 A \emph{homotopy band} $\sigma$ is a homotopy string with an equal number of direct and inverse homotopy letters such that the walk forms a cycle, and such that $\sigma \neq \tau^m$ for some homotopy string $\tau$ and $m \geq 2$.

From a homotopy string, we can form a perfect complex called a string complex by taking an indecomposable projective whenever the walk passes a vertex and using homotopy letters to define maps between them.  Note that because of our choice of conventions for path notation and left modules, composition of homotopy letters corresponds to composition of maps in the reverse order (see \cite[Remark~4.1.3]{Lak16}).  There is also a notion of a band complex and together with string complexes these form all the indecomposable perfect complexes.

\begin{theorem}[\cite{BM03}]
  Let $k$ be an arbitrary field and let $\Lambda \cong kQ/(\rho)$ be the path algebra of gentle algebra.  The indecomposable objects in $\mathcal{D}^{\perf}(\Lambda)$ are determined by the homotopy strings and homotopy bands, up to inverse.  Each homotopy string corresponds to an indecomposable complex, while the homotopy bands correspond to families of complexes.
\end{theorem}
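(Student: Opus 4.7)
The plan is to establish a bijection between equivalence classes of homotopy strings/bands and isomorphism classes of indecomposable objects in $\mathcal{D}^{\perf}(\Lambda)$. The argument splits into three parts: constructing the string and band complexes explicitly, proving their indecomposability, and proving completeness (that every indecomposable perfect complex arises this way).

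For the construction, given a homotopy string $\sigma = \omega_r\cdots\omega_1$ visiting vertices $v_0,\ldots,v_r$, I would place $\Lambda e_{v_i}$ in homological degree equal to the cumulative \emph{height} along $\sigma$ (increasing by $1$ at each direct letter, decreasing by $1$ at each inverse letter), with differentials given by the homotopy letters themselves interpreted as right multiplication by paths in $\Lambda$. The conditions in the definition of a homotopy string (the reducedness together with $\omega_{i+1}\omega_i \in (\rho)$ in the direct/direct case and $\omega_i^{-}\omega_{i+1}^{-} \in (\rho)$ in the inverse/inverse case) are precisely what is needed to force $d^2 = 0$ and ensure minimality. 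For a homotopy band $\tau$, the analogous cyclic construction is twisted by an extra parameter $(\varphi, n)$, where $\varphi \in k[x]$ is irreducible and $n \geq 1$, by tensoring one of the differentials with a nilpotent endomorphism of $k[x]/(\varphi^n)$; this mirrors the Butler--Ringel band construction for modules.

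For indecomposability, I would invoke the minimality observation from \cref{sec:derived_reptype}: for minimal complexes, morphisms in $K^b(\Lambda\mproj)$ are computed in $\Ch(\Lambda\mmod)$ with isomorphisms and idempotents detectable there. Any chain endomorphism of a string complex $P^\bullet(\sigma)$ decomposes into contributions indexed by pairs of positions along $\sigma$; commutativity with $d$ combined with the gentle relations forces all but the diagonal contribution into the radical, and the diagonal contribution acts by a single scalar simultaneously on every summand. Thus $\End(P^\bullet(\sigma))$ has residue field $k$ and is local, exactly along the lines of \cref{prop:key_technique}. The band case is the same, with residue field $k[x]/(\varphi)$.

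The main obstacle, and where I would expect to spend the bulk of the work, is completeness. The strategy is a matrix reduction argument: write an arbitrary minimal complex as $C^n = \bigoplus_{v \in Q_0}(\Lambda e_v)^{n_v^{(n)}}$ and expand each differential as a block matrix whose entries lie in $e_w \Lambda e_v$; the gentle conditions restrict each such entry to a scalar multiple of a single path. One then performs simultaneous row and column operations within each $(\Lambda e_v)^{n_v^{(n)}}$ (which preserve minimality and give isomorphisms in $K^b(\Lambda\mproj)$) to bring the differentials into a normal form. The delicate point is to coordinate reductions across successive degrees so that progress in degree $n$ is not undone by adjustments in degrees $n\pm 1$; the gentle conditions (property (6) of \cref{def:gentle}) are essential here because they keep the entries that appear after reduction controllable, forbidding the cascades that occur in the merely special biserial case. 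Once the normal form is achieved, a case analysis identifies each indecomposable summand as a string or a band complex. Finally, the identification up to inverse is immediate from the definitions ($\sigma^{-}$ traverses the same walk backwards, yielding an isomorphic complex), and the exclusion of non-primitive bands $\tau^m$ with $m \geq 2$ prevents the resulting parametrisation by $(\varphi, n)$ from double-counting.
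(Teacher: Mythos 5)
The paper offers no proof of this statement; it is quoted from \cite{BM03}, so the relevant comparison is with the Bekkert--Merklen argument. Your construction of string complexes (projectives placed by cumulative height, differentials given by the homotopy letters, minimality and $d^2=0$ forced by the defining conditions) and your locality argument for their endomorphism rings are consistent with the standard treatment. One concrete error in the construction: a band complex must be twisted by an \emph{invertible} automorphism of $k[x]/(\varphi^n)$ --- multiplication by $x$, with $\varphi$ irreducible and $\varphi \neq x$, or equivalently an invertible companion/Jordan-type matrix --- not by a nilpotent endomorphism. A nilpotent twist does not yield the band family (its ``eigenvalue'' $0$ is exactly the excluded case) and the resulting complex decomposes into, or coincides with, string complexes, so the parametrisation you describe would be wrong as stated.

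The genuine gap is completeness, which you outline but do not prove. You propose bringing an arbitrary minimal complex into normal form by simultaneous row and column operations on the block matrices of the differentials, and you yourself flag the delicate point: operations performed in degree $n$ act on both the differential into and out of degree $n$, so progress in one degree is generically undone in the adjacent ones, and condition (6) of \cref{def:gentle} does not by itself supply the needed bookkeeping. That coordination is the entire technical content of the theorem, and the blueprint you give stops exactly where the difficulty begins. Bekkert and Merklen do not argue by ad hoc elimination: they encode the category of bounded complexes of projectives over a gentle algebra as a matrix problem over a ``bunch of semichains'' (in Bondarenko's sense) and invoke the known classification of indecomposable representations of such problems, which is precisely the formalism that manages the interaction between degrees and which works over an arbitrary field. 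Without either carrying out such a reduction, or substituting an equivalent functorial/covering argument, the claim that \emph{every} indecomposable object of $\mathcal{D}^{\perf}(\Lambda)$ is a string or band complex remains unestablished, so the proposal as written does not prove the theorem.
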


Our main example of a gentle algebra is $\cohmu_k(C_2)$.

\begin{example}\label[example]{ex:C2_gentle_homotopy_strings}
Consider the bound quiver
   \begin{center}
   $\left.
    \begin{tikzcd}
      1 \ar[r, bend left, "b"] & 0 \ar[l, bend left, "a"]
    \end{tikzcd}
    \middle/ 
    \langle ba \rangle
    \right.$
  \end{center}
  for the gentle algebra from \cref{ex:C2_constant_gentle} and \cref{ex:C2constant_strings} (describing $\cohmu_k(C_2)$ in characteristic $2$).  Recall the direct strings and inverse strings are
  \[
  e_0,\ e_1,\ a,\ b,\ ab,\ a^{-},\ b^{-},\ b^{-}a^{-}.
  \] 
  To form homotopy strings we need reduced walks and thus in this small case we may consider only direct (or only inverse) homotopy letters. The composition of the homotopy letters must be a relation, so our homotopy strings (up to inverses) are
\[
(e_0),\ (e_1),\ (a),\ (b),\ (b)(a),\ (ab)^t,\ (ab)^t(a),\ (b)(ab)^t,\ (b)(ab)^t(a)
\]
for $t \geq 1$. There are no homotopy bands.  This gives a complete classification of the perfect complexes in the perfect derived category of the bound quiver. Thus this also gives a complete classification of the indecomposables in $\mathcal{D}^{\perf}(\cohmu_k(C_2))$ in characteristic two and equivalently the compact objects in the homotopy category of $C_2$-equivariant $H\und{k}$-modules.

Over the field $\F_2$, the complexes coming from the homotopy strings above correspond to the complexes $A_m$, $H(n)$, and $B_r$ for $m,r \geq 0$ and $n \in \Z$ from the classification in \cite{DHM}.  In particular, with $H=P_0$ and $F=P_1$, they are the complexes
\[
H(0),\ A_0,\ H(1),\ H(-1),\ B_0,\ A_t,\ H(t+1),\ H(-(t+1)),\ B_{t}
\]
respectively.
\end{example}

\section{Derived representation type of cohomological Mackey functors}\label[section]{sec:derived-wildnes-cohmackey}

We now turn to showing the cohomological Mackey algebra over a field of characteristic $p$ is derived wild in most cases.  We start with $G$ a $p$-group of order more than two and then generalize. In this section, we show the following.

\begin{theorem}\label[theorem]{thm:cohomological_derived_wild_surjection}
  Let $k$ be a field of characteristic $p$. The cohomological Mackey algebra  $\cohmu_k(G)$ is derived wild whenever $G$ surjects onto a $p$-group of order more than two.
\end{theorem}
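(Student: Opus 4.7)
The plan is to reduce immediately to the two ingredients that have already been assembled: \cref{thm:derived_wild_p-groups}, which handles nontrivial $p$-groups other than $C_2$, and \cref{prop:G/N_idempotent_subalgebra_cohomological}(1), which transports derived wildness of $\cohmu_k(G/N)$ up to $\cohmu_k(G)$ by realizing the former as an idempotent subalgebra of the latter. Given a surjection $G \twoheadrightarrow P$ onto a $p$-group $P$ with $|P| > 2$, I will set $N = \ker(G \to P)$, so that $P \cong G/N$ is a normal quotient of $G$.

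Next I would verify that $P$ satisfies the hypotheses of \cref{thm:derived_wild_p-groups}: namely, that $P$ is a nontrivial $p$-group not isomorphic to $C_2$. Nontriviality is clear from $|P| > 2 \geq 1$. For the second condition, observe that a $p$-group of order $>2$ is never isomorphic to $C_2$: if $p$ is odd, the order of $P$ is a power of an odd prime, so $|P| \neq 2$, and if $p = 2$, then $|P| \geq 4$ forces $P \not\cong C_2$. Hence \cref{thm:derived_wild_p-groups} applies and $\cohmu_k(P) = \cohmu_k(G/N)$ is derived wild.

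Finally, I invoke \cref{prop:G/N_idempotent_subalgebra_cohomological}(1), which says that derived wildness of $\cohmu_k(G/N)$ implies derived wildness of $\cohmu_k(G)$ whenever $N \trianglelefteq G$ is normal. Applying this with the $N$ constructed above yields the desired conclusion.

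There is no real obstacle here: the work was done in proving \cref{thm:derived_wild_p-groups} (where the group algebras $k[C_p]$ with $p$ odd, $k[C_4]$, and $k[C_2 \times C_2]$ are exhibited as derived wild idempotent subalgebras via \cref{prop:wild_cyclic_group} and \cref{prop:wild_CpxCp}) and in proving \cref{prop:G/N_idempotent_subalgebra_cohomological} (where the sum of permutation modules fixed by $N$ is used to realize $\cohmu_k(G/N)$ inside $\cohmu_k(G)$). The only subtlety worth flagging explicitly is the case analysis on the prime $p$ hidden in the statement ``$p$-group of order more than two'', which is precisely what rules out the one exceptional case $P = C_2$ excluded from \cref{thm:derived_wild_p-groups}; this is of course no accident, since $\cohmu_k(C_2)$ is gentle and derived tame as observed in \cref{ex:CohC2DerivedTameDHM}.
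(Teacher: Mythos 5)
Your proposal is correct and follows exactly the paper's intended deduction: the paper obtains this theorem by combining \cref{thm:derived_wild_p-groups} (derived wildness of $\cohmu_k(P)$ for any nontrivial $p$-group $P \not\cong C_2$, equivalently $|P|>2$) with \cref{prop:G/N_idempotent_subalgebra_cohomological}(1) applied to $N = \ker(G \twoheadrightarrow P)$, just as you do. Your explicit check that ``$p$-group of order more than two'' rules out exactly the excluded case $C_2$ is a reasonable small addition, but the argument is the same.
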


Thus, using \cref{Thm:QuillenEquiv}, our main result in equivariant homotopy theory is that there is no meaningful classification of compact objects in the homotopy category of $G$-equivariant $H\und{k}$-modules.

\begin{theorem}\label[theorem]{thm:cohomological_derived_wild_surjection_topological}
 Let $k$ be a field of characteristic $p$. The category of compact objects in the homotopy category of $G$-equivariant $H\und{k}$-modules is wild whenever $G$ surjects onto a $p$-group of order more than two.
\end{theorem}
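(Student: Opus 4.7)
The plan is to deduce this topological statement directly from its algebraic counterpart, \cref{thm:cohomological_derived_wild_surjection}, via the Schwede--Shipley Quillen equivalence recalled in \cref{Thm:QuillenEquiv} (or, equivalently, Fuhrmann's $\infty$-categorical refinement cited in the remark following it). The constant Mackey functor $\und{k}$ is a commutative Green functor since $k$ is a commutative ring, so the hypotheses of \cref{Thm:QuillenEquiv} are satisfied with $M = \und{k}$.

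First I would invoke \cref{Thm:QuillenEquiv} to obtain a Quillen equivalence between $\Ch(\und{k}\MMod)$ with the projective model structure and $H\und{k}\MMod$. Passing to homotopy categories, this yields a triangulated equivalence
\[
\mathcal{D}(\und{k}) \simeq \Ho(H\und{k}\MMod).
\]
Next, using the identification $\und{k}\MMod \simeq \cohmu_k(G)\MMod$ from Yoshida's theorem, I would rewrite the left-hand side as $\mathcal{D}(\cohmu_k(G))$. A triangulated equivalence preserves compact objects, so restricting gives an equivalence
\[
\mathcal{D}^{\perf}(\cohmu_k(G)) \simeq \Ho(H\und{k}\MMod)^{\omega}
\]
of the perfect derived category with the subcategory of compact objects in the homotopy category of equivariant $H\und{k}$-modules.

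Finally, by \cref{thm:cohomological_derived_wild_surjection}, whenever $G$ surjects onto a $p$-group of order more than two, the algebra $\cohmu_k(G)$ is derived wild, meaning there exists a strict family of $\cohmu_k(G)$-complexes over every finite dimensional $k$-algebra. Transporting these strict families across the equivalence above produces, for every finite dimensional $k$-algebra $\Gamma$, a strict family inside $\Ho(H\und{k}\MMod)^{\omega}$ in precisely the sense needed to conclude that the category of compact $H\und{k}$-modules is wild.

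There is essentially no obstacle here beyond bookkeeping: the derived wildness of $\cohmu_k(G)$ has already been established, and derived wildness is a property of the perfect derived category, which is exactly what the Schwede--Shipley equivalence transports to the compact objects of $\Ho(H\und{k}\MMod)$. The only subtlety worth flagging is to make sure the equivalence is genuinely triangulated (so that it respects idempotents, shifts, and indecomposability of perfect complexes) and preserves the tensor structure needed to push strict families of bimodule complexes through, both of which are guaranteed by the strengthened versions of \cref{Thm:QuillenEquiv}.
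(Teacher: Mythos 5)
Your proposal is correct and matches the paper's argument: the paper likewise deduces this statement by combining the derived wildness of $\cohmu_k(G)$ from \cref{thm:cohomological_derived_wild_surjection} with the Schwede--Shipley equivalence of \cref{Thm:QuillenEquiv}, which identifies $\mathcal{D}^{\perf}(\und{k}) \simeq \Ho(H\und{k}\MMod)^{\omega}$. The only difference is that the paper states this transport in one line, whereas you spell out the bookkeeping (Yoshida's identification, preservation of compacts, transport of strict families), which is exactly the intended content.
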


Over algebraically closed fields we have more tools available, and there we explore some tame cases in \cref{subsec:derived_tame_2m}.  Finally, we generalize the notion of semi-wild to the derived setting, which ultimately allows us to give the following complete classification of groups for which the cohomological Mackey algebra is derived wild.

\begin{theorem}\label[theorem]{thm:alg_closed_derived_type_cohomological_sylow}
Let $k=\bar{k}$ be an algebraically closed field of characteristic $p$ and let $G$ be a finite group.  Then $\cohmu_k(G)$ is derived wild if and only if the $p$-Sylow subgroup of $G$ has order more than two.
\end{theorem}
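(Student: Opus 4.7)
The plan is to combine the reduction to $p$-Sylow subgroups established in the preceding theorem with our derived wildness results for $p$-groups (\cref{thm:derived_wild_p-groups}), the equivalence of derived semi-wild and derived wild over algebraically closed fields (\cref{thm:semi_wild_equiv_wild}), and the derived tame-wild dichotomy (\cref{thm:derived_dichotomy}). Let $H$ be a $p$-Sylow subgroup of $G$. By the preceding theorem, $\cohmu_k(G)$ is derived semi-wild if and only if $\cohmu_k(H)$ is, and by \cref{thm:semi_wild_equiv_wild} this is equivalent to derived wildness for both algebras. Hence it suffices to show that $\cohmu_k(H)$ is derived wild if and only if $|H|>2$.

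For the forward direction, I would note that if $|H|>2$, then $H$ is a nontrivial $p$-group not isomorphic to $C_2$, so \cref{thm:derived_wild_p-groups} directly gives that $\cohmu_k(H)$ is derived wild, and hence $\cohmu_k(G)$ is as well.

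For the reverse direction, I would handle the two remaining cases. If $|H|=1$, then the order of $G$ is coprime to $p$, and by \cref{cor:invertible_char_derived_finite} the algebra $\cohmu_k(G)$ is derived finite, so in particular not derived wild. If $|H|=2$, then necessarily $p=2$ and $|G|=2m$ with $m$ odd; \cref{prop:2m_derived_tame} gives that $\cohmu_k(G)$ is derived tame, which by the dichotomy \cref{thm:derived_dichotomy} precludes derived wildness. This exhausts all cases with $|H|\leq 2$.

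The main conceptual work has been done in the preceding results, in particular the Sylow reduction (which uses that induction and restriction of cohomological Mackey functors are exact and preserve projectives, so transport (semi-)strict families of complexes), the equivalence of semi-wild and wild in the algebraically closed setting, and the tame classification for groups of order $2m$. Given those tools, the current statement reduces to a bookkeeping exercise over the cases $|H|=1$, $|H|=2$, and $|H|>2$, so I do not anticipate any further obstacle.
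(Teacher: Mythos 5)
Your proposal is correct and follows essentially the same route as the paper: the paper's proof is exactly the combination of the Sylow reduction theorem, \cref{thm:derived_wild_p-groups}, \cref{prop:2m_derived_tame}, \cref{thm:semi_wild_equiv_wild}, and the dichotomy \cref{thm:derived_dichotomy}, with the $|H|=1$ case handled by \cref{cor:invertible_char_derived_finite} just as you do. Your write-up simply makes the case analysis explicit, which is all the paper leaves to the reader.
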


Applying \cref{Thm:QuillenEquiv} as usual, we have the following consequence.

\begin{theorem}\label[theorem]{thm:cohomological_derived_type_topological_algebraically_closed}
 Let $k=\bar{k}$ be an algebraically closed field of characteristic $p$. The category of compact objects in the homotopy category of $G$-equivariant $H\und{k}$-modules is wild if and only if the $p$-Sylow subgroup of $G$ has order more than two.
\end{theorem}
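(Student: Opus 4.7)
The plan is to deduce this topological theorem directly from the algebraic classification in \cref{thm:alg_closed_derived_type_cohomological_sylow} by transferring along the Schwede--Shipley Quillen equivalence of \cref{Thm:QuillenEquiv}, exactly as was done earlier for the analogous statement in \cref{thm:cohomological_derived_wild_surjection_topological}. All the substantive work has been carried out in establishing the algebraic classification, so the argument here is essentially a translation.

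First I would note that the constant Mackey functor $\und{k}$ is a commutative Mackey ring (indeed a Tambara functor, as recorded in \cref{sec:Eilenberg-Maclane-spectra}), so \cref{Thm:QuillenEquiv} applies and yields a Quillen equivalence $\Ch(\und{k}\MMod) \simeq H\und{k}\MMod$. Passing to homotopy categories and using Yoshida's identification $\und{k}\MMod \simeq \cohmu_k(G)\MMod$ from \cref{def:cohomological_mackey_algebra}, one obtains a triangulated equivalence $\mathcal{D}(\cohmu_k(G)) \simeq \Ho(H\und{k}\MMod)$. Restricting to compact objects, which correspond to perfect complexes on the algebraic side, gives
\[
\mathcal{D}^{\perf}(\cohmu_k(G)) \simeq \Ho(H\und{k}\MMod)^\omega
\]
as triangulated categories. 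This is the identification already recorded in the discussion immediately following \cref{Thm:QuillenEquiv}.

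Next I would observe that, under this equivalence, wildness of the category of compact objects in $\Ho(H\und{k}\MMod)$ is by definition the same as wildness of $\mathcal{D}^{\perf}(\cohmu_k(G))$, i.e.\ derived wildness of the cohomological Mackey algebra $\cohmu_k(G)$. Applying \cref{thm:alg_closed_derived_type_cohomological_sylow}, which characterizes derived wildness of $\cohmu_k(G)$ over an algebraically closed field of characteristic $p$ in terms of the order of the $p$-Sylow subgroup, yields the claimed biconditional.

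There is no real obstacle, since the algebraic work of \cref{sec:derived-wildnes-cohmackey} has already done the heavy lifting. The only bookkeeping is ensuring that the triangulated equivalence coming from \cref{Thm:QuillenEquiv} genuinely identifies compact objects on both sides and that our notion of wildness transports along triangulated equivalences, both of which are standard: representation embeddings into $\Ho(H\und{k}\MMod)^\omega$ pull back to representation embeddings into $\mathcal{D}^{\perf}(\cohmu_k(G))$ and vice versa.
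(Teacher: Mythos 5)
Your proposal is correct and matches the paper's argument: the paper likewise obtains this result by applying \cref{Thm:QuillenEquiv} to identify compact $H\und{k}$-modules with $\mathcal{D}^{\perf}(\cohmu_k(G))$ and then invoking \cref{thm:alg_closed_derived_type_cohomological_sylow}. No gaps.
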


\subsection{\texorpdfstring{Derived wildness for $p$-groups}{Derived wildness for p-groups}}
In the previous section we saw that, over a field $k$ of characteristic $p$, the group algebra $k[C_p]$ is derived wild for $p$ odd.  We also saw that $k[C_p \times C_p]$ is derived wild for arbitrary primes. In this section, we will use this information to show the cohomological Mackey algebra $\cohmu_k(G)$ is derived wild for most $p$-groups. This strengthens a result of Webb \cite[Theorem~5.2]{Web24} showing indecomposable perfect complexes over $\cohmu_k(C_p)$ have arbitrarily large homology.

Recall from \cref{def:cohomological_mackey_algebra} that $\cohmu_k(G)$ is given by the opposite endomorphism ring of the transitive permutation modules $\hecke_k(G)$. This means that if $M$ is a direct sum of permutation modules, then $\End_{kG}(M)^{\op}$ is an idempotent subalgebra of $\cohmu_k(G)$. Using this together with \cref{prop:idempotent_subalgebra_derived_wild}, we prove the following.
\begin{theorem}\label[theorem]{thm:derived_wild_p-groups}
  Let $k$ be a field of characteristic $p$ and let $G$ be a nontrivial $p$-group not isomorphic to $C_2$. Then $\cohmu_k(G)$ is derived wild.
  \begin{proof}
    Consider first the case where $p$ is odd. Then there exists a normal subgroup $N$ such that $G/N \cong C_p$. Then $\End_{kG}(k[G/N])^{\op} \cong k[C_p]$ is an idempotent subalgebra of $\cohmu_k(G)$. Since the idempotent subalgebra $k[C_p]$ is derived wild by \cref{prop:wild_cyclic_group}, it follows from \cref{prop:idempotent_subalgebra_derived_wild} that $\cohmu_k(G)$ is derived wild as well.

    Consider now the case where $p=2$. Since $|G|>2$, we must have that $G$ maps onto a group of size $4$. We split this into two cases, $G/N \cong C_2 \times C_2$ or $G/N \cong C_4$.

    In the first case we get that $k[C_2 \times C_2]$ is an idempotent subalgebra, which is derived wild by \cref{prop:wild_CpxCp}. In the second case we get that $k[C_4]$ is an idempotent subalgebra, which is derived wild by \cref{prop:wild_cyclic_group}. In either case, we conclude that $\cohmu_k(G)$ is derived wild by \cref{prop:idempotent_subalgebra_derived_wild}.
  \end{proof}
\end{theorem}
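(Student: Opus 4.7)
My plan is to deduce the result from the two structural inputs already established: the derived wildness of the truncated polynomial algebras $k[c]/(c^{n+1})$ for $n \geq 2$ in \cref{prop:wild_cyclic_group} and of $k[s,t]/(s^n,t^n)$ for $n \geq 2$ in \cref{prop:wild_CpxCp}, together with the transfer principle \cref{prop:idempotent_subalgebra_derived_wild} for idempotent subalgebras. The strategy is to exhibit inside $\cohmu_k(G)$ an idempotent subalgebra isomorphic to the group algebra $k[G/N]$ for a suitably chosen normal subgroup $N \trianglelefteq G$, so that derived wildness propagates upward.

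The key algebraic observation is that for any normal subgroup $N \trianglelefteq G$, the permutation module $k[G/N]$ is a direct summand of $\bigoplus_{H \leq G} k[G/H]$, and there is a canonical algebra isomorphism
\[
\End_{kG}\bigl(k[G/N]\bigr)^{\op} \;\cong\; k[G/N],
\]
because normality of $N$ means $G/N$ acts on $k[G/N]$ by right multiplication through $kG$-linear maps, and this accounts for all endomorphisms (plus $k[G/N]$ is self-opposite via $g \mapsto g^{-1}$). Consequently $k[G/N]$ arises as an idempotent subalgebra of $\hecke_k(G) \cong \cohmu_k(G)^{\op}$, and hence of $\cohmu_k(G)$ itself. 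After this identification, it suffices to choose $N$ so that $G/N$ has a derived wild group algebra.

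The choice of $N$ is dictated by the standard fact that every nontrivial finite $p$-group admits a normal subgroup of every index dividing $|G|$. I would split into cases as follows. When $p$ is odd, pick $N$ with $G/N \cong C_p$, so the idempotent subalgebra is $k[C_p] \cong k[c]/(c^p)$; this is derived wild by \cref{prop:wild_cyclic_group} applied with $n = p-1 \geq 2$. When $p=2$ and $|G| > 2$, the hypothesis forces $|G| \geq 4$, so there is a normal subgroup $N$ with $|G/N| = 4$, and $G/N$ is isomorphic either to $C_4$ or to $C_2 \times C_2$. In the first case the idempotent subalgebra is $k[c]/(c^4)$, derived wild by \cref{prop:wild_cyclic_group} with $n = 3$; in the second it is $k[s,t]/(s^2,t^2)$, derived wild by \cref{prop:wild_CpxCp} with $n = 2$.

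Once the idempotent subalgebra is identified and shown to be derived wild in each case, \cref{prop:idempotent_subalgebra_derived_wild} immediately concludes the argument. I do not anticipate a major obstacle: the heavy lifting was accomplished in \cref{prop:wild_cyclic_group} and \cref{prop:wild_CpxCp}, and the only subtle point is ensuring that the endomorphism ring of $k[G/N]$ really is the full group algebra $k[G/N]$, which requires $N$ to be normal. The reason the case $G = C_2$ is excluded is precisely that $k[C_2] = k[c]/(c^2)$ falls outside the range $n \geq 2$ of \cref{prop:wild_cyclic_group}; indeed $\cohmu_k(C_2)$ is gentle and thus derived tame, as discussed in \cref{ex:C2_gentle_homotopy_strings}, showing the hypothesis $G \not\cong C_2$ cannot be dropped.
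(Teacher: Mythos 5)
Your proposal is correct and follows essentially the same route as the paper: exhibit $k[G/N] \cong \End_{kG}(k[G/N])^{\op}$ as an idempotent subalgebra of $\cohmu_k(G)$ for a normal subgroup $N$ with $G/N \cong C_p$ (for $p$ odd) or $G/N \cong C_4$ or $C_2\times C_2$ (for $p=2$), then invoke \cref{prop:wild_cyclic_group}, \cref{prop:wild_CpxCp}, and \cref{prop:idempotent_subalgebra_derived_wild}. The extra detail you supply on why the endomorphism ring of $k[G/N]$ is the full group algebra (using normality) is a point the paper leaves implicit, but the argument is the same.
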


A key step in the above proof is the following: for a normal subgroup $N$ of $G$, the group algebra $k[G/N]$ is an idempotent subalgebra of $\cohmu_k(G)$. In order to apply the same techniques above more generally, we separate this out as its own proposition. 
\begin{proposition}\label[proposition]{prop:G/N_idempotent_subalgebra_cohomological}
  Let $G$ be a finite group and  $N \trianglelefteq G$ be a normal subgroup. Then the following two statements hold.
  \begin{enumerate}
  \item If $\cohmu_k(G/N)$ is derived wild, then so is $\cohmu_k(G)$.
  \item If the group algebra $k[G/N]$ is derived wild, then so is $\cohmu_k(G/N)$.
  \end{enumerate}
  \begin{proof}
    Consider the sum of those permutation modules fixed by $N$:
    \begin{align*}
      M' \coloneqq \bigoplus_{N \leq H \leq G} k[G/H].
    \end{align*} 
    Then $\End_{kG}(M')^{\op} = \End_{kG/N}(M')^{\op} \cong \cohmu_k(G/N)$ is an idempotent subalgebra of $\cohmu_k(G)$.  Similarly, $\End_{kG}(k[G/N])^{\op} \cong k[G/N]$ is an idempotent subalgebra of $\cohmu_k(G/N)$. Hence, by \cref{prop:idempotent_subalgebra_derived_wild}, both results follow.
  \end{proof}
\end{proposition}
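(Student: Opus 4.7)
The plan is to exhibit the relevant algebras as idempotent subalgebras and then invoke \cref{prop:idempotent_subalgebra_derived_wild}. Recall from \cref{def:cohomological_mackey_algebra} that $\cohmu_k(G) \cong \hecke_k(G)^{\op}$ is (the opposite of) the endomorphism ring of $\bigoplus_{H \leq G} k[G/H]$ as a left $kG$-module. Consequently, cutting by the idempotent that projects onto any subsum $M'$ of permutation modules yields $\End_{kG}(M')^{\op}$ as an idempotent subalgebra of $\cohmu_k(G)$, and it suffices in each case to pick the right subsum.

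For part (1), I would take
\[
M' \coloneqq \bigoplus_{N \leq H \leq G} k[G/H],
\]
the direct sum of those transitive permutation modules on which $N$ acts trivially. Two observations make this work. First, since $N$ acts trivially on each summand, the $kG$-action on $M'$ factors through $k[G/N]$, so $\End_{kG}(M') = \End_{k[G/N]}(M')$. Second, by the lattice correspondence, $H \mapsto H/N$ is a bijection between subgroups of $G$ containing $N$ and subgroups of $G/N$, under which $k[G/H]$ is identified with the transitive $(G/N)$-permutation module $k[(G/N)/(H/N)]$. Together these identify $\End_{kG}(M')^{\op}$ with $\hecke_k(G/N)^{\op} \cong \cohmu_k(G/N)$, realizing $\cohmu_k(G/N)$ as an idempotent subalgebra of $\cohmu_k(G)$.

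For part (2), the observation is simply that $k[G/N]$ is itself a transitive $(G/N)$-permutation module, namely the one indexed by the trivial subgroup. Hence it appears as a single summand of the sum defining $\hecke_k(G/N)$, and the idempotent that projects onto this summand cuts out $\End_{k[G/N]}(k[G/N])^{\op} \cong k[G/N]$ as an idempotent subalgebra of $\cohmu_k(G/N)$.

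Once these two idempotent-subalgebra structures are in place, \cref{prop:idempotent_subalgebra_derived_wild} immediately delivers both conclusions. I do not anticipate a substantive obstacle; the only step requiring attention is verifying that restricting to permutation modules indexed by $H \supseteq N$ really reproduces the full permutation-module data of $G/N$, which is precisely the content of the correspondence theorem combined with the identification $k[G/H] \cong k[(G/N)/(H/N)]$ as $k[G/N]$-modules. The proposition is in essence a clean bookkeeping consequence of Yoshida's description of the cohomological Mackey algebra.
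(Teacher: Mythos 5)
Your proposal is correct and follows essentially the same route as the paper: cutting $\cohmu_k(G)$ by the idempotent corresponding to the summand $M' = \bigoplus_{N \leq H \leq G} k[G/H]$ to obtain $\cohmu_k(G/N)$, cutting $\cohmu_k(G/N)$ by the summand $k[G/N]$ to obtain the group algebra, and then invoking \cref{prop:idempotent_subalgebra_derived_wild}. The extra detail you supply about the lattice correspondence and the identification $k[G/H] \cong k[(G/N)/(H/N)]$ is exactly the implicit content of the paper's identification $\End_{kG}(M')^{\op} = \End_{kG/N}(M')^{\op} \cong \cohmu_k(G/N)$.
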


Putting together \cref{thm:derived_wild_p-groups} and \cref{prop:G/N_idempotent_subalgebra_cohomological}, we find a great deal of groups for which $\cohmu_k(G)$ is derived wild. For example, over a field of characteristic $p$, the cohomological Mackey algebra $\cohmu_k(G)$ is derived wild in each of the following cases:
\begin{enumerate}
  \item $G$ is a non-cyclic $p$-group
  \item $G = C_{p^m}$ for $p^m \geq 3$
  \item $G$ is an abelian group where $p^m$ divides the order of $G$ for $p^m \geq 3$
  \item $G$ has one of the above groups as quotient group.
\end{enumerate}

This completes the proof of our main result for the derived wildness of the cohomological Mackey algebra \cref{thm:cohomological_derived_wild_surjection}.

\bigskip

In order to address more general finite groups, we turn to the case where $k$ is algebraically closed for the rest of this section.

\subsection{\texorpdfstring{Derived tame for order $2m$}{Derived tame for order 2m}}\label[subsection]{subsec:derived_tame_2m}
Recall from \cref{thm:derived_dichotomy} of Bekkert and Drozd that, over an algebraically closed field $k = \bar{k}$, any finite dimensional $k$-algebra is either derived tame or derived wild, and never both. We now look at some cases where $\cohmu_k(G)$ is derived tame.

If the characteristic of $k$ does not divide the order of $G$, then Maschke's theorem tells us that the group algebra $kG$ is semisimple. Therefore every module is projective. 
In particular, the sum of the permutation modules $k[G/H]$ is a progenerator and so $\cohmu_k(G)$ is Morita equivalent to $kG$. Then every indecomposable perfect complex is just given by a stalk complex, and can be classified by the irreducible $kG$-modules, as in \cref{ex:semisimple_derived_finite}. Hence $\cohmu_k(G)$ is derived tame (in fact derived finite) whenever the characteristic of $k$ does not divide the order of $G$, as in \cref{cor:invertible_char_derived_finite}.

When $k$ has characteristic $2$, we have seen that $\cohmu_k(C_2)$ is a gentle algebra.  The indecomposable perfect complexes over a gentle algebra were classified in \cite{BM03} using homotopy string and homotopy bands. See \cref{ex:CohC2DerivedTameDHM} and \cref{ex:C2_gentle_homotopy_strings} for more details. In particular, when $k$ is algebraically closed, this means $\cohmu_k(C_2)$ is derived tame and hence not derived wild.  

We now cover one last family of examples that are derived tame.  All other cases will turn out to be derived wild.

\begin{proposition}\label[proposition]{prop:2m_derived_tame}
  Let $k=\bar{k}$ be an algebraically closed field of characteristic $2$.  Let $G$ be a group of order $2m$ for $m$ odd. Then $\cohmu_k(G)$ is derived tame.
  \begin{proof}
    By Burnside's normal p-complement theorem, $G$ is the semidirect product of $C_2$ acting on a group $H$ of order $m$. Then $kH$ will be semisimple and $kG$ will be the skew group algebra $kH \# C_2$. 
    Writing $kH$ as a product of matrix rings we can consider the $C_2$-action on the factors. 
    It will either interchange two factors or act on a single factor by automorphism. 
    This implies that $kG$ is the product of algebras of the form $M_n(k)^2 \# C_2$ and $M_n(k)\# C_2$, where the former action interchanges factors and the latter is arbitrary.

    In the first case we have $M_n(k)^2 \# C_2 \cong M_2(M_n(k)) = M_{2n}(k)$. In the second case, by the Skolem--Noether theorem, the $C_2$-action is given by conjugation by an element $u \in M_n(k)$ such that $u^2$ is in the center. Since the center is $k$ and $k$ is algebraically closed, we may assume $u^2 = 1$. Let $g$ be the generator of $C_2$.  Then $(1+ug)$ sits in the center of $M_n(k)\#C_2$ and satisfies $(1+ug)^2 = 0$.  Hence $M_n(k)\#C_2 = M_n(k)[x]/(x^2)$.

    This means that, up to Morita equivalence, $kG$ is the product of copies of $k$ and copies of $k[x]/(x^2)$. The only indecomposable $k[x]/(x^2)$-modules are the regular module and the simple module. Since $\cohmu_k(G)$ is the endomorphism ring of a $kG$-module, up to Morita equivalence it is the product of algebras each isomorphic to one of the following: $k$, $k[x]/(x^2)$, or $\End_{k[x]/(x^2)}(k[x]/(x^2) \oplus k[x]/(x))^{\op}$. All of these cases are gentle, hence derived tame by \cite{BM03}.
  \end{proof}
\end{proposition}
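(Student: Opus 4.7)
The plan is to study $\cohmu_k(G)$ via the structure of $kG$, exploiting that the $2$-Sylow subgroup is cyclic of order $2$. Because $|G|=2m$ with $m$ odd, Burnside's normal $p$-complement theorem gives a semidirect product decomposition $G \cong H \rtimes C_2$ with $|H|=m$. Since $\gcd(2,m)=1$, Maschke's theorem says $kH$ is semisimple, and since $k=\bar k$ we can write $kH \cong \prod_i M_{n_i}(k)$. The group algebra $kG$ is then the skew group algebra $kH \# C_2$, so the $C_2$-action permutes the matrix factors; each orbit has size one or two. I would handle the two kinds of orbits separately.

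For a two-element orbit, the contribution is $(M_n(k)\times M_n(k))\#C_2$, where $C_2$ swaps the factors; a direct check identifies this with $M_{2n}(k)$ (embedding the swap action via $2\times2$ block matrices). For a singleton orbit, the contribution is $M_n(k)\#C_2$ where $C_2$ acts by a $k$-algebra automorphism. By Skolem--Noether this automorphism is conjugation by a unit $u$ with $u^2$ central, and we may scale so that $u^2=1$. Writing $g$ for the generator of $C_2$ and working in characteristic $2$, the element $y \coloneqq 1+ug$ is central (using $ug\cdot a = u(uau^{-1})g = au^{-1}g = aug$ since $u^{-1}=u$) and satisfies $y^2 = 1+u^2 = 0$. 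Comparing dimensions shows $M_n(k)\#C_2 \cong M_n(k)\otimes k[y]/(y^2) \cong M_n(k[y]/(y^2))$. Consequently $kG$ is Morita equivalent to a finite product of copies of $k$ and of $k[x]/(x^2)$.

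Next I would translate this back into a statement about $\cohmu_k(G)$. By Yoshida's characterization, $\cohmu_k(G)^{\op}$ is the endomorphism ring over $kG$ of the direct sum of permutation modules $\bigoplus_{H\leq G} k[G/H]$. Under Morita equivalence, this sum becomes a module over the product of factors above, and splits according to the factors. For a $k$-factor, the only indecomposable summand that can appear is $k$ itself, giving $\End$ Morita equivalent to $k$. For a $k[x]/(x^2)$-factor, the possible indecomposable summands are the regular module $k[x]/(x^2)$ and the simple module $k$; the resulting endomorphism algebras that can occur are $k$, $k[x]/(x^2)$, and $\End_{k[x]/(x^2)}(k[x]/(x^2)\oplus k)^{\op}$. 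A quick computation shows this last algebra is isomorphic to $\cohmu_k(C_2)$ itself, which is the gentle algebra with quiver $\bullet \rightleftarrows \bullet$ and relation $ba=0$. Therefore $\cohmu_k(G)$ is Morita equivalent to a product of gentle (indeed, derived tame) algebras, and by \cite{BM03} each factor is derived tame, so $\cohmu_k(G)$ is derived tame.

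The main obstacle will be Step~2: carefully verifying that the fixed singleton orbits contribute $M_n(k)[x]/(x^2)$, which hinges crucially on the characteristic being $2$ (so that $(1+ug)^2 = 1+u^2$ rather than $1+2ug+u^2$) and on the Skolem--Noether reduction. The remaining pieces---Burnside's theorem, Maschke, the Morita reduction of endomorphism rings of direct sums of indecomposables, and the derived tameness of gentle algebras---are all either classical or already invoked elsewhere in the paper.
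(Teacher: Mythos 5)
Your proposal is correct and follows essentially the same route as the paper's proof: Burnside's normal $p$-complement theorem, semisimplicity of $kH$, the skew group algebra analysis of swapped versus fixed matrix factors via Skolem--Noether and the central square-zero element $1+ug$ in characteristic $2$, and the reduction of $\cohmu_k(G)$ up to Morita equivalence to endomorphism algebras over $k$ and $k[x]/(x^2)$, which are gentle and hence derived tame by \cite{BM03}. The only addition is your (correct) observation that the third endomorphism algebra is $\cohmu_k(C_2)$ itself, which is a nice remark but not needed.
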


\subsection{Arbitrary finite groups}
In this section, we use Sylow subgroups of $G$ to determine the derived wildness of the cohomological Mackey algebra $\cohmu_k(G)$. 
As above, we restrict to the case where $k=\bar{k}$ is algebraically closed.
To show an algebra is derived wild in this setting, by \cref{thm:derived_dichotomy} it is enough to show it is not derived tame. At first this seems to be a difficult task.  
However, we can replace the definition of derived wild by a seemingly weaker notion, derived semi-wild. 

Inspired by the definition of derived wild from \cite{BD03} and \cite{Dro04}, and the definition of semi-wild representation type from \cite{Dro77}, we combine the two to make the following definition.

\begin{definition}[Derived semi-wild]
  Let $k= \bar{k}$ be an algebraically closed field.  We say that a $k$-algebra $\Lambda$ is \emph{derived semi-wild} if for any finite dimensional $k$-algebra $\Gamma$, there is a perfect complex of bimodules 
  \[
  P^\bullet \in K^b(\Lambda \otimes \Gamma^{\rm{op}}\mproj) \cong \mathcal{D}^{\perf}(\Lambda \otimes \Gamma^{\rm{op}})
  \]
  with the following property: for any $M$ in $\Gamma\mmod$, there exist (up to isomorphism) only finitely many $N$ with $P^\bullet \otimes_\Gamma M \cong P^\bullet \otimes_\Gamma N$. We call $P^\bullet$ a \emph{semi-strict family of $\Lambda$-complexes over $\Gamma$}.
\end{definition}

In fact, derived semi-wild implies derived wild for algebraically closed fields.

\begin{theorem}\label[theorem]{thm:semi_wild_equiv_wild}
  Let $k= \bar{k}$ be an algebraically closed field.  A $k$-algebra $\Lambda$ is derived semi-wild if and only if it is derived wild.
\end{theorem}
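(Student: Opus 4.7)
The plan is to prove both directions; the forward implication is immediate, while the reverse reduces, via the derived tame--wild dichotomy, to showing that a derived semi-wild algebra cannot be derived tame.

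For the forward direction, suppose $\Lambda$ is derived wild and let $P^\bullet$ be a strict family of $\Lambda$-complexes over a finite dimensional algebra $\Gamma$. The defining injectivity of a strict family, $P^\bullet \otimes_\Gamma M \cong P^\bullet \otimes_\Gamma N \Rightarrow M \cong N$, forces every fiber on iso classes to be a singleton, hence in particular finite. Thus the same $P^\bullet$ already witnesses semi-wildness, and $\Lambda$ is derived semi-wild.

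For the reverse direction, I would assume $\Lambda$ is derived semi-wild and use the derived tame--wild dichotomy (\cref{thm:derived_dichotomy}) to reduce the problem to showing $\Lambda$ cannot be derived tame. Suppose for contradiction it were, and pick a semi-strict family $P^\bullet$ of $\Lambda$-complexes over the finite dimensional wild path algebra $\Gamma$ from \cref{prop:A1tildetilde_wild}. Writing $F = P^\bullet \otimes_\Gamma -$, I would compare ``parameter counts''. Because $\Gamma$ is a wild hereditary path algebra, for sufficiently large $d$ the iso classes of $d$-dimensional indecomposable $\Gamma$-modules form a moduli of dimension roughly $d^2$. On the other hand, because $P^\bullet$ has bounded total $\Gamma$-rank, each $F(M)$ with $\dim M = d$ has total cohomology dimension bounded linearly in $d$, so by Krull--Schmidt it decomposes into at most $O(d)$ indecomposable perfect complex summands whose cohomology dimension vectors are bounded by $O(d)$ in every degree. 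Derived tameness says that, for each fixed cohomology dimension vector, iso classes of indecomposable perfect complexes form a finite union of one-parameter families; hence multisets of such summands of size $O(d)$ form a constructible space of dimension only $O(d)$ (a symmetric power of a one-dimensional variety). The semi-strict hypothesis makes the induced map on iso classes finite-to-one, which would force $d^2 \leq O(d)$, failing for $d$ large.

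The main obstacle is making this parameter-counting argument precise and uniform. Concretely, one must make rigorous sense of ``dimension of a moduli of iso classes of indecomposable modules'', either via Drozd's generic modulus number or via GIT quotients of the representation variety restricted to the indecomposable locus; stratify the space of $d$-dimensional $\Gamma$-modules so that the cohomology dimension vector of $F(M)$ is constant on each stratum; verify that a finite-to-one constructible map cannot decrease dimension (an application of Chevalley's theorem); and handle the fact that $F$ need not preserve indecomposability by consistently passing to Krull--Schmidt invariants. An alternative route, avoiding the dichotomy entirely, would adapt Drozd's original semi-wild equals wild argument for modules via matrix problems and bocses, but this requires substantially more machinery.
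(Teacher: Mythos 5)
Your proposal follows essentially the same route as the paper: the forward direction is immediate from the definitions, and the reverse direction combines the derived tame--wild dichotomy (\cref{thm:derived_dichotomy}) with a dimension-theoretic (number-of-parameters) argument showing that a semi-strict family over a wild algebra is incompatible with derived tameness. The paper does not write this argument out either---it defers to Drozd's Proposition~2 in \cite{Dro77}, the parameter-counting machinery in \cite{BSZ09}, and \cite{BD03} for the derived setting---so your sketch, including the technical points you flag about making the moduli-dimension count rigorous, matches the intended proof.
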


The argument for this equivalence uses dimension theory and a similar argument to \cite[{\usefont{T2A}{cmr}{m}{n}Предложение~2}]{Dro77}.  See also \cite[Theorem~27.10 (page~366)]{BSZ09} for the machinery involved and \cite[pages~6-8]{BD03} for the derived setting. 

We now wish to compare the cohomological Mackey algebra of $G$ with that of its $p$-Sylow subgroups. To do this we make use of the induction and restriction functors described in \cite[Section~3]{Web00}.

Recall that we can think of a cohomological Mackey functor as an additive functor on the Hecke category $\mathcal H_k(G)$. We may identify $\mathcal H_k(G)$ with its additive closure in $kG\mmod$. Let $\ind_H^G$ and $\res_H^G$ be the usual induction and restriction functors between $ kH\mmod$ and $kG\mmod$. Since these map permutation modules to permutation modules, they restrict to functors between $\mathcal H_k(H)$ and $\mathcal H_k(G)$. Thus we can make the following definition.

\begin{definition}
  Let $G$ be a finite group and $H \leq G$ a subgroup. If $M \colon \mathcal H_k(G) \to k\mmod$ and $N \colon \mathcal H_k(H) \to k\mmod$ are cohomological Mackey functors, we define the restriction and induction Mackey functors by
  \begin{align*}
    \res_{\cohmu_k(H)}^{\cohmu_k(G)} M &\coloneqq M \circ \ind_H^G\\
    \ind_{\cohmu_k(H)}^{\cohmu_k(G)} N &\coloneqq N \circ \res_H^G
  \end{align*}
\end{definition}

\begin{proposition}
  The restriction and induction of cohomological Mackey functors are both exact functors, and they map projective objects to projective objects.
  \begin{proof}
    Precomposition is always an exact functor, so exactness follows from the definition. If we again think of cohomological Mackey functors as functors on the Hecke category, then the projective objects are given by the representable functors. Since $\res_H^G$ and $\ind_H^G$ are mutual adjoints on both sides, we have 
    \begin{align*}
      \res^{\cohmu G}_{\cohmu H} \Hom_{kG}(P,-) &= \Hom_{kG}(P,\ind_H^G-) = \Hom_{kH}(\res_H^G P,-)\\
      \ind^{\cohmu G}_{\cohmu H} \Hom_{kH}(Q,-) &= \Hom_{kH}(Q,\res_H^G-) = \Hom_{kG}(\ind_H^G Q,-).
    \end{align*}
    Since $\res_H^G$ and $\ind_H^G$ send permutation modules to permutation modules, these are again projective.
  \end{proof}
\end{proposition}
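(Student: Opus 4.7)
The plan is to treat everything through the Yoshida identification $\cohmu_k(G)\text{-mod} \simeq \mathrm{Add}(\mathcal{H}_k(G), k\text{-mod})$, so that a cohomological Mackey functor is literally an additive functor on the Hecke category. With this viewpoint, both claims reduce to formal properties of functor categories combined with two easy input facts: (a) $\ind_H^G$ and $\res_H^G$ send permutation modules to permutation modules (so they genuinely restrict to functors between $\mathcal{H}_k(H)$ and $\mathcal{H}_k(G)$, making the definitions well-posed), and (b) $\ind_H^G$ and $\res_H^G$ are a two-sided adjoint pair by Frobenius reciprocity.

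For exactness, I would note that the categories $\mathrm{Add}(\mathcal{H}_k(G), k\text{-mod})$ and $\mathrm{Add}(\mathcal{H}_k(H), k\text{-mod})$ are abelian with kernels, cokernels, and short exact sequences all computed pointwise. Both functors in question are defined by precomposition: $\res M = M \circ \ind_H^G$ and $\ind N = N \circ \res_H^G$. Precomposition on a functor category is always exact, because to test exactness of $\res M' \to \res M \to \res M''$ at an object $k[H/K]$ one simply evaluates the original exact sequence $M' \to M \to M''$ at the object $\ind_H^G k[H/K] = k[G/K]$, where it is exact by hypothesis. The same argument works for $\ind$.

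For preservation of projectives, the key observation is that projective objects of $\mathrm{Add}(\mathcal{H}_k(G), k\text{-mod})$ are (summands of) representable functors. Concretely, for a permutation module $P$, the representable $\Hom_{kG}(P, -)$ is projective, and every indecomposable projective arises this way as a summand. So it suffices to check that $\res$ and $\ind$ carry representables to representables. Using the two-sided adjunction, I would compute directly
\begin{align*}
\res\bigl(\Hom_{kG}(P,-)\bigr) &= \Hom_{kG}(P, \ind_H^G(-)) \cong \Hom_{kH}(\res_H^G P, -), \\
\ind\bigl(\Hom_{kH}(Q,-)\bigr) &= \Hom_{kH}(Q, \res_H^G(-)) \cong \Hom_{kG}(\ind_H^G Q, -).
\end{align*}
Since $\res_H^G P$ and $\ind_H^G Q$ are again permutation modules by (a), these right-hand sides are representable functors on the appropriate Hecke category, hence projective.

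There is no serious obstacle here; the only thing to be mildly careful about is that projective objects in a category of additive functors on an idempotent-complete additive category are summands of representables (Yoneda), and that Frobenius reciprocity holds in the strong two-sided form for the induction/restriction pair between $kG$ and $kH$. Both are standard and should be invoked without further comment.
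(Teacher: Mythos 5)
Your proof is correct and follows essentially the same route as the paper: exactness via precomposition, projectives as (summands of) representables on the Hecke category, and the two-sided adjunction of $\ind_H^G$ and $\res_H^G$ together with preservation of permutation modules to show representables go to representables. The extra detail you add (pointwise exactness, Yoneda-type remark about summands of representables) only fleshes out the same argument.
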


We now prove a well-known relationship between induction and restriction that is crucial for our reduction.

\begin{proposition}
  For $H \leq G$ finite groups, the identity on $kH\mmod$ is a direct summand of $\res_H^G \circ \ind_H^G$. If $[G:H]$ is invertible in $k$, then the identity on $kG\mmod$ is a direct summand of $\ind_H^G \circ \res_H^G$.
  \begin{proof}
    As a map of $kH$-modules, the inclusion $kH \to kG$ splits, simply by sending $g$ to $0$ for $g$ not in $H$. Hence $\res_H^G \circ \ind_H^G = kG \otimes_{kH}-$ has $kH\otimes_{kH}-$ as a direct summand, which is the identity on $kH\mmod$.

    For $X \in kG\mmod$ we have that $g \otimes x \mapsto gx$ is a surjective natural transformation from $kG \otimes_{kH} X = \ind_H^G \circ \res_H^G X$ to $X$. When $[G:H]$ is invertible in $k$ this has a natural splitting given by
    \begin{align*}
      x \mapsto \frac{1}{[G:H]}\sum_{g_i} g_i \otimes g_i^{-1} x
    \end{align*}
    where $g_i$ runs over representatives for $G/H$.
  \end{proof}
\end{proposition}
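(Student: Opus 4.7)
The plan is to exhibit both direct-summand claims by constructing explicit splittings at the level of modules and then tensoring to get the claim at the level of functors.

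For the first claim, I would view $\res_H^G \circ \ind_H^G$ as the functor $kG \otimes_{kH} -$, where $kG$ is regarded as a $(kH, kH)$-bimodule, and identify $\id$ with $kH \otimes_{kH} -$. It then suffices to split the inclusion $kH \hookrightarrow kG$ in the category of $(kH, kH)$-bimodules, because tensoring a split short exact sequence of bimodules with $X$ on the right yields a split short exact sequence of left $kH$-modules, naturally in $X$. I would define the $k$-linear retraction $\phi \colon kG \to kH$ by $\phi(h) = h$ for $h \in H$ and $\phi(g) = 0$ for $g \in G \setminus H$. The verification that $\phi$ is a bimodule map reduces to the observation that for $h_1, h_2 \in H$ and $g \in G \setminus H$, the product $h_1 g h_2$ also lies outside $H$ (otherwise $g \in h_1^{-1} H h_2^{-1} = H$). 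Hence both sides of the bimodule axiom vanish on such terms, and $\phi$ is the desired section.

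For the second claim, I would use the natural $kG$-linear map $\varepsilon_X \colon kG \otimes_{kH} \res_H^G X \to X$ given by $g \otimes x \mapsto gx$, which is clearly surjective. The splitting is constructed by averaging: fixing a set of coset representatives $\{g_i\}_{i=1}^{[G:H]}$ for $G/H$, I would define
\[
\sigma_X(x) \;=\; \frac{1}{[G:H]} \sum_{i} g_i \otimes g_i^{-1} x.
\]
The key verifications are (i) well-definedness, independent of the choice of representatives, since replacing $g_i$ by $g_i h$ for $h \in H$ leaves the tensor unchanged as $h$ slides across $\otimes_{kH}$; (ii) $kG$-linearity, which follows from the observation that for any $g \in G$ the set $\{g g_i\}$ is again a complete set of coset representatives, so that $g \cdot \sigma_X(x) = \sigma_X(g x)$ after reindexing; and (iii) the section identity $\varepsilon_X \circ \sigma_X = \id_X$, which is immediate from $\sum_i g_i g_i^{-1} x = [G:H] \cdot x$. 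The hypothesis that $[G:H]$ is invertible in $k$ enters precisely at the normalization.

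None of these verifications poses a serious obstacle, but the small conceptual subtlety worth emphasizing is that the first splitting must be checked as a map of bimodules rather than merely right $kH$-modules; this relies on the symmetry of the complement $G \setminus H$ under both left and right multiplication by $H$. In the second part, the only genuine input is the averaging identity, and the invertibility hypothesis is both used and essential, since for $H$ a proper subgroup of a $p$-group over characteristic $p$ the identity is demonstrably not a summand of $\ind \circ \res$.
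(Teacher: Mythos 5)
Your proof is correct and follows essentially the same route as the paper: the same retraction $kG \to kH$ killing $G \setminus H$ for the first claim, and the same averaging splitting $x \mapsto \tfrac{1}{[G:H]}\sum_i g_i \otimes g_i^{-1}x$ for the second. Your explicit check that the retraction is a map of $kH$--$kH$-bimodules (so that tensoring gives a natural splitting of functors) is a useful precision the paper leaves implicit, but it is the same argument.
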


The following technique is inspired by \cite{BD77}.

\begin{theorem}
  Let $k=\bar{k}$ be an algebraically closed field of characteristic $p$,
   $G$ a finite group, and $H \leq G$ its $p$-Sylow subgroup. Then $\cohmu_k(G)$ is derived semi-wild if and only if $\cohmu_k(H)$ is.
  \begin{proof}
    Let us assume that $\cohmu_k(G)$ is derived semi-wild. Then for every finite dimensional algebra $\Gamma$ we can find a semi-strict family of complexes $P^\bullet$. Since the restriction functor is exact and preserves projective objects, restricting $P^\bullet$ we can define the complex $Q^\bullet \coloneqq \res_{\cohmu_k(H)}^{\cohmu_k(G)}(P^{\bullet})$, which is a perfect complex of $\cohmu_k(H)$--$\Gamma$-bimodules.

    Consider two $\Gamma$-modules $M$ and $N$ such that $Q^\bullet \otimes M \cong Q^\bullet \otimes N$. Then 
    \begin{align*}
      \ind_{\cohmu_k(H)}^{\cohmu_k(G)} Q^\bullet \otimes M = \ind_{\cohmu_k(H)}^{\cohmu_k(G)}\res_{\cohmu_k(H)}^{\cohmu_k(G)} P^\bullet \otimes M \cong \ind_{\cohmu_k(H)}^{\cohmu_k(G)}\res_{\cohmu_k(H)}^{\cohmu_k(G)} P^\bullet \otimes N.
    \end{align*}
    Since $P^\bullet \otimes N$ is a direct summand of 
    \[
    \ind_{\cohmu_k(H)}^{\cohmu_k(G)}\res_{\cohmu_k(H)}^{\cohmu_k(G)} P^\bullet \otimes N \cong \ind_{\cohmu_k(H)}^{\cohmu_k(G)}\res_{\cohmu_k(H)}^{\cohmu_k(G)} P^\bullet \otimes M,
    \] 
    which has finitely many direct summands up to isomorphism, there are only finitely many possibilities for what $P^\bullet \otimes N$ can be. Combining this with the fact that $P^\bullet$ is a semi-strict family, it follows that $Q^\bullet$ is also a semi-strict family.

    The proof of the converse is exactly the same, replacing the roles of restriction and induction, and using that $[G:H]$ is relatively prime to $p$.
  \end{proof}
\end{theorem}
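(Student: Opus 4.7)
The plan is to transport semi-strict families along the induction and restriction functors between $\CohMack_k(H)$ and $\CohMack_k(G)$. Both of these functors are exact and preserve projectives, so they send perfect complexes to perfect complexes, and because they act on the $\cohmu_k(-)$-side alone they commute with tensoring over any auxiliary algebra $\Gamma$ acting on the right. The essential algebraic input is the preceding proposition: the identity on $kH\mmod$ is always a direct summand of $\res_H^G\circ\ind_H^G$, while the identity on $kG\mmod$ is a direct summand of $\ind_H^G\circ\res_H^G$ whenever $[G:H]$ is invertible in $k$. Since $H$ is a $p$-Sylow subgroup, $[G:H]$ is coprime to $p$ and hence a unit in $k$, so by precomposition both compositions $\ind_\cohmu\circ\res_\cohmu$ and $\res_\cohmu\circ\ind_\cohmu$ admit the identity as a direct summand.

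For the forward direction, assume $\cohmu_k(G)$ is derived semi-wild and fix a finite dimensional algebra $\Gamma$. Choose a semi-strict family $P^\bullet$ of $\cohmu_k(G)$-complexes over $\Gamma$, and set $Q^\bullet \coloneqq \res_\cohmu P^\bullet$; this is a perfect complex of $\cohmu_k(H)$--$\Gamma$-bimodules. Suppose $Q^\bullet\otimes_\Gamma M \cong Q^\bullet\otimes_\Gamma N$ for $\Gamma$-modules $M$ and $N$. Applying $\ind_\cohmu$ yields
\[
\ind_\cohmu\res_\cohmu P^\bullet \otimes_\Gamma M \;\cong\; \ind_\cohmu\res_\cohmu P^\bullet \otimes_\Gamma N.
\]
Using the retraction $P^\bullet \hookrightarrow \ind_\cohmu\res_\cohmu P^\bullet$, the complex $P^\bullet\otimes_\Gamma N$ is a direct summand of a perfect complex isomorphic to $\ind_\cohmu\res_\cohmu P^\bullet \otimes_\Gamma M$, which, by Krull--Schmidt for perfect complexes, has only finitely many indecomposable summands up to isomorphism. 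Hence $P^\bullet\otimes_\Gamma N$ ranges over only finitely many isomorphism classes as $N$ varies, and the semi-strictness of $P^\bullet$ then cuts this down to finitely many choices of $N$. Thus $Q^\bullet$ is a semi-strict family, and $\cohmu_k(H)$ is derived semi-wild.

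The converse is entirely symmetric: starting with a semi-strict family $P^\bullet$ for $\cohmu_k(H)$ one sets $Q^\bullet \coloneqq \ind_\cohmu P^\bullet$ and runs the same argument using the summand $P^\bullet \hookrightarrow \res_\cohmu\ind_\cohmu P^\bullet$. The main conceptual obstacle — and the reason the argument is phrased in terms of semi-wild rather than derived wild — is that the retraction $P^\bullet \hookrightarrow \ind_\cohmu\res_\cohmu P^\bullet$ exhibits $P^\bullet\otimes_\Gamma N$ only as \emph{some} direct summand of $\ind_\cohmu\res_\cohmu P^\bullet\otimes_\Gamma M$, not necessarily as $P^\bullet\otimes_\Gamma M$ itself, so one genuinely cannot upgrade the conclusion from "finitely many $N$" to "uniquely determined $N$". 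This is exactly why the semi-wild formulation is needed here, and \cref{thm:semi_wild_equiv_wild} then allows us to interpret the result as a transfer of derived wildness between $G$ and its $p$-Sylow subgroup over an algebraically closed field.
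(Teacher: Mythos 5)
Your proposal is correct and follows essentially the same route as the paper: restrict (resp.\ induce) the semi-strict family, apply the other functor, use the splittings of $\res\circ\ind$ and $\ind\circ\res$ (the latter available because $[G:H]$ is prime to $p$) to exhibit $P^\bullet\otimes_\Gamma N$ as a summand of a fixed perfect complex with finitely many summands, and then invoke semi-strictness of $P^\bullet$. Your closing remark correctly identifies why only semi-strictness (rather than strictness) survives the argument, which is exactly the role the semi-wild formulation plays in the paper.
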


The above theorem taken together with \cref{thm:derived_wild_p-groups}, \cref{prop:2m_derived_tame}, and \cref{thm:semi_wild_equiv_wild}, proves the classification of the derived representation type of cohomological Mackey functors, \cref{thm:alg_closed_derived_type_cohomological_sylow}.

\section{The Mackey algebra}\label[section]{sec:mackey_algebra}
In this section, we collect some results about the larger Mackey algebra.  Most of these follow fairly immediately from work in previous sections.  The main exception is the derived representation type of $\mu_k(C_2)$.

\subsection{Representation type of the Mackey algebra}
Here we use the results of \cref{sec:rep_type_cohom_mackey} to deduce some consequences for the representation type of the Mackey algebra over $k$ a field of characteristic $p$.

Recall from \cite[Theorem~18.1]{TW95} (see also \cref{thm:finite-rep-type-of-mackey-algebras}) the Mackey algebra $\mu_k(G)$ has finite representation type if and only if $p^2 \! \not| \ |G|$. 
Thus, for $k$ a field of characteristic $2$, the Mackey algebra $\mu_k(C_4)$ is representation infinite.  We showed in \cref{prop:C4_tame} that over $k = \bar{k}$, the cohomological Mackey algebra $\cohmu_k(C_4)$ is tame. We do not pursue the question of representation type for $\mu_k(C_4)$ here.

In \cref{sec:rep_type_cohom_mackey}, we largely focused on cyclic groups.  
There we showed the cohomological Mackey algebra has wild representation type whenever $G=C_{p^m}$ with $m \geq 2$ and $p^m > 4$.  
As an immediate corollary to \cref{prop:cohmu_cyclic_more_than_9_wild} and \cref{prop:cohmu_C8_C9_wild}, the cohomological Mackey algebra being wild makes the Mackey algebra wild.

\begin{corollary}\label[corollary]{cor:mackey_algebra_wild_cyclic}
  Let $k$ be a field of characteristic $p$. The Mackey algebra $\mu_k(C_{p^m})$ is wild when $m \geq 2$ and $p^m > 4$.
\end{corollary}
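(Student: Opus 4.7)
The plan is to leverage the fact, noted just after \cref{thm:finite-rep-type-of-mackey-algebras}, that the cohomological Mackey algebra $\cohmu_k(G)$ is a quotient of the Mackey algebra $\mu_k(G)$: the additional cohomological relations $\tr_H^K \circ \res_H^K = [K:H]$ cut $\mu_k(G)$ down to $\cohmu_k(G)$. Thus there is a surjective $k$-algebra homomorphism $\mu_k(C_{p^m}) \twoheadrightarrow \cohmu_k(C_{p^m})$.

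First I would use restriction of scalars along this surjection to obtain a functor
\[
F \colon \cohmu_k(C_{p^m})\mmod \longrightarrow \mu_k(C_{p^m})\mmod.
\]
Since restriction of scalars along a surjective ring map is always fully faithful and exact, \cref{cor:fullyfaithful_is_rep_embedding} tells us $F$ is a representation embedding. (This is exactly the example of a representation embedding given immediately after that corollary in \cref{sec:reptype}.)

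Next, under the hypotheses $m \geq 2$ and $p^m > 4$, the cohomological Mackey algebra $\cohmu_k(C_{p^m})$ is wild: for $p^m > 9$ this is \cref{prop:cohmu_cyclic_more_than_9_wild}, and for the remaining cases $p^m = 8, 9$ this is \cref{prop:cohmu_C8_C9_wild}. Composing the representation embedding provided by wildness of $\cohmu_k(C_{p^m})$ with $F$ and applying \cref{prop:embedding_from_wild_implies_wild} shows $\mu_k(C_{p^m})$ is wild as well.

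There is no real obstacle here; the corollary is essentially a formal consequence of the two wildness results for $\cohmu_k(C_{p^m})$ together with the standard fact that wildness is inherited by algebras that surject onto a wild algebra. The only thing to verify is that $\cohmu_k(C_{p^m})$ genuinely appears as a quotient of $\mu_k(C_{p^m})$, which is built into the definition of the cohomological Mackey algebra.
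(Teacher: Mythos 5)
Your proposal is correct and is essentially the paper's own argument: the paper also deduces the corollary from \cref{prop:cohmu_cyclic_more_than_9_wild} and \cref{prop:cohmu_C8_C9_wild} by noting that $\mu_k(C_{p^m})$ surjects onto $\cohmu_k(C_{p^m})$ and that restriction of scalars along this surjection is a representation embedding. Nothing is missing; your extra citations of \cref{cor:fullyfaithful_is_rep_embedding} and \cref{prop:embedding_from_wild_implies_wild} just make explicit the steps the paper leaves implicit.
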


\begin{proof}
  The Mackey algebra $\mu_k(G)$ surjects onto the cohomological Mackey algebra $\cohmu_k(G)$ and the restriction of scalars is a representation embedding.
\end{proof}

\subsection{Derived representation type of the Mackey algebra}
Here we look at the derived representation type of the Mackey algebra over $k$ a field of characteristic $p$.

The derived wild group algebras from \cref{sec:derived_wild_groups} have some immediate consequences for the derived representation type of the Mackey algebra.

\begin{theorem}\label[theorem]{thm:derived_wild_p-groups_mackey}
  Let $k$ be a field of characteristic $p$.  Let $G$ be either
  \begin{enumerate}
  \item $G=C_{p^m}$ for $p^m > 2$, or
  \item $G=C_p \times C_p$.
  \end{enumerate}
  Then the Mackey algebra $\mu_k(G)$ is derived wild.
\end{theorem}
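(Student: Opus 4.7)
The plan is to identify $kG$ as an idempotent subalgebra of $\mu_k(G)$, and then to combine this with the derived wildness results of \cref{sec:derived_wild_groups} via \cref{prop:idempotent_subalgebra_derived_wild}. The strategy closely mirrors the proof of \cref{thm:derived_wild_p-groups}, except that for the Mackey algebra we have an even more direct route: the group algebra $kG$ itself, rather than just quotients $k[G/N]$, sits as an idempotent subalgebra.

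To realize $kG$ as an idempotent subalgebra, I will take $e \in \mu_k(G)$ to be the idempotent at the vertex $G/\{1\}$, i.e.\ the one corresponding to the trivial subgroup. From the description of $\mu_k(G)$ as a path algebra whose arrows are restrictions, transfers, and conjugations subject to the axioms of \cref{def:MackeyFunctor}, one checks that $e \mu_k(G) e$ is spanned over $k$ by the conjugation maps $c_g \colon M(G/\{1\}) \to M(G/\{1\})$ for $g \in G$: any composition of restrictions, transfers, and conjugations starting and ending at $G/\{1\}$ must pass through only the trivial subgroup, and the relation $c_g c_h = c_{gh}$ then yields an algebra isomorphism $e \mu_k(G) e \cong kG$. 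Equivalently, via the Hecke-algebra description recalled in \cref{def:cohomological_mackey_algebra}, this is the standard identification $\End_{kG}(k[G/\{1\}])^{\op} \cong kG$ applied to the regular representation.

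Next I will verify that $kG$ is derived wild in each of the two cases. In case (1), $kG \cong k[c]/(c^{p^m})$, and since $p^m > 2$ forces $p^m - 1 \geq 2$, \cref{prop:wild_cyclic_group} exhibits a strict family showing that $kG$ is derived wild. In case (2), $kG \cong k[s,t]/(s^p, t^p)$, and since $p \geq 2$, \cref{prop:wild_CpxCp} applies to give derived wildness. Finally, \cref{prop:idempotent_subalgebra_derived_wild} applied to $kG \cong e\mu_k(G) e \subseteq \mu_k(G)$ concludes that $\mu_k(G)$ is derived wild in both cases.

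The only mildly delicate step is the identification $e \mu_k(G) e \cong kG$, which is a standard structural fact about the Mackey algebra (going back to \cite{TW95}) but is worth recording explicitly; everything else is a direct invocation of results already established earlier in the paper.
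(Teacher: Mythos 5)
Your proof is correct and is essentially the paper's own argument: the paper likewise observes that $kG$ is an idempotent subalgebra of $\mu_k(G)$ (the idempotent at the orbit $G/\{1\}$) and then cites \cref{prop:wild_cyclic_group}, \cref{prop:wild_CpxCp}, and \cref{prop:idempotent_subalgebra_derived_wild}. Your only extra content is the justification of $e\mu_k(G)e \cong kG$, where the phrase ``must pass through only the trivial subgroup'' is slightly loose (a path such as $\res^H_1\tr^H_1$ does pass through $G/H$ but reduces to $\sum_{x\in H}c_x$ by the double coset formula, or one can quote the standard basis $\tr\, c_g \res$ of $\mu_k(G)$ from \cite{TW95}); this does not affect the argument.
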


\begin{proof}
  The group algebra $kG$ is an idempotent subalgebra of $\mu_k(G)$, so this follows from \cref{prop:wild_cyclic_group} and \cref{prop:wild_CpxCp}.
\end{proof}

In fact, the argument for the derived wildness of $k[c]/(c^n)$ generalizes fairly immediately to cover a large class of algebras, which include all characteristic $p$ group algebras for finite $p$-groups of order more than two.

\begin{proposition}
  Let $k$ be a field and let $R$ be a finite dimensional local Frobenius $k$-algebra with length more than two. Then $R$ is derived wild.
\end{proposition}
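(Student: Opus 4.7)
The plan is to adapt the proof of \cref{prop:wild_cyclic_group} essentially verbatim, replacing the truncated polynomial algebra $k[c]/(c^{n+1})$ by a general local Frobenius algebra. Let $J = \rad(R)$ and let $\ell$ denote the Loewy length of $R$, so that $J^\ell = 0$ while $J^{\ell-1} \neq 0$; the hypothesis ``length more than two'' translates to $\ell \geq 3$. Since $R$ is local and Frobenius, the socle $\socle(R)$ is a simple $R$-module containing $J^{\ell-1}$, the left and right socles coincide, and for any nonzero $s \in \socle(R)$ one has the two-sided equality $\operatorname{ann}_R(s) = J$.

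First I would fix any nonzero element $s \in \socle(R)$ together with an element $t \in J^{\ell-2} \setminus J^{\ell-1}$; such a $t$ exists by Nakayama's lemma, since otherwise $J^{\ell-2} = J^{\ell-1}$ would force $J^{\ell-2} = 0$, contradicting the choice of $\ell$. These elements play the roles of $c^n$ and $c^{n-1}$ respectively. Since $s, t \in J$, we immediately get the identities $s^2 = st = ts = 0$, and since $R$ is local we have $\operatorname{ann}_R(t) \subseteq J$ on both sides (a unit cannot annihilate the nonzero element $t$).

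Next I would form the $R$--$\Gamma$-bimodule complex of \cref{prop:wild_cyclic_group} with every occurrence of $c^n$ replaced by $s$ and every $c^{n-1}$ replaced by $t$, where $\Gamma$ is the wild path algebra from \cref{prop:A1tildetilde_wild}. Verifying that consecutive differentials compose to zero uses only the identities $s^2 = st = ts = 0$, and in particular does \emph{not} require $t^2 = 0$, so the construction remains a well-defined complex of bimodules. I would then repeat the morphism analysis of \cref{prop:wild_cyclic_group}: for any chain map between the tensor products with $\Gamma$-modules $M$ and $N$, the commutativity squares yield equations of the two forms $s \Phi_{ij} = s \Phi_{kl}$ and $t \Phi_{ij} = 0$. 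Using $\operatorname{ann}_R(s) = J$, the former equations force the appropriate diagonal blocks of $\Phi$ to coincide modulo $J$; using $\operatorname{ann}_R(t) \subseteq J$, the latter equations force the relevant off-diagonal blocks to lie in $J$. Hence $\Phi$ is block lower triangular modulo $J$, and reducing its ``rightmost'' diagonal blocks $\Phi_{51}, \Phi_6, \Phi_7$ modulo $J$ assembles into a well-defined morphism $\Delta(\Phi)\colon M \to N$ in $\Gamma\mmod$. Since $J$ is nilpotent, $\Phi$ is an isomorphism (respectively, idempotent) if and only if its reduction modulo $J$ is, and the triangularity reduces this further to $\Phi_{51}, \Phi_6, \Phi_7$ being isomorphisms (resp.\ idempotents); thus the criteria of \cref{prop:key_technique} are satisfied.

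The main obstacle, mild in nature, is the bookkeeping required in the potentially non-commutative case: one must carefully distinguish left from right multiplication of $R$-valued matrix entries by $s$ and $t$ when writing out the commutativity equations. This is exactly where the Frobenius hypothesis enters: it guarantees that the socle is two-sided and simple, so that $\operatorname{ann}_R(s) = J$ on both sides, which is what makes the commutative-algebra argument of \cref{prop:wild_cyclic_group} go through unchanged in our more general setting.
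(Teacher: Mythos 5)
Your proposal follows the paper's own argument essentially verbatim: the paper proves this proposition by rerunning the proof of \cref{prop:wild_cyclic_group} with $c^n$ replaced by a nonzero socle element and $c^{n-1}$ by a radical element not in the socle, and your choice $t \in J^{\ell-2}\setminus J^{\ell-1}$ is exactly such an element, since $\socle(R)=J^{\ell-1}$ for a local Frobenius algebra of Loewy length $\ell$. The supporting details you supply (the annihilator identities $\operatorname{ann}_R(s)=J$, $s^2=st=ts=0$, and the left/right bookkeeping) are consistent with, and slightly more explicit than, what the paper records, so this is correct and the same approach.
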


\begin{proof}
  The proof is almost identical to the proof of \cref{prop:wild_cyclic_group}.  Throughout the argument, replace $c^n$ with an element $x$ of the socle and replace $c^{n-1}$ with an element $y$ in the radical of $R$ that is not in the socle.
\end{proof}

As a consequence, the Mackey algebra is derived wild for $p$-groups of order more than two.

\begin{theorem}
  Let $k$ be a field of characteristic $p$ and let $G$ be a finite $p$-group of order more than two. Then the Mackey algebra $\mu_k(G)$ is derived wild.
\end{theorem}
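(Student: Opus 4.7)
The plan is to deduce this from the immediately preceding proposition by realizing $kG$ as an idempotent subalgebra of $\mu_k(G)$ and invoking \cref{prop:idempotent_subalgebra_derived_wild}.

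First I would check that the group algebra $kG$ satisfies the hypotheses of the preceding proposition. For any finite group $G$, the algebra $kG$ is symmetric, hence Frobenius. When $G$ is a finite $p$-group and $k$ has characteristic $p$, the augmentation ideal is nilpotent and coincides with the Jacobson radical, so $kG$ is local with unique simple module the trivial module. As a module over itself, $kG$ has length equal to $\dim_k kG = |G| > 2$ by hypothesis. Therefore the preceding proposition gives that $kG$ is derived wild.

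Next I would exhibit $kG$ as an idempotent subalgebra of $\mu_k(G)$. Let $e \coloneqq e_{G/1}$ be the idempotent in $\mu_k(G)$ corresponding to the vertex $G/1$ in the Mackey quiver. The only morphisms in the Mackey category from the trivial orbit $G/1$ to itself are generated by the conjugation maps $c_g$ for $g \in G$ (restrictions and transfers for the trivial subgroup contribute nothing nontrivial), and these satisfy exactly the relations of the group algebra. Thus
\[
e \,\mu_k(G)\, e \;\cong\; kG.
\]
Equivalently, under Yoshida's description this is just $\End_{kG}(k[G/1])^{\op} \cong kG$, restricted to the summand of the regular module of the generator of the Hecke category. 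Applying \cref{prop:idempotent_subalgebra_derived_wild} then transports derived wildness from $kG$ to $\mu_k(G)$.

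The only genuine content is the preceding proposition; the present theorem is essentially a packaging step. No step is a serious obstacle, though one should be slightly careful when writing the identification $e\mu_k(G)e \cong kG$, since it uses the combinatorial presentation of the Mackey algebra (restrictions and transfers for $H = 1$ are identities, and conjugations $c_g$ generate a copy of $kG$ at the vertex $G/1$). Note finally that the excluded case $|G|=2$ corresponds to $kC_2 = k[c]/(c^2)$, which has length $2$ and so falls outside the scope of the previous proposition; that case requires the separate treatment indicated at the beginning of this section.
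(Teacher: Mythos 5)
Your proposal is correct and follows essentially the same route as the paper, which deduces the theorem exactly as you do: $kG$ is a local Frobenius (indeed symmetric) algebra of length $|G|>2$ in characteristic $p$, hence derived wild by the preceding proposition, and $kG = e\mu_k(G)e$ for the idempotent at the vertex $G/1$, so \cref{prop:idempotent_subalgebra_derived_wild} transfers derived wildness to $\mu_k(G)$. The only slight imprecision is the aside invoking Yoshida's description, which strictly concerns the cohomological Mackey algebra rather than $\mu_k(G)$; your direct identification of $e\mu_k(G)e$ with the span of the conjugations $c_g$ is the argument that matters and is correct.
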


One might hope that, like the cohomological version $\cohmu_k(C_2)$, the Mackey algebra $\mu_k(C_2)$ would be derived tame.  However, it is also derived wild.  The proof cannot rely on the group algebra, so we include a direct argument adapted from the argument in \cite{BDF09}.

\begin{proposition}
  The quiver with relations 
  \begin{center}
    $\Lambda \coloneqq
    \left.
    k\left[
    \begin{tikzcd}
      1 \ar[r, bend left, "b"] & 0 \ar[l, bend left, "a"]
    \end{tikzcd}
    \right]
    \middle/ 
    \langle aba,\ bab \rangle,
    \right.$
  \end{center}
  which describes the Mackey algebra $\mu_k(C_2)$ over a field of characteristic $2$ (see \cref{ex:C2characteristic2MackeyQuiver}), is derived wild. 
  \begin{proof}
    Recall from \cref{prop:A1tildetilde_wild}, that the quiver below is wild.
    \begin{center}
      \begin{tikzcd}
        1 \ar[r, bend left, "x"] \ar[r, bend right, swap, "y"] & 2 \ar[r, "w"] & 3
      \end{tikzcd}
    \end{center}
    Let $\Gamma$ be the path algebra of this wild quiver and write ${}_iP_j \coloneqq \Lambda e_i \otimes e_j\Gamma$. Consider the following complex of $R$--$\Gamma$-bimodules. 
    \begin{center}
      \begin{tikzcd}[ampersand replacement = \&, column sep=3em, row sep = 0em]
        \& {}_1P_1 \ar[dr]{}{\makebox[0pt]{$\begin{pmatrix}-a\\a\end{pmatrix}$}} 
        \&\&\&
        {}_0P_1 \ar[dr]{}{\makebox[0pt]{$b \otimes x$}}\\
        {}_1P_1 \ar[ur]{}{\makebox[0pt]{$ab$}} \ar[dr, swap]{}{\makebox[0pt]{$a$}} 
        \& \oplus \& 
        {{}_0P_1}^2 \ar[r]{}{\makebox[0pt]{$\begin{pmatrix}ba&0\\ 0 & ba \end{pmatrix}$}} 
        \&
        {{}_0P_1}^2 \ar[ur]{}{\makebox[0pt]{$\begin{pmatrix}ba&0\end{pmatrix}$}} 
        \ar[dr, swap]{}{\makebox[0pt]{$\begin{pmatrix}0&b\end{pmatrix}$}} 
        \& \oplus \& 
        {}_1P_2 \ar[r]{}{\makebox[0pt]{$a \otimes w$}} \& {}_1P_3\\
        \& {}_0P_1 \ar[ur, swap]{}{\makebox[0pt]{$\begin{pmatrix}ba\\0\end{pmatrix}$}} 
        \&\&\&
        {}_1P_1 \ar[ur, swap]{}{\makebox[0pt]{$ab \otimes y$}} 
      \end{tikzcd}
    \end{center}
    Using similar reasoning as in the proof of \cref{thm:derived_wild_p-groups}, one sees that this is a strict family of $\Lambda$-complexes over $\Gamma$.
  \end{proof}
\end{proposition}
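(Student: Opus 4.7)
The plan is to mimic the proof of \cref{prop:wild_cyclic_group}, where derived wildness of $k[c]/(c^{n+1})$ was established via a strict family of complexes over the wild path algebra $\Gamma$ of \cref{prop:A1tildetilde_wild} (quiver $1 \rightrightarrows 2 \to 3$ with arrows $x, y, w$). In that argument, the key algebraic features were that $c^{n}$ generates the socle and is annihilated by $c$, that $c^{n-1}$ lies in the radical but not in the socle, and that $\rad(R) = (c)$ is nilpotent. The bound quiver algebra $\Lambda = \mu_k(C_2)$ shares these features: the relations $aba = 0$ and $bab = 0$ force $ab$ and $ba$ to play the role of socle-generating elements, while $a$ and $b$ play the role of $c^{n-1}$. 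The radical $\rad(\Lambda) = \langle a, b \rangle$ is nilpotent (with $\rad(\Lambda)^{2} = \langle ab, ba \rangle$ and $\rad(\Lambda)^{3} = 0$).

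First I would take $\Gamma$ to be the wild path algebra above and consider the candidate complex of $\Lambda$--$\Gamma$-bimodules already displayed in the statement. Each summand ${}_{i}P_{j} \coloneqq \Lambda e_{i} \otimes_{k} e_{j}\Gamma$ is a finite direct sum of copies of the indecomposable projective $\Lambda e_{i}$ indexed by a basis of $e_{j}\Gamma$, and hence is projective both as a left $\Lambda$-module and as a right $\Gamma$-module. So this is a perfect complex of bimodules and $P^{\bullet} \otimes_{\Gamma} -$ is an exact functor $\Gamma\mmod \to \mathcal{D}^{\perf}(\Lambda)$.

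Next, given a $\Gamma$-representation $(V_{1},V_{2},V_{3};X,Y,W)$, I would write out the tensored complex $C^{\bullet} = P^{\bullet} \otimes_{\Gamma} (V_{1},V_{2},V_{3};X,Y,W)$ and consider a morphism $\Phi \colon C^{\bullet} \to \hat{C}^{\bullet}$ to a complex built analogously from data $(\hat{V}_{1},\hat{V}_{2},\hat{V}_{3};\hat{X},\hat{Y},\hat{W})$. Writing $\Phi$ degree-by-degree as a matrix of blocks with entries in the appropriate piece of $\Lambda$, the commutativity with the differentials yields a system of equations. As in \cref{prop:wild_cyclic_group}, multiplication by the socle-generating elements $ab$ and $ba$ (which appear in many differentials) will collapse all off-diagonal block entries of $\Phi$ to lie in $\rad(\Lambda)$, leaving a block-lower-triangular shape whose diagonal blocks must coincide up to lower-degree corrections and must intertwine $(X,Y,W)$ with $(\hat{X},\hat{Y},\hat{W})$. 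In particular, the triple (block at the $V_{1}$-column, block at $V_{2}$, block at $V_{3}$) reduces modulo $\rad(\Lambda)$ to a homomorphism $\Delta(\Phi) \colon (V_{1},V_{2},V_{3}) \to (\hat{V}_{1},\hat{V}_{2},\hat{V}_{3})$ of $\Gamma$-representations. Since $\rad(\Lambda)$ is nilpotent, the reduction modulo the radical detects invertibility and idempotence of the relevant diagonal blocks, and then block-lower-triangularity upgrades this to detection of invertibility and idempotence of $\Phi$ itself. Applying \cref{prop:key_technique} with this $\Delta$ shows that $P^{\bullet} \otimes_{\Gamma} -$ is essentially injective and preserves indecomposability, so $P^{\bullet}$ is a strict family of $\Lambda$-complexes over $\Gamma$, and derived wildness of $\Lambda$ follows from wildness of $\Gamma$.

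The main obstacle is the bookkeeping in the middle step: unlike the cyclic case, where all computations take place in a single local ring, the Mackey quiver has two vertices and the ``radical letters'' $a$ and $b$ shift between them, so the block matrix entries of $\Phi$ live in different pieces $e_{i}\Lambda e_{j}$ and have to be written in terms of the appropriate basis elements of $\Lambda e_{i}$. One must therefore be careful about which commutativity equation kills which block entry modulo $\rad(\Lambda)$; the analogues of the chains of equalities $c^{n}\Phi_{ii} = c^{n}\Phi_{j\!j}$ from \cref{prop:wild_cyclic_group} will here split into separate chains indexed by the source vertex, and one must verify that the diagonal blocks associated to the $V_{1}$-column agree after reducing modulo $\rad(\Lambda)$. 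Once this bookkeeping is set up properly, nilpotence of $\rad(\Lambda)$ and the specific shape of the differentials force exactly the same conclusions as in the cyclic proof.
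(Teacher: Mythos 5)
Your proposal matches the paper's proof: the paper uses exactly the displayed complex of $\Lambda$--$\Gamma$-bimodules over the wild quiver of \cref{prop:A1tildetilde_wild} and asserts it is a strict family ``by similar reasoning'' to the truncated-polynomial case, which is precisely the block-matrix/$\Delta$-reduction-modulo-radical argument via \cref{prop:key_technique} that you outline, with $ab$, $ba$ playing the role of $c^n$ and $a$, $b$ the role of $c^{n-1}$. Your identification of the relevant structural features (socle elements in the differentials, nilpotence of $\rad(\Lambda)$, lower-triangular block shape forcing the diagonal blocks to give a $\Gamma$-homomorphism) is exactly the intended verification, so this is essentially the same approach.
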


Collecting these results, we have shown the Mackey algebra is derived wild for cyclic $p$-groups.

\begin{theorem}\label[theorem]{thm:derived_wild_more_p-groups_mackey}
  Let $k$ be a field of characteristic $p$ and let $G$ be a nontrivial finite $p$-group. Then the Mackey algebra $\mu_k(G)$ is derived wild.
\end{theorem}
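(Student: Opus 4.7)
The plan is to combine the two results immediately preceding this theorem in the text: the unnumbered theorem handling $p$-groups of order more than two, and the proposition directly handling $\mu_k(C_2)$. A nontrivial finite $p$-group $G$ falls into exactly one of two cases, namely $|G| = 2$ or $|G| > 2$, so a case split finishes the argument.

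In the case $|G| = 2$, necessarily $p = 2$ and $G \cong C_2$, and the preceding proposition gives derived wildness of $\mu_k(C_2)$ by exhibiting an explicit complex of $\mu_k(C_2)$--$\Gamma$-bimodules, where $\Gamma$ is the path algebra of the wild quiver from \cref{prop:A1tildetilde_wild}. In the case $|G| > 2$, we invoke the theorem just above, whose proof observes that $kG$ is a local Frobenius $k$-algebra of length $|G| > 2$, hence derived wild by the generalization of \cref{prop:wild_cyclic_group}, and then uses that $kG = e_{G/e}\,\mu_k(G)\,e_{G/e}$ is an idempotent subalgebra of $\mu_k(G)$ (with $e_{G/e}$ the idempotent corresponding to the orbit $G/e$), so derived wildness transfers via \cref{prop:idempotent_subalgebra_derived_wild}.

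There is essentially no obstacle: all of the real work has already been carried out. The one thing worth flagging is simply that the case $G = C_2$ must be handled separately because $kC_2 \cong k[c]/(c^2)$ has length $2$, so the local-Frobenius-length-$>2$ argument does not apply there; this is precisely why the preceding proposition was needed as an independent input.
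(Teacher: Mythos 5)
Your proposal is correct and is essentially the paper's own argument: the theorem is obtained by combining the preceding result for $p$-groups of order more than two (via $kG$ being a local Frobenius algebra of length $>2$ sitting as an idempotent subalgebra of $\mu_k(G)$) with the separate proposition giving an explicit strict family of complexes for $\mu_k(C_2)$. Your remark on why $G=C_2$ needs its own argument (the length of $kC_2$ is only two) matches the paper's stated reason for including the direct construction.
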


Applying \cref{Thm:QuillenEquiv}, there is also no meaningful classification of compact objects in the homotopy category of $H\und{A}_k$-modules.

\begin{theorem}\label[theorem]{thm:derived_wild_more_p-groups_mackey_topological}
  Let $k$ be a field of characteristic $p$.  The category of compact objects in the homotopy category of $G$-equivariant $H\und{A}_k$-modules is wild whenever $G$ is a nontrivial finite $p$-group.
\end{theorem}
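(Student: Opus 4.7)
The plan is to deduce this topological statement as a direct corollary of \cref{thm:derived_wild_more_p-groups_mackey}, which establishes that $\mu_k(G)$ is derived wild for any nontrivial finite $p$-group $G$, by transporting the algebraic wildness across the Schwede--Shipley Quillen equivalence.

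First I would verify that \cref{Thm:QuillenEquiv} applies to $M = \und{A}_k$. The Burnside Mackey functor $\und{A}$ is a Tambara functor, and tensoring with $k$ preserves this commutative Green functor structure, so $\und{A}_k$ is a commutative Mackey ring. Hence \cref{Thm:QuillenEquiv} yields a Quillen equivalence between $\Ch(\und{A}_k\MMod)$ and $H\und{A}_k\MMod$, which on homotopy categories gives
\[
\mathcal{D}(\und{A}_k) \simeq \Ho(H\und{A}_k\MMod).
\]
Since triangulated equivalences preserve compact objects, restricting to compacts yields an equivalence
\[
\mathcal{D}^{\perf}(\und{A}_k) \simeq \Ho(H\und{A}_k\MMod)^\omega.
\]

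Next I would use the identification $\und{A}_k\MMod \simeq \Mack_k(G) \simeq \mu_k(G)\MMod$ from the beginning of \cref{sec:preliminaries}, which induces an equivalence of perfect derived categories
\[
\mathcal{D}^{\perf}(\und{A}_k) \simeq \mathcal{D}^{\perf}(\mu_k(G)).
\]
By \cref{thm:derived_wild_more_p-groups_mackey}, for $G$ a nontrivial finite $p$-group the Mackey algebra $\mu_k(G)$ is derived wild, meaning that for every finite dimensional $k$-algebra $\Gamma$ there exists a strict family of $\mu_k(G)$-complexes over $\Gamma$, i.e.\ a perfect complex of bimodules $P^\bullet$ in $K^b(\mu_k(G) \otimes \Gamma^{\op}\mproj)$ such that $P^\bullet \otimes_\Gamma -$ reflects isomorphisms and preserves indecomposables.

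Finally, transporting this strict family along the composite equivalence $\mathcal{D}^{\perf}(\mu_k(G)) \simeq \Ho(H\und{A}_k\MMod)^\omega$ produces, for any finite dimensional $k$-algebra $\Gamma$, a corresponding family in the compact objects of the topological category that reflects isomorphisms and preserves indecomposables. This is precisely the statement that the category of compact objects is wild in the derived/topological sense. There is no real obstacle beyond carefully tracking the Tambara structure to justify invoking \cref{Thm:QuillenEquiv} for $\und{A}_k$, and recording the standard fact that the Quillen equivalence restricts to compacts; both are already in place in the excerpt.
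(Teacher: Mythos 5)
Your proposal is correct and follows essentially the same route as the paper: the paper likewise deduces this statement directly from \cref{thm:derived_wild_more_p-groups_mackey} by applying the Schwede--Shipley equivalence of \cref{Thm:QuillenEquiv} to the commutative Mackey ring $\und{A}_k$, restricting to compact objects, and using the identification $\und{A}_k\MMod \simeq \mu_k(G)\MMod$. The extra details you supply (the Tambara/Green structure of $\und{A}_k$ and the preservation of compacts under the triangulated equivalence) are exactly the implicit justifications behind the paper's one-line argument.
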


\section{Singularity categories}\label[section]{sec:singularity_category}
In this section, we look at the singularity category of the cohomological Mackey algebra for $G$ a cyclic $p$-group and for $G=C_2 \times C_2$.

The singularity category, given by the Verdier quotient 
\[
\mathcal{D}_{\sg}(R) = \mathcal{D}^b(R)/\mathcal{D}^{\perf}(R),
\]
is a derived invariant of a Noetherian ring $R$.  It measures the failure of $R$ to have finite global dimension. In general, the singularity category can be quite hard to compute, but for some families of rings we have reliable methods.

An algebra is called (Iwanaga--)Gorenstein if all projectives have finite injective dimension and all injectives have finite projective dimension. For Gorenstein algebras there is an equivalence of triangulated categories 
\begin{align*}
 \mathcal{D}_{\sg}(R) \cong \underline{\CM}(R)
\end{align*}
between the singularity category and the stable category of Cohen--Macaulay modules \cite{Hap91, Buc21}, which is more accessible to computation.

For a Gorenstein algebra $R$, the category of Cohen--Macaulay modules is the full subcategory of modules without extensions to projectives,
\begin{align*}
  \CM(R) \coloneqq \{ X \in R\mmod \mid \Ext^i(X, R) = 0,\ i>0 \}.
\end{align*}
The stable category $\underline{\CM}(R)$ is the quotient by the ideal of morphisms factoring through a projective module.

There are different ways to compute Cohen--Macaulay modules, so for Gorenstein algebras we have a more reliable method of computing the singularity category.  It is known precisely when the cohomological Mackey algebra is Gorenstein.

\begin{theorem}\cite[Theorem~5.3]{BSW17}
The cohomological Mackey algebra $\cohmu_k(G)$ over a field of characteristic $p$ is Gorenstein if and only if $G$ satisfies one of the following:
\begin{enumerate}
  \item the $p$-Sylow subgroup of $G$ is cyclic, or
  \item $p=2$ and the $2$-Sylow subgroup is dihederal.
\end{enumerate} 
\end{theorem}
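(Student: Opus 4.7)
The plan is to use Yoshida's identification $\cohmu_k(G) \cong \End_{kG}\bigl(\bigoplus_{H \leq G} k[G/H]\bigr)^{\op}$ to recognize $\cohmu_k(G)$ as the opposite endomorphism algebra of a generator-cogenerator $M = \bigoplus_{H \leq G} k[G/H]$ of $kG\mmod$. This works because $kG$ is Frobenius (so projectives equal injectives) and every projective $kG$-module is a summand of the regular permutation module $k[G/e] = kG$. Such endomorphism algebras always have dominant dimension at least two, and there is a classical Auslander--Solberg type criterion characterizing the Gorenstein property in terms of the existence of bounded $\add M$-coresolutions of each indecomposable summand of $M$. The question thus becomes a combinatorial/homological question about the finite collection of indecomposable permutation $kG$-modules inside $kG\mmod$.

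First I would reduce to the case where $G$ is itself a $p$-group. Using the induction and restriction functors between $\cohmu_k(P)$ and $\cohmu_k(G)$ for a $p$-Sylow subgroup $P$, which are exact and preserve projectives, together with the fact that the identity functor on $kG\mmod$ is a direct summand of $\ind_P^G \circ \res_P^G$ whenever $[G:P]$ is invertible in $k$, one can transfer finite injective and projective dimensions between the two algebras and conclude that $\cohmu_k(G)$ is Gorenstein if and only if $\cohmu_k(P)$ is.

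For the positive direction, suppose $P$ is a $p$-group that is cyclic, or dihedral when $p=2$. When $P = C_{p^m}$ is cyclic, the explicit quiver presentation in \cref{prop:Cp^mCohMackQuiver} is small enough to construct a minimal injective resolution of each indecomposable projective directly, yielding finite self-injective dimension on both sides via the duality $\cohmu_k(G) \cong \cohmu_k(G)^{\op}$ noted after \cref{def:cohomological_mackey_algebra}. For a dihedral $2$-group, I would combine Ringel's classification of indecomposable $kP$-modules (all strings and bands) with the explicit subgroup lattice to show that each indecomposable permutation module admits a bounded $\add M$-coresolution. For the converse, if $P$ is a $p$-group not in the listed families, standard $p$-group theory produces a subquotient isomorphic to $C_p \times C_p$ (for $p$ odd) or, at $p = 2$, one of the remaining problematic families $(\mathbb{Z}/2)^3$, the generalized quaternion group, or the semidihedral group. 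In each of these cases, I would exhibit an indecomposable summand of $M$ whose iterated syzygies produce infinitely many pairwise non-isomorphic indecomposables outside $\add M$, obstructing any bounded coresolution and forcing infinite self-injective dimension.

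The hard part will be the negative direction: one must rule out \emph{any} bounded $\add M$-coresolution, not merely the naive one built from syzygies of permutation modules, so it is necessary to extract a persistent homological invariant — morally, the complexity of a well-chosen permutation module within $kG\mmod$. Matching this invariant precisely to the cyclic/dihedral dichotomy is where one must invoke the classification of tame blocks of finite group algebras and verify that only cyclic and dihedral $p$-groups produce the bounded behavior required for the Gorenstein property. A cleaner alternative would be to imitate the reduction strategy used in \cref{thm:alg_closed_derived_type_cohomological_sylow} and compare $\cohmu_k(P)$ directly with structurally simpler algebras — for instance Brauer correspondents arising in block theory — whose Gorenstein property is already understood.
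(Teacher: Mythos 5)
First, note that the paper does not prove this statement at all: it is quoted verbatim from \cite[Theorem~5.3]{BSW17}, so there is no internal proof to compare against, and your attempt has to be judged as a proof of the BSW theorem itself. As such, it is a reasonable research outline but not a proof. The pieces that do work are the ones that mirror arguments already in the paper: the Yoshida identification of $\cohmu_k(G)$ as $\End_{kG}(\bigoplus_H k[G/H])^{\op}$, and the reduction to a $p$-Sylow subgroup $P$ via the exact, biadjoint induction/restriction functors (which preserve both projectives and injectives, and for which the identity functor splits off the relevant composite when $[G:P]$ is invertible); this transfers finiteness of the injective dimension of projectives in both directions, exactly in the spirit of the semi-wildness reduction used for \cref{thm:alg_closed_derived_type_cohomological_sylow}.

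The genuine gaps are in the $p$-group classification itself. Your ``classical Auslander--Solberg type criterion'' is invoked without a precise statement: for a generator-cogenerator $M$ over the self-injective algebra $kP$, the Gorenstein property of $\End(M)^{\op}$ is \emph{not} simply the existence of bounded $\add M$-coresolutions of the summands of $M$; making this equivalence precise (via relative cotilting theory or dominant-dimension arguments) is a substantial part of what has to be proved, and without it both directions of your argument hang in the air. More seriously, the negative direction --- the only part that distinguishes the cyclic/dihedral list from all other $p$-groups --- is left unargued, and the tool you propose, the classification of tame blocks, is the wrong invariant: generalized quaternion and semidihedral $2$-groups have \emph{tame} group algebras yet must be excluded, while for the positive cyclic case at odd $p$ one is squarely in finite type and for $C_p\times C_p$ ($p$ odd, excluded) one is wild, so tameness neither implies nor is implied by Gorensteinness of $\cohmu_k(P)$. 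You acknowledge that ruling out \emph{every} bounded coresolution requires a persistent homological obstruction, but no candidate obstruction is actually constructed, and the dihedral positive case is likewise only gestured at. So the proposal identifies sensible reductions but is missing the core content of the theorem; as the paper does, the honest course here is to cite \cite{BSW17} unless you carry out the relative-homological analysis of permutation modules for each family of $p$-groups in full.
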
   
In this section we investigate a few such examples using precluster tilting modules and relative homological algebra.

\subsection{Singularity category for the cohomological Mackey algebra of $C_p$}
A module $M$ is called \emph{precluster tilting} if it is generating-cogenerating, and $\tau M$ and $\tau^- M$ are in $\add M$, where $\tau$ is the Auslander--Reiten translate and $\add M$ is the full subcategory of direct summands of $M^n$.

Let $M$ be a precluster tilting module over $\Lambda$ and let $\Gamma = \End(M)^{\op}$. Then $\Gamma$ is Gorenstein \cite{IS18}.  Moreover, we have an equivalence of categories
\begin{align*}
  \Hom(M,-)\colon \Lambda\mmod \to \CM(\Gamma),
\end{align*}
which restricts to an equivalence between $\add M$ and $\add \Gamma$. This equivalence induces an equivalence of categories between $\Lambda\mmod / [M]$ and $\underline{\CM}(\Gamma)$, where $[M]$ is the ideal of morphisms that factor through $\add M$ \cite{IS18}.

The singularity category is also a triangulated category. Through the equivalence $\mathcal{D}_{\sg}(\Gamma) \cong \underline{\CM}(\Gamma)$, the inverse shift functor is given by taking syzygies in $\Gamma\mmod$. Through the equivalence $\Lambda\mmod \cong \CM(\Gamma)$ syzygies can be computed as kernels of $\add M$-approximations.

In the following example we compute the singularity category of a family of examples, which will include $\cohmu_k(C_p)$.

\begin{example}\label[example]{ex:Cp_singularity_cat}
  Let $k$ be a field and let $\Lambda = k[t]/(t^{n})$. Note that $\Lambda$ is self-injective and so generating implies generating-cogenerating.  In this case, $\tau$ is the identity on nonprojective indecomposable objects. Thus any generating module is precluster tilting. 
  
  The AR-quiver of $\Lambda$ can be described as 
  \[\begin{tikzcd}
    1 & 2 & 3 & \cdots & {n-1} & n \phantom{-1}
    \arrow["{x_1}", curve={height=-6pt}, from=1-1, to=1-2]
    \arrow["{y_1}", curve={height=-6pt}, from=1-2, to=1-1]
    \arrow["{x_2}", curve={height=-6pt}, from=1-2, to=1-3]
    \arrow["{y_2}", curve={height=-6pt}, from=1-3, to=1-2]
    \arrow["{x_3}", curve={height=-6pt}, from=1-3, to=1-4]
    \arrow["{y_3}", curve={height=-6pt}, from=1-4, to=1-3]
    \arrow["{x_{n-2}}", curve={height=-6pt}, from=1-4, to=1-5]
    \arrow["{y_{n-2}}", curve={height=-6pt}, from=1-5, to=1-4]
    \arrow["{x_{n-1}}", curve={height=-6pt}, from=1-5, to=1-6]
    \arrow["{y_{n-1}}", curve={height=-6pt}, from=1-6, to=1-5]
  \end{tikzcd}\]
  such that $y_1 x_1 = 0$ and $y_{i+1}x_{i+1} = x_{i}y_i$ for $1\leq i \leq n-2$.  Here the vertex $i$ represents the unique $i$-dimensional indecomposable module.

  For any module $M$, we can recover the AR-quiver of 
  \[
  \Lambda\mmod / [M] \cong \underline{\CM}(\Gamma) \cong \mathcal{D}_{\sg}(\Gamma)
  \]
  by killing all paths that factor through a summand of $M$.  For example, take the module $M = \Lambda \oplus \Lambda/(t)$, then $\mathcal{D}_{\sg}(\End(M)^{\op})$ is the $k$-linear category given by 
  \[\begin{tikzcd}
    {\color{gray}1} & 2 & 3 & \cdots & {n-1} & {\color{gray}n}\phantom{-1}
    \arrow["{x_1}", dashed, gray, curve={height=-6pt}, from=1-1, to=1-2]
    \arrow["{y_1}", dashed, gray, curve={height=-6pt}, from=1-2, to=1-1]
    \arrow["{x_2}", curve={height=-6pt}, from=1-2, to=1-3]
    \arrow["{y_2}", curve={height=-6pt}, from=1-3, to=1-2]
    \arrow["{x_3}", curve={height=-6pt}, from=1-3, to=1-4]
    \arrow["{y_3}", curve={height=-6pt}, from=1-4, to=1-3]
    \arrow["{x_{n-2}}", curve={height=-6pt}, from=1-4, to=1-5]
    \arrow["{y_{n-2}}", curve={height=-6pt}, from=1-5, to=1-4]
    \arrow["{x_{n-1}}", dashed, gray, curve={height=-6pt}, from=1-5, to=1-6]
    \arrow["{y_{n-1}}", dashed, gray, curve={height=-6pt}, from=1-6, to=1-5]
  \end{tikzcd}\]
  with the relations as above, except now the objects $1$ and $n$ are $0$ (shown in gray).
\end{example}

We can apply this example to compute the singularity category of $\cohmu_k(C_p)$

\begin{theorem}
  Let $k$ be a field of characteristic $p$ and let $G = C_{p^m}$.  The singularity category of the cohomological Mackey algebra $\cohmu_k(C_{p^m})$ is the triangulated category given by the product of $m$ factors of categories of the form
    \[\begin{tikzcd}
    1 & 2 & 3 & \cdots & {n-1} & n \phantom{-1}
    \arrow["{x_1}", curve={height=-6pt}, from=1-1, to=1-2]
    \arrow["{y_1}", curve={height=-6pt}, from=1-2, to=1-1]
    \arrow["{x_2}", curve={height=-6pt}, from=1-2, to=1-3]
    \arrow["{y_2}", curve={height=-6pt}, from=1-3, to=1-2]
    \arrow["{x_3}", curve={height=-6pt}, from=1-3, to=1-4]
    \arrow["{y_3}", curve={height=-6pt}, from=1-4, to=1-3]
    \arrow["{x_{n-2}}", curve={height=-6pt}, from=1-4, to=1-5]
    \arrow["{y_{n-2}}", curve={height=-6pt}, from=1-5, to=1-4]
    \arrow["{x_{n-1}}", curve={height=-6pt}, from=1-5, to=1-6]
    \arrow["{y_{n-1}}", curve={height=-6pt}, from=1-6, to=1-5]
  \end{tikzcd}\]
  for $n = p^{j} - p^{j-1}-1$ and $1 \leq j \leq m$, together with relations $y_1 x_1 = 0$, $x_{n-1}y_{n-1}=0$ and $y_{i+1}x_{i+1} = x_{i}y_i$ for $1\leq i \leq n-2$.  The shift is given by flipping each of the diagrams above.
\end{theorem}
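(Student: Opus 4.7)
The plan is to realize the cohomological Mackey algebra as the endomorphism algebra of a precluster tilting module over a truncated polynomial algebra, and then apply the machinery developed in \cref{ex:Cp_singularity_cat}.

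By Yoshida's identification, $\cohmu_k(C_{p^m}) \cong \End_{\Lambda}(M)^{\op}$ where $\Lambda = kC_{p^m} \cong k[t]/(t^{p^m})$ and
\[
M = \bigoplus_{i=0}^{m} k[t]/(t^{p^i}),
\]
since the permutation module $k[C_{p^m}/C_{p^i}]$ is isomorphic to $k[t]/(t^{p^{m-i}})$ as a $\Lambda$-module. First I would verify that $M$ is precluster tilting. Since $\Lambda$ is self-injective and is a direct summand of $M$, the module $M$ is generating and cogenerating. Moreover, for a self-injective algebra the AR-translate $\tau$ is the identity on nonprojective indecomposables, so $\tau M, \tau^{-}M \in \add M$ automatically. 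The equivalence recalled in \cref{ex:Cp_singularity_cat} then gives
\[
\mathcal{D}_{\sg}(\cohmu_k(C_{p^m})) \simeq \underline{\CM}(\cohmu_k(C_{p^m})) \simeq \Lambda\mmod / [M].
\]

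Next I would compute this quotient using the AR-quiver of $\Lambda$, which is the chain $1-2-\cdots-p^m$ from \cref{ex:Cp_singularity_cat}. The summands of $M$ are the indecomposables of dimensions $p^0,p^1,\ldots,p^m$, so precisely the vertices $p^j$ (for $0\le j\le m$) are killed. Any morphism $k[t]/(t^r)\to k[t]/(t^{r'})$ with $r$ in the interval $(p^{j-1},p^j)$ and $r'$ in $(p^{j'-1},p^{j'})$ for $j\neq j'$ factors through an intermediate $k[t]/(t^{p^l})$ and is therefore killed. Hence the quotient decomposes as a direct sum of $m$ full subcategories, one for each interval $(p^{j-1},p^j)$, which has $n_j = p^j - p^{j-1}-1$ vertices. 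Within the $j$-th block, relabeling $k[t]/(t^{p^{j-1}+i})$ as the $i$-th vertex, the irreducible arrows and mesh relations are inherited from the AR-quiver; the boundary relations $y_1 x_1=0$ and $x_{n-1}y_{n-1}=0$ follow because, via the mesh relations, these compositions equal maps factoring through the killed objects $k[t]/(t^{p^{j-1}})$ and $k[t]/(t^{p^j})$ respectively.

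Finally I would identify the shift. Under $\Hom_\Lambda(M,-)\colon \Lambda\mmod\to \CM(\cohmu_k(C_{p^m}))$, the inverse shift on $\underline{\CM}$ corresponds to taking the kernel of a minimal right $\add M$-approximation. For $X=k[t]/(t^r)$ with $p^{j-1}<r<p^j$, I would show the minimal approximation is
\[
k[t]/(t^{p^j})\oplus k[t]/(t^{p^{j-1}}) \longrightarrow X, \qquad (a,b) \longmapsto a \bmod t^r + t^{r-p^{j-1}}b,
\]
by checking that every homomorphism $k[t]/(t^{p^i})\to X$ factors through it: for $i\ge j$ via the first component, for $i<j$ via the second. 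A direct computation shows the kernel is cyclic, generated by $(-t^{r-p^{j-1}},1)$, with annihilator $(t^{p^j+p^{j-1}-r})$, and thus isomorphic to $k[t]/(t^{p^j+p^{j-1}-r})$. Since $p^{j-1}<p^j+p^{j-1}-r<p^j$, this stays in block $j$, and in the block coordinates $i=r-p^{j-1}$ it sends $i\mapsto n_j+1-i$, which is the flip.

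The main obstacle is the explicit determination of the minimal $\add M$-approximation and the verification that its kernel is cyclic of the claimed length; once that is in hand, the block decomposition and the identification of the shift reduce to bookkeeping with the AR-quiver of $k[t]/(t^{p^m})$.
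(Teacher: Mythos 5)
Your proposal is correct and takes essentially the same approach as the paper: realize $\cohmu_k(C_{p^m})$ as $\End_\Lambda(M)^{\op}$ for the precluster tilting module $M$ of permutation modules over $\Lambda = k[t]/(t^{p^m})$, and compute $\mathcal{D}_{\sg} \simeq \Lambda\mmod/[M]$ by deleting the $p$-th power vertices of the AR-quiver, exactly as in \cref{ex:Cp_singularity_cat}. Your explicit minimal $\add M$-approximation and kernel computation verifying that the shift is the flip is a detail the paper asserts but does not spell out, and it checks out ($k[t]/(t^r) \mapsto k[t]/(t^{p^j+p^{j-1}-r})$ within block $j$).
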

\begin{proof}
  Notice that when $k$ has characteristic $p$ and $n=p$, the \cref{ex:Cp_singularity_cat} computes the singularity category of the Hecke algebra $\End(M)^{\op} = \mathcal{E}_k(C_p)$, which we can identify with $\cohmu_k(C_p)$ using Yoshida's characterization. More generally, the singularity category of $\cohmu_k(C_{p^m})$ can be described in a similar way, taking $n=p^m$ and deleting every $p$th-power vertex. 
\end{proof}

\subsection{Singularity category for the Klein four cohomological Mackey algebra}
We now turn our attention to $\cohmu_k(C_2 \times C_2)$. In this case, the sum of transitive permutation modules does not define a precluster tilting module over the group algebra, so we cannot directly use the same approach as before. However, we can do something similar with the help of some tools from relative homological algebra, which replaces the exact structure of the module category with a relative exact structure. We briefly recall some standard constructions, see \cite{AS93} for more details.

Given a module $M$ over a finite dimensional $k$-algebra $\Lambda$, one can define two exact structures $F_M$ and $F^M$ on $\Lambda\mmod$, forcing $M$ to be either projective or injective as below \cite[Prop~1.7]{AS93}.
\begin{align*}
  \Ext^1_{F_M}(C, A) &\coloneqq \{ \eta \in \Ext^1(C, A) | \Hom(M, \eta) \text{ is exact} \} \\
  \Ext^1_{F^M}(C, A) &\coloneqq \{ \eta \in \Ext^1(C, A) | \Hom(\eta, M) \text{ is exact} \}
\end{align*}

In the $F_M$ exact structure, the $F_M$-relative indecomposable projectives are the usual indecomposable projectives of $\Lambda$ together with the summands of $M$. Thus the $F_M$-relative projective modules are $\add M \oplus \Lambda$. The $F_M$-relative injective modules are $\add \tau M \oplus D\Lambda$ and $F_M = F^{\tau M}$ \cite[Prop~1.8,~1.10]{AS93}.

If $M$ is a generator-cogenerator over a finite dimensional algebra $\Lambda$ and $\Gamma = \End(M)^{\op}$, then there is an equivalence of categories between modules and second syzygies called the Morita--Tachikawa correspondence given by
\begin{align*}
  \Hom(M, -) \colon \Lambda \mmod\to \Omega^2 \Gamma\mmod.
\end{align*}
Moreover, it restricts to an equivalence between $\add M$ and $\add \Gamma$, and 
\begin{align*}
  \Ext_{F_M}^\bullet(X, Y) = \Ext_\Gamma^\bullet((M, X), (M, Y)).
\end{align*}
This means that when $\Gamma$ is Gorenstein, we can compute $\CM(\Gamma)$ as the $F_M$-orthogonal complement to $M$ in $\Lambda\mmod$.

Let $k$ be a field of characteristic $2$ and let $$\Lambda = k[a,b]/(a^2, b^2) \cong k[C_2 \times C_2].$$ 
Then $\Lambda$ is special biserial and we can describe the indecomposable modules by strings and bands \cite{WW85,CB18}. If $\omega$ is a string we write $M(\omega)$ for the associated string module, and if $\omega$ is a band and $A$ an indecomposable invertible matrix we write $M(\omega, A)$ for the band module in which $A$ describes the action of moving around the band. The strings of $\Lambda$ are up to inverse
\[
e_1,\ a,\ b,\ (ab^-)^i,\ (b^-a)^i,\ (ab^-)^ia, \quad \text{and} \quad (b^-a)^ib^- \quad\text{for } 1\leq i <\infty.
\] 
The only band is $ab^-$. Some of the strings are depicted in \cref{fig:strings_over_klein4}.  See also \cite[Section~11.5]{WebbCourseNotes} for another description of the indecomposables of $k[C_2 \times C_2]$.
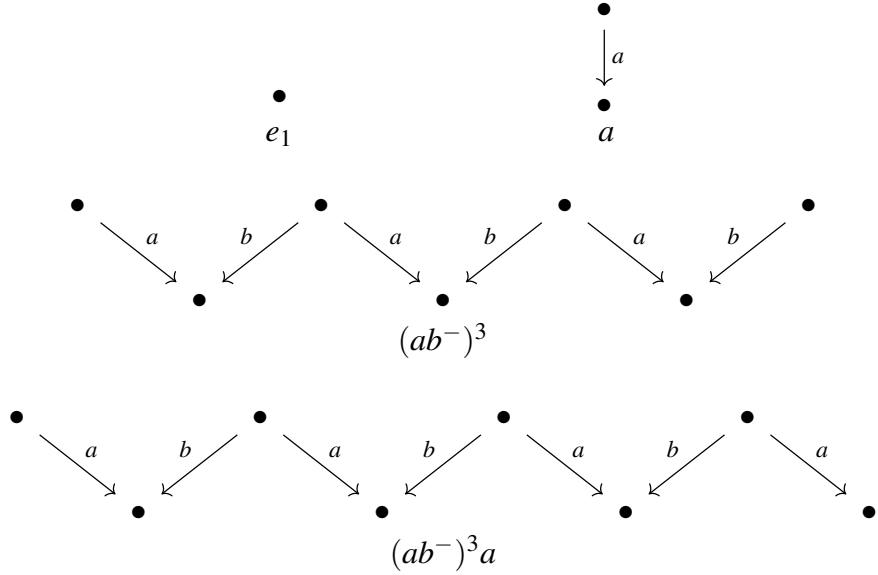
\begin{figure}
  \centering
  \begin{subfigure}{0.3\textwidth}
    \centering
    \begin{tikzcd}
      \bullet
    \end{tikzcd}\\
    $e_1$
  \end{subfigure}
  \begin{subfigure}{0.3\textwidth}
    \centering
    \begin{tikzcd}
      \bullet \ar[d, "a"]\\
      \bullet
    \end{tikzcd}\\ 
    $a$
  \end{subfigure}\vspace{0.5cm}

  \begin{subfigure}{\textwidth}
    \centering
    \begin{tikzcd}
      \bullet \ar[dr, "a"] && \bullet \ar[dl, swap, "b"]\ar[dr, "a"] && \bullet\ar[dl, swap, "b"]\ar[dr, "a"] && \bullet \ar[dl, swap, "b"]\\
      &\bullet && \bullet &{}&{}\bullet
    \end{tikzcd}\\
    $(ab^-)^3$
  \end{subfigure}\vspace{0.5cm}

  \begin{subfigure}{\textwidth}
    \centering
    \begin{tikzcd}
      \bullet \ar[dr, "a"] && \bullet \ar[dl, swap, "b"]\ar[dr, "a"] && \bullet\ar[dl, swap, "b"]\ar[dr, "a"] && \bullet \ar[dl, swap, "b"]\ar[dr, "a"]\\
      &\bullet && \bullet &{}&\bullet&& \bullet
    \end{tikzcd}\\
    $(ab^-)^3a$
  \end{subfigure}
  \caption{Some strings over $k[C_2 \times C_2]$}\label[figure]{fig:strings_over_klein4}
\end{figure}

The transitive permutation modules are given by: 
\[
\Lambda \text{ itself},\ \ M_a \coloneqq M(a),\ \ M_b \coloneqq M(b),\ \ M_{ab} \coloneqq M(ab^{-}, 1),\quad \text{and} \quad S \coloneqq M(e_1).
\] 
Letting $M$ denote the direct sum of all of these, we have $\cohmu_k(C_2 \times C_2) \cong \End(M)^{\op}$. Now using $F_M$, the relative exact structure making $M$ projective, our goal is to compute the $F_M$-orthogonal complement to $M$ in $\Lambda\mmod$. That is, we want to find all $X$ such $\Ext_{F_M}^*(X,M)=0$. These will correspond to the Cohen--Macaulay modules.

We have $\Lambda = D\Lambda$, as well as $\tau M_* = M_*$ for $*=a,\, b,\, ab$ and $\tau S = M(ab^-ab^-)$. Thus $\Lambda$ and the modules $M_*$ are all $F_M$-injective. So we need only worry about modules that have nontrivial $F_M$-extensions with $S$. 

An $F_M$-injective resolution of the simple module $S$ is given by 
\begin{center}
  \begin{tikzcd}
    0 \ar[r] & S \ar[r] & M_a \oplus M_b \oplus M_{ab} \ar[r] & \tau S \ar[r] & 0.
  \end{tikzcd}
\end{center}
As usual, a module $X$ is $F_M$-orthogonal if and only if the above sequence remains exact when we apply the functor $\Hom(X,-)$. So for each indecomposable module $X$, we compute the dimension of $\Hom(X, -)$ on $S$, $M_*$ and $\tau S$ by brute force. The results are displayed in \cref{tab:hom-dimensions}. Note $J_n(1)$ denotes the $n\times n$ Jordan block with eigenvalue $1$ and $A$ denotes an indecomposable $n\times n$ matrix without $1$ as an eigenvalue.

\begin{table}[h]
  \begin{tabular}{|l||c|c|c|c|c|}
  \hline
   & $S$ & $M_a$ & $M_b$ & $M_{ab}$ & $\tau S$ \\ \hline\hline
   $M((ab^-)^n)$     & $n+1$ & $n+1$ & $n+1$ & $n+1$ & $3n+1$ \\ \hline
   $M((b^-a)^n)$     & $n$   & $n+1$ & $n+1$ & $n+1$ & $3n+2$ \\ \hline
   $M((ab^-)^na)$    & $n+1$ & $n+2$ & $n+1$ & $n+1$ & $3n+3$ \\ \hline
   $M((b^-a)^nb^-)$  & $n+1$ & $n+1$ & $n+2$ & $n+1$ & $3n+3$ \\ \hline
   $M(ab^-, J_n(1))$ & $n$   & $n$   & $n$   & $n+1$ & $3n$ \\ \hline
   $M(ab^-, A)$      & $n$   & $n$   & $n$   & $n$   & $3n$ \\ \hline
  \end{tabular}
  \caption{Dimension of $\Hom(X, Y)$}\label[table]{tab:hom-dimensions}
\end{table}

Using \cref{tab:hom-dimensions} and the formula 
\begin{align*}
  \dim \Ext^1_{F_M}(X, S) = \dim \Hom(X, S\oplus \tau S) - \dim\Hom(X, M_a \oplus M_b \oplus M_{ab})
\end{align*}
we can compute the dimensions of the extension groups $\Ext^1_{F_M}(X, S)$ for each indecomposable $X$. 
The results are shown in \cref{tab:ext-dimensions}

\begin{table}[h]
  \begin{tabular}{|l||c|}
  \hline
   $M((ab^-)^n)$     & $n-1$ \\ \hline
   $M((b^-a)^n)$     & $n-1$ \\ \hline
   $M((ab^-)^na)$    & $n$   \\ \hline
   $M((b^-a)^nb^-)$  & $n$   \\ \hline
   $M(ab^-, J_n(1))$ & $n-1$ \\ \hline
   $M(ab^-, A)$      & $n$   \\ \hline
  \end{tabular}
   \caption{Dimension of $\Ext^1_{F_M}(X, S)$}\label[table]{tab:ext-dimensions}
\end{table}

The injective resolution of $S$ is short so there are no higher $\Ext$ groups. We see from \cref{tab:ext-dimensions} that besides the summands of $M$, the only indecomposables that are $F_M$-orthogonal to $M$ are $M(ab^-)$ and $M(b^- a)$ (since $M(ab^-,J_n(1))$ at $n=1$ is $M(ab^-,1) = M_{ab}$). Thus these correspond to the Cohen--Macaulay modules of $\cohmu_k(C_2 \times C_2)$.

Now to compute the singularity category of $\cohmu_k(C_2 \times C_2)$, we take the stable category of Cohen--Macaulay modules. Hence the singularity category of $\cohmu_k(C_2 \times C_2)$ will have two indecomposable objects corresponding $M(ab^-)$ and $M(b^- a)$. For the morphisms, we observe that any homomorphism between these two must factor through $M$ and similarly for any radical endomorphism. So there are no nontrivial morphisms in the singularity category and we conclude the following.

\begin{theorem}
  Let $k$ be a field of characteristic $2$. The singularity category of the cohomological Mackey algebra $\cohmu_k(C_2 \times C_2)$ is the triangulated category $k^2\mmod$, which is semisimple. The triangulated shift functor is given by swapping the two indecomposable objects.
\end{theorem}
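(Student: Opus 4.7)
The plan builds on the preceding computation, which has already identified the indecomposable Cohen--Macaulay modules over $\Gamma = \cohmu_k(C_2\times C_2)$: via the Morita--Tachikawa equivalence, they correspond to the $F_M$-orthogonal indecomposables in $\Lambda\mmod$, namely the five summands of $M$ (which are stably zero) together with the two string modules $M(ab^-)$ and $M(b^-a)$. Thus $\underline{\CM}(\Gamma) \cong \mathcal{D}_{\sg}(\Gamma)$ has at most two nonzero indecomposables, and the remaining task is to upgrade this count to a triangulated equivalence with $k^2\mmod$.

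First I would compute the four stable morphism spaces. Using the identifications $M(ab^-) \cong \rad(\Lambda)$ and $M(b^-a) \cong \Lambda/\socle(\Lambda)$ as $\Lambda$-modules, a direct calculation on the string bases writes down all $\Lambda$-linear maps between these two modules and exhibits, for each one, a factorization through a summand of $M$. For maps $M(b^-a) \to M(ab^-)$ the relevant intermediate object is the regular representation $\Lambda$; for maps $M(ab^-) \to M(b^-a)$, whose image necessarily lies in the socle, it is the simple summand $S$. The same bookkeeping applied to endomorphisms shows that in each of $\End_\Lambda(M(ab^-))$ and $\End_\Lambda(M(b^-a))$ every non-scalar component factors through $\add M$, yielding $\underline{\End}(M(ab^-)) = k = \underline{\End}(M(b^-a))$. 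This establishes the additive equivalence $\underline{\CM}(\Gamma) \simeq k^2\mmod$.

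To upgrade this to a triangulated equivalence, I need to identify the shift $\Sigma$ on the two simple objects. The shift in $\underline{\CM}(\Gamma)$ is computed by $F_M$-cosyzygies in $\Lambda\mmod$: a short $F_M$-exact sequence $0 \to M(ab^-) \to P \to C \to 0$ with $P \in \add M$ gives $\Sigma M(ab^-) \cong C$ in the stable category. The main obstacle is choosing the right embedding, because the ordinary injective envelope $M(ab^-) \hookrightarrow \Lambda$ with cokernel $S$ is \emph{not} $F_M$-exact; for instance, the induced map $\Hom(M_{ab}, \Lambda) \to \Hom(M_{ab}, S)$ fails to be surjective. Using the description of the $F_M$-injectives as $\add(\tau M \oplus D\Lambda)$, which here is $\add(\Lambda \oplus M_a \oplus M_b \oplus M_{ab} \oplus M((ab^-)^2))$, one constructs a minimal $F_M$-injective envelope of $M(ab^-)$ whose cokernel, after removing summands in $\add M$, is isomorphic to $M(b^-a)$. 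Symmetric reasoning yields $\Sigma M(b^-a) \cong M(ab^-)$, so the shift is an involution exchanging the two indecomposables, completing the triangulated identification with $k^2\mmod$.
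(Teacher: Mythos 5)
Your treatment of the additive structure follows the paper's own route: identify the two non-projective indecomposable Cohen--Macaulay objects with $M(ab^-)\cong\rad\Lambda$ and $M(b^-a)\cong\Lambda/\socle(\Lambda)$, then check that all maps between them and all radical endomorphisms factor through $\add M$. That is correct in outline, but your specific factorizations are off: a map $M(b^-a)\to M(ab^-)$ hitting the top of $\rad\Lambda$ (say $\bar1\mapsto a$) cannot factor through a free module, since any composite $\Lambda/\socle(\Lambda)\to\Lambda^n\to\rad\Lambda$ sends $\bar1$ into $\rad^2=\socle(\Lambda)$; such maps factor through $M_b$ (resp.\ $M_a$) instead, and only $\bar1\mapsto ab$ factors through $\Lambda$ or $S$. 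Likewise a map $M(ab^-)\to M(b^-a)$ can have two-dimensional image in the socle, so you need $\add S$ (i.e.\ $S\oplus S$), not a single copy of $S$. These are repairable bookkeeping slips and the semisimplicity conclusion survives.

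The genuine gap is in the shift computation. The suspension on $\underline{\CM}\bigl(\cohmu_k(C_2\times C_2)\bigr)\cong\mathcal{D}_{\sg}\bigl(\cohmu_k(C_2\times C_2)\bigr)$ is the cosyzygy with respect to the projective-injective objects of the Frobenius category of Cohen--Macaulay modules, and under $\Hom(M,-)$ these correspond to $\add M$ --- not to the $F_M$-injectives $\add(\tau M\oplus D\Lambda)$ of the ambient category $\Lambda\mmod$ that you propose to use. Indeed $\tau S=M((ab^-)^2)$ is not $F_M$-orthogonal to $M$ (by \cref{tab:ext-dimensions}, $\dim\Ext^1_{F_M}(\tau S,S)=1$), so it corresponds to no Cohen--Macaulay object at all, while $S$, which your list of injectives omits, \emph{is} relatively injective inside the orthogonal subcategory, precisely because $\Ext^1_{F_M}(X,S)=0$ for every orthogonal $X$. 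Hence ``minimal $F_M$-injective envelope, then delete $\add M$ summands'' does not compute $\Sigma$; note moreover that every map from $\rad\Lambda$ to $M_a$, $M_b$, $M_{ab}$ or $\tau S$ kills the socle, so any admissible monomorphism has to pass through a copy of $\Lambda$ in any case. What is needed is an $F_M$-exact sequence with middle term in $\add M$ and $F_M$-orthogonal cokernel, for instance
\[
0\to\rad\Lambda\xrightarrow{(\iota,\,p_a,\,p_b)}\Lambda\oplus S\oplus S\to\Lambda/\socle(\Lambda)\to0,
\]
where $\iota$ is the inclusion and $p_a,p_b$ are the two projections of $\rad\Lambda$ onto its top; one checks by hand that $\Hom(N,-)$ keeps this exact for each of the five summands $N$ of $M$. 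Equivalently, the inverse shift is computed from the minimal right $\add M$-approximation $M_a\oplus M_b\oplus M_{ab}\twoheadrightarrow\rad\Lambda$, whose kernel is $\Lambda/\socle(\Lambda)$. Either computation shows $\Sigma$ interchanges the two simple objects, as the theorem asserts; but your step as written does not establish this.
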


\bibliography{./wildbib}
\bibliographystyle{alpha}
\end{document}